\newlength{\defbaselineskip} \setlength{\defbaselineskip}{\baselineskip}
\newtheorem{thm}{Theorem}[section]
\newtheorem{cor}[thm]{Corollary}
\newtheorem{lemma}[thm]{Lemma}
\newtheorem{lem}[thm]{Lemma}
\newtheorem{prop}[thm]{Proposition}
\newtheorem{prob}[thm]{Problem}
\theoremstyle{definition}
\newtheorem{defi}[thm]{Definition}
\newtheorem{rem}[thm]{Remark}
\tikzset{
  edge node/.code={%
      \expandafter\def\expandafter\tikz@tonodes\expandafter{\tikz@tonodes #1}}}
\tikzset{
  subseteq/.style={
    draw=none,
    edge node={node [sloped, allow upside down, auto=false]{$\subseteq$}}},
  Subseteq/.style={
    draw=none,
    every to/.append style={
      edge node={node [sloped, allow upside down, auto=false]{$\subseteq$}}}
  }
}
 \numberwithin{equation}{section}
\numberwithin{equation}{section} \theoremstyle{definition}
\DeclareMathOperator{\Hom}{Hom}
\DeclareMathOperator{\rank}{rank}
\DeclareMathOperator{\corank}{corank}
          \newcommand\PP{{\mathbb{P}}}
           \newcommand\QQ{{\mathbb{Q}}}
           \newcommand\PL{{\mathrm{P}}}
          \newcommand\C{{\mathbb{C}  }}
 \newcommand\F{{\mathcal F}}
           \newcommand\G{{\mathcal G}}
          \newcommand\oo{\mathcal O}
          \newcommand\Z{\mathbb{Z}}
          \newcommand\rk{\mathrm{rk}}
\definecolor{zielony}{rgb}{0.5, 0.9, 0.1}
\definecolor{czerwony}{rgb}{0.8, 0.2, 0.1}
\definecolor{niebieski}{rgb}{0.3, 0.1, 0.9}
\newcounter{appendice}
\begin{document}

\title{Hyperk\"ahler fourfolds and Kummer surfaces} 
%\title{Hyperkahler fourfolds from a Hilbert scheme of conics on the double cover of $\PP^2\times \PP^2$ branched along a bi-degree $(2,2)$ divisor}

\author[A.~Iliev]{Atanas Iliev}
\address{Seoul National University, Department of Mathematics, Gwanak Campus, Bldg. 27, Seoul 151-747, Korea}
\email{ailiev@snu.ac.kr}
\author[G.~Kapustka]{Grzegorz Kapustka}
\address{Institut f\"ur Mathematik Mathematisch-naturwissenschaftliche Fakult\"at
Universit\"at Z\"urich, Winterthurerstrasse 190, CH-8057 Z\"urich}
\address{Institute of Mathematics of the Jagiellonian University, ul \L ojasiewicza 6 Krak\'ow, Poland}
%\address{Institute of Mathematics of the Polish Academy of Sciences, ul. Sniadeckich 8, P.O. Box 21, 00-956 Warszawa, Poland}

\email{grzegorz.kapustka@uj.edu.pl}
\author[M.~Kapustka]{Micha\l{} Kapustka}
\address{University of  Stavanger, Department of Mathematics and Natural Sciences, NO-4036 Stavanger, Norway}
\address{Institute of Mathematics of the Jagiellonian University, ul \L ojasiewicza 6 Krak\'ow, Poland}
\email{michal.kapustka@uis.no}
\author[K.~Ranestad]{Kristian Ranestad}
\address{University of Oslo, Department of Mathematics, PO Box 1053, Blindern, N-0316 Oslo, Norway}
\email{ranestad@math.uio.no}
\keywords{Kummer surfaces, Irreducible symplectic manifolds, hyperk\"ahler varieties, Lagrangian fibrations, Lagrangian degeneracy loci, Brauer groups}
\subjclass[2000]{14J10,14J40}

\begin{abstract}
We show that a Hilbert scheme of conics on a Fano fourfold double cover of $\PP^2\times\PP^2$ ramified along a divisor of bidegree $(2,2)$ admits a $\PP^1$-fibration with base being
a hyper-K\"{a}hler fourfold. We investigate the geometry of such fourfolds relating them with degenerated EPW cubes, with elements in the Brauer groups of $K3$ surfaces
of degree $2$, and with Verra threefolds studied in \cite{Vera}. 
These hyper-K\"{a}hler fourfolds admit natural involutions and complete the classification of geometric realizations
of anti-symplectic involutions on hyper-K\"{a}hler $4$-folds of type $K3^{[2]}$.

As a consequence we present also three constructions of quartic Kummer surfaces in $\PP^3$: as Lagrangian and symmetric degeneracy loci and as the base of a fibration of conics in certain threefold quadric bundles over $\PP^1$.
\end{abstract}

%\begin{abstract}
%We study geometric properties of EPW cubes and
%some of their natural degenerations.
%We present three geometric construction of the unique 19-dimensional family $\mathcal{M}_2$ of 4-dimensional hyper-K\"{a}hler manifolds (IHS) of $K3^{[2]}$ type admitting anti-symplectic involutions that cannot be realized as a subfamily of the unique $20$-dimensional family $\mathcal{M}_1$ of such involutions i.e. of double EPW sextics. By \cite{MongardiWandel} each IHS fourfold of $K3^{[2]}$ type that admits a non-symplectic automorphism of prime order $p\neq 5,23$ is either in the closure of the family $\mathcal{M}_1$ or $\mathcal{M}_2$ or it is a moduli space of sheaves on a $K3$ surface.

%One of our construction is as double cover of a Lagrangian degeneracy locus, the second is related with the Hilbert scheme of conics on a Verra fourfold being the double cover of $\PP^2\times\PP^2$ ramified along a divisor of degree $(2,2)$, and the third with the moduli space of stable twisted sheaves on a $K3$ surface of degree $2$. Each element of the family $\mathcal{M}_2$ admits two Lagrangian fibrations, a polarization of Beauville degree $q=4$, and is a $8:1$ ramified cover of $\PP^2\times \PP^2$.

%As a consequence we present also two constructions of the quartic Kummer surfaces: as a Lagrangian degeneracy locus and from the Hilbert scheme of conics
%on an Fano threefolds.
%\end{abstract}

\maketitle
\tableofcontents
By a hyper-K\"{a}hler manifold or equivalently by an irreducible holomorphic symplectic  (or IHS) $2n$-fold we mean
a $2n$-dimensional simply connected compact K\"{a}hler manifold with trivial canonical
bundle that admits a unique (up to a constant)
non-degenerate holomorphic $2$-form (called the symplectic form) and is not a product of two
manifolds \cite{Beauville}. 
In this paper we are studying the geometry of some families of IHS fourfolds that are deformation equivalent to the Hilbert scheme of two points on a $K3$ surface (of type $K3^{[2]}$).
 
Recall from \cite{BD} that Hilbert schemes of lines on smooth cubic hypersurfaces in $\PP^5$ are IHS fourfolds of  type $K3^{[2]}$
characterized by the fact that they admit a polarization of Beauville degree $q=6$ (i.e degree $3*36$).
In \cite{OgradyEPW} O'Grady described the complete family of polarized IHS fourfolds of $K3^{[2]} $ type with Beauville degree $q=2$. He found out that such manifolds are double covers of sextic hypersurfaces defined as Lagrangian degeneracy loci. 
Next \cite{IM} described constructions of IHS fourfolds with $q=2$ as bases of $\PP^1$ fibrations
 on Hilbert schemes of conics on Fano fourfolds of degree $10$.

The aim of this article is to investigate a special $19$-dimensional family $\mathcal{U}$ of IHS fourfolds of type $K3^{[2]}$ admitting a polarization of Beauville degree $q=4$ (i.e degree $48$). In fact, the family $\mathcal{U}$ represents a component of the hyperelliptic locus in the moduli space of all  IHS fourfolds of type $K3^{[2]}$ admitting a polarization of Beauville degree $q=4$. The elements of the family $\mathcal{U}$ are obtained as double covers of some special Lagrangian degeneracy loci on a cone over $\mathbb{P}^2\times \mathbb{P}^2$.  The same family $\mathcal{U}$ is obtained by considering for a general  Fano fourfold $Y$ being the double cover of $\PP^2\times \PP^2$ branched along a bi-degree $(2,2)$ divisor (we call such $Y$ Verra fourfolds)  the Hilbert scheme $F(Y)$ of conics on $Y$.
We show that a general fivefold $F(Y)$ admits a natural $\PP^1$ fibration such that its base is an IHS fourfold in $\mathcal{U}$. Finally, we show also that the generic element from $\mathcal{U}$ is a moduli space of twisted sheaves on a $K3$ surface.

%By deforming the fourfold $Y$ we obtain
%a $19$-dimensional family of such IHS fourfolds that can be seen as the hyperelliptic locus of the moduli space of polarised IHS fourfolds with $q=4$: denote it by  $\mathcal{U}$ . 
The IHS fourfolds from $\mathcal{U}$ appear naturally in the following context: 
Recall that van Geemen classified two torsion elements in the Brauer group $Br(S)$ of a general $K3$ surface $S$ that admits a polarization of degree $2$, \cite{vG}. He showed that there are three types of elements in $Br(S)_2\simeq (\Z_2)^{21}$ and that they give rise to three type of varieties $Y_{\alpha_i}$ for $i=1,2,3$ respectively:
\begin{itemize}
\item  a smooth complete intersection of three quadrics in $\PP^5$, or
\item  a cubic fourfold containing a plane, or
\item  a double cover of $\PP^2 \times \PP^2$ ramified along a hypersurface of bi-degree $(2, 2);$
\end{itemize}
such that a twist of the polarized Hodge structure defined by $\alpha_i$ is Hodge isometric to a primitive sublattice of the middle cohomology of $Y_{\alpha_i}$ for $i=1,2,3$.
There are direct geometric constructions relating $(S,\alpha_i)$ with the variety $Y_{\alpha_i}$. In the first case Mukai \cite{Mu2} showed that a moduli space of bundles on $Y_{\alpha_1}$ is isomorphic
to $S$. In 
 \cite{Bh} it is shown that $Y_{\alpha_1}$ is isomorphic 
to the moduli space of certain orthogonal bundles on $S$; giving the relation in the other direction.
Note, however, that the twist is not apparent in these construction.
One may ask whether the  $K3$ surface $Y_{\alpha_1}$ of degree $8$ is isomorphic to a moduli
space of twisted sheaves  on
$K3$ surfaces of degree 2 with the twist $\alpha_1$  \cite[\S 1]{MSTV}.

%Note also that the above suggest a general relation between moduli space of twisted sheaves and 
%%Hilbert schemes.

In the second case for $(S,\alpha_2)$ a geometric relation was described in \cite{MS}.
It was shown that a moduli space of twisted sheaves on $(S,\alpha_2)$ is birational to the IHS fourfold being the Hilbert scheme of lines
on a cubic fourfold containing a plane.
Our construction completes the picture by showing that the moduli space of twisted sheaves on $(S,\alpha_3)$
is isomorphic to an IHS fourfold from $\mathcal{U}$ i.e. is constructed from the Hilbert scheme of conics on the corresponding fourfold $Y_{\alpha_3}$.

%Next we study the geometric properties of the IHS fourfolds from $\mathcal{U}$. 
\subsection{Construction via Lagrangian Degeneracy loci}
Section \ref{section2} is devoted to the construction of elements of $\mathcal{U}$ as double covers of appropriate Lagrangian degeneracy loci inside a cone $C(\PP^2\times \PP^2)\subset \PP^9$ over the Segre embedding of $\PP^2\times \PP^2$. This construction is analogous to the construction of EPW sextics  \cite{OgradyEPW}, \cite{EPW}. It is also naturally related to special EPW cubes \cite{EPWcubes}. Let us be more precise: 
Let $U_1, U_2$ be $3$-dimensional vector spaces with fixed volume forms. Consider the cone over the Segre embedding of $\PP(\wedge^2 U_1 )\times \PP(U_2) $
\[
C_{U_1}:=C(\PP(\wedge^2 U_1 )\times \PP(U_2))
\]
 interpreted as a subset 
\[
C_{U_1}=\mathrm{G}(3,U_1\oplus U_2)\cap \PP(\wedge^3 U_1\oplus (\wedge^2 U_1\otimes U_2))\subset \PP(\wedge^3(U_1\oplus U_2)).
\]
Note that we use the notation $\mathbb{P}(B)$ to denote the space of 1-dimensional subspaces of $B$. 
Consider the vector subspace $(\wedge^2 U_1\otimes U_2)\oplus ( U_1\otimes \wedge^2 U_2) \subset \wedge^3 (U_1\oplus U_2)$ equipped with a symplectic form corresponding to wedge product. Each point $[U]$ of the cone $C_{U_1}$ corresponds to a three-space $U\subset U_1\oplus U_2$ such that $\dim (U\cap U_1)\geq 2$. To $U$ we associate the Lagrangian subspace 
\begin{align*}
\bar{T}_U:=(\wedge^2 U\wedge (U_1\oplus U_2))/\wedge^3U_1\subset &(\wedge^3 U_1\oplus (\wedge^2 U_1\otimes U_2)\oplus ( U_1\otimes \wedge^2 U_2))/\wedge^3U_1\\
&\cong(\wedge^2 U_1\otimes U_2)\oplus ( U_1\otimes \wedge^2 U_2).
\end{align*}
Let $\bar{A}\subset (\wedge^2 U_1\otimes U_2)\oplus ( U_1\otimes \wedge^2 U_2)$ be a general Lagrangian subspace.
 To this subspace $\bar{A}$ we can associate degeneracy loci for each $k>0$:
$$D^{\bar{A}}_k=\{ [U]\in C_{U_1}| \dim (\bar{T}_U \cap \bar{A})\geq k\}.$$
The variety $D^{\bar{A}}_1$ is a special quartic section of $C_{U_1}$ that we call an EPW quartic section (abusing the name of the first degeneracy locus in $\mathrm{G}(3,6)$ considered in \cite{DK}). We shall prove that for a generic choice of $\bar{A}$ the fourfold $D^{\bar{A}}_1$ is singular exactly along the surface $D^{\bar{A}}_2\subset \PP(\wedge^3(U_1\oplus U_2))$ which has degree $72$.
The main result of the above construction is the following: 
\begin{thm}\label{main}  For a generic choice of $\bar{A}\in \mathrm{LG}(9,(\wedge^2 U_1\otimes U_2)\oplus ( U_1\otimes \wedge^2 U_2))$ there exists a natural double cover $X_{\bar{A}}\to D^{\bar{A}}_1$ branched along $D^{\bar{A}}_2$ such that 
$X_{\bar{A}}$ is an IHS fourfold of $K3^{[2]}$ type that admits a polarization of Beauville degree $q=4$. 
\end{thm}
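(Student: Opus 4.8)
The plan is to mirror O'Grady's construction of double EPW sextics \cite{OgradyEPW}, now carried out on the cone $C_{U_1}$ with the $18$-dimensional symplectic space $W:=(\wedge^2U_1\otimes U_2)\oplus(U_1\otimes\wedge^2 U_2)$ playing the role of $\wedge^3V$. The assignment $[U]\mapsto\bar T_U$ defines, over the smooth locus $C_{U_1}\setminus\{v\}$ (where $v=[U_1]$ is the vertex of the cone), a rank-$9$ Lagrangian subbundle $\mathcal T\subset W\otimes\oo_{C_{U_1}}$; one first checks that $\bar T_U$ is genuinely $9$-dimensional and isotropic at each such $[U]$. Since $\mathcal T$ is Lagrangian the symplectic form gives $\mathcal E:=(W\otimes\oo)/\mathcal T\cong\mathcal T^{\vee}$, and I would form the morphism of rank-$9$ bundles $\lambda_{\bar A}\colon\bar A\otimes\oo_{C_{U_1}}\hookrightarrow W\otimes\oo\twoheadrightarrow\mathcal E$. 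Its degeneracy locus is $\{[U]:\bar T_U\cap\bar A\neq 0\}=D^{\bar A}_1$ and its cokernel is a sheaf $\zeta_{\bar A}$ supported on $D^{\bar A}_1$, generically a line bundle. Because both $\bar A$ and $\mathcal T$ are Lagrangian, $\lambda_{\bar A}$ is self-dual with respect to $\omega$, which endows $\zeta_{\bar A}$ with a symmetric pairing and hence $\oo_{D^{\bar A}_1}\oplus\zeta_{\bar A}$, after the twist dictated by the construction, with a commutative $\oo_{D^{\bar A}_1}$-algebra structure; I define $X_{\bar A}:=\Spec$ of this algebra. This produces a finite degree-$2$ morphism $\pi\colon X_{\bar A}\to D^{\bar A}_1$, étale where $\zeta_{\bar A}$ is invertible and ramified exactly along the corank-$\ge2$ locus, i.e.\ along $D^{\bar A}_2$. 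I would also record that for generic $\bar A$ the vertex $v$ lies off $D^{\bar A}_1$: here $\bar T_v=\wedge^2U_1\otimes U_2$ is itself Lagrangian, and a generic $\bar A$ meets it only in $0$, so the cone singularity can be ignored.

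For the smoothness of $X_{\bar A}$ the decisive input is a genericity count on the incidence variety over $LG(9,W)$: the Lagrangian degeneracy strata have codimension $\binom{k+1}{2}$, so $D^{\bar A}_1$ and $D^{\bar A}_2$ have dimensions $4$ and $2$, while $D^{\bar A}_3=\{[U]:\dim(\bar T_U\cap\bar A)\ge3\}$, of expected codimension $6>\dim C_{U_1}$, is empty for generic $\bar A$. Granting the singularity statement preceding the theorem — that $D^{\bar A}_1$ is smooth away from the surface $D^{\bar A}_2$ — I would run the standard local analysis of $\lambda_{\bar A}$ in adapted coordinates to see that over a corank-$2$ point the double cover is the smooth model of a transverse node, so that the emptiness of $D^{\bar A}_3$ forces $X_{\bar A}$ to be smooth everywhere.

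Next I would produce the holomorphic symplectic form and check $K_{X_{\bar A}}\cong\oo_{X_{\bar A}}$. As in \cite{OgradyEPW}, the symplectic form $\omega$ on $W$ induces, through the cokernel sheaf and the ramified cover, a non-degenerate closed $2$-form on the smooth locus of $X_{\bar A}$, which extends over all of $X_{\bar A}$ by smoothness; triviality of the canonical bundle then follows from the explicit shape of $\zeta_{\bar A}$ together with the double-cover adjunction. Simple connectedness I would obtain exactly as for double EPW sextics, via a Lefschetz-type argument on $C_{U_1}$ combined with the structure of the ramification over $D^{\bar A}_2$. At this point $X_{\bar A}$ is a smooth simply connected projective fourfold with trivial canonical bundle carrying a holomorphic symplectic form; whether it is irreducible holomorphic symplectic of $K3^{[2]}$ type — rather than, say, a product or a variety of generalized Kummer type — is precisely the deformation-type question taken up next.

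The genuinely hard part is pinning down the deformation type and then the Beauville degree. Rather than extract the global topology from the double-cover description directly, I would fix the type by a bridge to an already-classified family: either degenerate $\bar A$ inside $LG(9,W)$ to a special member for which $X_{\bar A}$ degenerates to a variety of known deformation type (the degenerated EPW cubes of \cite{EPWcubes}) and invoke deformation invariance, or use the identifications developed later in the paper of $X_{\bar A}$ with the base of the $\PP^1$-fibration on the Hilbert scheme $F(Y)$ of conics on the corresponding Verra fourfold $Y$, and with a moduli space of twisted sheaves on a degree-$2$ $K3$ surface, the latter being of $K3^{[2]}$ type by the theory of twisted sheaf moduli. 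Once $K3^{[2]}$ type is secured we have $b_2=23$, $h^{2,0}=1$ and the Beauville--Bogomolov form with Fujiki constant $3$; writing $H:=\pi^*\oo_{C_{U_1}}(1)$ and using that $D^{\bar A}_1$ is a quartic section of the degree-$6$ cone, so that $\deg D^{\bar A}_1=24$, the degree-$2$ cover gives $\int_{X_{\bar A}}H^4=2\cdot 24=48$, whence the Fujiki relation $48=3\,q(H)^2$ yields $q(H)=4$, as asserted. I expect this identification of the deformation class to be the principal obstacle: the intrinsic Lagrangian construction gives no direct grip on the global topology of $X_{\bar A}$, and it is the degeneration or the Verra-fourfold/twisted-sheaf bridge that finally makes the deformation type accessible.
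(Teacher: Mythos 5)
Your overall strategy is sound and, at the decisive step, coincides with the paper's: the deformation type is pinned down by degenerating $\bar A$ (equivalently, taking $[A]\in(\Sigma\cap\Delta)-(\Sigma_+\cup\Sigma[1]\cup\Gamma)$), where the paper shows $X_{\bar A}$ is birational to $\overline{S_A(v)}^{[2]}$ for a one‑nodal degree‑$2$ $K3$ section $S_A(v)$ of a Fano threefold in $G(3,V_0)$, and then concludes by deformation invariance; your Fujiki computation $H^4=2\cdot 4\cdot 6=48=3\,q(H)^2$ is also exactly how the degree $q=4$ is obtained. Where you genuinely diverge is in the construction of the cover itself: you follow O'Grady's cokernel‑sheaf recipe, defining $X_{\bar A}=\Spec(\oo_{D_1^{\bar A}}\oplus\zeta_{\bar A})$ with the algebra structure coming from self‑duality of $\lambda_{\bar A}$, whereas the paper (following its EPW‑cube construction) first resolves $D_1^{\bar A}$ along $D_2^{\bar A}$ via the incidence variety $\tilde D_1^{\bar A}\subset C_{U_1}\times G(1,\bar A)$, checks that the exceptional divisor is $2$‑divisible ($E=4h-2\iota^*(R)$), takes the double cover of the resolution branched along $E$, and blows back down. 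Your route is more economical when it works (this is a first degeneracy locus, so $\zeta_{\bar A}$ is generically a line bundle), but both ultimately hinge on the same local input, namely the paper's Lemma on singularities of $D_1^{\bar A}$: that the tangent cone at points of $D_2^{\bar A}$ is a cone over a smooth conic (a rank‑$3$ quadric), which requires the non‑obvious transversality statement that the quadrics cutting out the image of $C_{U_0}$ under projection from $[U_1]$ restrict surjectively to lines avoiding it; you assume this rather than prove it, and it is the real technical content behind both the emptiness of $D_3^{\bar A}$ and the smoothness of $X_{\bar A}$.

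Two cautions. First, your suggestion that the symplectic form is induced directly from $\omega$ on $W$ through the cokernel sheaf is not something the paper (or O'Grady) actually carries out; in both treatments the holomorphic symplectic structure comes only a posteriori from the identification of the deformation type, so your argument should lean entirely on the degeneration. Second, of your two proposed bridges for the deformation type, only the degeneration one is admissible here: the identification of $X_{\bar A}$ with the base of the $\PP^1$‑fibration on $F(Y)$ (Theorem \ref{main2}) and with a moduli space of twisted sheaves (Proposition \ref{twist}) are both proved in the paper \emph{using} Theorem \ref{main} (the latter via the global Torelli theorem for $K3^{[2]}$‑type manifolds), so invoking them would be circular.
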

%The proof will be given at the end of section \ref{known}.
The proof is presented in Section \ref{section2}. The subset $\mathcal{U}$ of the moduli space of polarized IHS fourfolds deformation equivalent to $K3^{[2]}$ and with polarization of Beauville degree 4 that parametrizes manifolds constructed in Theorem \ref{main} is of dimension $19$.
% Moving $\bar{A}$ we obtain a $19$-dimensional family of IHS fourfolds that we denote by $\mathcal{U}$.
 % such that the invariant lattice
 %of the covering involution is isomorphic to $U(2)$. 
\subsection{Relation to EPW cubes} 
The construction of EPW quartic sections is more natural when seen in the context  of EPW cubes. Recall that in \cite{EPWcubes} we constructed a $20$-dimensional family (locally complete) of polarized IHS sixfolds deformation equivalent to the Hilbert scheme of three points on a $K3$-surface (i.e.~of type $K3^{[3]}$) and admitting a  polarization of Beauville degree $q=4$. The elements of this family are natural double covers of special codimension $3$ subvarieties of the Grassmannian $\mathrm{G}(3,6)$ that we called EPW cubes. The EPW quartic sections can be seen as subvarieties of special %degenerated 
EPW cubes.
Recall that for a Lagrangian subspace $A\subset \wedge^3 V_6$ we define 
$$D^2_A=\{[U]\in \PP(\wedge^3V_6)| \dim (A\cap ((\wedge^2 U) \wedge V_6))\geq 2\}.$$
When $A$ is general $D^2_A$ is called an EPW cube.
If now $A\subset \wedge^3 V_6 $ is a general Lagrangian subspace that contains $\wedge^3 U_1$,   for some $U_1\subset V_6$ of dimension 3 
then $D^2_A $ %\cap C_{U_1}$ 
is a special %degenerate 
EPW cube. Now for every decomposition  $V_6=U_1\oplus U_2$ we have a natural identification $C_{U_1}=C(\PP(U_1)\times \PP(\wedge^2 U_2))=T_{[U_1]}\cap \mathrm{G}(3,V_6)$, where $T_{[U_1]}$ is the projective tangent space to $\mathrm{G}(3,V_6)$ in $[U_1]$. Under this identification we have
$$D^1_{\bar{A}}=D^2_A \cap C_{U_1},$$
with $\bar{A}=A/(\wedge^3 U_1)\subset (\wedge^3 U_1)^{\perp}/(\wedge^3 U_1)$. 
%See Remark \ref{U2} to see how the above intersection changes when we change the decomposition $V_6=U_1\oplus U_2$.

 \subsection{Construction via Hilbert scheme}
Our second construction of IHS fourfolds from the family $\mathcal{U}$ is the subject of Section \ref{F-Verra}. It uses Hilbert schemes of conics on so-called Verra Fano fourfolds.
Let $U_1$ and $U_2$ be $3$-dimensional vector spaces.
We call a \emph{Verra fourfold} \cite{Vera}, \cite{Iliev} an element of the $19$-dimensional family of Fano fourfolds which is the intersection $Y$ of the cone $C(\PP( U_1)\times \PP(\wedge^2 U_2))\subset \PP(\C\oplus ( U_1 \otimes \wedge^2 U_2))$ with a quadric hypersurface $Q$. 
Equivalently $Y$ is the double cover of $\PP(U_1)\times \PP(\wedge^2 U_2)=\PP^2\times \PP^2$ branched along a divisor $Z$ of bi-degree $(2,2)$. The threefold $Z$ will be called the \emph{Verra threefold} associated to the Verra fourfold $Y$.  Note that $Z$ can be identified with the section of $Y$ by the hyperplane polar to the vertex of the cone $C(\PP( U_1)\times \PP(\wedge^2 U_2))$ via the quadric $Q$.
Verra threefolds were introduced by  A. Verra in \cite{Vera} as counterexamples to the Torelli problem for Prym varieties of unbranched double coverings  of plane sextics.

 The linear system of quadrics containing $C(\PP(U_1)\times \PP(\wedge^2 U_2))\subset \PP(\C\oplus (U_1\otimes \wedge^2 U_2))$ is then naturally isomorphic to $\PP(U_1 \otimes \wedge^2 U_2)$, via a volume form on $ U_1\otimes \wedge^2 U_2\cong U_1\otimes  U_2 ^\vee$.
 %In fact let $w\in U\otimes \wedge^2 U_2$ and $(w_0,w')\in \C\oplus(U_1\otimes \wedge^2 U_2)$, then 
%\[
%q_w(w_0,w')=\eta(w\wedge w'\wedge w')
%\]
%is a quadratic form on $\C\oplus(U_1\otimes \wedge^2 U_2)$, and
% the map
%\[
%U_1 \otimes \wedge^2 U_2\to I_{C(\PP(U_1)\times \PP(\wedge^2 U_2)),2};\quad w\mapsto q_w
%\]
%is an isomophism.
The linear system of quadrics containing $Y\subset \PP^9$ is therefore naturally isomorphic to $\PP(\C\oplus (U_1 \otimes \wedge^2 U_2))$ and its dual is naturally isomorphic to $\PP(\C\oplus (\wedge^2 U_1 \otimes U_2))$. %We denote by $Q_{w}$ the quadric parametrised by $[w]\in \PP(U_1)\otimes \PP(\wedge^2 U_2)$.
%Changing $Q$ we obtain a $19$-dimensional family of Verra fourfolds.
The fourfold $Y$ admits  two natural projections $\pi_1$ and $\pi_2$ onto  $\PP(U_1)$ and $\PP(\wedge^2 U_2)$ respectively.
We denote by $F(Y)$ the Hilbert scheme of plane conic curves on $Y$ of type $(1,1)$ i.e. conics that projects to lines by both $\pi_1$ and $\pi_2$. 

%Let us now relate the Hilbert scheme $F(Y)$ corresponding to the quadric $Q$ with an EPW quartic section.
Let $[C]\in F(Y)$ be a $(1,1)$-conic on $Y$, then $C$ spans a plane $\PL_C\subset \PP(\C\oplus (U_1\otimes \wedge^2 U_2))$, and the  locus $H_C$ of quadrics containing $Y\cup \PL_C$ is a hyperplane, i.e. a point $[H_C]\in \PP(\C\oplus (\wedge^2 U_1\otimes  U_2))$ in the dual space. %Clearly
%$H_C$ is a hyperplane in the space of quadrics containing $Y$ i.e. naturally a point $H_C\in \PP(\wedge^3 U_1 \oplus (\wedge^2 U_1 \otimes U_2)).$
In this way we define a map
\[
\psi_{Q}: F(Y)\to \PP(\wedge^3 U_1 \oplus (\wedge^2 U_1 \otimes U_2)); \qquad [C]\mapsto [H_C]
\]
We identify the image of this map in the following way.
Note that  the quadric hypersurface $Q$, such that $Y=Q\cap C(\PP( U_1)\times \PP(\wedge^2 U_2))$, induces a quadric $Q'\subset \PP(U_1 \otimes \wedge^2 U_2)$ defining the branch locus $Z$ of the double cover $Y\to \PP(U_1)\times \PP(\wedge^2 U_2)$ via $Z=Q'\cap (\PP( U_1) \otimes \PP(\wedge^2 U_2))$. The quadric $Q'$ is defined by a symmetric linear map $q'\colon ( U_1\otimes \wedge^2 U_2) \to  (\wedge^2 U_1\otimes U_2)$. The graph of such a symmetric map $q'$ is a  Lagrangian subspace that we denote $\bar{A}_Q\subset (\wedge^2 U_1\otimes U_2)\oplus ( U_1\otimes \wedge^2 U_2)$.
% In this paper we provide a construction of a variety  $D_1^{\bar{A}}$ as a Lagrangian degeneracy locus (see \cite{EPW} for generalities about Lagrangian loci),  similar to the construction of an EPW sextic, as follows: %  The construction is as follows:
% consider two Lagrangian subbundles of rank 9 the trivial $\oo_{C(\PP(U_1)\times \PP(U_2))}\otimes \overline{A}$ and $\tilde{\mathcal{T}}$ (described in section \ref{Section-Singular}) of the symplectic bundle $\oo_{C(\PP(U_1)\times \PP(U_2)}\otimes \wedge^3(U_1\oplus U_2)/??$ (cf.~\cite[ex.~9.3]{EPW}).
We shall prove that the image $\psi_{Q}(F(Y))$ coincides with the first degeneracy locus $$D^{\bar{A}_Q}_1\subset C(\PP(\wedge^2 U_1)\times \PP(U_2)).$$
Furthermore by studying fibers of the map we obtain a factorization $\psi_{Q}%(F(Y))
= \rho\circ\phi$ with $\phi$ a $\PP^1$ fibration and $\rho$ a 2:1 map branched exactly in $D^{\bar{A}_Q}_2$.
Combining this with Theorem \ref{main} we obtain:
\begin{thm}\label{main2}  The Hilbert scheme of conics on a general Verra fourfold $Y=Q\cap C(\PP( U_1)\times \PP(\wedge^2 U_2))$ admits a $\PP^1$-fibration (a smooth map whose all fibers are isomorphic to $\mathbb{P}^1$) over the IHS fourfold $X_{\bar{A}_Q}\in \mathcal{U}$. Moreover, a general IHS fourfold $X\in \mathcal{U}$ appears in this way.
\end{thm}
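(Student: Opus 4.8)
The plan is to deduce Theorem \ref{main2} by combining the factorization of $\psi_Q$ established in Section \ref{F-Verra} with Theorem \ref{main}. Recall that the fiber analysis of Section \ref{F-Verra} produces $\psi_Q=\rho\circ\phi$, where $\phi\colon F(Y)\to X$ is a $\PP^1$-fibration onto a fourfold $X$ and $\rho\colon X\to D^{\bar{A}_Q}_1$ is a $2:1$ map branched exactly along $D^{\bar{A}_Q}_2$, together with the identification $\psi_Q(F(Y))=D^{\bar{A}_Q}_1$. The only thing left to establish for the first assertion is that the intermediate variety $X$ coincides with the IHS fourfold $X_{\bar{A}_Q}$ of Theorem \ref{main}. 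By Theorem \ref{main} applied to the Lagrangian $\bar{A}=\bar{A}_Q$ attached to $Y$, the double cover of $D^{\bar{A}_Q}_1$ branched along $D^{\bar{A}_Q}_2$ is precisely $X_{\bar{A}_Q}\in\mathcal{U}$, so both $X$ and $X_{\bar{A}_Q}$ are normal double covers of $D^{\bar{A}_Q}_1$ with the same branch divisor.

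To identify them I would realise $X$ as the target of the Stein factorization of $\psi_Q$, so that $X=\Spec\bigl(\psi_{Q*}\oo_{F(Y)}\bigr)$, and then match the sheaf $\rho_*\oo_X$ with the sheaf $\oo\oplus L^{-1}$ (where $L^{\otimes 2}\cong\oo_{D^{\bar{A}_Q}_1}(D^{\bar{A}_Q}_2)$) used to build $X_{\bar{A}_Q}$ in Theorem \ref{main}. The cleanest way to do this is to compare the two covers over the smooth locus $D^{\bar{A}_Q}_1\setminus D^{\bar{A}_Q}_2$, where both are \'etale of degree $2$, and then to check that they agree through the explicit local models of the branching along $D^{\bar{A}_Q}_2$ coming from the Lagrangian degeneracy description. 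This identification is the main obstacle: one must rule out that the Stein factorization of $\psi_Q$ differs from $X_{\bar{A}_Q}$ by an unramified twist, i.e.\ one needs that the double cover with prescribed branch locus $D^{\bar{A}_Q}_2$ is unique (equivalently, that the relevant square root $L$ is unique, which follows once the $2$-torsion ambiguity is excluded on $D^{\bar{A}_Q}_1$). Granting this, $X\cong X_{\bar{A}_Q}$ and $\phi$ is the desired $\PP^1$-fibration $F(Y)\to X_{\bar{A}_Q}$.

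For the converse assertion, that a general $X\in\mathcal{U}$ arises from a Verra fourfold, the key point is purely linear-algebraic: the Lagrangians of the form $\bar{A}_Q$ are exactly the graphs of symmetric maps $q'\colon (U_1\otimes\wedge^2 U_2)\to(\wedge^2 U_1\otimes U_2)$, and a general Lagrangian is such a graph. Indeed, with respect to the splitting of the symplectic space into the complementary Lagrangian summands $\wedge^2 U_1\otimes U_2$ and $U_1\otimes\wedge^2 U_2$, any $\bar{A}$ meeting $\wedge^2 U_1\otimes U_2$ trivially projects isomorphically onto $U_1\otimes\wedge^2 U_2$ and is therefore the graph of a map $q'\colon U_1\otimes\wedge^2 U_2\to\wedge^2 U_1\otimes U_2$; under the identification $\wedge^2 U_1\otimes U_2\cong(U_1\otimes\wedge^2 U_2)^\vee$ furnished by the wedge pairing, the Lagrangian condition is equivalent to symmetry of $q'$, and this transversality is an open dense condition on $LG(9,(\wedge^2 U_1\otimes U_2)\oplus ( U_1\otimes \wedge^2 U_2))$. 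Such a symmetric $q'$ defines a quadric $Q'$, hence a $(2,2)$ branch divisor $Z=Q'\cap(\PP(U_1)\times\PP(\wedge^2 U_2))$ and a Verra fourfold $Y$ with $\bar{A}=\bar{A}_Q$.

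It then remains to reconcile the two genericity hypotheses: the open locus on which Theorem \ref{main} produces a smooth IHS fourfold, the open locus where $\bar{A}$ is the graph of a \emph{nondegenerate} symmetric map (so that $Y$ is a smooth Verra fourfold), and the locus where the fiber analysis of $\psi_Q$ is valid, are all nonempty and dense, hence meet. A general member of $\mathcal{U}$ is by construction $X_{\bar{A}}$ for a general $\bar{A}$, which by the above equals $X_{\bar{A}_Q}$ for a smooth $Y$; the first part then exhibits it as the base of the $\PP^1$-fibration on $F(Y)$. I expect the verification that the Verra locus dominates $\mathcal{U}$ to be routine once the graph description is in place, with all the genuine difficulty concentrated in the Stein-factorization identification of the second paragraph.
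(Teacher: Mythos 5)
Your proposal follows essentially the same route as the paper: the paper likewise combines the Stein factorization $\psi_Q=\phi\circ\rho$ (with $\phi$ a $\PP^1$-fibration and $\rho$ a $2:1$ cover branched exactly in $D_2^{\bar{A}_Q}$) with Theorem \ref{main}, asserting that $X_Q$ and $X_{\bar{A}_Q}$ are both double covers of $D_1^{\bar{A}_Q}$ branched along $D_2^{\bar{A}_Q}$ and hence isomorphic, and handles the converse via the bijection between quadrics $Q$ and graph Lagrangians $\bar{A}_Q$. You are in fact more explicit than the paper about the one delicate point---the uniqueness of the branched double cover up to an unramified ($2$-torsion) twist---which the paper passes over by bare assertion.
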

%Note that in Remark \ref{U2} we discuss how the Verra fourfold changes when we choose another $3$ dimensional space $U_2'$ such that $U_1\oplus U_2=U_1\oplus U_2'$.
As a consequence of the proof of Theorem \ref{main2} we observe furthermore that in the above notation the surface $D^{\bar{A}_Q}_2$
is on one hand isomorphic to the fixed locus of an antisymplectic involution on the IHS fourfold $X_{\bar{A}_Q}$ and on the other it admits an \'etale double cover by the Hilbert scheme of conics on the Verra threefold $Z$ (see Proposition \ref{Z_Q}).

\subsection{Moduli space of twisted sheaves}
In Section \ref{known} we show a further alternative construction of the elements of $\mathcal{U}$: as moduli spaces of twisted sheaves \cite{Yo} 
on $K3$ surfaces. More precisely we prove:
\begin{thm}\label{0.3} A general fourfold $X\in \mathcal{U}$  is isomorphic to the moduli space of stable twisted sheaves on a polarized $K3$ surface of degree $2$ with a two-torsion Brauer element. 
\end{thm}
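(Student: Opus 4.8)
The plan is to prove the isomorphism by a Hodge-theoretic argument, matching the period of $X$ with that of a moduli space of twisted sheaves and then invoking the Torelli theorem for IHS fourfolds of type $K3^{[2]}$; this parallels the treatment of the cubic fourfold case in \cite{MS}. First I would record the period of a general $X\in\mathcal{U}$. Since $X$ carries a polarization $H$ of Beauville degree $q(H)=4$ and a general member has Picard rank $1$, its transcendental lattice is $T_X=H^\perp\subset H^2(X,\Z)$, a lattice of signature $(2,20)$ carrying the weight-two Hodge structure of $X$. The goal is to realize $(T_X,H^{2,0}(X))$ as the transcendental part of the twisted Mukai Hodge structure of a degree-two $K3$ surface.

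Second, I would use the bridge supplied by van Geemen \cite{vG}. A general $X\in\mathcal{U}$ is, by Theorem \ref{main2}, built from the Verra fourfold $Y=Y_{\alpha_3}$ through the Hilbert scheme of conics and the $\PP^1$-fibration $\phi$; I would transport the Hodge structure of $X$ to the middle cohomology of $Y$ via $\phi$ and the incidence correspondence of conics, in the spirit of the Abel--Jacobi isomorphism for the Fano variety of lines on a cubic fourfold. Van Geemen's classification identifies a primitive sub-Hodge-structure of $H^4(Y_{\alpha_3},\Z)(1)$ with the $\alpha_3$-twist of the Hodge structure of a polarized $K3$ surface $(S,h)$ of degree $2$, where $\alpha_3\in Br(S)_2$ is the distinguished two-torsion class. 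Composing these identifications, I expect a Hodge isometry between $T_X$ and the transcendental part $T(S,\alpha_3)$ of the twisted Hodge structure determined by a $B$-field lift of $\alpha_3$.

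Third, on the moduli side I would invoke Yoshioka's theory \cite{Yo}: for a primitive Mukai vector $v$ with $v^2=2$ in the twisted Mukai lattice $\tilde H(S,\alpha_3,\Z)$ and a $v$-generic twisted polarization, the moduli space $M_v(S,\alpha_3)$ of stable $\alpha_3$-twisted sheaves is a smooth projective IHS fourfold of $K3^{[2]}$ type, and $H^2(M_v,\Z)$ with the Beauville--Bogomolov form is Hodge-isometric to $v^\perp$. Choosing $v$ so that the transcendental part of $v^\perp$ reproduces the period found in the second step, this transcendental part becomes Hodge-isometric to $T_X$. I would then exhibit the class in $v^\perp$ corresponding to $H$, check that it has square $4$ and is ample, so that the isometry $H^2(X,\Z)\cong H^2(M_v,\Z)$ is polarized; by the polarized Torelli theorem this promotes to an isomorphism $X\cong M_v(S,\alpha_3)$.

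The main obstacle will be the precise bookkeeping of the twist. One must verify that the Brauer class governing the twisted moduli problem is exactly the van Geemen class $\alpha_3$, and not some other element of $Br(S)_2$, which amounts to matching the $B$-field lift appearing in $\tilde H(S,\alpha_3,\Z)$ with the twist implicit in the correspondence $X\leftrightarrow Y_{\alpha_3}$ of Theorem \ref{main2}; a mismatch here would yield only an isogeny of Hodge structures or a birational map rather than an isomorphism, which is precisely the gap between our statement and the weaker birationality obtained in the cubic case \cite{MS}. Closely tied to this is the choice of $v$: I must confirm nonemptiness, smoothness and the expected dimension of $M_v$, and that the chosen isometry carries the ample cone of $M_v$ onto that of $X$ so that Torelli produces a genuine isomorphism. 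The remaining lattice computations---signatures, discriminants and primitivity---are routine once the twist is correctly identified.
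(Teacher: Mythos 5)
There is a genuine gap, and it starts at your very first step: a general $X\in\mathcal{U}$ does \emph{not} have Picard rank $1$. The family is $19$-dimensional inside the moduli of $K3^{[2]}$-type fourfolds ($b_2=23$), so the generic Picard lattice has rank $2$; the paper identifies it as $U(2)$, the invariant lattice of the antisymplectic involution, spanned by the classes of the two Lagrangian fibrations, with $H=h_1+h_2$ of square $4$. This is not a cosmetic slip. The paper's entire mechanism for producing the degree-$2$ K3 surface and the two-torsion Brauer class is extracted from that $U(2)$: one embeds $U(2)\oplus\langle\eta\rangle$ into the Mukai lattice $\tilde\Lambda$ (uniquely, by Eichler/Nikulin), uses the isotropic algebraic class $e=u_1+v_1$ to build a new hyperbolic summand $U=\langle e,f\rangle$ and a splitting $\tilde\Lambda=U\oplus\Lambda$, writes the period as $x=\lambda e+\sigma$, realizes $\sigma$ by a K3 surface $S$ via surjectivity of the period map (with $\eta$ giving the degree-$2$ polarization since $\eta\cdot\sigma=0$ and $\eta^2=2$), and reads off the $B$-field $B=\tfrac12(v_2-u_2)$ --- hence the two-torsion class $\alpha_3$ --- from the second generator of $U(2)$. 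With your rank-$1$ setup and transcendental lattice $H^\perp$ of signature $(2,20)$ there is no algebraic isotropic class to play the role of $e$, and the construction of $(S,\alpha_3)$ never gets off the ground.

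Two further points. Your second step routes the Hodge structure of $X$ through $H^4(Y_{\alpha_3})$ via an Abel--Jacobi-type correspondence for the conic fibration and then through van Geemen's identification; no such Hodge isometry between $H^2(X,\Z)$ and the middle cohomology of the Verra fourfold is established in the paper (or in your sketch), so this is an additional unproven ingredient rather than a bridge you can simply cross. The paper avoids it entirely by working directly with $H^2(X,\Z)$ and lattice uniqueness. Finally, for the upgrade from birational to biregular you propose a polarized Torelli argument requiring ampleness of the image of $H$ on $M_v$, i.e.\ control of the walls of the positive cone of $M_v$; the paper's argument is that since $\mathrm{Pic}(X)$ has rank $2$ and $X$ admits two Lagrangian fibrations, the movable cone coincides with the nef cone, so $X$ admits a unique IHS birational model and the birational map from global Torelli is automatically an isomorphism. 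That shortcut is again unavailable under your Picard rank $1$ hypothesis (and, correctly stated, is one more place where the rank-$2$ lattice $U(2)$ is doing essential work).
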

%This will complete the proof of Theorem \ref{main}.

 \subsection{Properties} Our main motivation to study the family $\mathcal{U}$ is to understand the relation between the three geometric constructions considered. 
As a result we present relations of different points of view: Hodge-theoretic, moduli-theoretic, geometric, and arithmetic.
 In particular we prove, that the generic element of $\mathcal{U}$ has Picard group of rank $2$ does not admit any polarization of Beauville degree $2$ and is not isomorphic to a moduli space of sheaves on a $K3$ surface. 
  %We shall show also that the generic element of $\mathcal{U}$ is isomorphic to a twisted moduli space of sheaves on a $K3$-surface of degree $2$. 
  Moreover, each element of the family $\mathcal{U}$ admits two Lagrangian fibrations and is a $8:1$ ramified cover of $\PP^2\times \PP^2$.

  In section \ref{section2} we also discuss our construction in the context of the classification of automorphisms of IHS fourfolds of type $K3^{[2]}$. In particular, we shall see that $\mathcal{U}$ is the unique $19$-dimensional irreducible family of IHS fourfolds of type $K3^{[2]}$ that is not in the closure of the family of double EPW sextics, such that each element admits an anti-symplectic involution \cite{OW}. In particular, the family $\mathcal{U}$ can be seen as a component of the hyperelliptic locus of the moduli space of  polarized  IHS fourfolds of type $K3^{[2]}$ with $q=4$. Indeed, for a general IHS fourfold of type $K3^{[2]}$ with polarization of Beauville degree $q=4$ the map defined by the polarization is birational.   The following remains a challenge:
  \begin{prob} Describe the generic polarized IHS fourfold of type $K3^{[2]}$ of Beauville degree $q=4$. 
\end{prob}

The description as  double covers of Lagrangian degeneracy loci can also be applied to study degenerations of the family $\mathcal{U}$ and permit to complete the classification of geometric realizations of automorphisms of IHS of type $K3^{[2]}$ given in \cite{MongardiWandel}.
 Note that as a direct consequence from \cite[\S 5.1]{MongardiWandel} we obtain the following: 
\begin{cor}\label{MongWand} Any IHS fourfold $X$ of type $K3^{[2]}$ that admits non-symplectic automorphism of prime order  $p\neq 3,23$ is either in the closure of the family of double EPW sextics
or in the closure of the family $\mathcal{U}$, or $X$ is isomorphic to a moduli space of stable objects on a $K3$ surface and the automorphism is induced from an automorphism of the $K3$ surface.
\end{cor}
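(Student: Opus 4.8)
The plan is to invoke the classification from \cite[\S 5.1]{MongardiWandel} of non-symplectic automorphisms of prime order on IHS fourfolds of type $K3^{[2]}$, and to match each case appearing there with one of the three geometric families in the statement. First I would recall that a non-symplectic automorphism $\sigma$ of prime order $p$ acts on the transcendental lattice $T(X)\subset H^2(X,\Z)$ with the symplectic form transforming by a primitive $p$-th root of unity (for $p$ odd) or by $-1$ (for $p=2$, the anti-symplectic case). The Euler characteristic constraints on the action force $p-1$ (respectively the rank of the $-1$-eigenspace) to divide $\rank T(X)\le 22$, which is exactly what restricts the admissible primes to a finite list and, after discarding $p=5$ and $p=23$, leaves the cases enumerated in \cite{MongardiWandel}.

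Next I would go through that enumeration case by case. For each admissible $p$ the invariant sublattice $H^2(X,\Z)^\sigma$ has a determined isometry type, and Mongardi--Wandel identify the corresponding geometric realization. The key observation is that for $p=2$ the anti-symplectic involutions split according to the invariant lattice into precisely those realized on double EPW sextics (the $q=2$ polarization of O'Grady) and those realized on the family $\mathcal{U}$ (the $q=4$ polarization constructed in Theorems~\ref{main} and~\ref{main2}); this is exactly the completion of the classification asserted in the introduction, namely that $\mathcal{U}$ is the unique $19$-dimensional family of anti-symplectic involutions outside the closure of double EPW sextics. For the remaining odd primes, the invariant lattice either contains a vector of the relevant square realizing one of these two degeneracy-locus families in its closure, or else the transcendental lattice embeds into a Mukai lattice in a way that forces $X$ to be a moduli space of stable objects on a $K3$ surface with the automorphism descending from one of the surface; this last alternative covers the cases where no polarization of Beauville degree $2$ or $4$ is preserved.

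Finally I would assemble these observations into the trichotomy of the corollary. The statement is genuinely a corollary: once one knows (from the body of this paper) that the $q=4$ anti-symplectic family is exactly $\mathcal{U}$, and one knows the Mongardi--Wandel list together with their identification of the invariant lattices, membership of $X$ in one of the three classes is read off directly from which lattice-theoretic case $\sigma$ falls into. The main obstacle I anticipate is not in the lattice arithmetic but in the translation between the abstract lattice data of \cite{MongardiWandel} and the concrete geometric families: one must verify that an automorphism whose invariant lattice matches the $q=4$ case really degenerates into (the closure of) $\mathcal{U}$ rather than giving a genuinely new family, and symmetrically that the $q=2$ case lands in the closure of double EPW sextics. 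This is precisely where the uniqueness result for $\mathcal{U}$ established earlier in Section~\ref{section2} is needed, so the corollary rests on that identification rather than on any new computation.
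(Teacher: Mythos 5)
Your proposal is correct and takes essentially the same approach as the paper, which offers no written proof at all: the corollary is asserted as a direct consequence of the case enumeration in \cite[\S 5.1]{MongardiWandel} combined with the identification, established in Section \ref{section2}, of the $19$-dimensional anti-symplectic family with invariant lattice $U(2)$ as $\mathcal{U}$ (the remaining invariant lattices $U$, $\langle 2\rangle\oplus\langle -2\rangle$ admitting a $q=2$ polarization and hence landing in the closure of double EPW sextics or being induced from a $K3$). Your elaboration of the lattice-theoretic bookkeeping and of why the uniqueness of $\mathcal{U}$ is the only input from this paper is exactly the intended argument.
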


%ndeed, the Verra fourfold was studied before from different points of view in \cite{HV}, \cite{vG}, \cite{MSTV}.
%In \cite[\S 9]{vG} it is shown that there three type of order two elements in the Brauer group of a $K3$ surface of degree $2$.
%One of this element $\alpha_1$ have a geometric realization as a Verra fourfold as described in \cite[\S 9.8]{vG}.
%We considered moduli space of twisted sheaves on
  
% Moreover, Verra \cite{V} showed that the Verra threefold leads to a 

%We hope our construction give also a brige with arithmetic geometry.

Finally in section \ref{Verra-Involutions} we study the invariants of the two dimensional fixed loci of the involution on the elements from the family $\mathcal{U}$.
 Recall that Beauville studied the invariants of the fixed loci of antisymplectic involutions on IHS fourfolds in general.
In the case of $19$-dimensional families of involutions on IHS fourfolds with $b_2=23$ it follows from \cite[Theorem~2]{beau-invo}
that the invariants of the fixed locus $F$ are $K^2_F=288$ and $\chi(\oo_F)=37$. 
%Then we study further properties of the fixed locus of a generic element of our $19$-dimensional familyof involutions $\iota$ with fixed invariant lattice $U(2)$. 
Using Proposition \ref{Z_Q} we are able to deduce  the invariants of a Hilbert schemes of conics on a Verra threefold $Z$. 
The computation of all invariants is included in Proposition \ref{3-2}.
%We study future relations of our construction with the work of Verra \cite{Vera} and Logachev \cite{Logachev} including the study of the Abel-Jacobi map and the tangent bundle theorem
%for the Fano surface $F(Z)$ of a Verra threefold.
%In the case of sixfolds there are no such results. In view of the above, we compute the invariants of the antisymplectic involution of the double EPW cube in section
%\ref{F-EPW}, as a first studied case of a 20 dimensional family of sixfolds with anti-symplectic involutions.

\subsection{Relation to Kummer surfaces} In section \ref{kummer}, we describe a "Baby case" of our constructions by presenting two constructions of the Kummer surfaces first as Lagrangian degeneracy loci (as in \cite[Theorem~9.2]{EPW}) and next as a quotient of the base of a fibration on the Hilbert scheme of $(1,1)$-conics on a quadric section of a cone $C(\PP^1\times \PP^2)\subset \PP^6$ over the Segre embedding $\PP^1\times \PP^2\subset \PP^5$.
The relation to the description of the EPW quartic section is explained in Section \ref{lagr}. In particular, we shall see that the EPW quartic section admits two fibrations by Kummer surfaces. The descriptions of EPW quartic sections via Lagrangian degeneracy loci and Hilbert scheme fibration restrict to the obtained descriptions of Kummer surfaces.

 Furthermore, in Section \ref{kummer} we provide in addition a third construction for a general Kummer surface: as a component of the discriminant locus of the system of quadrics containing the Verra fourfold, or equivalently as the associated symmetric degeneracy locus.
%Note this is an analogous situation to the case of the Fano fourfold of degree $10$)

%Finally we study other families of IHS fourfolds admitting involutions and their relations with the families $\mathcal{U}$. 
%Note that the elements of the family $\mathcal{U}$ have the invariant lattice $H^2(X,\mathbb{Z})^{\iota}$ isomorphic to $U(2)$.
%In particular, they admits two invariant Lagrangian fibrations. These fibrations can be seen on the geometric realization as induced by the projections $\PP^2 \leftarrow C(\PP^2\times \PP^2)\to \PP^2$.
%These Lagrangian fibrations induce on the EPW quartic section fourfold $Y$ two fibrations by Kummer surfaces.
{\bf Acknowledgements.} We thank G.~Mongardi for several helpful discussions, in particular for suggesting us Theorem \ref{0.3}. We thank A.~Kresch,  A.~Kuznetsov and L.~Manivel for helpful comments. A. Iliev was supported by SNU grant 0450-20130016, G. Kapustka by Iuventus plus 0301/IP3/2015/73 ``Teoria reprezentacji oraz wlasno\'{s}ci rozmaito\'{s}ci siecznych'', M. Kapustka by NCN grant 2013/10/E/ST1/00688 and K. Ranestad by RCN grant 239015.

\subsection{Notation}\label{notation}
Let us explain here some of the notation used in the paper. 
 Let   $V$  be a complex $6$-dimensional complex vector space, and
 fix an isomorphism  $vol: \wedge^6 V \to \mathbb{C}$. 
It induces a natural skew-symmetric form
\begin{equation}\label{def of eta1}\textstyle 
\eta: \wedge^3V\times \wedge^3 V\to \mathbb{C}, \quad (\omega,\omega')\mapsto vol(\omega\wedge \omega').
 \end{equation} %Let $A$ be a ten dimensional Lagrangian space $A\subset \wedge^3W$ Vith respect to this form. 
 We denote by $\mathrm{LG}_\eta(10, \wedge^3V)$ the variety of $10$-dimensional isotropic (i.e. Lagrangian) subspaces of $\wedge^3W$ with respect to $\eta$.
 For any $3$-dimensional subspace $U\subset V$, the $10$-dimensional subspace 
 \[\textstyle 
 T_U:=\wedge^2U\wedge V\subset \wedge^3V
 \]
 belongs to $\mathrm{LG}_\eta(10, \wedge^3V)$, and $\PP(T_U)$ is the projective tangent space to $\mathrm{G}(3,V)\subset \PP(\wedge^3 V)$ at $[U]$.
 Therefore, the family $\{T_U\;|\; [U]\in \mathrm{G}(3,V)\}$ forms a symplectic vector bundle of rank $10$ over $\mathrm{G}(3,V)$.
 
 For any  $[A]\in \mathrm{LG}_\eta(10, \wedge^3V)$ and $k\in \mathbb{N}$, %be any $10$-dimensional Lagrangian space $A\subset \wedge^3W$. 
 we consider the following Lagrangian degeneracy locus, with natural scheme structure \cite{PragaczRatajski},
 \[
 D_k^A=\{[U]\in \mathrm{G}(3,V)\;|\; \dim (A\cap T_U)\geq k\}\subset \mathrm{G}(3,V).
 \]
The variety $D_2^A$ is an {\it EPW cube}.
In the present paper we study special EPW cubes corresponding to the choice of Lagrangian space $A\in \Sigma$, where 

$$\Sigma =\{ [A]\in \mathrm{LG}_\eta(10,\wedge^3V)\;|\;  \PP(A)\cap \mathrm{G}(3,V)\not=\emptyset \}$$ 
as in  \cite{Ogrady-michigan} and \cite{EPWcubes}.  From the same references we recall the notation for the following additional subsets of $\mathrm{LG}_\eta(10,\wedge^3V)$:
 $$\Delta=\{ [A]\in \mathrm{LG}_\eta(10,\wedge^3V)\;|\;  \exists v\in V \colon \dim A\cap (v\wedge(\wedge^2 V))\geq 3 \}, $$

$$\Gamma= \{A\in \mathrm{LG}_\eta(10,\wedge^3V)\;|\;  \exists [U]\in \mathrm{G}(3,V) \colon \dim A\cap T_U\geq 4\}.$$

 For $[U_1]\in  \PP(A)\cap \mathrm{G}(3,V)$ the Lagrangian space $A\subset \wedge^3V$ is contained in $(\wedge^3U_1)^{\bot}$, and thus defines a Lagrangian space $\bar{A}_{U_1}\subset (\wedge^3U_1)^{\bot}/ (\wedge^3{U_1})$.  Clearly  
 \[
 T_{U}\subset (\wedge^3U_1)^{\bot}\subset \wedge^3V 
 \]
  for any $[U]\in \mathrm{G}(3,V)\cap \PP(T_{U_1})$ so we define 
 \[
 D_k^{\bar{A}_{U_1}}=\{[U]\in \mathrm{G}(3,V)\cap \PP(T_{U_1})\;|\; \dim (\bar{A}_{U_1}\cap (T_{U}/{(\wedge^3U_1))})\geq k\}= \mathrm{G}(3,V)\cap\PP(T_{U_1})\cap D_{k+1}^{A}.
 \] 
  The variety $D_1^{\bar{A}_{U_1}}$ is an EPW quartic section.
%Thus $D_k^{\bar{A}}\subset D_k^A$.
%$\bar{A}$ the Lagrangian 

Denote after O'Grady \cite {Ogrady-michigan}:
$$\tilde{\Sigma}:=\{ ([U],[A])\in \mathrm{G}(3,V)\times \mathrm{LG}(10,\wedge^3 V)\;|\; \wedge^3U\subset A \},$$
$$\tilde{\Sigma}(d):=\{ ([U],[A])\in \tilde{\Sigma} \;|\; \dim(A\cap (\wedge^2U\wedge V))\geq d+1 \},$$
$$\Theta_A:=\{[U]\in \mathrm{G}(3,V) \;|\; \wedge^3U \subset A \}.$$
$$\Sigma_{+}=\{[A]\in \Sigma \;|\; \ \ Card(\Theta_A)>1\},$$
If $\pi\colon \mathrm{G}(3,V)\times \mathrm{LG}(10,\wedge^3 V)\to \mathrm{LG}(10,\wedge^3 V)$ is the projection, then we set $\Sigma[d]:=\pi(\tilde{\Sigma}(d))$.

\section{Kummer surfaces--the first case}\label{kummer}
In this section we present a special construction of the Kummer quartic surface as a first Lagrangian degeneracy locus and at the same time as a symmetric degeneracy locus,  as well as the base of a fibration on the Hilbert scheme of conics on a Fano threefold. This shows, in particular, that the Kummer quartic can be seen as the "baby case" of the EPW sextic construction.
In the section \ref{lagr} we shall see that the Kummer quartic is a building block in the construction of our $19$-dimensional family 
$\mathcal{U}$.%(this relation will be studied in section \ref{kummer-revisited}). 

\subsection{Kummer surfaces as Lagrangian degeneracy loci}\label{lagrangiankummer} Denote by $V=V_2\oplus V_4$ the complex $6$-dimensional vector space decomposed in the direct sum of a $2$-dimensional space $V_2$ and a $4$-dimensional space $V_4$.
 Set an isomorphism  $vol: \wedge^6 V=\wedge^2 V_2\otimes \wedge^4 V_4 \to \mathbb{C}$ by fixing isomorphisms  $vol_i: \wedge^i V_i \to \mathbb{C}$.
The isomorphism induces a natural skew symmetric form
\begin{equation}\label{def of eta}\textstyle 
\eta: \wedge^3V\times \wedge^3 V\to \mathbb{C}, \quad %(u,v)\mapsto vol(u\wedge v).
(\omega,\omega')\mapsto vol(\omega\wedge \omega'),
 \end{equation} %Let $A$ be a ten dimensional Lagrangian space $A\subset \wedge^3W$ with respect to this form. 
 which restricts to a nondegenerate skew symmetric form $\eta_{2,4}$ on the $12$-dimensional subspace 
 \[
 V_{2,4}=V_2\otimes \wedge^2 V_4\subset \wedge^3 V.
 \]
 For each $v\in V_4$ the $6$-dimensional subspace 
 \[
 F_v:=V_2\otimes V_4\wedge v\subset V_{2,4}
 \]
 is Lagrangian with respect to $\eta_{2,4}$.
 Let $A\subset V_{2,4}$ be a general Lagrangian $6$-space, and let 
 \[
 D_i^A=\{[v]\in \PP(V_4)|\rank A\cap (V_2\otimes  V_4\wedge v)\geq i\}.
 \]
  \begin{lemma}\label{lagrangian Kummer} $D_1^A$ is a Kummer quartic surface singular in $D_2^A$; a set of $16$ points. 
 \end{lemma}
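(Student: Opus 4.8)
The plan is to reduce the statement to a symmetric degeneracy problem on $\PP(V_4)$ and then to read off the quartic, its $16$ singular points, and their nature from the associated symmetric bundle map. First I would reformulate the incidence $\dim(A\cap F_v)$. Fix a symplectic basis $e_0,e_1$ of $V_2$ and set $W=\wedge^2 V_4$ with the nondegenerate quadratic form $q(\alpha,\beta)=vol_4(\alpha\wedge\beta)$, whose zero locus is the Pl\"ucker quadric $G(2,V_4)\subset\PP(W)=\PP^5$. Under $V_{2,4}=(e_0\otimes W)\oplus(e_1\otimes W)$ one has $\eta_{2,4}(e_i\otimes w,e_j\otimes w')=vol_2(e_i\wedge e_j)\,q(w,w')$, so a general Lagrangian $A$ is transverse to $e_0\otimes W$ and $e_1\otimes W$ and is therefore the graph of a $q$-self-adjoint isomorphism $\phi\colon W\to W$. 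Since $F_v=(e_0\otimes L_v)\oplus(e_1\otimes L_v)$ with $L_v=V_4\wedge v$ a maximal $q$-isotropic subspace, an element $e_0\otimes w+e_1\otimes\phi(w)$ of $A$ lies in $F_v$ iff $w\in L_v$ and $\phi(w)\in L_v$; using $L_v^{\perp_q}=L_v$ and self-adjointness, $\phi(w)\in L_v$ is equivalent to $q(\phi(w),w')=0$ for all $w'\in L_v$. Hence
\[
A\cap F_v\;\cong\;\ker\bigl(B_v\colon L_v\to L_v^\vee\bigr),\qquad B_v:=q(\phi(\cdot),\cdot)\big|_{L_v},
\]
a symmetric form on the $3$-space $L_v$ (equivalently $B_v=Q_\phi|_{L_v}$ for the quadric $Q_\phi(\alpha)=q(\phi\alpha,\alpha)$). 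Thus $D_i^A=\{[v]:\corank B_v\ge i\}$, and geometrically $[v]$ corresponds to a plane $\PP(L_v)$ of one ruling of $G(2,V_4)$, with $D_1^A$ (resp. $D_2^A$) the planes cutting $Q_\phi$ in a singular conic (resp. a double line).

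For the degree, the forms $B_v$ are the fibres of a symmetric bundle map $\beta\colon\mathcal L\to\mathcal L^\vee$ on $\PP(V_4)=\PP^3$, where $\mathcal L$ is the rank-$3$ bundle with fibre $L_v$. The map $w\mapsto w\wedge v$ identifies the tautological quotient with $\mathcal L(1)$, giving $0\to\oo(-1)\to V_4\otimes\oo\to\mathcal L(1)\to 0$ and hence $c(\mathcal L)=1-2h+2h^2$; in particular $\det\mathcal L=\oo(-2)$. Therefore $\det\beta$ is a section of $(\det\mathcal L^\vee)^{\otimes2}=\oo(4)$, so $D_1^A=Z(\det\beta)$ is a quartic surface.

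For the $16$ nodes, I would first pin down genericity: for general $A$ (equivalently general $q$-self-adjoint $\phi$) the pencil of quadrics $\langle q,Q_\phi\rangle$ on $\PP^5$ is generic, its discriminant being a binary sextic with six distinct roots and no member of corank $>1$. This makes $\beta$ sufficiently general that $D_2^A=\{\corank\beta\ge2\}$ has the expected codimension $3$, hence is a reduced finite scheme with every point of corank exactly $2$ (the locus $D_3^A$ of corank $\ge3$ has expected codimension $6$ and is empty). Using $c(\mathcal L^\vee)=1+2h+2h^2$, the Harris--Tu formula for symmetric degeneracy loci gives
\[
[D_2^A]=4\bigl(c_1(\mathcal L^\vee)\,c_2(\mathcal L^\vee)-c_3(\mathcal L^\vee)\bigr)=4\cdot(2h)(2h^2)=16\,[\mathrm{pt}],
\]
so $D_2^A$ consists of $16$ points. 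A local computation at a point where $B_v$ drops to rank $1$ shows that $\det\beta$ there has a nondegenerate quadratic leading term, i.e. an ordinary double point, while the standard smoothness of determinantal loci shows $D_1^A$ is smooth along the corank-$1$ stratum; hence $\mathrm{Sing}(D_1^A)=D_2^A$ and these $16$ points are nodes.

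Finally, $D_1^A$ is an irreducible quartic surface in $\PP^3$ with exactly $16$ nodes, so by the classical characterisation of $16$-nodal quartics (the maximal number of nodes on a quartic, attained precisely by Kummer surfaces) it is a Kummer quartic, singular exactly in the $16$ points $D_2^A$. I expect the main difficulty to lie not in the Chern-class computation but in the genericity analysis of the third step: showing that for general $A$ the section $\beta$ meets the corank-$2$ stratum of symmetric forms transversally, so that $D_2^A$ is reduced of length $16$ and each singular point of $D_1^A$ is an ordinary node, and that $D_1^A$ is irreducible. This is exactly where the general-pencil-of-quadrics hypothesis (hence the restriction to general $A$) is essential; it can be made precise either through the explicit local model of a symmetric matrix degenerating from rank $3$ to rank $1$, or by comparison with the classical construction of \cite[Theorem~9.2]{EPW}.
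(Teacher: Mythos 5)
Your argument is correct, and it reaches the same numbers by a genuinely different route. The paper's proof is a three-line Chern-class computation: it pulls back the universal quotient bundle from $LG(6,V_{2,4})$ along $[v]\mapsto [F_v]$, observes that it splits as two copies of a bundle $F_0$ with $c(F_0)=1+2h+2h^2$, and applies the Pragacz--Ratajski formulas for Lagrangian degeneracy loci directly, getting $[D_1^A]=c_1=4h$ and $[D_2^A]=c_1c_2-2c_3=16h^3$. You instead split off the $V_2$-factor, realize $A$ as the graph of a $q$-self-adjoint endomorphism $\phi$ of $W=\wedge^2V_4$, and identify $\dim(A\cap F_v)$ with the corank of the restriction of the quadric $Q_\phi$ to the ruling plane $\PP(L_v)$ of the Pl\"ucker quadric --- i.e.\ you recover the classical quadratic-line-complex construction of the Kummer surface --- and then apply Harris--Tu to the symmetric map $\beta\colon\mathcal L\to\mathcal L^\vee$. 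The Chern-class input is identical (your $\mathcal L^\vee$ is the paper's $F_0$), and your formula $4(c_1c_2-c_3)(\mathcal L^\vee)=16h^3$ agrees with the paper's $\tilde Q_{2,1}$-computation. What your route buys is twofold: it anticipates and unifies with the paper's own subsequent subsection on symmetric degeneracy loci and the net of quadrics through $\Sigma_{2,3}$, and it makes explicit the points the paper leaves implicit --- that $D_2^A$ is reduced, that its $16$ points are ordinary nodes, that $D_1^A$ is irreducible with singular locus exactly $D_2^A$, and that the ``Kummer'' label then follows from the classical characterization of $16$-nodal quartics. You correctly flag that these genericity/transversality verifications are the only substantive work remaining; note that the paper's own proof does not carry them out either, so on this point your write-up is, if anything, more complete than the original.
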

 \begin{proof}  Let $\mathrm{LG}(6,V_{2,4})$ denote the Lagrangian Grassmannian parameterizing the Lagrangian subspaces of  $V_{2,4}$, and let ${\mathcal F}$ be the universal rank $6$ quotient bundle on $\mathrm{LG}(6,V_{2,4})$.  The map 
 \[
 \phi: \PP(V_4)\to \mathrm{LG}(6,V_{2,4});\quad [v]\mapsto [F_v]
 \]
 is an embedding, and the pullback $\phi^*({\mathcal F})$ is a rank $6$ bundle ${\mathcal F}_{\PP(V_4)}$ on $\PP(V_4)$.
 By construction $F_v$ is a direct sum of two copies of a plane $\PP(V_4\wedge v)\subset \PP(\wedge^2 V_4)$, so ${\mathcal F}_{\PP(V_4)}$ is a direct sum of two copies of a bundle $F_0$ on $\PP(V_4)$ with total Chern class $c(F_0)=1+2h+2h^2$, where $h$ is the class of  hyperplane in $\PP(V_4)$.  Therefore ${\mathcal F}_{\PP(V_4)}$ has total Chern class
 \[
 c({\mathcal F}_{\PP(V_4)})=1+4h+8h^2+8h^3+4h^4. 
\]
The class in $\PP(V_4)$ of the degeneracy  $D_i^A$ is now the degeneracy of the natural map $\phi^*(A)\to { \mathcal F}_{\PP(V_4)}$. 
 The first bundle $\phi^*(A)$ is trivial, so, by the formulas of Pragacz and Ratajski \cite[Theorem 2.1]{PragaczRatajski}, these degeneracy classes are given by the Chern classes of ${ \mathcal F}_{\PP(V_4)}$:
\[
[D_1^A]= c_1({\mathcal F}_{\PP(V_4)})=4h,\quad [D_2^A]= (c_1c_2-2c_3)({\mathcal F}_{\PP(V_4)})=16h^3.
\]
 \end{proof}
 \begin{rem} Similarly, for any $3$-dimensional subspace $U\subset V_4$,  the subspace 
 \[
 V_2\otimes \wedge^2 U\subset V_{2,4}
 \]
  is Lagrangian with respect to $\eta_{2,4}$.  The degeneracy loci 
  \[
  \hat{D}_i^A= \{[U]\in \PP(V_4^\vee)| \rank A \cap(V_2\otimes \wedge^2 U)\geq i\}, \;(i=1,2)
  \]
   are then again a Kummer surface $\hat{D}_1^A$ and $16$  points $\hat{D}_2^A$ forming the singular locus of $\hat{D}_1^A$. %on  is then also a Kummer surface corresponding to $\hat{D}_i^{A^{\perp}}\subset  \PP(V_4^\vee)$.
 \end{rem}
 The Lagrangian degeneracy loci $D_i^A$,  may also be interpreted as  symmetric degeneracy loci:
 
 \subsection{Kummer surfaces as symmetric degeneracy loci}
 Fix a decomposition $V_4=\langle v_0\rangle \oplus V_3$ and the Lagrangian subspace $F_{v_0}=V_2\otimes V_4\wedge v_0\cong V_2\otimes V_3$, and let $B\subset V_{2,4}$ be a Lagrangian subspace such that $F_{v_0}\cap B=0$.
 Then $B$ is naturally isomorphic to $F_{v_0}^\vee\cong V_2^\vee\otimes \wedge^2 V_3$.  
 The Lagrangian space $A$ is then the graph in $V_{2,4}=F_{v_0}\oplus B\cong F_{v_0}\oplus F_{v_0}^\vee$ of a  linear symmetric map $F_{v_0}\to F_{v_0}^\vee$.
 Composing with the natural isomorphism $V_2\otimes V_3\to F_{v_0}$ and its transpose $F_{v_0}^\vee\to V_2^\vee\otimes \wedge^2 V_3$, we obtain a linear map
 \[
q_A: V_2\otimes V_3\to V_2^\vee\otimes \wedge^2 V_3
 \]
 inducing a symmetric bilinear form that, by abuse of notation, we shall denote by the same name 
  $$q_A: (V_2\otimes V_3)\times  (V_2\otimes V_3)\to \mathbb{C}.$$
 Denote by $Q_A=\{[\alpha]|\  q_A(\alpha ,\alpha)=0\}\subset \PP(V_2\otimes V_3)$ the quadric defined by $q_A$. Abusing  notation again $Q_A$ will also be the quadric polynomial defined by $Q_A(\alpha):=q_A(\alpha ,\alpha)$ defining the quadric $Q_A$.
 Similarly, for every $v\in V_3$ the map $(v_2\otimes v_3)\mapsto v_2\otimes  v_3\wedge v$ extends linearly to a symmetric bilinear map
  \[
 q_v:(V_2\otimes V_3) \times  (V_2\otimes V_3)\to \mathbb{C}.
 \]
Denote by $Q_v=\{[\alpha]|\ q_v(\alpha ,\alpha)=0\}\subset \PP(V_2\otimes V_3)$ the quadric defined by $q_v$ and again also the quadratic polynomial defining the quadric.
  Notice that  $Q_v$  vanishes on the Segre $3$-fold 
  \[
  \Sigma_{2,3}=\{[v_2'\otimes v_3]\in \PP(V_2\otimes V_3)|\ v_2\in V_2, v_3\in V_3\},
  \]
  and in fact 
  $[v]\mapsto Q_v$ defines an isomorphism 
  \[
  \PP(V_3)\to \PP(H^0({\mathcal I}_{\Sigma_{2,3}}(2))).
  \]
Let $S_A=\Sigma_{2,3}\cap Q_A$. Then there is similarly a natural isomorphism
\[
V_4\cong H^0({\mathcal I}_{S_A}(2));\quad  v+\lambda v_0\mapsto  q_v+\lambda q_A.
\]
 
 Let 
 \[
 {\mathcal D}_i=\{[v]\in \PP(V_4)|\corank q_v\geq i\}
 \]
 be the $i$-th degeneracy locus in $\PP(V_4)$ of the linear system of quadrics $\{Q_v|[v]\in \PP(V_4)\}$.
 Since the quadrics in the ideal of $\Sigma_{2,3}$ have rank $4$, i.e. corank $2$, we get that  ${\mathcal D}_1$ contains the plane $\PP(H^0({\mathcal I}_{\Sigma_{2,3}}(2)))$ with multiplicity $2$, and 
 ${\mathcal D}_2$ contains this plane with multiplicity $1$.  The relation between the Lagrangian loci $D_i^A$ and the symmetric loci ${\mathcal D}_i$ is described in the following:
 \begin{lemma}
 $D_i^A\cup \PP(H^0({\mathcal I}_{\Sigma_{2,3}}(2)))= {\mathcal D}_i$
 \end{lemma}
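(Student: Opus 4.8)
The plan is to reduce the asserted equality to a pointwise comparison on $\PP(V_4)$, treating separately the points off the plane $\PP(H^0(\mathcal I_{\Sigma_{2,3}}(2)))$ and those on it. Under the identification $V_4\cong H^0(\mathcal I_{S_A}(2))$, $v+\lambda v_0\mapsto q_v+\lambda q_A$, the plane $\PP(H^0(\mathcal I_{\Sigma_{2,3}}(2)))$ is exactly $\PP(V_3)\subset\PP(V_4)$, i.e. the locus $\lambda=0$. The key observation is that away from this plane both Lagrangian spaces entering the two constructions are graphs over the fixed Lagrangian $F_{v_0}$, so that $A\cap F_v$ is governed by the kernel of a single symmetric map which, up to a nonzero scalar, is precisely $q_v$.

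First I would fix a basis $f_1,f_2,f_3$ of $V_3$, use the splitting $V_{2,4}=F_{v_0}\oplus B$ with $B=V_2\otimes\wedge^2 V_3$, and write $A=\{x+q_A(x)\mid x\in F_{v_0}\}$ as the graph of the symmetric map $q_A\colon F_{v_0}\to B$. For $v=w+\lambda v_0$ with $w\in V_3$ and $\lambda\ne 0$ I would show that $F_v=V_2\otimes(V_4\wedge v)$ is also a graph over $F_{v_0}$: the space $V_4\wedge v$ is spanned by $-\lambda(v_0\wedge f_j)+f_j\wedge w$, whose projection to $V_3\wedge v_0$ is an isomorphism precisely because $\lambda\ne0$, so $V_4\wedge v=\{y+\tau_v(y)\mid y\in V_3\wedge v_0\}$ with $\tau_v(v_0\wedge f_j)=-\tfrac1\lambda\, f_j\wedge w$. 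Tensoring with $V_2$ gives $F_v=\{x+T_v(x)\mid x\in F_{v_0}\}$ with $T_v=\mathrm{id}_{V_2}\otimes\tau_v$, and under the identifications $F_{v_0}\cong V_2\otimes V_3$, $B\cong V_2^\vee\otimes\wedge^2 V_3$ one reads off $T_v=-\tfrac1\lambda\, q_w$.

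With both spaces presented as graphs, an element lies in $A\cap F_v$ iff its $F_{v_0}$-component lies in $\ker(q_A-T_v)$, so $\dim(A\cap F_v)=\corank(q_A-T_v)$. Since $q_v=q_w+\lambda q_A=\lambda(q_A-T_v)$ and $\lambda\ne0$, rescaling does not affect the corank, whence $\corank q_v=\dim(A\cap F_v)$ for every $[v]\in\PP(V_4)\setminus\PP(V_3)$; thus $\mathcal D_i$ and $D_i^A$ coincide off the plane for $i=1,2$. On the plane, i.e. for $v\in V_3$, we have $q_v=q_w$ with $w=v$, one of the quadrics in the ideal of $\Sigma_{2,3}$; its kernel is $V_2\otimes\langle v\rangle$, so $\corank q_v=2$ and hence $\PP(V_3)\subseteq\mathcal D_1\cap\mathcal D_2$, giving $\mathcal D_i\cap\PP(V_3)=\PP(V_3)$. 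Combining the two cases, $\mathcal D_i=(D_i^A\setminus\PP(V_3))\cup\PP(V_3)=D_i^A\cup\PP(V_3)$, which is the claimed equality. (Consistently with the multiplicities recorded above, the plane occurs in $\mathcal D_1$ with multiplicity $2$, so that as divisors $\mathcal D_1=D_1^A+2\,\PP(V_3)$, matching the degree count $6=4+2$.)

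The main obstacle is the normalization in the graph computation: one must verify that $F_v$ genuinely is a graph over $F_{v_0}$, which fails exactly along $\PP(V_3)$ and is what produces the extra plane, and then track the scalar $-\tfrac1\lambda$ together with the sign conventions in the identifications $v_0\wedge f_j\leftrightarrow f_j$ and $V_2\cong V_2^\vee$ carefully enough to obtain the exact identity $q_v=\lambda(q_A-T_v)$ rather than merely a proportionality with an unknown factor. It is this precise identity, valid for all $v$ including the degeneration $\lambda\to0$, that simultaneously yields the equality off the plane and the corank-$2$ behaviour along it.
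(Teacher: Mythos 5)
Your proof is correct, and at bottom it rests on the same linear algebra as the paper's: both arguments use the graph presentation $A=\{x+q_A(x)\}$ over $F_{v_0}$ and reduce membership in $A\cap F_{v+\lambda v_0}$ to the vanishing of $q_v+\lambda q_A$ on the $F_{v_0}$-component. The one genuine difference is in how $F_{v+\lambda v_0}$ is handled. The paper verifies element-wise, for $\alpha=v_2\otimes v_3+v_2'\otimes v_3'$, that the condition $(v_0\wedge\alpha+\beta)\wedge(v+\lambda v_0)=0$ forces $(q_v+\lambda q_A)(\alpha)=0$; as written this gives only the inclusion $D_i^A\subseteq\mathcal D_i$, with the converse left implicit. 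You instead observe that for $\lambda\neq 0$ the space $F_{v+\lambda v_0}$ is itself a graph over $F_{v_0}$ with graph map proportional to $-\tfrac1\lambda q_w$, so that $A\cap F_{v+\lambda v_0}=\ker(q_A-T_v)$ and hence $\dim(A\cap F_v)=\corank q_v$ exactly. This buys you both inclusions at once off the plane $\PP(V_3)$, and makes transparent why the plane (where $F_v$ fails to be a graph, and $\corank q_v=2$) is exactly the discrepancy between the two loci. Your closing remark is slightly overcautious: for the set-theoretic statement you only need $q_A-T_v$ proportional to $q_v$ by a \emph{nonzero} scalar, since corank is scale-invariant; the exact constant matters only for the divisor-level multiplicity count in your parenthetical.
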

 \begin{proof} 
 It suffices to show that if $\beta =q_A(\alpha)$ and $(\alpha\wedge v_0+\beta)\in  F_{v+\lambda v_0}\cap A $ , then 
 \[
%(\alpha\wedge v_0+\beta)\in  F_{v+\lambda v_0}\cap A \quad {\rm implies}\quad 
(q_v+\lambda q_A)(\alpha)=0.
 \]
 To show this we may assume that 
 \[\alpha =v_2\otimes v_3+v_2'\otimes v_3'\in V_2\otimes V_3
 \]
  and let
 \[
 q_A(\alpha)=\beta=v_2\otimes \beta_1+v_2'\otimes \beta_2
 \]
  with $\beta_i\in \wedge^2V_3$.  Then
 \[
 v_0\wedge \alpha+\beta\in A\in F_{v+\lambda v_0}\cap A\quad {\rm iff}\quad  (v_0\wedge \alpha+\beta)\wedge (v+\lambda v_0)=0.
 \]
 The right hand side is
 \begin{eqnarray}\nonumber
 &v_0\wedge \alpha\wedge v+\beta\wedge (v+\lambda v_0)=\\
 \nonumber
 &v_2\otimes (v_3\wedge v\wedge v_0 +\beta_1\wedge v+\lambda \beta_1\wedge v_0)  +v_2'\otimes (v_3'\wedge v\wedge v_0 +\beta_2\wedge v+\lambda \beta_2\wedge v_0) =0
% v_3'\wedge v_0\wedge v+v_2\otimes \beta_1\wedge v+u_2\otimes \beta_2\wedge v+\lambda v_0)=0
 \end{eqnarray}
 and is equivalent to 
 \[
  \beta_1\wedge v= \beta_2\wedge v=0\quad {\rm and}\quad \lambda  \beta_1= -v_3\wedge v,  \lambda  \beta_2= -v_3'\wedge v.
 \]
  But then 
   \begin{eqnarray}\nonumber
  (q_v+\lambda q_A)( \alpha)=v_2\otimes v_3\wedge v+v_2'\otimes v_3'\wedge v+\lambda v_2\otimes \beta_1+\lambda v_2'\otimes \beta_2\\
 \nonumber =v_2\otimes v_3\wedge v+v_2'\otimes v_3'\wedge v- v_2\otimes v_3\wedge v- v_2'\otimes v_3'\wedge v=0
  %  v_2\otimes v_3\wedge v+u_2\otimes v_3'\wedge v+\lambda \cdot 1/\lambda \cdot v_2\otimes v_3\wedge v+\lambda \cdot 1/\lambda \cdot u_2\otimes v_3'\wedge v=0
   \end{eqnarray}
  so the implication  and the lemma follows. 
   %$\ker q_v+\lambda q_A=\{\omega |q_v(\omega)=\lambda q_A(\omega)\}=\{A\cap F_v\}$
% since
% $q_v(\omega)=\omega\oplus v\wedge \omega\in F_v$, while $q_A(\omega)\in A$.
  \end{proof}
  %Similarly, for every $v\in U_4^*$ with $v(u_0)=0$ 
 \begin{rem} The intersection $S_A=\Sigma_{2,3}\cap Q_A$ is a del Pezzo surface of degree $2$.
 The plane $\PP(H^0({\mathcal I}_{\Sigma_{2,3}}(2)))\subset \PP(H^0(\mathcal{I}_{S_A}(2)))$ intersects the Kummer surface  $D_i^A\subset \PP(H^0(\mathcal{I}_{S_A}(2)))$ in a plane quartic curve.  One may show, that for general $A$, this curve is smooth.  
 % Moreover, the projective duals of 
% \[
% D_i^A\subset \PP(H^0(\mathcal{I}_{S_A}(2)))\quad {\rm and}\quad  \PP(H^0({\mathcal I}_{\Sigma_{2,3}}(2))\subset \PP(H^0(\mathcal{I}_{S_A}(2)))
% \]
%  are respectively
 %a Kummer surface and a point outside it, which we call the Segre point. 
Considering the similar symmetric degeneracy locus of quadrics for a hyperplane section $S_A\cap H$ and a double hyperplane section $S_A\cap H\cap H'$, one may show that the corresponding plane quartics are a singular quartic and a double conic,  respectively.  
%This phenomenon is similar to the one Debarre and Kuznetsov described in  (cf.~ \cite{DK}) for EPW sextics.
%As observed (?) in this observation could be used to study a compactifications of the moduli space of del Pezzo surfaces of degree $2$, plane quartic curves or Fano threefolds of genus $12$. 
 \end{rem}
%\vskip 1cm
%THE REMAINDER OF THIS SECTION NEEDS RECONSIDERATION
%\vskip 2cm
%In section \ref{hilbert-kummer} we shall see that the Lagrangian and symmetric descriptions of Kummer surfaces above may be related to the Hilbert scheme of conics in a certain threefold.
That the symmetric degeneracy locus ${\mathcal D}_1\subset \PP(H^0(\mathcal{I}_{S_A}(2)))$ has a component that is a Kummer surface can be seen considering conics on $S_A$.  The surface ${\mathcal D}_1$ is clearly a sextic, being the discriminant of a space of quadrics in $\PP^5$. Since the quadrics in the ideal of the Segre cubic scroll all have rank $4$, the plane $\PP(H^0({\mathcal I}_{\Sigma_{2,3}}(2)))$ is a component of multiplicity $2$ in ${\mathcal D}_1$, so the residual component is a quartic surface.  We show that $16$ pairs of conic curves on $S_A$  correspond to $16$ planes  in $\PP(H^0(\mathcal{I}_{S_A}(2)))$ that each contain  $6$ rank $4$-quadrics that contain $S_A$, but not $\Sigma_{2,3}$.  Furthermore there are $16$  rank $4$-quadrics on the quartic surface in ${\mathcal D}_1$ outside the plane $\PP(H^0({\mathcal I}_{\Sigma_{2,3}}(2)))$, so the quartic is a Kummer surface.

%Here we identify certain pairs of conics on the surface $S_A$ with the nodes on the Kummer surface, i.e. as rank $4$ quadrics in the ideal of $S_A$ that do not contain the Segre variety $\Sigma_{2,3}$.  
Let $\pi_1:S_A\to \PP^1$ and $\pi_2:S_A\to \PP^2$ be the two projections to the factors of $\Sigma_{2,3}$.  Then, for a general quadric $Q_A$ every line in the intersection $S_A =\Sigma_{2,3}\cap Q_A$ is contracted by the map $\pi_1$.  
\begin{prop}\label{32} Assume that $S_A$ is smooth and that every line in $S_A$ is contracted by $\pi_1$.  Then
\begin{enumerate}
\item $S_A$ contains $12$ lines, that form the components of $6$ singular conics. 
\item $S_A$ contains $32$ smooth conic sections  that are not fibers of $\pi_1$.  They form $16$ pairs that each intersect in a scheme of length $2$. 
\item $S_A$ contains $32$ pencils of twisted cubic curves, that are pairwise complementary in hyperplane sections. 
\end{enumerate}
\end{prop}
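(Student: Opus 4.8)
The plan is to recognize $S_A$ as a smooth del Pezzo surface of degree $2$, as already noted in the Remark, and to read off all three enumerations from the classical theory of its $(-1)$-curves and conic classes, keeping careful track of degrees with respect to the conic-bundle class of $\pi_1$. First I would fix the numerical setup. By adjunction on the Segre threefold $\Sigma_{2,3}=\PP(V_2)\times\PP(V_3)$, using $K_{\Sigma_{2,3}}=-2h_1-3h_2$ and $S_A\in|2H|$ with $H=h_1+h_2$, one gets $K_{S_A}=-h_2|_{S_A}$, so $-K_{S_A}=\pi_2^{*}\oo_{\PP(V_3)}(1)$ and $(-K_{S_A})^2=2$. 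The hypothesis that every line of $S_A$ is contracted by $\pi_1$ is exactly the ampleness of $-K_{S_A}$: since $-K_{S_A}$ is nef (it is the pullback of an ample class by $\pi_2$), a curve $C$ with $-K_{S_A}\cdot C=0$ is contracted by $\pi_2$, hence is a line $\PP(V_2)\times\{q\}\subset S_A$, and such a line has $f\cdot C=1$ so is \emph{not} contracted by $\pi_1$; conversely a line uncontracted by $\pi_1$ forces $-K_{S_A}\cdot C\le 0$. Thus under the hypothesis $S_A$ is a genuine smooth del Pezzo surface of degree $2$, $\pi_2$ is the anticanonical double cover branched along a smooth plane quartic, and $\pi_1$ is a conic bundle with fibre class $f=h_1|_{S_A}$, $f^2=0$, $-K_{S_A}\cdot f=2$. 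Writing $S_A$ as the blow-up of $\PP^2$ at seven points with line class $\ell$ and exceptional classes $e_1,\dots,e_7$, I would normalize the blow-down so that $f=\ell-e_7$; then $-K_{S_A}=3\ell-\sum_{i=1}^{7}e_i$ and $H|_{S_A}=f-K_{S_A}=4\ell-\sum_{i=1}^{6}e_i-2e_7$, with $H^2=6=\deg S_A$.

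For (1), a line of $S_A\subset\PP^5$ is a curve $C$ with $H\cdot C=1$; since $f\cdot C\ge 0$ and $-K_{S_A}\cdot C\ge 1$ by ampleness, $H\cdot C=f\cdot C+(-K_{S_A})\cdot C=1$ forces $f\cdot C=0$ and $-K_{S_A}\cdot C=1$. Hence the lines are exactly the $(-1)$-curves contained in fibres of $\pi_1$, that is, the components of the reducible fibres. Counting reduces to counting singular fibres: the pencil $|f|=|\ell-e_7|$ is the pencil of lines through $p_7$, degenerating precisely when the line meets one further point $p_i$, giving the six reducible fibres $(\ell-e_7-e_i)+e_i$ with $i\le 6$. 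Equivalently, the restriction of $Q_A$ to the fibres is a net of conics whose discriminant has degree $6$ on $\PP^1$, or one uses $\chi_{\mathrm{top}}(S_A)=10=4+n$. This yields $12$ lines forming the $6$ singular conics.

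For (2), a conic of $S_A$ other than a fibre is an irreducible curve $C$ with $H\cdot C=2$ and $f\cdot C\ge 1$, and $H\cdot C=f\cdot C+(-K_{S_A})\cdot C$ forces $f\cdot C=1$, $-K_{S_A}\cdot C=1$, so $C$ is a $(-1)$-curve meeting the fibre once. I would list the $56$ exceptional classes and single out those with $f\cdot C=1$: namely $e_7$; the $\ell-e_i-e_j$ with $i,j\le 6$; the $2\ell-e_7-e_{i_1}-\dots-e_{i_4}$ with $i_k\le 6$; and $3\ell-2e_7-\sum_{j\le 6}e_j$, in numbers $1+15+15+1=32$. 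The Geiser involution $\gamma$ of the double cover acts on exceptional classes by $\gamma^{*}C=-K_{S_A}-C$, and $f\cdot\gamma^{*}C=2-1=1$, so it preserves this set; having no fixed class (as $-K_{S_A}$ is not $2$-divisible) it pairs the $32$ conics into $16$ pairs, and $C\cdot\gamma^{*}C=-K_{S_A}\cdot C-C^2=1+1=2$, so each pair meets in a length-$2$ scheme. For (3), a pencil of twisted cubics is a class $D$ with $\dim|D|=1$ and smooth rational general member of $H$-degree $3$; Riemann--Roch and adjunction make this equivalent to $D^2=0$, $-K_{S_A}\cdot D=2$, $H\cdot D=3$, i.e. a conic-bundle class with $f\cdot D=1$. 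Enumerating these gives the $\ell-e_i$ with $i\le 6$, the $2\ell-e_7-e_{i_1}-e_{i_2}-e_{i_3}$, and the $3\ell-2e_7-\sum_{j\in S}e_j$ with $S\subseteq\{1,\dots,6\}$, $|S|=5$, again $6+20+6=32$. The map $D\mapsto H|_{S_A}-D$ preserves the set, since $(H-D)^2=H^2-2H\cdot D=0$, $-K_{S_A}\cdot(H-D)=2$ and $f\cdot(H-D)=1$, and has no fixed class ($H|_{S_A}$ is not $2$-divisible), so it pairs the $32$ pencils into $16$ complementary pairs with $D+D'=H|_{S_A}$ and $D\cdot D'=H\cdot D-D^2=3$; thus general members of $|D|$ and $|D'|$ are twisted cubics meeting in $3$ points whose union is a hyperplane section.

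The main obstacle is not the intersection-number bookkeeping but the two geometric inputs that validate it: first, verifying that the stated hypothesis indeed makes $-K_{S_A}$ ample (equivalently that $S_A$ has no line uncontracted by $\pi_1$ and no $(-2)$-curve), so that the $(-1)$-curve theory and the blow-up normalization with $f=\ell-e_7$ are legitimate; and second, checking that each numerical class above is effectively and \emph{irreducibly} represented with the expected smoothness, so that the $(-1)$-curves embed as honest smooth conics and the conic-bundle pencils embed as smooth twisted cubics rather than degenerating. Both follow from $H$ being the very ample Segre polarization restricted to a genuine del Pezzo surface, but they are exactly what gives the enumeration geometric meaning.
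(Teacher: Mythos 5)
Your proof is correct, and it reaches the three counts by a genuinely different mechanism from the paper's. Both arguments start from the same identification of $S_A$ as a smooth del Pezzo surface of degree $2$ carrying the conic-bundle class $f=h_1|_{S_A}$ with $-K_{S_A}=h_2|_{S_A}$; your adjunction computation $K_{S_A}=-H+F$ is the right one (the displayed formula $K_{S_A}=-2H+F$ in the paper is a typo, and only the consequence $K_{S_A}^2=2$ is actually used there). From that point the paper argues synthetically: it gets the $6$ singular fibres from the blown-up-ruled-surface description, obtains the $16$ pairs of conics as the preimages of the $28-12=16$ bitangents of the branch quartic of $\pi_2$ whose pullbacks contain no line, and builds the pencils of twisted cubics as $|C+L|$ for incident conic--line pairs, extracting the count $32$ from an incidence bookkeeping (each pencil has $6$ singular fibres, each conic lies in $6$ pencils). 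You instead fix a marking with $f=\ell-e_7$ and enumerate classes in the $E_7$ lattice: the $32$ exceptional classes with $f\cdot C=1$, paired by the Geiser involution $C\mapsto -K_{S_A}-C$ (which is exactly the deck transformation of $\pi_2$, so your pairing and the paper's bitangent pairing are the same pairing in two languages), and the $32$ conic-bundle classes with $f\cdot D=1$, paired by $D\mapsto H-D$. Your route buys a cleaner and more checkable derivation of part (3) --- the incidence count is the least transparent step of the paper's proof --- at the cost of having to justify the normalization $f=\ell-e_7$ (Weyl-group transitivity on conic-bundle classes) and the effectivity, irreducibility and smoothness of each listed class; you correctly flag both points, and they follow from ampleness of $-K_{S_A}$ together with very ampleness of $H$. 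Conversely, the paper's route keeps the plane quartic and its $28$ bitangents visible, which is the geometry reused in the subsequent corollaries on rank-$4$ quadrics and the dual Kummer surface. Your observation that the hypothesis that every line of $S_A$ is contracted by $\pi_1$ is precisely the ampleness of $-K_{S_A}$ (equivalently, finiteness of $\pi_2$) is left implicit in the paper and is worth making explicit.
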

\begin{proof} The fibers of the projection $\pi_1:S_A\to \PP^1$ are plane conics, so $S_A$ is birational to a ruled surface.  Let $H$ be the class of a hyperplane section on $A$ and $F$ the class of a fiber, then the canonical divisor is, by adjunction on $\Sigma_{2,3}$, 
\[
K_{S_A}=-2H+F.
\]
So $K_{S_A}^2=2$ and $S_A$ is isomorphic  to a rational ruled surface blown up in $6$ points, and therefore has $6$ singular conics, i.e. $12$ lines that intersect in $6$ pairs and (1) follows. 
  
Consider next the projection $\pi_2:S_A\to \PP^2$.  It is $2:1$ and given by divisors in the class $H-F$.  The general curve in this class is an elliptic quartic curve which is mapped $2:1$ onto a line with $4$ branch points.  In particular, the branch curve in $\PP^2$ is a quartic curve with $28$ bitangent lines.  The preimage in $S_A$ of each of these lines is a pair of rational curves intersecting in $2$ points lying over the two branch points.
Now, every line $L$ in $S_A$ is mapped to a line by $\pi_2$, and $\pi_2^{-1}(\pi_2(L))$ is the union of $L$ and a twisted cubic curve $C_L$ with $C_L^2=-1$.
Since there are $12$ lines on $S_A$, there must be $16$ bitangents to the branch curve whose preimage in $S_A$ does not contain a line.  Since the preimages have degree $4$ on $S_A$, they must decompose into two smooth conics that intersect in a scheme of length $2$.  On the other hand, any conic that is not in a fiber of $\pi_1$ must be section of $\pi_1$ and is therefore mapped to a line by $\pi_2$, so (2) follows.  

Notice that each of these conic sections have self intersection $-1$ and intersect $15$ other conic sections among the $32$ in one point.

Consider any conic  section $C$ that is a section of $\pi_1$, and its complement $C'$ in the preimage of its image by $\pi_2$.  Then $C$ intersect $6$ lines in $S_A$, one from each singular fiber of $\pi_1$, while $C'$ intersect the remaining $6$.  Let $L$ be on of the lines intersecting $C$, then the divisor class $C+L$ contains a pencil of twisted cubic curves without basepoints on $S_A$.  If $L'$ is the line in $S_A$ that intersect $L$, then $C'+L'$ contains a  pencil of twisted cubic curves without basepoints and $C+L+C'+L'=H$.  Now, if $C''$ is a conic section in $S_A$ that do not intersect $C$, and $L''$ is a line that intersect $C''$ but neither of $C$ and $L$, then $(C+L)\cdot (C''+L'')=0$ and the two divisor classes $C+L$ and $C''+L''$ coincide.   Since  $(C'+L')\cdot (C''+L'')=3$,  we also have $C'\cdot C''=C'\cdot L''=C''\cdot L'=1$.
Let $L''$ be one of the $5$ lines in $S_A$ besides $L'$ that do not intersect $C$, then $C'\cup L'\cup L''$ spans a hyperplane, so the divisor class $H-C'-L'-L''$ contains a unique curve $C''$, a conic section that must be a section of $\pi_1$.  We may conclude that that in the pencil $|C+L|$ of twisted cubic curves there are $6$ singular fibers.  We conclude that each conic section $C$ that is a section of $\pi_1$ is a component of a fiber in $6$ pencils of twisted cubic curves, and that each such pencil has $6$ singular fibers.  Adding up we find $16$ pairs of base point free pencils of twisted cubic curves on $S_A$ and (3) follows.
\end{proof}
Notice that the linear span of each twisted cubic curve is contained in unique quadric that contains  $S_A$, a quadric of rank at most $4$ that does not belong to the ideal of $\Sigma_{2,3}$.  A hyperplane section of this quadric that contains the twisted cubic, will contain a twisted cubic of the complementary pencil, so the quadric must have rank $4$. On the other hand any rank $4$ quadric in the ideal of $S_A$ that does not contain $\Sigma_{2,3}$, will define on $S_A$ two base point free pencils of twisted cubic curves. We may therefore conclude:
\begin{cor}  In the ideal of $S_A$ there are exactly $16$ quadrics of rank $4$ that do not contain  $\Sigma_{2,3}$. Each of them define a pair of base point free pencils of twisted cubic curves on $S_A$.  Furthermore, let $C$ and $C'$ be a pair of conics in $S_A$ that intersect in a scheme of length $2$ and let $P$ and $P'$ be the planes spanned by these conics. Then the net of quadrics that contain $S_A$ and $P$ contains also $P'$, and the net contains exactly $6$ rank $4$-quadrics that do not contain  $\Sigma_{2,3}$.  
\end{cor}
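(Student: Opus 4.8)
The plan is to deduce everything from Proposition \ref{32} together with the remark preceding the statement, namely that the linear span of each twisted cubic in $S_A$ lies on a unique rank $4$ quadric of the ideal that does not contain $\Sigma_{2,3}$, and that conversely each such quadric carries two base point free pencils of twisted cubics. For the first assertion I would argue as follows. By Proposition \ref{32}(3), $S_A$ carries $32$ base point free pencils of twisted cubics, grouped into $16$ complementary pairs $\{|D|,|D'|\}$ with $D+D'=H$. For a member $T$ of such a pencil the span $\langle T\rangle$ is a $\PP^3$ lying on a unique rank $4$ quadric $Q$ of the ideal of $S_A$ not containing $\Sigma_{2,3}$; since a hyperplane section of $Q$ through $T$ contains a member of the complementary pencil, the same $Q$ is attached to both pencils of the pair. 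This assigns to each of the $16$ pairs a rank $4$ quadric, injectively, since $Q$ is recovered as the quadric containing the spans of the members. Conversely every rank $4$ quadric of the ideal not containing $\Sigma_{2,3}$ produces, by the cited remark, two base point free pencils, i.e.\ one of the $16$ pairs; hence the assignment is a bijection and there are exactly $16$ such quadrics, each defining a pair of base point free pencils of twisted cubics. I would also record the conceptual identification: by the comparison of the symmetric and Lagrangian degeneracy loci these $16$ quadrics are exactly the corank $2$ points $D_2^A$, i.e.\ the $16$ nodes of the Kummer quartic $D_1^A$, so the corollary is the classical $(16,6)$ configuration.

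For the containment statement, let $C,C'$ be a pair as in Proposition \ref{32}(2), meeting in a length $2$ scheme, with planes $P=\langle C\rangle$ and $P'=\langle C'\rangle$. First I would note that the two points of $C\cap C'$ span a line contained in $P\cap P'$, and since $P\neq P'$ this intersection is exactly a line; hence $P$ and $P'$ span a $\PP^3=:M$, and $C\cup C'\subset S_A\cap M$ is a curve of degree $4$. A quadric through $S_A$ restricts on $P$ to a conic vanishing on $C$, hence to a scalar multiple of the equation of $C$; requiring it to contain $P$ is the single linear condition that this scalar vanish, so the quadrics through $S_A$ and $P$ form a net (a $\PP^2\subset\PP(V_4)$). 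For any such quadric $Q$, the restriction $Q|_M$ is a quadric surface in $M$ containing the plane $P$, hence $Q|_M=P\cup L$ for some plane $L\subset M$; since $C'\subset S_A\cap M\subset Q|_M$ is irreducible and spans $P'\neq P$, it cannot lie in $P$, so $C'\subset L$ and therefore $L=P'$. Thus $Q\supset P'$, and the net of quadrics through $S_A$ and $P$ also contains $P'$ (and coincides with the net through $S_A$ and $P'$).

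It then remains to count the rank $4$ quadrics (not containing $\Sigma_{2,3}$) lying in this net $N$. By Proposition \ref{32}, $C$ is a section of $\pi_1$ meeting exactly $6$ lines of $S_A$, one in each of the $6$ singular fibres, and for each such line $L_i$ the class $C+L_i$ moves in a base point free pencil of twisted cubics whose associated rank $4$ quadric $Q_i$ contains $\langle C\cup L_i\rangle$, hence contains $P$; so $Q_i\in N$. The $Q_i$ are pairwise distinct, carrying distinct pencils, giving at least $6$ rank $4$ quadrics in $N$. To see there are no more I would run the incidence count supplied by Proposition \ref{32}: each of the $16$ quadrics carries two pencils with $6$ singular fibres each, i.e.\ $12$ conic components, while each of the $32$ conics is a component in $6$ pencils, and $16\cdot 12=32\cdot 6=192$; thus each conic $C$ lies on exactly $6$ of the quadrics, namely the $Q_i$. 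Since $Q\in N$ forces $Q\supset P\supset C$, these $6$ are all the rank $4$ quadrics of $N$, and the count follows. Equivalently, $N$ is one of the $16$ tropes of the Kummer quartic and the statement is the trope half of the $(16,6)$ configuration.

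The main obstacle is the combinatorics of this last step: upgrading the ``span of a twisted cubic lies on a unique rank $4$ quadric'' correspondence to an exact bijection, and pinning down that precisely the $6$ pencils having $C$ as a reducible fibre account for \emph{all} rank $4$ quadrics of $N$ — in other words, reconstructing the Kummer $(16,6)$ configuration from the ruled surface geometry of $S_A$. The delicate point is that ``$C\subset Q$'' must coincide with ``$C$ is a component of a singular fibre of a pencil on $Q$''; this equivalence is exactly what the incidence identity $16\cdot 12=32\cdot 6$ enforces, and it is here that the smoothness and genericity hypotheses of Proposition \ref{32} are used.
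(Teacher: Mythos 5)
Your overall route is the paper's: the count of $16$ and the pairing of pencils with rank-$4$ quadrics come straight from Proposition \ref{32}(3) together with the remark preceding the corollary, and your proof that the net through $S_A$ and $P$ also contains $P'$ (restrict a quadric of the net to $\langle P,P'\rangle$, observe the restriction is $P\cup L$, and use the irreducible conic $C'$ to force $L=P'$) is the same argument the paper gives, phrased via the span of $P$ and $P'$ instead of via the line $P\cap P'$ and $C'$ inside $P'$. The lower bound of $6$ rank-$4$ quadrics in the net, obtained from the $6$ lines $L_i$ meeting $C$ and the pencils $|C+L_i|$, is also correct.

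The step that does not hold up as written is the upper bound. You assert that the identity $16\cdot 12=32\cdot 6$ ``enforces'' the equivalence between ``$P\subset Q$'' and ``$C$ is a component of a singular fibre of a pencil on $Q$''. A consistent double count cannot do that: your data only bound both sides of $\sum_Q\#\{C: P_C\subset Q\}=\sum_C\#\{Q: P_C\subset Q\}$ from below by $192$, and nothing in the count excludes a seventh rank-$4$ quadric through $P$ that fails to have $C$ as a fibre component. What actually closes the argument is a geometric fact you never invoke: every plane contained in a rank-$4$ quadric in $\PP^5$ lies in one of its two rulings of $\PP^3$'s (such a quadric is a cone with vertex a line over a smooth quadric surface, every plane in it meets the vertex line, and its projection is a point or a ruling line of the base). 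Hence $P\subset Q$ places $P$ inside a ruling $\PP^3$, which cuts $S_A$ in a member $D$ of the corresponding pencil of twisted cubics; then $C\subset D$ and $\deg D=3$ force $D=C+L$, so $C$ is a fibre component after all and $Q$ is one of your six $Q_i$. (The paper's own proof is admittedly silent on this point as well, writing out only the $P'$ containment; but since you single this out as the delicate step and then resolve it by the numerical identity alone, the resolution you offer is not valid and needs to be replaced by the argument above.)
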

\begin{proof} It remains only to remark that each quadric in the net that contain $S_A$ and $P$ contain both $C'$ and the line of intersection $P\cap P'$, so also $P'$.
\end{proof}

The dual surface $K^{\vee}$ to a Kummer quartic surface $K$ is also a Kummer quartic, with each plane tangent along a conic through $6$ nodes on $K$ corresponding to a node on $K^{\vee}$, so we conclude:
\begin{cor}  Let $D_1^A\subset \PP(H^0(\mathcal{I}_{S_A}(2)))$ be the Kummer surface, such that ${\mathcal D}_1= D_1^A\cup \PP(H^0({\mathcal I}_{\Sigma_{2,3}}(2))$.  Then the dual Kummer surface 
\[
(D_1^A)^{\vee}\subset \PP(H^0({\mathcal I}_{S_A}(2))^\vee)
\]
is singular in each point $[H^0({\mathcal I}_{S_A\cup \langle C\rangle}(2))]\in  \PP(H^0({\mathcal I}_{S_A}(2))^\vee)$, where $C\subset S_A$ is any of the $32$ conics whose spanning plane $\langle C\rangle$ is not contained in $\Sigma_{2,3}$. These conics occur in pairs that define the same point, thus accounting for the $16$ nodes of $(D_1^A)^{\vee}$.
 \end{cor}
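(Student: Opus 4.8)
The plan is to deduce the statement from the classical self-duality of Kummer quartics, feeding in the combinatorial data already recorded in Proposition \ref{32} and in the preceding corollary. As noted just above, the projective dual $(D_1^A)^{\vee}$ of the Kummer quartic $K=D_1^A$ is again a Kummer quartic, and its $16$ nodes are in bijection with the $16$ \emph{tropes} of $K$, that is, with the planes in $\PP(H^0(\mathcal{I}_{S_A}(2)))$ tangent to $K$ along a conic through $6$ of its $16$ nodes; under this bijection a trope-plane $\Pi$ corresponds to the point of $\PP(H^0(\mathcal{I}_{S_A}(2))^{\vee})$ dual to $\Pi$. It therefore suffices to identify the $16$ tropes of $K$ with the planes attached to the $32$ conics of Proposition \ref{32}.

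First I would note that, by the preceding corollary, the $16$ nodes of $K$ (the points of $D_2^A$) are exactly the $16$ rank-$4$ quadrics in the ideal of $S_A$ that do not contain $\Sigma_{2,3}$. For one of the $32$ conics $C\subset S_A$ of Proposition \ref{32}, whose spanning plane $\langle C\rangle$ is not contained in $\Sigma_{2,3}$, the quadrics through $S_A$ that in addition contain $\langle C\rangle$ form, again by that corollary, a net; equivalently $H^0(\mathcal{I}_{S_A\cup\langle C\rangle}(2))$ is a hyperplane in $H^0(\mathcal{I}_{S_A}(2))$, so $N_C:=\PP(H^0(\mathcal{I}_{S_A\cup\langle C\rangle}(2)))$ is a plane in $\PP(H^0(\mathcal{I}_{S_A}(2)))$ and a point of the dual space. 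By the same corollary $N_C$ meets $D_2^A$ in exactly $6$ of the $16$ nodes of $K$. Since the only coplanar hexads of nodes of a general Kummer surface are its trope-hexads, the plane $N_C$ must be one of the $16$ tropes of $K$, and hence its dual point $[H^0(\mathcal{I}_{S_A\cup\langle C\rangle}(2))]$ is a node of $(D_1^A)^{\vee}$.

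It then remains to match the count. By Proposition \ref{32} the relevant conics number $32$ and fall into $16$ pairs $(C,C')$ meeting in a length-$2$ scheme; writing $P=\langle C\rangle$ and $P'=\langle C'\rangle$, the preceding corollary shows that the net of quadrics through $S_A$ and $P$ automatically contains $P'$. Thus $N_C\subseteq N_{C'}$, and since both are planes of codimension one they coincide; the two conics of a pair therefore determine one and the same trope, hence one and the same node of $(D_1^A)^{\vee}$. In this way the $32$ conics produce exactly $16$ distinct nodes, which by the bijection of the first paragraph are all the nodes of the dual Kummer surface.

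The only genuinely delicate point is the middle step, namely that the special net $N_C$ really is a trope and not merely an incidental plane through six of the nodes. I would settle this by the rigidity of the Kummer $16_6$ configuration (no six of the sixteen nodes are coplanar except along a trope), the exact count of six nodes on $N_C$ being supplied by the preceding corollary. Alternatively, one may verify the tangency directly: every quadric of $N_C$ contains the two planes $P$ and $P'$, and from this one checks that inside the sextic discriminant $\mathcal{D}_1$, which equals $D_1^A$ together with the plane $\PP(H^0(\mathcal{I}_{\Sigma_{2,3}}(2)))$ counted twice, the residual curve $N_C\cap K$ is a double conic passing through the six nodes.
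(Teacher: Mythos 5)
Your proposal is correct and follows essentially the same route as the paper: the paper derives this corollary from the sentence immediately preceding it (the dual of a Kummer quartic is a Kummer quartic whose nodes correspond to the tropes, i.e.\ planes tangent along a conic through $6$ nodes) together with the preceding corollary, which supplies exactly the facts you use — that each net $N_C=\PP(H^0(\mathcal{I}_{S_A\cup\langle C\rangle}(2)))$ contains $6$ of the $16$ rank-$4$ quadrics (the nodes of $D_1^A$) and that $N_C=N_{C'}$ for a paired conic $C'$. Your added justification that a plane through six nodes must be a trope is a reasonable filling-in of a detail the paper leaves implicit.
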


%\vskip 1cm
%CONSIDER REMOVING THE REST OF THIS SECTION
%\vskip 1cm
\subsection{Kummer surfaces from a Hilbert scheme of conics}\label{hilbert-kummer}
We relate the Lagrangian and symmetric descriptions of Kummer surfaces to the Hilbert scheme of conics in a certain Fano threefold. 
 
 First we note a general lemma that identifies the discriminant locus of a family of quadrics with base locus a quadric section of a cone with the discriminant of the family of quadrics defining the branch locus of the induced double cover.
\begin{lemma}\label{discr} Let $X\subset \mathbb{P}^n$ be a manifold defined by quadrics and let $CX\subset  \mathbb{P}^{n+1}$ be a cone over $X$ with vertex $p\in \mathbb{P}^{n+1}$. Let $Q$ be a general quadric form  in $\mathbb{P}^{n+1}$.  Let $Y_Q=CX\cap \{Q=0\}$ and let $Y_r\subset X$ be the branch locus of the $2:1$ map induced by the projection from $p$ of $Y_Q$ onto $X$. Let 
$D_{CX}\subset \mathbb{P}(H^0(\mathbb{P}^{n+1},I_{CX}(2)))$ and $D_{Y_Q}\subset \mathbb{P}(H^0(\mathbb{P}^{n+1},I_{Y_Q}(2)))$  be the discriminants. The projective  space $\mathbb{P}(H^0(\mathbb{P}^{n+1}, I_{CX}(2)))$ is a hyperplane in $\mathbb{P}(H^0(\mathbb{P}^{n+1},I_{Y_Q}(2)))$, so we consider the inclusions
\[
D_{CX}\subset D_{Y_Q}\subset\mathbb{P}(H^0(\mathbb{P}^{n+1},I_{Y_Q}(2))).
\]
  Similarly, we consider the inclusions in  $ \mathbb{P}(H^0(\mathbb{P}^{n},I_{X}(2)))$ and  
  $\mathbb{P}(H^0(\mathbb{P}^{n},I_{Y_r}(2)))$
  \[
  D_X\subset D_{Y_r} \subset \mathbb{P}(H^0(\mathbb{P}^{n},I_{Y_r}(2))).
  \]
 Then there exists a linear isomorphism $\mathbb{P}(H^0(\mathbb{P}^{n},I_{Y_r}(2)))\to \mathbb{P}(H^0(\mathbb{P}^{n+1},I_{Y_Q}(2)))$ mapping $D_{Y_r} \setminus D_X$ isomorphically to $D_{Y_Q}\setminus D_{CX}$.
\end{lemma}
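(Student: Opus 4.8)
The plan is to choose coordinates adapted to the cone and reduce the statement to an elementary computation with symmetric matrices via completion of the square. First I would pick homogeneous coordinates $x_0,\dots,x_n$ on $\PP^n$ and an extra coordinate $t$ on $\PP^{n+1}$ so that the vertex is $p=[0:\cdots:0:1]$. If $X$ is cut out by quadrics $q_1,\dots,q_m$ in the $x_i$, then $CX$ is cut out by the very same $q_i$ viewed in $\PP^{n+1}$, so $H^0(I_{CX}(2))=\langle q_1,\dots,q_m\rangle$. A general quadric $Q$ does not pass through $p$, so after scaling $Q=at^2+tL(x)+G(x)$ with $a\neq 0$, $L$ linear and $G$ quadratic in the $x_i$. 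Projecting $Y_Q=CX\cap\{Q=0\}$ from $p$, the fibre over $[x]\in X$ is the pair of roots of $at^2+tL+G=0$, so the branch divisor is $Y_r=X\cap\{L^2-4aG=0\}$ and $H^0(I_{Y_r}(2))=\langle q_1,\dots,q_m,\,L^2-4aG\rangle$, while $H^0(I_{Y_Q}(2))=\langle q_1,\dots,q_m,\,Q\rangle$. This already exhibits the two claimed hyperplanes, and since every quadric through a cone is singular, $D_{CX}$ is the whole hyperplane $\{\mu=0\}$, with $\mu$ the coordinate of $Q$, and $D_{Y_Q}$ restricts to it there; symmetrically $D_X=D_{Y_r}\cap\{\nu=0\}$, the coordinate $\nu$ being that of $L^2-4aG$.

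I would then define the candidate isomorphism $\Phi$ on the level of linear systems by $q_i\mapsto q_i$ and $L^2-4aG\mapsto -4a\,Q$; explicitly it sends the $Y_r$-quadric $\sum\lambda_iq_i+\nu(L^2-4aG)$ to the $Y_Q$-quadric $\sum\lambda_iq_i-4a\nu\,Q$, i.e. it rescales the last coordinate by $\mu=-4a\nu$ and is the identity on the rest. The heart of the argument is the following congruence: writing $R=\sum\lambda_iq_i$, completing the square in $t$ gives
\[
R+\mu Q=\mu a\Bigl(t+\tfrac{L}{2a}\Bigr)^2+\Bigl(R+\mu G-\tfrac{\mu}{4a}L^2\Bigr),
\]
so the symmetric matrix $M_2$ of $R+\mu Q$ is congruent, by a unipotent (hence determinant-one) shear in $t$, to the block form with diagonal entry $\mu a$ and remaining block $S$, the matrix of the $n$-variable quadric $R+\mu G-\frac{\mu}{4a}L^2$. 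For $\mu\neq 0$ the factor $\mu a$ is a nonzero scalar, so $R+\mu Q$ is singular if and only if $S$ is singular. Finally, substituting $\mu=-4a\nu$ turns $R+\mu G-\frac{\mu}{4a}L^2$ into exactly $R+\nu(L^2-4aG)$, the $Y_r$-quadric; that is, $S$ equals the matrix of the $Y_r$-quadric under $\Phi$. Hence for $\nu\neq 0$ the $Y_r$-quadric is singular precisely when its $\Phi$-image is, which says exactly $\Phi(D_{Y_r}\setminus D_X)=D_{Y_Q}\setminus D_{CX}$. Keeping track of determinants gives the cleaner identity $\det M_2=\mu a\det S$, so $\Phi$ pulls back the residual degree $n+1$ factor of $D_{Y_Q}$ to the defining equation of $D_{Y_r}$ up to the unit $\nu$, upgrading the set bijection to a scheme-theoretic isomorphism on the complements.

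The routine verifications—that $CX$ and $X$ are cut out by the same $q_i$, that the branch divisor is $\{L^2-4aG=0\}$, and that the stated hyperplane inclusions hold—are immediate. The one genuinely delicate point, and the step I expect to be the main obstacle, is getting the parameter matching right: one must notice that the $t^2$-coefficient $a$ of $Q$ enters the branch quadric as $L^2-4aG$ and simultaneously forces the rescaling $\mu=-4a\nu$, so that the completed-square quadric $R+\mu G-\frac{\mu}{4a}L^2$ coincides on the nose with $R+\nu(L^2-4aG)$. This numerical coincidence, the same $a$ reconciling the square-completion and the discriminant of the double cover, is what makes $\Phi$ \emph{linear} and is precisely the content of the lemma; once it is observed, everything else is bookkeeping with the block determinant.
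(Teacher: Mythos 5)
Your proof is correct and is essentially the paper's own argument: the paper simply chooses coordinates in which $Q=z^2-q(x)$ from the start (taking $\{z=0\}$ to be the polar hyperplane of $p$), which is exactly what your explicit completion of the square and the rescaling $\mu=-4a\nu$ accomplish, after which the block-diagonal corank comparison is the same. The only difference is bookkeeping: you carry the discriminant $L^2-4aG$ and the unit $-4a$ explicitly, while the paper normalizes them away before defining the map.
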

\begin{proof} Observe that in an appropriate choice of coordinates in $\mathbb{P}^{n+1}$ we have 
$$Q(z,x_0,\dots, x_n)=z^2-Q'(x_0,\dots, x_n)$$ 
and $p$ is the point $(0,\dots, 0, 1)$. It is the clear that in this setup $Y_r$ is defined in $\mathbb{P}^{n}$ with coordinates $x_0,\dots x_n$ as $X\cap\{(x_0:\dots :x_n)| Q'(x_0: \dots :x_n)=0\}$. Note that $H^0(I_{CX}(2))=H^0(I_{X}(2))$. Consider the map: 
$$\phi: H^0(\mathbb{P}^{n},I_{Y_r}(2)) \to H^0(\mathbb{P}^{n+1},I_{Y_Q}(2))$$
such that $\phi |_{H^0(I_{X}(2))}=id$ and $\phi(Q')=Q$. Clearly $\phi$ is an isomorphism that doesn't change the corank of the quadrics that do not belong to $I_{X}(2)$, while it increases the corank by one for each quadric in $I_{X}(2)$.  The complement $D_{Y_r} \setminus D_X$ is therefore isomorphic to $D_{Y_Q}\setminus D_{CX}$.
\end{proof}
%\begin{remark} Not that the map $\phi$
%\end{remark}
% that relate n the degeneracy loci in the space of quadrics 
Consider the $6$-space  $\PP(\C\oplus (V_2\otimes  V_3)) (=\PP^6)$, a general quadric hypersurface $Q_A$ in this space and the $3$-fold obtained as the intersection
 \[
 T_A=C(\PP(V_2)\times \PP(V_3))\cap Q_A\subset \PP(\C\oplus (V_2\otimes  V_3)).
 \]
 Denote by $p$ the vertex of $C(\PP(V_2)\times \PP(V_3))$, and let $H_{A,p}$ be the polar of $p$ with respect to the quadric $Q_A$, and let $Q_{A,p}=Q_A\cap H_{A,p}$ and $S_{A}=T_A\cap H_{A,p}$.   
Following Lemma \ref{discr},  the restriction map $H^0(\PP(\C\oplus (V_2\otimes  V_3),{\mathcal I}_{T_A}(2))\to H^0(H_{A,p},{\mathcal I}_{S_{A}}(2))$  is an isomorphism not just between the vector spaces, but also between the components of the discriminants
residual to the planes $\PP(H^0({\mathcal I}_{C(\PP(V_2)\times \PP(V_3))}(2)))$ and $\PP(H^0({\mathcal I}_{\Sigma_{2,3}}(2)))$, respectively.
The discriminant in $\PP(H^0({\mathcal I}_{T_A}(2)))$ is the union of the plane $\PP(H^0({\mathcal I}_{C(\PP(V_2)\times \PP(V_3))}(2)))$ and a surface that we therefore may identify with the Kummer surface $D_1^A$. Dual to $D_1^A$ is the Kummer surface $(D_1^A)^\vee\subset \PP(H^0({\mathcal I}_{T_A}(2))^{\vee}$.
% is singular in the points defined by the nets of quadrics  in the ideal of $T_A$ that contain a plane $\langle C\rangle$ of a conic section $C\subset S_{A}$ that is not itself contained in the cone $C(\PP(V_2)\times \PP(V_3))$.

 The $3$-fold $T_A$ has natural projections, $\pi_1:T_A\to \PP(V_2)$ and $\pi_2:T_A\to \PP(V_3)$.  A conic in $T_A$ that is mapped birationally to $\PP(V_2)$ and birationally onto a line in $\PP(V_3)$ is called a $(1,1)$-conic.
We denote by  $F(T_A)$ the Hilbert scheme of $(1,1)$-conics in $T_A$.  
\begin{prop}\label{1.10} $F(T_A)$ admits a morphism  
\[
\psi_{Q_A}: F(T_A)\to (D_1^A)^\vee\subset \PP(H^0({\mathcal I}_{T_A}(2))^{\vee}
\]
whose general fiber is a pair of $\PP^1$'s.
%that factors as $\rho_{Q_A}\circ \phi_{Q_A}$, where $\phi_{Q_A}:F(T_A)\to X_A $ is a $\PP^1$-fibration and
%$\rho_{Q_A}:X_A\to (D_1^A)^\vee$ is $2:1$ onto its image.  %Furthermore $X_A$ is an abelian surface, and $\rho_{Q_A}$ is a double cover of its Kummer surface.
\end{prop}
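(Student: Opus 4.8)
The plan is to realise $\psi_{Q_A}$ as the map induced on planes of conics by the web of quadrics through $T_A$, to identify its image with the dual Kummer by means of the covering involution, and to read off the fibres from that involution.

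First I would check that $\psi_{Q_A}$ is a well-defined morphism. For $[C]\in F(T_A)$ the $(1,1)$-conic $C$ is a smooth conic spanning a plane $\PL_C\subset \PP(\C\oplus(V_2\otimes V_3))$, and one sees that $\PL_C\cap T_A=C$. Since every quadric of $H^0(\mathcal{I}_{T_A}(2))$ already contains $C\subset T_A$, its restriction to $\PL_C$ is a multiple of the equation of $C$; hence $w\mapsto Q_w|_{\PL_C}$ is a linear functional $\ell_C\colon H^0(\mathcal{I}_{T_A}(2))\to \langle C\rangle\cong\C$ whose kernel $H_C$ is the hyperplane of quadrics containing $T_A\cup\PL_C$. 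I would verify that $\ell_C\neq 0$ (so that $[H_C]$ is defined) and that $\ell_C$ varies algebraically with $[C]$, giving the morphism $\psi_{Q_A}\colon[C]\mapsto[\ell_C]=[H_C]$. The useful observation is that for \emph{every} $x_0\in\PL_C\setminus C$ one has $Q_w(x_0)=(\mathrm{eqn}\,C)(x_0)\cdot\ell_C(w)$, so $\mathrm{ev}_{x_0}$ and $\ell_C$ are proportional: the rational map $\Phi$ on $\PP(\C\oplus(V_2\otimes V_3))$ defined by the web $H^0(\mathcal{I}_{T_A}(2))$ contracts the whole plane $\PL_C$ to the single point $\psi_{Q_A}([C])$, and this computes $\psi_{Q_A}$.

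Writing $\rho\colon T_A\to\Sigma_{2,3}$ for the projection from the cone point $p$ (a double cover branched along $S_A$, by Lemma \ref{discr}) with deck involution $\iota$, each $C$ maps to a $(1,1)$-conic $\bar C=\rho(C)\subset\Sigma_{2,3}$ along which $Q'_A$ restricts to a perfect square, and $\rho^{-1}(\bar C)=C\cup\iota C$. Restricting the web to $\PP^3=\langle p,\langle\bar C\rangle\rangle$ shows that the net $H_C$ contains both $\PL_C$ and $\PL_{\iota C}$ as well as the rank-two quadric $\PL_C\cup\PL_{\iota C}$; in particular $\psi_{Q_A}([C])=\psi_{Q_A}([\iota C])$, so $\psi_{Q_A}$ factors through the quotient $B$ of $F(T_A)$ by $\iota$, i.e. the family of bitangent $(1,1)$-conics $\bar C$.

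Next I would prove $\mathrm{Im}\,\psi_{Q_A}=(D_1^A)^\vee$. By definition of the projective dual, $[H_C]\in(D_1^A)^\vee$ if and only if the net $\PP(H_C)$ is tangent to the Kummer quartic $D_1^A\subset\PP(H^0(\mathcal{I}_{T_A}(2)))$, i.e. $\PP(H_C)\cap D_1^A$ is singular; so the task is to produce the required contact. The degenerate member $\PL_C\cup\PL_{\iota C}\in\PP(H_C)$ is the natural candidate: transporting the situation to $S_A$ through the linear isomorphism $H^0(\mathcal{I}_{T_A}(2))\cong H^0(\mathcal{I}_{S_A}(2))$ of Lemma \ref{discr} (which matches the discriminant components residual to the distinguished planes), the two planes play the role of the spanning planes $P,P'$ of a pair of conics on $S_A$ meeting in length two, and the corollaries following Proposition \ref{32} identify the corresponding net with a tangent plane of $D_1^A$. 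I expect this tangency/contact step to be the main obstacle: the delicate point is to promote the evident degeneracy of $\PL_C\cup\PL_{\iota C}$ on $\langle p,\langle\bar C\rangle\rangle$ to genuine tangency of the net to $D_1^A$ in the ambient $\PP^3$. Granting it, $\mathrm{Im}\,\psi_{Q_A}$ is an irreducible surface contained in the irreducible surface $(D_1^A)^\vee$; since the $32$ conics on $S_A$ (the degenerate members of $B$, where $Q'_A|_{\bar C}\equiv 0$) are carried by $\psi_{Q_A}$ precisely to the $16$ nodes of $(D_1^A)^\vee$ described in the preceding corollary, the image passes through those nodes, and equality follows by dimension once the fibres are shown to be one-dimensional.

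Finally I would compute the fibres. A general point of $(D_1^A)^\vee$ corresponds to a point $x_0$ via the web map $\Phi$, and $\psi_{Q_A}^{-1}$ of it is $\{[C]\mid x_0\in\PL_C\}$; on the quotient $B$ this is the family of bitangent conics $\bar C$ with fixed image, which I would check is a smooth rational curve $\cong\PP^1$ (the residual $\bar\psi$-fibre over a fixed line $\pi_2(\bar C)$). Crucially, no $(1,1)$-conic is $\iota$-invariant: if $C=\iota C$ then $\rho|_C$ would be two-to-one, contradicting that $C$ maps birationally onto $\bar C$. Hence $F(T_A)\to B$ is an étale double cover, and over the simply connected $\PP^1$ it necessarily splits into two disjoint sections. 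Therefore the fibre of $\psi_{Q_A}$ is a pair of $\PP^1$'s, interchanged by $\iota$, as asserted.
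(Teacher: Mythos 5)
Your overall architecture matches the paper's --- define $\psi_{Q_A}$ by the hyperplane $H_C$ of quadrics through $T_A\cup \PL_C$, show the net $\PP(H_C)$ is tangent to the discriminant quartic $D_1^A$, and analyse the fibre --- and several of your structural observations are sound (that $H_C=H_{\iota C}$, that no $(1,1)$-conic is $\iota$-invariant, that the two components of a general fibre are exchanged by $\iota$; the paper records exactly this in the proof of Proposition \ref{hilb lagr}). But the two steps that carry the actual content are missing. For the tangency, you yourself flag ``promoting the evident degeneracy of $\PL_C\cup\PL_{\iota C}$ to genuine tangency'' as the main obstacle and do not resolve it; note also that $\PL_C\cup\PL_{\iota C}$ is not a member of the net $\PP(H_C)$ (it is a quadric surface inside a $\PP^3$, not a quadric hypersurface in $\PP^6$), and the corollaries following Proposition \ref{32} that you invoke concern the $32$ conics lying \emph{on} $S_A$, which account only for the $16$ nodes of $(D_1^A)^\vee$ and say nothing about a general $(1,1)$-conic of $T_A$, whose image in $\Sigma_{2,3}$ is not contained in $S_A$. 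The paper closes this gap by a different chain: the base locus of $H_C$ is the complete intersection $T_A\cup Q_C$ with $Q_C$ a rank-$4$ quadric threefold whose vertex $p_C$ lies in the base locus but not on $T_A$ (Lemma \ref{phi_C}); some quadric of the net is singular at $p_C$ (Lemma \ref{Q_c}); and a quadric of a linear system that is singular at a base point is a singular point of the discriminant (Lemma \ref{singdisc}). That is the argument you would need to supply.

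The fibre computation has the same problem. Your description of $\psi_{Q_A}^{-1}([H])$ as $\{[C]\mid x_0\in\PL_C\}$ for a single point $x_0$ is unjustified: the fibre of the web map $\Phi$ over a general point of the target is the three-dimensional base locus of the corresponding net, not a point, so the correct characterisation is $\{[C']\mid \PL_{C'}\subset Q_C\}$. And the assertion that the quotient fibre in $B$ is a $\PP^1$ is precisely the statement to be proved --- ``which I would check'' conceals the whole argument. In the paper both components appear at once from Lemma \ref{phi_C}: the planes of the rank-$4$ quadric $Q_C$ form two pencils, each cutting $T_A$ in a pencil of $(1,1)$-conics, and these exhaust the fibre. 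Your \'etale-cover-over-$\PP^1$ device would indeed convert ``quotient fibre $\cong\PP^1$ plus fixed-point-freeness'' into ``two disjoint $\PP^1$'s'', and is a pleasant alternative to exhibiting the two pencils directly; but without identifying the residual quadric $Q_C$ and its pencils of planes (or some equivalent computation) the hypothesis of that device is not established, so the proof as written is incomplete.
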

\begin{proof}
The proof requires several lemmas.  First we define $\psi_{Q_A}$. 
%We assume that $Q$ is general, so that the Picard group of $Y$ has rank $2$ (isomorphic to the Picard group of $\PP(U_1)\times \PP( \wedge^2 U_2)$.
%For this, we identify, as above, $H^0({\mathcal I}_{C(\PP(V_2)\times \PP(V_3))}(2))=H^0({\mathcal I}_{\Sigma_{2,3}}(2))=V_3$ and hence
%Let $q_A\in H^0({\mathcal I}_{T_A}(2))$ be the form defining $Q_A$
%\[
%V_4=V_3\oplus \C\cong H^0({\mathcal I}_{T_A}(2));\quad  v+\lambda v_0\mapsto  q_v+\lambda q_A.
%\]
%As above, we claim that we can identify $U_1\otimes \wedge^2 U_2$ naturally with the quadrics in the ideal of $\PP(U_1)\times \PP(\wedge^2 U_2)$.  In fact any $\alpha\in U_1\otimes \wedge^2 U_2$ defines a natural quadratic form $q_\alpha: \C\oplus U_1\otimes \wedge^2 U_2\to \C $, by
%\[
%q_\alpha (b,\beta)=\eta(\alpha\wedge \beta\wedge\beta)\qquad b\in \C, \beta\in U_1\otimes \wedge^2 U_2.
%\]
%The map $\alpha\mapsto q_\alpha$ is injective, and $\beta\wedge\beta=0$ for any $\beta\in U_1\times \wedge^2 U_2$, i.e. tensors of rank one, so the claim follows. 
% We may therefore identify $$H^0({\mathcal I}_Y(2))=\langle q\rangle \oplus U_1\otimes \wedge^2 U_2= \C\oplus (U_1\otimes \wedge^2 U_2).$$ 
For any $(1,1)$-conic $C\subset T_A$  we let $P_C$ be the plane spanned by $C$.
Then the subspace
$$H_C:= H^0({\mathcal I}_{T_A\cup P_C}(2))\subset H^0({\mathcal I}_{T_A}(2))%=V_4
$$
has codimension one, and hence defines a point in  
\[
[H_C]\in%\PP(V_4)^\vee=\PP(V_4^{\vee})= 
\PP(H^0({\mathcal I}_{T_A}(2))^{\vee}.
\]

We shall show that $(D_1^A)^\vee$ is the image of the map 
\[
\psi_{Q_A}: F(T_A)\to \PP(H^0({\mathcal I}_{T_A}(2))^{\vee},\qquad [C]\mapsto [H_C].
\]
First, however, we show that the general fiber of $\psi_{Q_A}$ is a pair of $\PP^1$'s.
%The variety $T_A$ is the intersection of a quadric with the cone over a Segre  cubic threefold. 
\begin{lemma}\label{phi_C} Assume that $Q_A$ is general, so that $T_A$ is smooth. Let $[C]\in F(T_A)$, then the subscheme defined by the net of quadrics $H_{C}$ is a complete intersection, the union of $T_A$ and a quadric threefold $Q_{C}$ of rank at most $4$. 
For general $C$, the quadric  $Q_{C}$ has rank $4$ with singular point $p_C\notin T_A$, and the intersection $Q_{C}\cap T_A$ is a Del Pezzo quartic surface inside $T_A$.  The two pencils of planes in $Q_{C}$, define two pencils of $(1,1)$-conics  on $T_A$.
\end{lemma}
\begin{proof}
We first show that the quadrics in $H_{C}$ define a complete intersection.
Note that since $C$ is a $(1,1)$-conic, the plane $P_C$ is not contained in the cone $C(\PP(V_2)\times \PP(V_3))$, so the net of quadrics $H_{C}$ cannot contain the cone.    Therefore, the net of quadrics $H_{C}$ contains a pencil of quadrics that contain this cone.  The base locus of this pencil is the union of the cone and a $\PP^4_C$ that intersects the cone in a quadric $3$-fold $Q_{CC}$. If the net of quadrics $H_C$  contains the $\PP^4_C$, then $Q_{CC}$ is a component of $T_A$, against the genericity of $T_A$. Therefore every component in the base locus of $H_C$ has codimension $3$ and $H_{C}$ defines a complete intersection.

This base locus is therefore the union of $T_A$ and a quadric $3$-fold $Q_{C}$ in $\PP^4_C$.   Since $Q_{C}$ contains the plane $P_C$, it has rank at most $4$, with equality for  general $C$.  
The intersection $T_A\cap Q_{C}= Q_{CC}\cap Q_{C}$ is a Del Pezzo surface, which is smooth for a general $C$. In particular, the singular point $p_C$ of the quadric $Q_{C}$ cannot lie on this surface.   The two pencils of planes in $Q_{C}$, intersect $T_A$ in two pencils of conics, both of type $(1,1)$.  The fiber of  the map $\psi_{Q}^{-1}(H_{C})$ is therefore two disjoint $\PP^1$'s.  
%The quadrics $H_{C,L}$ therefore intersect along $Y_L$ and a variety not contained in the cone.  
\end{proof} 
\begin{cor} The Hilbert scheme of $(1,1)$-conics $F(T_A)$ is a threefold.
\end{cor}
\begin{proof} The general net of quadrics $H\subset H^0({\mathcal I}_{T_A}(2))$ defines a reducible complete intersection $T_A\cup Q$, where $Q$ is a quadric threefold. The quadric $Q$ is singular for a codimension one, i.e. $2$-dimensional family of nets $H$, in which case the pencil of planes in $Q$ intersect $T_A$ in $(1,1)$-conics.
\end{proof}
To identify the image of $\psi_{Q_A}$ with the  Kummer surface $(D_1^A)^\vee$, % and in Section \ref{lagr} that the image $X_A=\phi_{Q_A}(F(T_A))$ is an Abelian surface.
we show that the net of quadrics $H_C\subset  H^0({\mathcal I}_{T_A}(2)$  defines a plane $\PP(H_C)$ that is tangent to the discriminant $D_1^A$, so that the point $[H_C]\in (D_1^A)^\vee$.

First we show that when $C$ is a $(1,1)$-conic on $T_A$, then the net of quadrics $H_C$ contains a quadric $Q_c$ that is singular in the base locus of $H_C$.
\begin{lemma}\label{Q_c}  Let $[C]\in F(T_A)$, and let $Q_{C}$ be the quadric $3$-fold of rank at most $4$ in $\PP^4_C$, such that the base locus of $H_C$ is $T_A\cup Q_C$.   Let $p_C\in Q_{C}$ be the singular point.
Then there is at least one quadric $Q_c\subset \PP(\C\oplus V_2\otimes V_3)$ that belongs to  $H_{C}$  and is singular at $p_C$.
\end{lemma}
\begin{proof}
The net of quadrics $H_{C}$ defines a complete intersection 3-fold $T_A\cup Q_{C}$ of degree 8, and  $Q_{C}\subset \PP^4_C$.
There is a pencil of hyperplanes in $\PP(\C\oplus V_2\otimes V_3)$ that contain $\PP^4_C$. Every quadric in $H_{C}$ contains $p_C\in Q_{C}$, and has a tangent space at $p_C$  that contains $\PP^4_C$, so one of these quadrics, say $Q_c$ is singular at the point $p_C$.  %The other quadrics in $H_{C,L}$ are smooth along $Q_c$.  And:  Q_c is the Q_(p,4) above.
\end{proof}

The next lemma implies that the plane $\PP(H_C)\subset  \PP(H^0({\mathcal I}_{T_A}(2))$ is tangent to the discriminant surface $D_1^A$.
\begin{lemma}\label{singdisc} Let $W$ be a linear space of quadrics in a projective space  $P$ and let $Z\subset P$  be the base locus of the quadrics in $W$. Let $D\subset W$ be the discriminant.  If $[Q]\in W$ is a singular quadric with singular point at $p\in Z$, then the discriminant $D$ is singular at $[Q]$.  
\end{lemma}
\begin{proof}The tangent space to $D$ in $W$ at a quadric $[Q]$ that is singular at $p\in P$ is the hyperplane in $W$ of quadrics that vanish at $p$.  So if $p$ is in the base locus $Z$, then the hypersurface $D$ is singular at $[Q]$.
\end{proof}

Let $C\subset T_A$ be a general $(1,1)$-conic, let  $H_C$ be the net of quadrics vanishing on $T_A\cup P_C$, and let $T_A\cup Q_C$ be the base locus of $H_C$.
Let $p_C\in \PP(\C\oplus V_2\otimes V_3)$ be the singular point in the quadric $3$-fold $Q_C$ of rank $4$. Then, by Lemma \ref{phi_C}, $p_C\notin T_A$ and, by Lemma \ref{Q_c}, $p_C$ is a singular point of a quadric $Q_c$ in $H_C$.  Therefore, by Lemma \ref{singdisc}, $\PP(H_{C})\cap D_1^A$ is singular at $[Q_c]$, so $\PP(H_{C})$ is the tangent plane to $D_1^A$ at $[Q_c]$.

%If  $p_C\notin T_A$, then  $\PP(H_{C})\cap D_1^A$ is singular at $[Q_c]$, so it is, by Lemma \ref{singdisc}, the tangent plane to $D_1^A$ at $[Q_c]$.  
%Therefore, by Lemma \ref{singdisc}, the plane $\PP(H_C)$ is  tangent to $D_1^A$ at $[Q_c]$.  
In particular $\psi_{Q_A}$ maps to 
 $(D_1^A)^\vee$.  Since $F(T_A)$ is a threefold and the fibers are curves, the map is onto. 

\end{proof}

%\begin{corr}\label{1.7} 
%The map $\psi_Q$ factors as $\rho_{Q_A}\circ \phi_{Q_A}$, where $\phi_{Q_A}:F(T_A)\to X_A $ is a $\PP^1$-fibration and
%$\rho_{Q_A}:X_A\to \PP(V_4^{\vee})$ is $2:1$ onto its image. 
%The ramification locus of $\rho_Q$ is the set of $16$ points such that the image of $\psi_Q$ is singular at $16$ points. 
%\end{corr}
%\begin{proof}
%The first part follows from the previous Lemma.
%The proof of the second part is parallel to the proof of Proposition \ref{Z_Q}.
%For the second part, denote by $H_{A,p} \subset \PP^5$  the polar hyperplane to the vertex $p$ of the cone $C(\PP(V_2)\times \PP(V_3))$ with respect to the quadric $Q_A$. 
%Let $S_A=T_A\cap H_{A,p}$. We identify the $(1,1)$-conics that map to the ramification locus of $\rho_{Q_A}$ as the $32$ conics contained in $S_A$ and described in points $(2)$ in Proposition \ref{32}.
%The map $\psi_{Q_A}$ map this set $2:1$ to $16$ points such that each fiber is
%a pair of conic described in Proposition \ref{32}.\end{proof}

%Since the dual surface of Kummer surface is again a Kummer surface.
%we have recovered the two families of Kummer surfaces in the EPW- quartic.
\subsection{From the Hilbert scheme of conics to a Lagrangian degeneracy locus.}
Finally we relate the base of the fibration on the Hilbert scheme $F(T_A)$ directly to the Lagrangian degeneracy locus defined in \ref{lagrangiankummer}.
Let us consider the space 
 \[
 T_A=C(\PP(V_2)\times \PP(V_3))\cap Q_A\subset \PP(\C\oplus (V_2\otimes  V_3)).
 \]
 Choose a coordinate system in $\C \oplus (V_2\otimes  V_3)$ in such a way that $Q_A(z,x)=z^2-Q'_A(x)$, i.e. such that $z=0$ is the hyperplane polar to the vertex of the cone with respect to the quadric $Q_A$. Note that we then have $T_A\cap \{z=0\}= S_A$. The quadric $Q'_A$ corresponds to a symmetric map $q'_A: V_2\otimes V_3 \to (V_2\otimes V_3)^{\vee}$. Let now $V_4:=\C v_0 \oplus V_3$. Thus $\wedge^2 V_4= v_0\wedge V_3\oplus \wedge^2 V_3$ and:
 $$V_2\otimes \wedge^2 V_4= (V_2\otimes v_0\wedge V_3)\oplus (V_2\otimes \wedge^2 V_3).$$
% where elements of the $V_3$ in the first component should be thought of as two-forms $v_0\wedge v$ for $v\in V_3$. 
We shall from now on interpret 
 $V_2\otimes \wedge^2 V_4$ as a subspace in $\wedge^3(V_2\oplus V_4)$. Then,
 up to choices of volume forms $vol_2$ and $vol_4$ in $V_2$ and $V_4$ respectively, we have a natural skew-symmetric form $\eta_{2,4}$ on $V_2\otimes \wedge^2 V_4$ induced by the wedge product. The decomposition 
 $V_2\otimes \wedge^2 V_4= (V_2\otimes v_0\wedge V_3)\oplus (V_2\otimes \wedge^2 V_3)$ is then a decomposition into a sum of two Lagrangian spaces with respect to $\eta_{2,4}$. Furthermore the graph  $A\subset (V_2\otimes V_3)\oplus (V_2\otimes \wedge^2 V_3)$ of $q'_A$ is also Lagrangian. 
Hence to $A$ we can associate a Kummer surface:

$$\hat{D}_1^A:=\{[U]\in \PP(V_4^{\vee}) | \ \dim (A\cap (V_2\otimes \wedge^2 U)) \geq 1\},$$
singular in 
$$\hat{D}_2^A:=\{[U]\in \PP(V_4^{\vee}) | \ \dim (A\cap (V_2\otimes \wedge^2 U)) \geq 2\}.$$

On the other hand the system of quadrics containing $T_A$ is naturally isomorphic to $V_4=\C \oplus V_3$ via 
$$V_4=\C \oplus V_3 \ni (z,v)\mapsto (z\cdot Q_A+ Q_v)\in H^0({\mathcal I}_{T_A}(2)),$$
with $Q_v$ defined by $Q_v(z,x)=x\wedge x\wedge v$ where $(z,x)\in \C\oplus(V_2\oplus V_3)$.

We, consider the map $\psi_{Q_A}: F(T_A)\to \PP(H^0({\mathcal I}_{T_A}(2))^{\vee})=\PP(V_4^{\vee})$ associating to a conic $C$ the system $H_C$ of quadrics vanishing on $T_A$ and the plane $P_C$ spanned by $C$ and prove:
 
\begin{prop}\label{hilb lagr} The map  $\psi_{Q_A}$ factors as $\rho_{Q_A}\circ \phi_{Q_A}$, where $\phi_{Q_A}:F(T_A)\to X_A $ is a $\PP^1$-fibration and
$\rho_{Q_A}:X_A\to \hat{D}_1^A$ is $2:1$ onto its image.  Furthermore $X_A$ is an abelian surface, and $\rho_{Q_A}$ is a double cover of its Kummer surface.
\end{prop}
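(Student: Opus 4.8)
The plan is to read off the factorization from the Stein factorization of $\psi_{Q_A}$, and then to identify the base with an abelian surface by a branch–locus analysis. First I would recall from Proposition \ref{1.10} and Lemma \ref{phi_C} that over a general point the fibre of $\psi_{Q_A}$ is a pair of disjoint $\PP^1$'s: these are the two pencils of planes (rulings) of the rank-$4$ quadric threefold $Q_C$ cut out by the net $H_C$, which meet $T_A$ in two pencils of $(1,1)$-conics. Taking the Stein factorization $\psi_{Q_A}=\rho_{Q_A}\circ\phi_{Q_A}$ of the proper morphism from the threefold $F(T_A)$, the map $\phi_{Q_A}\colon F(T_A)\to X_A$ has connected fibres while $\rho_{Q_A}\colon X_A\to\psi_{Q_A}(F(T_A))$ is finite. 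Since the general fibre of $\psi_{Q_A}$ has exactly two connected components, each a $\PP^1$, the morphism $\phi_{Q_A}$ is a $\PP^1$-fibration and $\rho_{Q_A}$ has degree $2$.

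Next I would identify the image. Under the isomorphism $H^0({\mathcal I}_{T_A}(2))\cong V_4$ fixed above, the target $\PP(H^0({\mathcal I}_{T_A}(2))^\vee)$ becomes $\PP(V_4^\vee)$, and by Proposition \ref{1.10} the image of $\psi_{Q_A}$ is the dual Kummer surface $(D_1^A)^\vee$. It remains to match $(D_1^A)^\vee$ with the Lagrangian degeneracy locus $\hat D_1^A$. For this I would rewrite both as the same condition on $[U]\in\PP(V_4^\vee)$: a net of quadrics through $T_A$ is tangent to the discriminant $D_1^A$ exactly when the associated Lagrangian $V_2\otimes\wedge^2U$ meets $A$ nontrivially, which is precisely the defining condition of $\hat D_1^A$. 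This is the Lagrangian/symmetric dictionary already used for the plane section in the preceding subsection. As both are quartic Kummer surfaces in $\PP(V_4^\vee)$, matching the defining conditions yields $\psi_{Q_A}(F(T_A))=\hat D_1^A$ and hence the stated factorization with $\rho_{Q_A}\colon X_A\to\hat D_1^A$ of degree $2$.

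The heart is to show that $X_A$ is abelian and that $\rho_{Q_A}$ is the quotient onto $\hat D_1^A=\mathrm{Kum}(X_A)$. The two sheets of $\rho_{Q_A}$ record the choice of one of the two rulings of $Q_C$, and these rulings come together exactly when $Q_C$ drops to rank $3$. I would show that this happens precisely over the $16$ nodes $\hat D_2^A$ of the Kummer quartic and over no other point, so that $\rho_{Q_A}$ is étale over $\hat D_1^A\setminus\hat D_2^A$ and ramified exactly over these $16$ nodes. Passing to the minimal resolution $\widetilde K$ of $\hat D_1^A$, the $16$ nodes become $16$ disjoint $(-2)$-curves $E_1,\dots,E_{16}$ whose sum is $2$-divisible (Nikulin's characterization of Kummer surfaces), and the local analysis at a node ($A_1\cong\C^2/\{\pm1\}$) shows that $\rho_{Q_A}$ induces a double cover of $\widetilde K$ branched along $\sum_i E_i$. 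Since $\widetilde K$ is simply connected and $\mathrm{Pic}(\widetilde K)$ is torsion free, such a cover is unique, given by the square root $\tfrac12\sum_i E_i$; its total space is the blow-up of an abelian surface at its $16$ two-torsion points. Blowing down the exceptional curves then identifies the smooth surface $X_A$ with that abelian surface and $\rho_{Q_A}$ with the quotient $X_A\to X_A/\{\pm1\}=\hat D_1^A$, as claimed.

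One can moreover recognize $X_A$ concretely: the projection $T_A\to\PP(V_2)=\PP^1$ exhibits $T_A$ as a quadric-surface bundle degenerating over $6$ points, and the double cover of $\PP^1$ parametrizing the two rulings is a genus-$2$ curve $C$; then $X_A\cong\mathrm{Jac}(C)$, and the $6$ singular conics of $\pi_1$ found in Proposition \ref{32} are exactly the $6$ branch points. The main obstacle is the branch-locus computation of the previous paragraph: one must verify by a local analysis of the net $H_C$ and of $\mathrm{rank}\,Q_C$ that the two rulings merge over every node of $\hat D_1^A$ and over no smooth point, and that the Stein factor $X_A$ is smooth there, so that the Nikulin argument applies cleanly. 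The preliminary identification $(D_1^A)^\vee=\hat D_1^A$ also requires careful bookkeeping of the volume forms and of the graph description of $A$, but is routine given the dictionary of the previous subsection.
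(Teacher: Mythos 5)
Your overall architecture (Stein factorization gives the $\PP^1$-fibration and a degree-$2$ finite part; branching over the $16$ nodes forces the cover to be an abelian surface) matches the shape of the result, and your Nikulin-style argument for abelianness is a legitimate substitute for the paper's terse conclusion. But there are two genuine gaps. First, the identification of the image with $\hat D_1^A$. You propose to deduce it from "the Lagrangian/symmetric dictionary already used for the plane section in the preceding subsection," but that dictionary identifies $D_1^A$ with a component of the symmetric degeneracy locus ${\mathcal D}_1$ inside $\PP(V_4)$; the surface $\hat D_1^A$ lives in $\PP(V_4^\vee)$ and is defined by the condition $A\cap(V_2\otimes\wedge^2 U)\neq 0$, which is not what that lemma treats. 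The paper has to prove the inclusion $\psi_{Q_A}(F(T_A))\subset \hat D_1^A$ by a separate explicit computation: choosing three points of the conic, extracting the constant $c_C=z_iz_j-c_{i,j}$, and showing that the resulting $3\times 3$ linear system $E_1=E_2=E_3=0$ has rank $2$ (hence a nontrivial solution in $A\cap(V_2\otimes\wedge^2 U_C)$), with rank dropping to $1$ exactly when the conic lies in $\{z=0\}$. Asserting that "matching the defining conditions" is "routine" skips the only nontrivial content of that step.

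Second, the branch-locus computation, which you yourself flag as "the main obstacle" and then do not carry out. The paper resolves it not by a local analysis of $\mathrm{rank}\,Q_C$ at each node, but by a global device absent from your proposal: the deck involution $(z,\beta)\mapsto(-z,\beta)$ of the double cover $T_A\to S_A'$ acts on $F(T_A)$ and preserves every fiber of $\psi_{Q_A}$; its fixed conics are exactly those contained in $S_A=T_A\cap\{z=0\}$, of which there are $32$ by Proposition \ref{32}. Since a nontrivial involution of $\PP^1$ has two fixed points, these $32$ conics account for exactly $16$ fibers on which the two rulings have merged, and the rank-$1$ case of the coordinate computation shows these fibers map into $\hat D_2^A$; on all other fibers the involution swaps the two $\PP^1$'s, so $\rho_{Q_A}$ is unramified there. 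This simultaneously proves that the rulings merge over every node and over no smooth point — precisely the two facts your plan defers. Without either the involution-plus-counting argument or an actual local verification, the claim that $\rho_{Q_A}$ is branched exactly over the $16$ nodes (and hence that $X_A$ is abelian) is not established.
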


In Proposition \ref{1.10} we showed that $(D_1^A)^{\vee}$ is the image of $\psi_{Q_A}$.  We shall now see that the image of $\psi_{Q_A}$ is in fact also described as $\hat{D}^1_A$.
\begin{lem}  For any $(1,1)$-conic $C\subset T_A$ i.e. $[C]\in F(T_A)$ we have $\psi_{Q_A}([C])\in \hat{D}_1^A$, furthermore if $C\subset S_A=T_A\cap \{z=0\}$ then $\psi_{Q_A}([C])\in \hat{D}_2^A$.
\end{lem}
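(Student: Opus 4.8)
The plan is to compute the image point $\psi_{Q_A}([C])$ explicitly as a hyperplane $U_C\subset V_4$, and then to exhibit by hand a nonzero element of $A\cap(V_2\otimes\wedge^2U_C)$; the second, independent element in the $S_A$–case will come from a rank drop forced by the incidence $C\subset Q_A$. First I would pin down $U_C$. Write $\pi_2(C)=\PP(V_3')$ for a $2$–plane $V_3'\subset V_3$, let $\omega_\ell\in\wedge^2V_3'$ be its Pl\"ucker point and $\mu:=vol_3(\omega_\ell\wedge\cdot)\in V_3^\vee$, so that $\ker\mu=V_3'$. Let $\bar P_C\subset\PP(V_2\otimes V_3)$ be the image of $P_C$ under the projection from the cone vertex; it is the plane spanned by the two rank one points and the rank two point of $C$, it lies in $\PP(V_2\otimes V_3')$, and $\bar P_C\cap\Sigma_{2,3}=C$ is cut out on $\bar P_C$ by the restriction $f_C$ of the Segre equation. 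A direct computation of $Q_v|_{\bar P_C}$ (which depends only on the $x$–variables) shows $Q_v|_{P_C}=c_f\,\mu(v)\,f_C$ for a fixed nonzero constant $c_f$; in particular $Q_v$ contains $P_C$ iff $v\in V_3'$. Since moreover $Q_A|_{P_C}=c_A\,f_C$ for a constant $c_A$ (because $Q_A$ vanishes on the reduced conic $C\subset P_C$), the net of quadrics through $T_A\cup P_C$ is $U_C=\ker\xi_C$ with $\xi_C=c_A\,v_0^\vee+c_f\,\mu\in V_4^\vee$, i.e. $\psi_{Q_A}([C])=[U_C]$.

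Next I would translate membership in $\hat D_1^A$ into linear algebra on $V_2\otimes V_3'$. Using that for a hyperplane $U=\ker\xi\subset V_4$ one has $\wedge^2U=\ker(\iota_\xi\colon\wedge^2V_4\to V_4)$, the condition $\tau\in V_2\otimes\wedge^2U_C$ reads $(\mathrm{id}_{V_2}\otimes\iota_{\xi_C})\tau=0$. Writing the general element of $A$ as $\tau=x^\flat+q'_A(x)$ with $x\in V_2\otimes V_3$ and $x^\flat$ its image in $V_2\otimes(v_0\wedge V_3)$, and splitting $(\mathrm{id}\otimes\iota_{\xi_C})\tau$ along $V_2\otimes\C v_0$ and $V_2\otimes V_3$, one finds that $\tau\in A\cap(V_2\otimes\wedge^2U_C)$ if and only if $x\in V_2\otimes V_3'$ and $x$ lies in the radical of the symmetric form
\[
N:=q'_A|_{V_2\otimes V_3'}+\lambda_0\,S,\qquad S:=vol_2\otimes vol_{V_3'},
\]
on the four dimensional space $V_2\otimes V_3'$, where $S$ is the tensor of the area forms of $V_2$ and $V_3'$ (so that its zero locus is the rank one Segre, whence $S|_{\bar P_C}\propto f_C$), and $\lambda_0$ is the scalar for which the $f_C$–terms in $N|_{\bar P_C}$ cancel. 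As $x\mapsto x^\flat$ is injective, this identifies $A\cap(V_2\otimes\wedge^2U_C)$ with $\mathrm{rad}(N)$.

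The crux is the rank drop of $N$. The plane $\bar P_C$ is a hyperplane of $V_2\otimes V_3'$, and $C\subset Q_A$ means precisely that $z(t)^2=Q'_A(x(t))$ along $C$; equivalently $Q'_A|_{\bar P_C}$ differs from $\ell_z^2$ by a multiple of $f_C$, where $\ell_z$ is the linear form restricting to $z(t)$ on $C$. Hence for the above $\lambda_0$ one gets $N|_{\bar P_C}=\ell_z^2$, a perfect square, so $\mathrm{rad}(N|_{\bar P_C})$ has dimension at least $2$ inside $\bar P_C$. Because restriction to a hyperplane lowers the dimension of the radical by at most one, $N$ has nonzero radical, and any $0\neq x\in\mathrm{rad}(N)$ yields $0\neq\tau=x^\flat+q'_A(x)\in A\cap(V_2\otimes\wedge^2U_C)$. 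Therefore $\psi_{Q_A}([C])\in\hat D_1^A$.

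Finally the $S_A$–case is the degenerate version of the same computation: if $C\subset S_A=T_A\cap\{z=0\}$ then $z\equiv0$ on $C$, so $\ell_z=0$ and $N|_{\bar P_C}=0$ identically; thus $\mathrm{rad}(N|_{\bar P_C})=\bar P_C$ is three dimensional, $\mathrm{rad}(N)$ has dimension at least $2$, and $A\cap(V_2\otimes\wedge^2U_C)$ contains two independent vectors, giving $\psi_{Q_A}([C])\in\hat D_2^A$. I expect the main difficulty to be purely bookkeeping: matching the description of $U_C$ as a net of quadrics with its description as a subspace of $V_4$ entering $\wedge^2U_C$, and fixing the volume-form normalisations so that $\lambda_0$ is exactly the scalar making $N|_{\bar P_C}$ a perfect square. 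Once that normalisation is correct, the geometric input — that $C\subset Q_A$ forces a perfect square on $\bar P_C$ — does all the work, and the dichotomy $\operatorname{rank}\ell_z^2\le 1$ versus $\ell_z=0$ is exactly what separates $\hat D_1^A$ from $\hat D_2^A$.
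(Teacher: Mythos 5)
Your proposal is correct, and the engine driving it is the same one that powers the paper's proof, but your packaging is genuinely different and worth contrasting. The paper chooses three explicit points $(z_i,\beta_i)$ on $C$ in a normalized basis, computes $\psi_{Q_A}([C])=[(z_iz_j-c_{i,j},v_1\wedge v_2)]$ from the Lagrangian relations $\alpha_i\wedge\beta_j=\alpha_j\wedge\beta_i$, and then looks for an element of $A\cap(V_2\otimes\wedge^2U_C)$ only inside the $3$-dimensional span of the $\alpha_i+\beta_i$: the wedge conditions collapse to a linear system whose essential block is the Gram matrix $(z_iz_j)$, which has rank $1$ in general and rank $0$ when $C\subset\{z=0\}$. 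Your argument searches in all of $A$ (parametrized by $x\in V_2\otimes V_3$), reduces membership to $x\in V_2\otimes V_3'$ lying in $\mathrm{rad}(N)$ for the quadratic form $N=q'_A|_{V_2\otimes V_3'}+\lambda_0 S$ on the $4$-space $V_2\otimes V_3'$, and extracts the rank drop from $N|_{\bar P_C}=\ell_z^2$ together with the inequality $\dim\mathrm{rad}(N)\geq\dim\mathrm{rad}(N|_H)-1$ for a hyperplane $H$. The matrix $(z_iz_j)$ in the paper is exactly the Gram matrix of your $\ell_z^2$ evaluated at the three chosen points, so the two proofs degenerate for the same reason; what your version buys is a basis-free explanation of \emph{why} the system drops rank (the quadric $Q_A$ restricts to $\ell_z^2$ modulo $f_C$ on the plane of the conic) and a uniform treatment of the two cases via $\operatorname{rank}\ell_z^2\leq1$ versus $\ell_z=0$, at the cost of the normalization bookkeeping you flag: you must check that the scalar $\lambda_0$ forced by the identification $U_C=\ker(c_Av_0^\vee+c_f\mu)$ is the same one that cancels the $f_C$-terms in $N|_{\bar P_C}$ (it is, since both equal $c_A/c_f$ up to the fixed volume-form constants), and you should note explicitly that the cone vertex does not lie in $P_C$ (a $(1,1)$-conic neither passes through the vertex nor projects to a line), so that $P_C\cong\bar P_C$ and $\ell_z$ is a well-defined linear form there.
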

\begin{proof} Fix the notation above. We start by describing the map $\psi_{Q_A}$ in coordinates. 
 Consider three general points $(z_1,\beta_1), (z_2,\beta_2), (z_3,\beta_3) \in C\subset T_A \subset C(\PP(V_2)\times \PP(V_3))$.
 By assumption, $\beta_i \in V_2\otimes V_3$ are decomposable tensors that can be written as $\beta_1=u_1\otimes (v_0\wedge v_1),  \beta_2=u_2\otimes (v_0\wedge v_2), \beta_3=(u_1+u_2)\otimes (v_0\wedge (v_1+v_2))$ (recall that we interpret elements of $V_3$ as two-forms $v_0\wedge *$) for appropriate choice of basis $(u_1,u_2)$ of $V_2$ and $(v_1,v_2,v_3)$ of $V_3$ satisfying $vol_4(v_0\wedge v_1\wedge v_2\wedge v_3)=1$, $vol_2(u_1\wedge u_2)=1$ . We keep this basis until the end of the proof. Clearly the component of $\psi_{Q_A}(C)\in \PP(V_4^{\vee})=\PP(\C\oplus \wedge^2 V_3)$ corresponding to the part $\wedge^2 V_3$ is then $v_1\wedge v_2$. We need to determine the remaining part of $\psi_{Q_A}(C)$. Let $\alpha_i=q'_A(\beta_i)\in V_2\otimes \wedge^2 V_3$ which is equivalent to  $\alpha_i+\beta_i \in A\subset (V_2\otimes V_3)\oplus (V_2\otimes \wedge^2 V_3)$ and implies also $Q'_A(\beta_i)=\alpha_i\wedge \beta_i$.
 Since $A$ is Lagrangian we have $(\alpha_i+\beta_i)\wedge(\alpha_j+\beta_j)=0$ for all $i,j$ which implies  
 $\alpha_i\wedge\beta_j=\alpha_j\wedge\beta_i=:c_{i,j}$ for $i\neq j$. 
Now 
$$Q_A(\lambda_1 (z_i,\beta_i)+\lambda_2 (z_j,\beta_j))=(z_i+\lambda z_j)^2- Q'_A(\lambda_1\beta_i+ \lambda_2 \beta_j)=$$
$$=(\lambda_1 z_i+\lambda_2 z_j)^2- (\lambda_1\alpha_i+ \lambda_2 \alpha_j)\wedge(\lambda_1\beta_i+ \lambda_2 \beta_j)$$
$$=\lambda_1^2 Q_A((z_i,\beta_i))+\lambda_2^2 Q_A((z_j,\beta_j))+2 \lambda_1\lambda_2 (z_iz_j-c_{i,j}).$$
But $Q_A((z_i,\beta_i))=0$, since $(z_i,\beta_i)\in C\subset T_A$. So we deduce that
$$Q_A(\lambda_1 (z_i,\beta_i)+\lambda_2 (z_j,\beta_j))= 2\lambda_1\lambda_2 (z_iz_j-c_{i,j})$$
Now 
$$(t_0 Q_A+t_1 Q_{(v_1\wedge v_2)^*}) (\lambda_1 (z_i,\beta_i)+\lambda_2 (z_j,\beta_j))=2t_0 \lambda_1\lambda_2 (z_iz_j-c_{i,j})+ 2t_1\lambda_1\lambda_2$$
it follows that the $\psi_{Q_A}(C)=[(z_iz_j-c_{i,j},  v_1\wedge v_2) ]\in \PP(\C\oplus \wedge^2 V_3))$ which means, in particular, that:
$$z_1z_2-c_{1,2}=z_1z_3-c_{1,3}=z_2z_3-c_{2,3}=:c_{C}.$$
If now $U_C=<[(c_C,  v_1\wedge v_2) ]>^\perp\subset \C\oplus V_3$ we have 
$$\wedge^2 U_C=\{\gamma\in (v_0\wedge V_3)\oplus \wedge^2 V_3 |  \gamma \wedge v_1\wedge v_2 =\gamma \wedge v_1\wedge (c_C v_3+v_0)=\gamma \wedge v_2\wedge (c_C v_3+v_0)=0 \}$$
 We deduce 
 $$V_2\otimes \wedge^2 U_C= \{\omega\in V_2 \otimes \wedge^2 V_4 |  \omega \wedge v_1\wedge v_2 =\omega \wedge v_1\wedge (c_C v_3+v_0)=\omega \wedge v_2\wedge (c_C v_3+v_0)=0 \}$$
 
We shall prove that $A\cap (V_2\otimes \wedge^2 U_C)\neq 0$. We know $\sum_{i=1}^3 \lambda_i (\alpha_i+\beta_i)\in A$ for $\lambda_i\in \C$. It is therefore enough to prove that the following system of equations has a nonzero solution $(\lambda_1,\lambda_2,\lambda_3)$:
\begin{equation*}
\begin{split}
E_1(\lambda_1,\lambda_2,\lambda_3):=(\sum_{i=1}^3 \lambda_i (\alpha_i+\beta_i))\wedge v_1\wedge v_2=0\\
E_2(\lambda_1,\lambda_2,\lambda_3):=(\sum_{i=1}^3 \lambda_i (\alpha_i+\beta_i))\wedge v_1\wedge (v_0+c_{C} v_3)=0\\
E_3(\lambda_1,\lambda_2,\lambda_3):=(\sum_{i=1}^3 \lambda_i (\alpha_i+\beta_i))\wedge v_2\wedge (v_0+c_{C} v_3)=0\\
\end{split}
\end{equation*}
Observe now that $E_1(\lambda_1,\lambda_2,\lambda_3)\equiv 0$  since both $\alpha_i\wedge v_1\wedge v_2=0$ and $\beta_i\wedge v_1\wedge v_2=0$.
Furthermore, we have:
$$E_2(\lambda_1,\lambda_2,\lambda_3)\wedge v_1=E_2(\lambda_1,\lambda_2,\lambda_3)\wedge v_2=E_2(\lambda_1,\lambda_2,\lambda_3)\wedge v_3=E_2(\lambda_1,\lambda_2,\lambda_3)\wedge v_0=0,$$
as well as 
$$E_3(\lambda_1,\lambda_2,\lambda_3)\wedge v_1=E_3(\lambda_1,\lambda_2,\lambda_3)\wedge v_2=E_3(\lambda_1,\lambda_2,\lambda_3)\wedge v_3=E_3(\lambda_1,\lambda_2,\lambda_3)\wedge v_0=0.$$
Finally, the three equations
$$E_2(\lambda_1,\lambda_2,\lambda_3)\wedge u_1= z_1^2 \lambda_1 +z_1z_2 \lambda_2+z_1z_3 \lambda_3=0,$$
$$E_3(\lambda_1,\lambda_2,\lambda_3)\wedge u_2= z_1z_2 \lambda_1 +z_2^2 \lambda_2+z_2z_3 \lambda_3=0,$$
$$(E_2(\lambda_1,\lambda_2,\lambda_3)+E_3(\lambda_1,\lambda_2,\lambda_3))\wedge (u_1+u_2)= z_1z_3 \lambda_1 +z_2 z_3 \lambda_2+z_3^2 \lambda_3=0,$$
are proportional, so the above equations reduce to the following linear equations in the $\lambda_i$:
$E_2(\lambda_1,\lambda_2,\lambda_3)\wedge u_2=0$
and one of the above proportional equations.
%=E_3(\lambda_1,\lambda_2,\lambda_3)\wedge u_1=0$$
It follows that the system is a rank 2 system of equations, it hence admits a nontrivial solution implying 
$\dim (A\cap \wedge^2 U_C)\geq 1$, which proves:

$$\psi_{Q_A}([C])\in \hat{D}_1^A.$$
If now $C$ is contained in the branch locus $S_A=T_A\cap {z=0}$, then the system of equations is of rank 1 since the three above proportional equations vanish, so $\psi_{Q_A}([C])\in \hat{D}_2^A$.
\end{proof}

We can now pass to the proof of Proposition \ref{hilb lagr}.
\begin{proof}[Proof of Proposition \ref{hilb lagr}] Note that there is a natural involution on $F(T_A)$ induced by the involution 
$(z,\beta)\mapsto(-z,\beta)$.  A conic is fixed by this involution if and only if it is contained in $\{z=0\}$. Note also that from the explicit formula it follows that the involution acts on the fibers of  $\psi_{Q_A}$. From Lemma \ref{phi_C} we know that such a fiber is either a disjoint union of two $\PP^1$s or a single $\PP^1$. On the other hand, we know that a nontrivial involution on $\PP^1$ has  two fixed points. By Proposition \ref{32}, there are exactly 16 pairs of $(1,1)$-conics on $S_A=T_A\cap\{z=0\}$.  Each pair intersect in two points, so we deduce that the involution exchanges the $\PP^1$s  in the general fiber and acts on the $\PP^1$s in 16 fibers whose images are the 16 singular points of the Kummer surface $\hat{D}_1^A$. Hence the Stein factorization of  $\psi_{Q_A}$ gives the desired decomposition. Moreover, $X_A\to \hat{D}_1^A$ is a double cover branched in the 16 singular points of $\hat{D}_1^A$ and is therefore an Abelian surface. 
\end{proof}

Note that, as observed above, combining Proposition \ref{hilb lagr} with Proposition \ref{1.10} we obtain.
\begin{cor} \label{projdualKummer} The Kummer surface $\hat{D}_1^A$ is projective dual to the Kummer surface $D_1^A$.
\end{cor}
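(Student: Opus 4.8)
The plan is to observe that Corollary \ref{projdualKummer} is nothing more than a comparison of the two available descriptions of the image of a \emph{single} morphism, namely the map $\psi_{Q_A}$. Indeed, the map appearing in Proposition \ref{1.10} and the map appearing in the statement of Proposition \ref{hilb lagr} are literally the same: in both places $\psi_{Q_A}$ sends a $(1,1)$-conic $[C]\in F(T_A)$ to the net $H_C=H^0(\mathcal{I}_{T_A\cup P_C}(2))$ of quadrics vanishing on $T_A$ together with the plane $P_C=\langle C\rangle$, viewed as a point of the dual of the space of nets of quadrics through $T_A$. So the first thing I would do is make this coincidence explicit, and stress that there is only one map to analyse.

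Next I would identify the two target projective spaces. In the preamble to Proposition \ref{hilb lagr} the linear system of quadrics through $T_A$ is canonically identified with $V_4=\C\oplus V_3$, giving $H^0(\mathcal{I}_{T_A}(2))\cong V_4$ and hence a single isomorphism $\PP(H^0(\mathcal{I}_{T_A}(2))^{\vee})\cong\PP(V_4^{\vee})$. Under this identification both $(D_1^A)^{\vee}$, which lives in $\PP(H^0(\mathcal{I}_{T_A}(2))^{\vee})$ by Proposition \ref{1.10}, and $\hat{D}_1^A$, which lives in $\PP(V_4^{\vee})$ by the Lagrangian degeneracy construction of \ref{lagrangiankummer}, sit inside one and the same $\PP^3$. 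With the spaces matched, the conclusion is immediate: Proposition \ref{1.10} says the image of $\psi_{Q_A}$ is $(D_1^A)^{\vee}$, while Proposition \ref{hilb lagr} gives a factorization $\psi_{Q_A}=\rho_{Q_A}\circ\phi_{Q_A}$ with $\phi_{Q_A}$ a surjective $\PP^1$-fibration and $\rho_{Q_A}$ mapping onto $\hat{D}_1^A$. Since $\phi_{Q_A}$ is surjective, the image of $\psi_{Q_A}$ equals the image of $\rho_{Q_A}$, that is $\hat{D}_1^A$. A morphism has a unique image, so $(D_1^A)^{\vee}=\hat{D}_1^A$ as closed subvarieties of the common $\PP^3$, which is exactly the assertion that $\hat{D}_1^A$ is projective dual to $D_1^A$.

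The only point that genuinely needs attention — and which I expect to be the main (but mild) obstacle — is the compatibility of the two ambient identifications: one must be sure that the ``dual net-of-quadrics'' space of Proposition \ref{1.10} is matched with $\PP(V_4^{\vee})$ via the \emph{same} canonical isomorphism $H^0(\mathcal{I}_{T_A}(2))\cong V_4$ used in Proposition \ref{hilb lagr}, and that $D_1^A\subset\PP(V_4)$ is identified with the relevant discriminant component in $\PP(H^0(\mathcal{I}_{T_A}(2)))$ as arranged in Lemma \ref{discr}. Once this bookkeeping of the canonical identifications is in place, no further computation is required: the duality follows formally from the uniqueness of the image of $\psi_{Q_A}$, and one may simply record the equality $(D_1^A)^{\vee}=\hat{D}_1^A$.
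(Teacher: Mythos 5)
Your argument is correct and is exactly the paper's route: the corollary is obtained by combining Proposition \ref{1.10} (image of $\psi_{Q_A}$ is $(D_1^A)^\vee$) with Proposition \ref{hilb lagr} (image of $\psi_{Q_A}$ is $\hat{D}_1^A$), under the canonical identification $H^0(\mathcal{I}_{T_A}(2))\cong V_4$. Your write-up merely makes explicit the bookkeeping of identifications that the paper leaves implicit.
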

\begin{rem} Note that Corollary \ref{projdualKummer} provides further analogies between our description of Kummer surfaces and that of EPW sextics. Indeed a choice of Lagrangian $A$ provides two constructions leading to birational and projectively dual varieties $\hat{D}_1^A$ and $D_1^A$ which are both Kummer surfaces. In the context of EPW sextics a choice of Lagrangian space also gives rise to two birational and projectively dual EPW sextics.  
\end{rem}

\section{First construction - singular EPW cubes}\label{section2}
In this section we present the first construction of the family $\mathcal{U}$.
Let us first discuss a natural context where the elements from $\mathcal{U}$ appear. We shall investigate IHS fourfolds deformation equivalent to the Hilbert scheme of two points on a $K3$ surface (of $K3^{[2]}$-type) which admit an antisymplectic involution $\iota$ (i.e. that changes the sign of the symplectic form). 

 Involutions of $K3$  surfaces were first studied from a lattice-theoretic point of view by Nikulin  \cite{Nikulin}. For higher dimensions a classification of invariant lattices of non-symplectic automorphisms of
prime order was given in \cite{BCS} and \cite{BCMS}. The problem of finding a geometric realization of non-symplectic automorphisms on IHS fourfolds was addressed in \cite{OW} and \cite{MongardiWandel}.
%General properties of such involutions where also studied in \cite{beau-invo}, \cite{OgradyEPW}.

It follows from \cite[3.4, Theorem 2]{beau-invo} and \cite{OgradyEPW} that there exists exactly one irreducible 20-dimensional family of IHS fourfolds of $K3^{[2]}$ 
type  which admit anti-symplectic involutions. By \cite{OgradyEPW}, the invariant polarisation in this family has Beauville degree $q=2$ and the quotient of such an involution for a generic element is a special sextic hypersurface in $\PP^5$ called an EPW sextic. 
In \cite{OW} the authors classified all the possible invariant lattices $H^2(X,\mathbb{Z})^{\iota}$ of $19$ parameter families of IHS fourfolds of $K3^{[2]}$ type.
They found that any such lattice is hyperbolic and $2$-elementary. In \cite[thm.~2.3]{OW} they
distinguished five families of IHS fourfolds of $K3^{[2]}$-type with anti-symplectic involutions. 
In fact there are only four isomorphism classes of invariant sublattices $H^2(X,\Z)^{\iota}\subset H^2(X,\Z)$.  
They are $U$, $U(2)$, and $\langle 2\rangle \oplus \langle -2\rangle$ such that the generator $g$ with $q(g)=-2$ has divisibility either $2$ or $1$ (we call them the cases 1,2,3,4 respectively).
 Moreover, they found that there is a unique $19$-dimensional irreducible family that admits the invariant lattice from each of the cases $1,2,4$ respectively and two families in the case $3$. The families in the cases $1,3,4$ admit polarisations of Beauville degree $q=2$, it is not hard to see \cite[Rem.~5.7]{MongardiWandel} that they can be described as families of resolutions of special singular double EPW sextics. 

Our aim is to study the geometry of the missing family of IHS fourfolds with involutions  from the case $2$ above i.e. with invariant lattice $U(2)$. Note that each element of this family admits a natural polarization of Beauville degree $4$ and as proved in \cite[Proposition.~4]{Addington} the generic element of this family is not isomorphic to a moduli space of sheaves on a $K3$-surface. Note finally that the family with invariant lattice $U$ also admits a polarisation of degree $q=4$ that is invariant with respect to the non-symplectic involution. 
%(studied before by whom?). 

From \cite{OW} there is only one possible invariant lattice of rank two
$$H^2(X,\mathbb{Z})^{\iota}:=\{ x\in H^2(X,\mathbb{Z})| \iota^{\ast}(x)=x \}.$$
 that does not admit a polarization of Beauville degree $q=2$, namely:
$$U(2)=\begin{bmatrix} 0&2\\
2&0
\end{bmatrix}.$$
Let $X$ be an IHS fourfold with an involution and an invariant lattice $U(2)$.
 Then, by \cite{beau-invo}, the invariant lattice
has signature $(1,1)$ for some $n$. In particular $X$ is projective.
Let $h_1$ and $h_2$ be the generators of the lattice with $q(h_1)=q(h_2)=0$.
We are interested in the invariant polarization $h=h_1+h_2$ of Beauville degree $q(h_1+h_2)=4$.
If $(X,H)$ is a polarized IHS fourfold of type $($K3$)^{[2]}$ with $q(H)=4$, we infer
$H^4=3\cdot (4)^2=48$, and from the Riemann--Roch theorem we find $h^0(\mathcal{O}_X(H))=10$ \cite[Theorem~5.2]{Nieper}.
Thus, our polarization $h$ gives a map $f\colon X\to \PP^9$ that factors through the involution $\iota$.
Hence, we expect that $f$ is $2:1$ to a degree $24$ fourfold. Our aim is to describe the image of this map.
We shall first show that this image can be realized as a subset of a degenerated EPW cube and next prove that in fact $X$ is an element of $\mathcal{U}$. 

\subsection{Degenerate EPW cubes}\label{Section-Singular}
In this section we consider double EPW cubes constructed from a general Lagrangian subspace $A\in \Sigma$, in particular with  $\PP(A)\cap \mathrm{G}(3,V)=[U_1]$.
%(and the fixed locus of the antisymplectic involution) constructed from general $A\in \Sigma$ and the Lagrangian subbundle $\mathcal{T}\subset \mathcal{O}_{\mathrm{G}(3,V)}\otimes \wedge^3 W$ whose fibers are $\mathcal{T}_{[U]}=T_U=\wedge^2 U\wedge W$. 
Let $\mathcal{T}$ be the Lagrangian subbundle $\mathcal{T}\subset \mathcal{O}_{\mathrm{G}(3,V)}\otimes \wedge^3 V$ whose fiber over $[U]\in \mathrm{G}(3,V)$  is $\mathcal{T}_{[U]}=T_U=\wedge^2 U\wedge V$.   The degeneracy locus $D_2^A=\{[U]\in \mathrm{G}(3,V)| \dim (A\cap T_U)\geq 2\}$ is called an EPW cube.   
Our $19$-dimensional family of IHS fourfolds will be constructed from the subvariety $D_2^A\cap \PP(T_{U_1})$, when $\PP(A)\cap \mathrm{G}(3,V)=[U_1]$.  

The following description of a projective tangent space $\PP(T_U)$ to $\mathrm{G}(3,V)$ at $[U]$ is classical \cite{Donagi}.
\begin{lem}\label{lem-C_U} Let $[U]\in \mathrm{G}(3,V)$ and $\PP(T_{U})=\PP(\wedge^2{U}\wedge V)\subset \PP(\wedge^3V)$ be the embedded projective tangent space to $\mathrm{G}(3,V)\subset \PP(\wedge^3 V)$ in $[U]$.
Then the intersection
$$C_{U}:=\PP(T_{U})\cap \mathrm{G}(3,V)$$ 
 is a cone in the $9$-dimensional linear space $\PP(T_{U})$ with vertex $[U]$ over the Segre embedding of $\PP(\wedge^2U)\times \PP(V/U)$.
\end{lem}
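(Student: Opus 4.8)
The plan is to determine exactly which $3$-planes $U'\subset V$ have their Plücker point in the tangent space $\PP(T_U)$, and then to recognise the resulting locus as a cone. The core assertion is that, for a nonzero decomposable $\omega=u_1\wedge u_2\wedge u_3$ spanning $U'=\langle u_1,u_2,u_3\rangle$, one has $\omega\in T_U=\wedge^2U\wedge V$ if and only if $\dim(U\cap U')\geq 2$.

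To set this up I would fix a complement $W$ with $V=U\oplus W$, giving $W\cong V/U$ and the grading
\[
\wedge^3V=\wedge^3U\;\oplus\;(\wedge^2U\wedge W)\;\oplus\;(U\wedge\wedge^2W)\;\oplus\;\wedge^3W,
\]
under which $T_U$ is the sum of the first two summands. Writing $u_i=a_i+w_i$ with $a_i\in U$ and $w_i\in W$, membership $\omega\in T_U$ means the $U\wedge\wedge^2W$- and $\wedge^3W$-components of $\omega$ vanish. The $\wedge^3W$-component is $w_1\wedge w_2\wedge w_3$, whose vanishing says that the image of $U'$ in $V/U$ has dimension $\leq 2$, i.e. $\dim(U\cap U')\geq 1$; the substance is to upgrade this to $\geq 2$.

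This upgrade is the \textbf{main step}. I would argue by contradiction: if $\dim(U\cap U')=1$, choose a basis $u_1',u_2',u_3'$ of $U'$ with $u_3'\in U\cap U'$ and with $w_1,w_2$ (the images of $u_1',u_2'$ in $W$) independent. Then the $U\wedge\wedge^2W$-component of $\omega=u_1'\wedge u_2'\wedge u_3'$ is a nonzero scalar times $u_3'\wedge w_1\wedge w_2$, which is nonzero since $u_3'\in U$, $w_1,w_2\in W$ are independent and $U\cap W=0$; this contradicts $\omega\in T_U$. Hence $\dim(U\cap U')\geq 2$. The converse is immediate: if $P:=U\cap U'$ is a plane with basis $p_1,p_2$ and $v\in U'$, then $\omega=p_1\wedge p_2\wedge v\in\wedge^2U\wedge V$. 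This identifies $C_U\setminus\{[U]\}$ with the $3$-planes $U'=P+\langle v\rangle$, where $P\subset U$ is a plane and $v\notin U$.

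Finally I would read off the cone structure. Projection from the vertex $[U]=[\wedge^3U]$ realises $\PP(T_U)$ as a cone over $\PP(T_U/\wedge^3U)$, and the wedge map $\wedge^2U\otimes V\to T_U$ induces an isomorphism $T_U/\wedge^3U\cong\wedge^2U\otimes(V/U)$ (a $9=3\cdot 3$ dimension count makes injectivity suffice). Under it, $[p_1\wedge p_2\wedge v]$ maps to the decomposable tensor $(p_1\wedge p_2)\otimes\bar v$, hence into the Segre variety $\PP(\wedge^2U)\times\PP(V/U)$; here one uses that every nonzero element of $\wedge^2U$ is decomposable since $\dim U=3$, so $\PP(\wedge^2U)$ really parametrises planes in $U$. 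Conversely every Segre point is attained, and replacing $v$ by $v+u$ with $u\in U$ changes $\omega$ by the multiple $p_1\wedge p_2\wedge u$ of the vertex vector, so the whole line through $[U]$ over each Segre point lies in the closed set $C_U$. This exhibits $C_U$ as the cone over $\PP(\wedge^2U)\times\PP(V/U)$ with vertex $[U]$. I expect the adapted-basis computation in the main step to be the only real obstacle, the cone description then being formal.
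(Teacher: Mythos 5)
Your proposal is correct and follows essentially the same route as the paper: the paper's proof simply cites Donagi (Lemma 3.5) for the fact that the points of $G(3,V)$ in $\PP(T_U)$ are exactly the $[U']$ with $\dim(U\cap U')\geq 2$, parameterized by pairs (plane $M_2\subset U$, line $N_1\subset V/U$), which is the Segre identification you make. You supply in full the verification that the paper outsources — the graded decomposition of $\wedge^3V$ under $V=U\oplus W$ and the adapted-basis computation ruling out $\dim(U\cap U')=1$ — and your argument is sound.
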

\begin{proof} See \cite[Lemma 3.5]{Donagi}.  The tangent space $\PP(T_U)$ is spanned by the spaces $U'$ that intersect $U$ in codimension $1$.  These spaces are naturally parameterized by pairs $(M_2,N_1)$, where $M_2\subset U$ is $2$-dimensional, $N_1\subset V/U$ is $1$-dimensional and $U'\cap U=M_2$, $U'/M_2=N_1$.  
\end{proof}
\medskip
Note that for each $[U]\in C_{U_1}$ we have $[U_1]\in \mathbb{P}(T_U)\cap\PP(A)$. It follows that $C_{U_1}\subset D_1^A$.  Observe, that since $A$ is Lagrangian and $\wedge^3 U_1\subset A$
then $A\subset (\wedge^3 U_1)^{\perp}$. Similarly $T_U\subset (\wedge^3 U_1)^{\perp}$ for each $[U]\in C_{U_1}$. There is moreover an exact sequence:

\begin{equation}\label{egbart}\textstyle
0\to \mathcal{O}_{C_{U_1}} \to \mathcal{T} |_{C_{U_1}} \to \bar {\mathcal{T}} \to 0, 
\end{equation}
with $\bar{\mathcal{T}}$ a subbundle of the trivial bundle $\mathcal{O}_{C_{U_1}}\otimes (\wedge^3 V)/(\wedge^3 U_1)$ with fibers $\bar{T}_U= T_U/(\wedge^3 U_1)$ over $[U]\in C_{U_1}$.
Consider the space $$(\wedge^3 U_1)^{\perp}/(\wedge^3 U_1) \subset (\wedge^3 V)/(\wedge^3 U_1)$$ equipped with the symplectic form $\bar{\eta}$ induced by $\eta$. Clearly both $\bar{A}=A/(\wedge^3 U_1)$ and the fibers $\bar{T}_U= T_U/(\wedge^3 U_1)$ of $\bar{\mathcal{T}}$ are contained in $ (\wedge^3 U_1)^{\perp}/(\wedge^3 U_1)$  and are Lagrangian with respect to the symplectic form $\bar{\eta}$.  The natural map
\[
\iota: C_{U_1}\to \mathrm{LG}_{\bar{\eta}}(9,(\wedge^3 U_1)^{\perp}/(\wedge^3 U_1));\quad [U]\mapsto [\bar{T}_U]
\]
is an embedding, since it is the restriction of the embedding 
\[
\mathrm{G}(3,V)\to \mathrm{LG}_\eta(10,\wedge^3V), \;\; [U]\mapsto [T_U]
\]
 to $C_{U_1}$.
Denote the corresponding Lagrangian degeneracy loci by
$$D_k^{\bar{A}}=\{ [U]\in C_{U_1}| \dim (\bar{T}_U \cap \bar{A})\geq k\}.$$
These degeneracy loci are simply the restrictions to $\iota(C_{U_1})$ of the universal degeneracy loci $\mathbb{D}_k^{\bar{A}}$ on $\mathrm{LG}_{\bar{\eta}}(9,(\wedge^3 U_1)^{\perp}/(\wedge^3 U_1))$ \cite{PragaczRatajski}.
%\begin{lem}Let $D_1^{\bar{A}}$ and $D_2^{\bar{A}}$ are integral.
%\end{lem}
%\begin{proof}
%\end{proof}

\begin{lem}\label{lem-ram} Let $[A]\in (\Sigma-(\Sigma_+\cup \Sigma[1])) \subset \mathrm{LG}_\eta(10,\wedge^3W)$ such that $[U_1]\in \mathbb{P}(A)\cap \mathrm{G}(3,V)$. Then $C_{U_1}\subset D_1^A$, and  $D_i^{\bar{A}}=C_{U_1}\cap D_{i+1}^A$, when $i=1,2$.  
Furthermore,
$D_1^{\bar{A}}$ is an intersection of $C_{U_1}\subset \PP^9$ with a quartic hypersurface $Q_A$,
and $D_2^{\bar{A}}$ is a surface of degree $72$ contained in the singular locus of $D_1^{\bar{A}}$.
\end{lem}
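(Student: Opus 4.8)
The plan is to derive the first two assertions from a single linear-algebra observation on $C_{U_1}$, to read off the quartic from a first Chern class, and to settle the degree and the singularities by a degeneracy-locus computation on a resolution of the cone.

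For $[U]\in C_{U_1}$, Lemma \ref{lem-C_U} gives $\dim(U\cap U_1)\geq 2$, so $\wedge^3U_1\in\wedge^2U\wedge V=T_U$; as $\wedge^3U_1\in A$ by hypothesis, the line $\langle\wedge^3U_1\rangle$ lies in $A\cap T_U$ and hence $C_{U_1}\subset D_1^A$. Passing to $\bar A=A/(\wedge^3U_1)$ and $\bar T_U=T_U/(\wedge^3U_1)$, the quotient by $\langle\wedge^3U_1\rangle$ presents $A\cap T_U$ as an extension of $\bar A\cap\bar T_U$ by that line, so $\dim(\bar A\cap\bar T_U)=\dim(A\cap T_U)-1$; scheme-theoretically this is exactly the content of the exact sequence \eqref{egbart}, and it yields $D_i^{\bar A}=C_{U_1}\cap D_{i+1}^A$, in particular for $i=1,2$. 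The hypothesis $[A]\notin\Sigma[1]$ forces $\dim(A\cap T_{U_1})=1$, hence $\dim(\bar A\cap\bar T_{U_1})=0$, so the vertex $[U_1]$ lies on neither $D_1^{\bar A}$ nor $D_2^{\bar A}$; this is what allows me to work in the smooth locus of $C_{U_1}$ below.

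To identify $D_1^{\bar A}$, I view it as the locus where the map of rank-$9$ bundles $\phi\colon\bar A\otimes\mathcal O_{C_{U_1}}\to\bar{\mathcal T}^\vee$ (the composite of $\bar A\otimes\mathcal O\hookrightarrow ((\wedge^3U_1)^\perp/\wedge^3U_1)\otimes\mathcal O$ with the quotient by $\bar{\mathcal T}$, using that $\bar{\mathcal T}$ is Lagrangian) fails to be injective; this is cut out by the single section $\det\phi\in H^0(\det\bar{\mathcal T}^\vee)$. From \eqref{egbart}, $c_1(\bar{\mathcal T})=c_1(\mathcal T|_{C_{U_1}})$, and the filtration $0\to\wedge^3\mathcal U\to\mathcal T\to\wedge^2\mathcal U\otimes\mathcal Q\to0$ on $G(3,V)$, with $\mathcal U\subset V\otimes\mathcal O$ and $\mathcal Q=(V\otimes\mathcal O)/\mathcal U$ the tautological bundles, gives $c_1(\mathcal T)=4c_1(\mathcal U)=-4H$. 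Thus $\det\bar{\mathcal T}^\vee=\mathcal O_{C_{U_1}}(4)$, and since the cone $C_{U_1}$ over the Segre $\PP^2\times\PP^2$ is projectively normal, $\det\phi$ is the restriction of a quartic form $Q_A$ on $\PP(T_{U_1})=\PP^9$; hence $D_1^{\bar A}=C_{U_1}\cap\{Q_A=0\}$.

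For the last assertion I compute $[D_2^{\bar A}]$ by the Pragacz--Ratajski formula for the second Lagrangian degeneracy locus, $[D_2^{\bar A}]=(c_1c_2-2c_3)(\bar{\mathcal T}^\vee)$ (the same formula as in Lemma \ref{lagrangian Kummer}, cf.\ \cite{PragaczRatajski}), and evaluate its degree against $H^2$. The main obstacle is that $C_{U_1}$ is singular at $[U_1]$; I resolve it by blowing up the vertex, obtaining the $\PP^1$-bundle $\PP(\mathcal O\oplus\mathcal O(-1,-1))$ over $\PP(\wedge^2U_1)\times\PP(V/U_1)=\PP^2\times\PP^2$. Writing $h_1,h_2$ for the hyperplane classes of the two factors and $\zeta=\sigma^*H$, one has $\zeta^2=(h_1+h_2)\zeta$ and $\int\zeta^5=6=\deg C_{U_1}$; restricting $\mathcal U$ as an extension of a line bundle with $c_1=-\zeta+h_1$ by the pullback of the tautological rank-$2$ bundle on $\PP(\wedge^2U_1)$, I compute the Chern classes of $\mathcal T|_{C_{U_1}}$ and obtain $[D_2^{\bar A}]=-c_1(\mathcal T)c_2(\mathcal T)+2c_3(\mathcal T)=16\zeta^3-12h_1\zeta^2+12h_1^2\zeta$. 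Intersecting with $\zeta^2$ and using $\int\zeta^5=6$, $\int h_1\zeta^4=3$, $\int h_1^2\zeta^3=1$ gives $16\cdot6-12\cdot3+12\cdot1=72$; since $D_2^{\bar A}$ avoids the exceptional divisor, this is its actual degree in $\PP^9$. Finally, $D_2^{\bar A}\subset\mathrm{Sing}(D_1^{\bar A})$ because the differential of $\det\phi$ is computed from the adjugate (cofactor matrix) of $\phi$, which vanishes identically where $\corank\phi\geq2$, i.e.\ along $D_2^{\bar A}$; as $D_1^{\bar A}$ is a hypersurface in the smooth locus of $C_{U_1}$ near these points, it must be singular there.
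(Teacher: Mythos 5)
Your proof is correct and follows essentially the same route as the paper: both reduce to the Pragacz--Ratajski formulas for the Lagrangian degeneracy classes of $\bar{\mathcal T}^{\vee}$, use $c_1(\mathcal T^{\vee})=4H$ to produce the quartic (your extra step via $\det\phi$ and projective normality of the cone makes this slightly more explicit), and evaluate $(c_1c_2-2c_3)$ to get degree $72$; the only real difference is that you compute the intersection numbers on the blown-up cone $\PP(\mathcal O\oplus\mathcal O(-1,-1))$ over $\PP^2\times\PP^2$, whereas the paper does Schubert calculus in $G(3,6)$ against the class $[C_{U_1}]=\sigma_2^2-\sigma_1\sigma_3$. (A minor bookkeeping remark: with the natural labelling of the factors the restrictions of $\sigma_2,\sigma_3$ come out as $h_2\zeta,\ h_2^2\zeta$ rather than $h_1\zeta,\ h_1^2\zeta$, but since $\int h_1\zeta^4=\int h_2\zeta^4=3$ and $\int h_1^2\zeta^3=\int h_2^2\zeta^3=1$ the degree $16\cdot 6-12\cdot 3+12\cdot 1=72$ is unaffected.)
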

\medskip
\begin{proof} 
First, we simply note that $\bar{A}\cap \bar{T}_U =(A\cap T_U)/(\wedge^3 U_1)$, so we obtain 
$C_{U_1}\cap D_{i+1}^A=D_i^{\bar{A}}$.   
To compute invariants, recall that
the $\PP^9$-bundle $\PP(\mathcal{T})$ is the projective tangent bundle on $\mathrm{G}(3,V)$, 
so $\mathcal{T}^{\vee}$ fits into an exact sequence
%\[
%0\to\mathcal{O}_{\mathrm{G}(3,V)}(-1)\to \mathcal{T} \to \mathcal{T}_{\mathrm{G}(3,V)}(-1)\to 0.
%\]
\[
0\to\Omega_{\mathrm{G}(3,V)}(1)\to \mathcal{T}^{\vee} \to \mathcal{O}_{\mathrm{G}(3,V)}(1)\to 0.
\]
Therefore ${\mathcal T^{\vee} }$ has total Chern class 
\begin{align*}\label{}
c({\mathcal T^{\vee} })= \; &c(\mathcal{O}_{\mathrm{G}(3,V)}(1))/(\Omega_{\mathrm{G}(3,V)}(1))\\
= \; &1+4\sigma_1+8\sigma_1^2+(8\sigma_1^2+6\sigma_1\sigma_2-6\sigma_3)+(24\sigma_1^2\sigma_2-24\sigma_1\sigma_3)\\
&+(30\sigma_1\sigma_2^2-30\sigma_2\sigma_3)+(10\sigma_2^3+24\sigma_1\sigma_2\sigma_3-24\sigma_3^2)+
18\sigma_2^2\sigma_3+12\sigma_2\sigma_3^2+4\sigma_3^3 
\end{align*}
where $\sigma_i=c_i(Q_G)$ and $Q_G$ is the universal quotient bundle on $\mathrm{G}(3,V)$.
Furthermore, by the exact sequence \ref{egbart},  $c_i(\overline {\mathcal{T}}^{\vee} )\cap C_{U_1}=c_i({\mathcal{T^{\vee} }})\cap C_{U_1}$ for all $i$.  Applying the Pragacz Ratajski formulas \cite[Theorem 2.1]{PragaczRatajski} for the classes of the Lagrangian degeneracy loci $D_i^{\bar{A}}$ we get
\[
[D^{\bar{A}}_1]=c_1(\overline{{\mathcal T}}^{\vee})\cap [C_{U_1}] =c_1(\mathcal{T}^{\vee})\cap [C_{U_1}]=4\sigma_1\cap [C_{U_1}],
%, \quad [D^{\bar{A}}_2]=[(c_2c_1-2c_3)({\mathcal T}^{\vee})\cap \mathrm{G}(3,V)]
\]
so  $D_1^{\bar{A}}$ is an intersection of $C_{U_1}\subset \PP^9$ with a quartic hypersurface $Q_A$.
Furthermore
\begin{align*}
 [D^{\bar{A}}_2]&=(c_2c_1-2c_3)(\overline{{\mathcal T}}^{\vee})\cap [C_{U_1}]=(c_2c_1-2c_3)({\mathcal T}^{\vee})\cap [C_{U_1}] \\
 &=(c_2c_1-2c_3)({\mathcal T}^{\vee})\cap [C_{U_1}] = (16\sigma_1^3-12\sigma_1\sigma_2+12\sigma_3)\cap [C_{U_1}].
\end{align*}
%where $Q_G$ is the universal quotient bundle on $\mathrm{G}(3,V)$.
% The degree of  $D_2^{\bar{A}}$ is now computed, noting that
 The class of $[C_{U_1}]$ in $\mathrm{G}(3,V)$ is $(\sigma_2^2-\sigma_1\sigma_3)\cap [\mathrm{G}(3,V)]$, 
 so 
 \[
 \deg D^{\bar{A}}_2=\int_{ [\mathrm{G}(3,V)]}\sigma_1^2\cdot (\sigma_2^2-\sigma_1\sigma_3)\cdot (16\sigma_1^3-12\sigma_1\sigma_2+12\sigma_3)=\int_{ [\mathrm{G}(3,V)]}36 \sigma_1^2\sigma_2^3\sigma_3=72.
 \]
 %Similarily the class of $D_2^{\bar{A}}$ is $ D_3^A\cap C_{U_1}=D_2^{\bar{A}}$ which
 % a surface of degree $72$ by the Pragacz Ratajski formula \cite[Thm 2.1]{PragaczRatajski} for  $\bar{\mathcal{T}}$ and Schubert calculus. 
 The last statement is a standard result on degeneracy loci.
\end{proof}
To proceed with the construction we need to know precisely the singular locus of the Lagrangian degeneracy locus $D_1^{\bar{A}}$.
 % we first prove the following 
  \begin{lem} \label{singularities of bar D1A} Let $[A]\in (\Sigma-(\Sigma_+\cup \Sigma[1])$  and let $[U_1]$ be the unique point in $\Theta(A)$. 
  Then the Lagrangian locus $C_{U_1}\cap D_2^A=D_1^{\bar{A}}$ is smooth outside $S_{\bar{A}}=D_2^{\bar{A}}=C_{U_1}\cap D_3^A$. 
  Moreover, the tangent cone to $D_1^{\bar{A}}$ in points of $S_{\bar{A}}$ is a cone over a smooth conic curve.
  \end{lem}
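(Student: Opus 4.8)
The plan is to put the family $\{\bar T_U\}_{[U]\in C_{U_1}}$ into the standard local normal form for Lagrangian degeneracy loci and read off the singularities from the resulting symmetric determinantal equation. Fix a point $[U_0]\in D_1^{\bar A}$. By the hypothesis $[A]\notin\Sigma_+$ the cone $C_{U_1}$ is the unique one attached to $A$, while $[A]\notin\Sigma[1]$ gives $\dim(A\cap T_{U_1})=1$, hence $\dim(\bar A\cap\bar T_{U_1})=0$ and $[U_1]\notin D_1^{\bar A}$; so $[U_0]$ is a smooth point of the $5$-dimensional cone $C_{U_1}$ and $T_{U_0}C_{U_1}\cong\C^5$. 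Choose a Lagrangian complement $B$ to $\bar A$ in $(\wedge^3U_1)^{\perp}/(\wedge^3U_1)$ with $B\cap\bar T_{U_0}=0$. In the induced chart $\Sym^2\bar A^{*}\hookrightarrow LG_{\bar\eta}(9,(\wedge^3U_1)^{\perp}/(\wedge^3U_1))$ the subspace $\bar T_U$ is the graph of a symmetric map $\phi_U\colon\bar A\to B\cong\bar A^{*}$, and $\dim(\bar A\cap\bar T_U)=\corank\phi_U$. Thus locally $D_k^{\bar A}=\{\corank\phi_U\ge k\}$ and $D_1^{\bar A}=\{\det\phi_U=0\}$; moreover for generic such $A$ (equivalently $[A]\notin\Gamma$, or by the codimension count $\binom{4}{2}=6>5=\dim C_{U_1}$) one has $D_3^{\bar A}=\emptyset$, so every point of $S_{\bar A}=D_2^{\bar A}$ has $\corank\phi_{U_0}=2$ exactly.

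Next I would linearize. Put $K=\bar A\cap\bar T_{U_0}=\ker\phi_{U_0}$, of dimension $k\in\{1,2\}$, and let $dR_{U_0}\colon T_{U_0}C_{U_1}\to\Sym^2K^{*}$ be the composite of $d\iota_{U_0}\colon T_{U_0}C_{U_1}\to T_{[\bar T_{U_0}]}LG_{\bar\eta}\cong\Sym^2\bar T_{U_0}^{*}$ with restriction to $K$. By the standard reduction of a symmetric matrix to its degenerate block, the leading term of $\det\phi_U$ at $[U_0]$ is $\det\big(dR_{U_0}(v)\big)$ up to an invertible factor, so the tangent cone to $D_1^{\bar A}$ at $[U_0]$ is the quadric
\[
\mathcal{C}_{U_0}=\{\,v\in T_{U_0}C_{U_1}\mid \det\big(dR_{U_0}(v)\big)=0\,\}.
\]
For a corank-$1$ point ($k=1$) the space $\Sym^2K^{*}$ is a line and $dR_{U_0}\in(T_{U_0}C_{U_1})^{*}$, so $D_1^{\bar A}$ is smooth at $[U_0]$ exactly when $dR_{U_0}\ne0$. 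For a corank-$2$ point ($k=2$) one has $\Sym^2K^{*}\cong\C^{3}$ with its discriminant conic, and $\mathcal{C}_{U_0}$ is the pullback of that conic; if $dR_{U_0}$ is surjective then $\mathcal{C}_{U_0}$ is a quadric of rank $3$ in $T_{U_0}C_{U_1}\cong\C^{5}$, i.e. a cone over a smooth conic curve with vertex the $\PP^1$ equal to $\ker dR_{U_0}=T_{U_0}D_2^{\bar A}$, which is precisely the assertion.

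Everything therefore reduces to a rank statement for the linear map $dR_{U_0}$, and this is the main obstacle. I would compute $dR_{U_0}$ through the second fundamental form of $[U]\mapsto[\bar T_U]$: for $x=[\omega\wedge\xi],\,y=[\omega'\wedge\xi']\in K$ (with $\omega,\omega'\in\wedge^2U_0$, $\xi,\xi'\in V$) and $v\in T_{U_0}C_{U_1}\subset\Hom(U_0,V/U_0)$,
\[
dR_{U_0}(v)(x,y)=\bar\eta(\dot x,y)=vol\big(\dot\omega\wedge\xi\wedge\omega'\wedge\xi'\big),
\]
where $\dot\omega$ is the derivative along $v$ of a path $\omega_\epsilon\in\wedge^2U_\epsilon$. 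Using the parametrization $[U]=(M_2,[w])$ of $C_{U_1}$ by $\PP(\wedge^2U_1)\times\PP(V/U_1)$ and an explicit basis of $K=\bar A\cap\bar T_{U_0}$, this presents $dR_{U_0}$ as an explicit $3\times5$ matrix whose entries are linear in the tensor defining $\bar A$. The remaining task is to show that this matrix has rank $3$ at every point of $D_2^{\bar A}$ (and that $dR_{U_0}\ne0$ along $D_1^{\bar A}\setminus D_2^{\bar A}$). The rank-drop locus is closed, so it suffices to exclude it; I expect the cleanest route is to prove a dichotomy: a rank drop of $dR_{U_0}$ at a corank-$2$ point would force either $\dim(\bar A\cap\bar T_{U_0})\ge3$, so $[U_0]\in D_3^{\bar A}=\emptyset$, or would place $A$ in one of the excluded strata $\Sigma_+,\Sigma[1],\Gamma$, contradicting the hypotheses. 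Verifying this dichotomy — that for $A$ avoiding these strata the second-order map $dR_{U_0}$ is everywhere of maximal rank along $S_{\bar A}$ — is the heart of the argument.
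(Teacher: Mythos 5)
Your setup reproduces the paper's framework faithfully: the same local symmetric--determinantal model for $D_k^{\bar A}$ (graph coordinates on the Lagrangian Grassmannian, $D_1^{\bar A}=\{\det=0\}$), the same expansion of the determinant whose lowest-order term lives in $\Sym^2K^{*}$ with $K=\bar A\cap \bar T_{U_0}$, and the same reduction of both assertions to the statement that the induced linear map $T_{U_0}C_{U_1}\to \Sym^2K^{*}$ is surjective (nonvanishing when $\dim K=1$, onto the full $3$-dimensional space when $\dim K=2$, giving a rank-$3$ quadric as tangent cone). The preliminary observations ($[U_1]\notin D_1^{\bar A}$ from $[A]\notin\Sigma[1]$, $D_3^{\bar A}=\emptyset$) also match.

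However, the proof stops exactly where the paper's real work begins: you explicitly leave the surjectivity of $dR_{U_0}$ unproven and call it ``the heart of the argument,'' offering only a conjectural dichotomy (a rank drop would force $[U_0]\in D_3^{\bar A}$ or $[A]\in\Sigma_+\cup\Sigma[1]\cup\Gamma$). That dichotomy is not established and is not how the paper proceeds; surjectivity of the second-order map is not formally equivalent to avoiding those strata, and you give no mechanism for deriving the contradiction. The paper's argument is concretely geometric: in the explicit chart it shows that the image of the differential of $[U]\mapsto \bar Q_U$ at $[U_0]$ is the \emph{entire} linear system of quadrics cutting out $\hat C$, the projection of the cone $C_{U_0}\subset\PP(T_{U_0})$ from the point $[U_1]$ (a cone over a codimension-two linear section of $G(2,5)$); it then observes that $\PP(A)\cap G(3,V)=\{[U_1]\}$ forces $\PP(K)\cap\hat C=\emptyset$, and proves a separate lemma that quadrics through $\hat C$ restrict surjectively onto quadrics on any point or line disjoint from $\hat C$ --- using that the map $\PP^9\dashrightarrow\PP^4$ defined by the quadrics through $G(2,5)$ has $\PP^5$-fibers and sends a line disjoint from $G(2,5)$ to a smooth conic. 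Without some such input (or your unverified dichotomy), neither the smoothness of $D_1^{\bar A}$ away from $D_2^{\bar A}$ nor the rank-$3$ tangent cone along $D_2^{\bar A}$ is actually proved, so the proposal has a genuine gap at its central step.
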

  \medskip
\begin{proof} The proof will be analogous to that  of \cite[Lemma 2.9]{EPWcubes}. 
 Let $[U_1]$ be the unique point in $\Theta(A)$. Observe that, by assumption, $[U_1]\notin D_1^{\bar{A}}$. 
 Fix $[U_0]\in C_{U_1}\cap D_1^{\bar{A}}$ and choose a $3$-space $U_\infty$ such that $U_{\infty}\cap U_1=0$ and $U_{\infty}\cap U_0=0$. 
 Let $$\mathfrak{U}=\{[U]\in \mathrm{G}(3,V)| U\cap U_\infty=0 \}.$$
 It is an open neighbourhood of $[U_0]$ in $\mathrm{G}(3,V)$.
 
 For $[U]\in \mathfrak{U}$ the Lagrangian space $T_U$ defines a symmetric linear map $T_{U_0}\to T_{U_0}^\vee$ that we denote by $q_U%:=Q_{T_U}
 $ and a corresponding  quadratic form on $T_{U_0}$ that we denote by $Q_U%:=q_{T_U}
 $. We shall describe $Q_U$ in local coordinates.  Let $(u_1,u_2,u_3), (u_4,u_5,u_6)$ be a basis for $U_0$ resp. $U_{\infty}$.
 
 Observe that  for any $[U]\in \mathrm{G}(3,V)$,
  \[
T_U\cap T_{U_\infty}=0\leftrightarrow  U\cap {U_\infty}=0
 \] 
 and that any such subspace $U$ is the graph of a linear map $\beta_U:U_0\to U_{\infty}$.
 In particular, there is an isomorphism:
 \[
\rho: \mathfrak{U}\to Hom(U_0,U_{\infty}); \quad [U]\mapsto \beta_U
\]
whose inverse is the map
\[
 \alpha \mapsto [U_\alpha]:=[(u_1+\alpha(u_1))\wedge(u_2+\alpha(u_2))\wedge (u_3+\alpha(u_3))].
\]
In the given basis for $U_0$ and $U_\infty$ we let $B_U=(b_{i,j})_{i,j\in\{1\dots 3\}}$ be the matrix of the linear map $\beta_U$. In the dual basis, we let $(m_0, M)$, with $ M=(m_{i,j})_{i,j\in\{1\dots3\}}$, be the coordinates in
\[
T_{U_0}^{\vee}=(\wedge^3 U_0\oplus \wedge^2U_0\otimes U_\infty)^\vee=(\wedge^3 U_0\oplus Hom(U_0, U_\infty))^\vee
\]
  In these coordinates, the map 
  \[
  \iota: \mathfrak{U}\ni [U]\mapsto Q_U\in Sym^2 T_{U_0}^{\vee}
  \]
  is defined by
  \begin{equation}\label{eqquadrics}
  Q_U(m_0, M)=\sum_{i,j\in\{1\dots3\}} b_{i,j} M^{i,j} + m_0 \sum_{i,j\in\{1\dots3\}} B^{i,j}_U m_{i,j} + m_0^2 \det B_U, 
  \end{equation}
  where $M^{i,j}$, $B^{i,j}_U$ are the entries of the matrices adjoint to $M$ and $B_U$.
 To see this, write  the map $\wedge^3 U_0 \oplus \wedge^2 U_0\otimes U_{\infty} \to \wedge^3 U_{\infty} \oplus \wedge^2 U_{\infty}\otimes U_0  $
  whose graph is $ \wedge^3 U \oplus \wedge^2 U\otimes U_{\infty} $ in coordinates, where $U$ is the graph of the map $U_0\to U_{\infty}$ given by the matrix $B_U$.

  Let now $q_A$ be the symmetric map  $T_{U_0}\to T_{U_\infty}=T_{U_0}^{\vee}$ whose graph is $A$ and $Q_A$ the corresponding quadratic form.
  In this way
  $$D_l^A\cap\mathfrak{U}=\{[U]\in \mathfrak{U} |\dim T_U\cap A)\geq l\}=\{[U]\in \mathfrak{U} |\operatorname{rk} (q_U-q_A)\leq 10-l\},$$
  hence $D_l^A$ is locally defined by the vanishing of the $(11-l)\times (11-l)$ minors of the $10\times 10$ matrix with entries being polynomials in $b_{i,j}$.

We now consider the restriction of the map $\iota : [U] \mapsto Q_U$ to  $C_{U_1}\cap \mathfrak{U}$.  

The map $f: U_0\to U_{\infty}$, whose graph is $U_1$, has rank 1, since $[U_0]\in C_{U_1}\cap D_1^{\bar{A}}$ and $C_{U_1}\cap \mathfrak{U}=\{g\in Hom(U_0,U_\infty)| \rk (g-f)\leq 1\}$. 
After possible changes of basis for  $U_0$ and $U_{\infty}$, we may assume that $f\in Hom(U_0,U_\infty)$ is given by a matrix with one nonzero entry in the upper left corner.
The restriction of the map  $\iota$ is then given by
\begin{equation} \label{equationsqu}
Q_U(m_0,M)=\sum_{i,j\in\{1\dots3\}}  b_{i,j} M^{i,j}+ m_0 \sum_{i,j\in\{2,3\}} B_U^{i,j}m_{i,j}.
\end{equation}

We now observe that all quadrics $Q_U$ with $[U]\in C_{U_1}$ are singular in the point $[U_1]$ with coordinates $m_0=m_{1,1}\not=0$ and $m_{i,j}=0; (i,j)\not=(1,1) $. Passing to the quotient $T_{U_0}/\wedge^3 U_1$ and denoting the induced quadrics on the quotient space by $\bar{Q}_U$ and $\bar{Q}_{A}=Q_{\bar{A}}$ and the corresponding symmetric linear maps by $\bar{q}_U$ and $q_{\bar{A}}$ respectively. We have 
\begin{equation}\label{9x9quadrics}
\bar{Q}_U(M)=\sum b_{i,j} M^{i,j}.
\end{equation}

We can now follow the proof of \cite[Lemma 2.9]{EPWcubes} for the first degeneracy locus $D_1^{\bar{A}}$ around $[U_0]$. In $\mathfrak{U}$ the locus 
$$D_1^{\bar{A}}\cap \mathfrak{U}=\{[U]\in \mathfrak{U}\cap C_{U_1} | \dim (T_U/\wedge^3 U_1) \cap \bar{A})\geq l\} = \{[U]\in \mathfrak{U}\cap C_{U_1} |\operatorname{rk} (\bar{q}_U-q_{\bar{A}})\leq 9-l\},$$ i.e. $D_1^{\bar{A}}$
is defined by the determinant of a $9\times 9$ symmetric matrix with entries being regular functions on  $\mathfrak{U}\cap C_{U_1}$. We may assume that $q_{\bar{A}}$ is given by a diagonal matrix  with $0$'s and $1$'s on the diagonal, and let $K:=\ker q_{\bar{A}}=\bar{A}\cap T_{U_0}/\wedge^3 U_1$.  By \ref{equationsqu}, we know that the differential of  the map $\iota|_{C_{U_1}}$ in $[U_0]$ maps onto the linear system of quadrics generating the ideal of the image $\hat{C}$ of the projection of the cone $C_{U_0}\subset \mathbb{P}(T_{U_0})$ from the point $[U_1]$. In other words, the linear forms of the matrix of polynomials
$$\bar{\iota}: \mathfrak{U}\cap C_{U_1}  \ni [U]\mapsto \bar{Q}_U\in Sym^2 (T_{U_0}/\wedge^3 U_1)^{\vee}$$  
for a chosen coordinate chart of $C_{U_1}$ in $[U_0]$ define the linear system of quadrics containing $\hat{C}$. We then observe that if $\mathbb{P}(A)\cap \mathrm{G}(3,6)=[U_1]$ then $\mathbb{P}(A/\wedge^3U_1) \cap \hat{C}=\emptyset$ hence $K\cap  \hat{C}=0$ and remark that $\hat{C}$ satisfies the assertion of \cite[Lemma 2.8]{EPWcubes}. More precisely, we have:
  \begin{lem}\label{restriction of quadrics to K} If $P\subset \PP(T_{U_0}/\wedge^3 U_1)\setminus \hat{C}$ is a linear subspace of dimension at most $1$,  then the restriction map $\mathbf{r}_P: H^0(\PP(T_{U}), \mathcal{I}_{\hat{C}}(2)) \to H^0(P, \mathcal{O}_{P}(2))$ is surjective.
  \end{lem}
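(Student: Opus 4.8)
The plan is to use the explicit description of $\hat C$ obtained in the proof of Lemma~\ref{singularities of bar D1A} and to test surjectivity against apolar functionals. Since the map $f$ has rank $1$ we have $\dim(U_0\cap U_1)=2$, so in the matrix coordinates $(x_{ij})_{1\le i,j\le 3}$ on the Segre factor $\PP(\wedge^2U_0\otimes U_\infty)$ of $\PP(T_{U_0})$ the point $[U_1]$ lies on the ruling of the cone $C_{U_0}$ (Lemma~\ref{lem-C_U}) through the Segre point $x_{11}$. Hence $\hat C$ is again a cone, with vertex the image of $[U_0]$, over the projection $\widehat\Sigma$ of $\PP(\wedge^2U_0)\times\PP(U_\infty)\cong\PP^2\times\PP^2$ from that point, and $H^0(\mathcal I_{\hat C}(2))$ is spanned by the five $2\times 2$ minors of $(x_{ij})$ not involving $x_{11}$ — exactly the linear system produced by the differential $\bar{\iota}$.

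I would then dualize: $\mathbf r_P$ is surjective iff no nonzero functional on $H^0(P,\mathcal O_P(2))$ kills its image. If $P=\{p\}$, the only functional is evaluation at $p$, so surjectivity onto the line $H^0(\mathcal O_P(2))$ holds as soon as some minor is nonzero at $p$. If $P=\langle q_1,q_2\rangle$ is a line, write $Q|_P\leftrightarrow(Q(q_1),B_Q(q_1,q_2),Q(q_2))$ with $B_Q$ the polar form; a functional annihilating the image corresponds, through apolarity of binary quadrics, to a pair of points $q_1',q_2'\in P$ conjugate with respect to every $Q\in H^0(\mathcal I_{\hat C}(2))$, equivalently with $q_1'\odot q_2'\in\langle v\odot v : v\in\hat C\rangle\subset\Sym^2$. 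The coincident case $q_1'=q_2'$ would make $q_1'$ a common zero of all five minors, ruled out because $P$ meets neither $\hat C$ nor the residual base locus of the minors (which one checks holds for the $P\subseteq\PP(\ker q_{\bar A})$ arising here, since $\bar A$ is general). Thus the statement reduces to excluding a pair of \emph{distinct} conjugate points on $P$.

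To do this I would pass to the determinantal picture: representing $q_1',q_2'$ by $3\times 3$ matrices $M,N$ (whose $(1,1)$ entries are invisible to the five minors), conjugacy means precisely that each minor has vanishing mixed ($st$) coefficient on the pencil $sM+tN$. Combining this with $P\cap\hat C=\varnothing$ — no member of the pencil maps into $\widehat\Sigma$ — and running the linear-algebra analysis of \cite[Lemma~2.8]{EPWcubes}, which transfers verbatim because $\hat C$ is the same projected Segre-product cone, yields the required contradiction and hence surjectivity.

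The delicate point, and the reason a naive base-locus argument does not suffice, is that $\hat C$ is \emph{not} cut out by its quadrics: the five minors vanish on an extra linear space meeting $\hat C$ only along two planes. Consequently surjectivity — and even the separation of points in the case $\dim P=0$ — must be extracted from the fine structure of the minors (equivalently, from the second fundamental form of $\hat C$) rather than from the equations of $\hat C$ alone. Verifying that no distinct conjugate pair survives on a line disjoint from the base locus is the computational heart of the proof and the main obstacle; all the reductions above are formal.
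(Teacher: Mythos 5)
Your proposal goes wrong in two places, one factual and one structural. The factual error is the claim---on which your handling of the case $\dim P=0$, your appeal to the genericity of $\bar A$, and your closing paragraph all rest---that the five minors cut out strictly more than $\hat C$, namely an extra linear space meeting $\hat C$ only in two planes. This is false: the common zero locus of the five quadrics is exactly $\hat C$. The ``extra'' linear space you have in mind is the $\PP^4$ where the lower-right $2\times 2$ block of the matrix vanishes; but that $\PP^4$ is precisely the image of the projectivized tangent space $\PP(T_{[U_1]}C_{U_0})$, i.e.\ the image of the exceptional divisor of $\mathrm{Bl}_{[U_1]}C_{U_0}$, and therefore lies in the closed image $\hat C=\overline{\pi_{[U_1]}(C_{U_0}\setminus [U_1])}$. (The ``two planes'' you see are only the images of the two rulings of $\PP^2\times\PP^2$ through the centre of projection, i.e.\ the part of that $\PP^4$ reached by honest points of $C_{U_0}\setminus[U_1]$; the rest is reached in the limit.) One checks directly that every point outside this $\PP^4$ where all five minors vanish lifts to a rank-one matrix, so the base locus is $\hat C$ and nothing more. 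With this corrected, the case $\dim P=0$ is the paper's one-line argument, and no genericity of $\bar A$ is needed; indeed the lemma is stated for an arbitrary $P$ disjoint from $\hat C$, so smuggling genericity into the proof would establish a weaker statement than the one required.

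The structural gap is in the case $\dim P=1$. Your apolarity reduction to excluding a pair of distinct points of $P$ conjugate with respect to all five quadrics is sound, but the exclusion itself---which you correctly identify as the computational heart---is never carried out: it is deferred to a ``verbatim transfer'' of the analysis of Lemma~2.8 of \cite{EPWcubes}. That analysis concerns the nine quadrics through the cone $C_{U_0}$ over $\PP^2\times\PP^2$ itself, not the five quadrics through its degree-$5$ projection $\hat C$; the two situations are genuinely different, which is exactly why the present paper isolates this statement and proves it afresh. The paper's route avoids apolarity entirely: it realizes $\hat C$ as a linear section of the cone over $G(2,5)$ and uses that the map $\delta$ defined by the five Pl\"ucker quadrics has linear fibres, namely the $\PP^5$'s spanned by the quadric fourfolds $G(2,H)\subset G(2,5)$. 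A line $l$ with $l\cap G(2,5)=\emptyset$ cannot lie in such a fibre (it would have to miss the quadric hypersurface $G(2,H)$ inside that $\PP^5$), and $\delta|_l$ cannot be $2{:}1$ onto a line (a ramification point would force $l$ into a fibre), so $\delta(l)$ is a smooth conic, which is equivalent to the surjectivity of $\mathbf r_P$. To salvage your approach you would have to actually perform the conjugate-pair computation for the five minors; as written, the proposal does not contain a proof of the key step.
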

  \begin{proof} Note that $\hat {C}$ is defined in $\mathbb{P}^8= \PP(T_{U_0}/\wedge^3 U_1)$ by 5 quadrics obtained as $2\times 2$ minors  of a $3\times 3$ matrix of linear forms that do not involve the upper left entry. Since $\hat {C}$ is defined by quadrics the lemma is proven for $\dim P=0$. If $\dim P=1$ it is enough to observe that $\hat {C}$ can be seen as a cone over a section of the Grassmannian $\mathrm{G}(2,5)$ by two hyperplanes. Let $G=\mathrm{G}(2,5)\subset \PP^9$ and consider the rational map $\delta: \PP^9 \to \PP^4$ defined by the quadrics that generate the ideal of $G$. Observe that the closures of the fibers of $\delta$ are $\mathbb{P}^5$ spanned by 4-dimensional quadrics in $G$. It follows that the image $\delta(l)$ of any line $l$ , with $l\cap G=\emptyset$ is a smooth conic.  If now $CG\subset \PP^{10}$ is a cone over $G$, then the map defined by quadrics containing $CG$ factorizes as the  composition $\delta\circ \pi_p$ of the projection from the vertex $p$ of the cone $CG$ and $\delta$. It follows that $\delta(\pi_p (l))$ is a conic for any line $l\subset \PP^{10}$ such that $l\cap CG=\emptyset$. This means that the restriction map from the  system of quadrics containing $CG$ to quadrics on the line $l$ is surjective if $l\cap CG=\emptyset$. Now, since $\hat{C}$ appears as a section of $CG$ we conclude that the system of quadrics containing $\hat{C}$ contains the system of restrictions of quadrics containing $CG$. The latter restricts surjectively onto quadrics on the line $P$ since $P\cap CG=\emptyset$, which proves the lemma.\end{proof} 
Let us now denote the components of $\Phi:=\det (\bar{q}_U-q_{\bar{A}})$ of degree $i $ by $\Phi_i$. 
If now $[U_0]\in D_1^{\bar{A}}\setminus D_2^{\bar{A}}$ then $\dim K=1$, then $\Phi_0=0$ and $\Phi_1$ is the linear entry of $(\bar{q}_U- q_{\bar{A}})$ corresponding to the restrictions to $Sym^2 K$. It follows, by Lemma \ref{restriction of quadrics to K}, that  $\Phi_1\neq 0$ hence $D_1^{\bar{A}}$ is smooth in $[U_0]$. 
If now $[U_0]\in D_2^{\bar{A}}$, then $\dim K=2$ so $\Phi_0=\Phi_1=0$ and then  $\Phi_2$ is the determinant of the restriction of $\bar{q}_U-q_{\bar{A}}$ to $Sym^2 K$. Again, by Lemma \ref{restriction of quadrics to K}, we get that  $\Phi_2$ is a rank 3 quadric  which concludes the proof.
  
  \end{proof}

\begin{lem}\label{lem int} The variety $D_1^{\bar{A}}$ is integral.
\end{lem}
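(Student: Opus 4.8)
The plan is to show that $D_1^{\bar A}$ is \emph{normal} and \emph{connected}: a connected normal Noetherian scheme is automatically irreducible (its irreducible components are pairwise disjoint) and reduced, hence integral. Normality will follow from Serre's criterion $R_1+S_2$, and both ingredients are essentially already available. First I would record the ambient geometry. By Lemma \ref{lem-ram}, $D_1^{\bar A}$ is the intersection of the five-dimensional cone $C_{U_1}$ with a quartic hypersurface $Q_A$; it is therefore a fourfold cut out on $C_{U_1}$ by a section of the ample line bundle $\mathcal O_{C_{U_1}}(4)$, i.e. an ample effective Cartier divisor. As noted in the proof of Lemma \ref{singularities of bar D1A}, under the standing assumption $[A]\in \Sigma-(\Sigma_+\cup\Sigma[1])$ the vertex $[U_1]$ does \emph{not} lie on $D_1^{\bar A}$. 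Hence $D_1^{\bar A}\subset C_{U_1}\setminus\{[U_1]\}$, which is the smooth locus of the cone, since a cone over the smooth Segre fourfold $\PP(\wedge^2U_1)\times\PP(U_2)$ is smooth away from its vertex.

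For normality, I would argue as follows. Inside the smooth fivefold $C_{U_1}\setminus\{[U_1]\}$ the divisor $D_1^{\bar A}$ is locally principal, hence a local complete intersection; in particular it is Cohen--Macaulay and satisfies Serre's condition $S_2$. On the other hand, Lemma \ref{singularities of bar D1A} identifies the singular locus $\operatorname{Sing}(D_1^{\bar A})$ with the surface $S_{\bar A}=D_2^{\bar A}$, which has codimension two in the fourfold $D_1^{\bar A}$; thus $D_1^{\bar A}$ is regular in codimension one, i.e. satisfies $R_1$. By Serre's criterion $R_1+S_2$, the scheme $D_1^{\bar A}$ is normal, and in particular reduced with pairwise disjoint irreducible components.

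It then remains to prove connectedness. Here I would use the structure sequence
\[
0\to \mathcal O_{C_{U_1}}(-4)\to \mathcal O_{C_{U_1}}\to \mathcal O_{D_1^{\bar A}}\to 0.
\]
Since $C_{U_1}$ is the cone over the arithmetically Cohen--Macaulay variety $\PP^2\times\PP^2$ in its Segre embedding, it is itself arithmetically Cohen--Macaulay and has no intermediate cohomology; in particular $H^1(C_{U_1},\mathcal O_{C_{U_1}}(-4))=0$. As $H^0(C_{U_1},\mathcal O_{C_{U_1}}(-4))=0$ and $H^0(\mathcal O_{C_{U_1}})=\C$ (the cone being integral and projective), the long exact sequence forces $H^0(\mathcal O_{D_1^{\bar A}})=\C$, so $D_1^{\bar A}$ is connected. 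Equivalently, one may invoke the standard theorem that an ample effective divisor on an irreducible projective variety of dimension at least two is connected. Combining normality with connectedness gives that $D_1^{\bar A}$ is integral. The routine steps are the $S_2$ property and Serre's criterion; the one place requiring genuine geometric input is the identification of the singular locus in codimension two, which is exactly the content of Lemma \ref{singularities of bar D1A}. The only global (rather than local) ingredient is the connectedness step, and this cohomological vanishing is where I expect the main, though mild, obstacle to lie.
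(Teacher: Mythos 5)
Your proof is correct, and it rests on exactly the same two geometric inputs as the paper's: Lemma \ref{lem-ram} (that $D_1^{\bar{A}}$ is a quartic hypersurface section of $C_{U_1}$) and Lemma \ref{singularities of bar D1A} (that its singular locus is contained in the codimension-two surface $D_2^{\bar{A}}$). The logical packaging, however, is genuinely different. The paper argues directly: reducedness follows from generic smoothness, and if $D_1^{\bar{A}}$ were reducible then, being a hypersurface section of the integral fivefold $C_{U_1}$, two of its four-dimensional components would meet in codimension one and $D_1^{\bar{A}}$ would be singular there, contradicting the codimension-two bound on the singular locus. You instead route through Serre's criterion ($S_2$ from the locally-complete-intersection structure on the smooth locus of the cone, $R_1$ from the singular-locus computation) to get normality, which makes the components pairwise disjoint, and then kill all but one component by a cohomological connectedness argument using the arithmetic Cohen--Macaulayness of the Segre cone (or, as you note, the connectedness of ample divisors). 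Both arguments are two faces of the same coin --- ``components would have to meet, and meeting points are singular'' --- but yours buys the stronger conclusion that $D_1^{\bar{A}}$ is normal, at the cost of invoking more machinery; the paper's dimension-count contradiction is shorter and entirely elementary. One small point worth making explicit in your write-up: the identity $H^0(\oo_{D_1^{\bar{A}}})=\C$ via the structure sequence presupposes that the degeneracy-locus scheme structure on $D_1^{\bar{A}}$ agrees with that of the divisor of the determinantal section of $\oo_{C_{U_1}}(4)$, which is indeed how the class computation in Lemma \ref{lem-ram} is obtained, so no gap results.
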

\begin{proof}  By Lemma \ref{singularities of bar D1A},  $D_1^{\bar{A}}$ is a divisor in $C_{U_1}$ that is smooth outside the codimension two locus $D_2^{\bar{A}}$; in particular it is reduced.
By Lemma \ref{lem-ram}, the locus $D_1^{\bar{A}}$ is the intersection of $C_{U_1}$ with a quartic hypersurface,  so if it was reducible, it would have singularities in codimension one which would contradict Lemma \ref{singularities of bar D1A}.
Therefore  $D_1^{\bar{A}}$ is integral.
\end{proof}

From Lemmas \ref{singularities of bar D1A} and \ref{lem int} we conclude that $D_1^{\bar{A}}$ is an irreducible $4$-fold  with quadratic singularities along the surface $D_2^{\bar A}$.   We proceed to construct a natural resolution of singularities.
For this define the incidences 
 %Thus, we start by defining the incidences
\[
 \tilde{D}_1^{\bar{A}}=\{([U],[\omega])\in C_{U_1}\times \mathrm{G}(1,\bar{A})| \quad \overline{T}_U \supset \langle \omega\rangle\},
 \]
 and 
 \[\textstyle 
 \tilde{\mathbb{D}}_1^{\bar{A}}=\{([L],[\omega])\in \mathrm{LG}_{\bar{\eta}}(9,(\wedge^3 U_1)^{\perp}/(\wedge^3 U_1))\times \mathrm{G}(1,\bar{A})| \quad L \supset \langle\omega\rangle\},
 \]
which fit in the following diagram:
\begin{center}
\begin{tikzcd}
 C_{U_1}              \rar{\iota}   &  \mathrm{LG}_{\bar{\eta}}(9,(\wedge^3 U_1)^{\perp}/(\wedge^3 U_1))\\
 D_1^{\bar{A}}   \rar[swap]{\iota|_{D_1^{\bar{A}}}} \arrow[Subseteq]{u}{}   & \mathbb{D}^{\bar{A}}_1 \arrow[Subseteq]{u}{}\\
\tilde {D}_1^{\bar{A}} \uar[swap]{\bar{\alpha}} \rar[swap]{\tilde{\iota}} & \tilde{\mathbb{D}}^{\bar{A}}_1 \uar[swap]{\phi}
\end{tikzcd}
\end{center}
 where $\bar{\alpha}$ and $\phi$ are the projections on the first factor.
 \begin{lem} The variety $\tilde {D}_1^{\bar{A}}$ as well as the exceptional divisor $E$ of $\bar{\alpha}$ are both smooth. In particular $\bar{\alpha}$ is a resolution of singularities of $ D_1^{\bar{A}}   $.
 \end{lem}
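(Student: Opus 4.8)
The plan is to reduce the statement to the explicit local quadratic model already set up in the proof of Lemma~\ref{singularities of bar D1A}, and to read off smoothness of $\tilde D_1^{\bar A}$ and of $E$ from the single fact established there: that at each point of $D_2^{\bar A}$ the tangent cone to $D_1^{\bar A}$ is a cone over a \emph{smooth} conic. Since $[U_1]\notin D_1^{\bar A}$, the cone $C_{U_1}$ is smooth along $D_1^{\bar A}$, so the ambient $C_{U_1}\times G(1,\bar A)$ is smooth near the relevant fibres and it suffices to work \'etale-locally. Over $[U_0]\in D_1^{\bar A}\setminus D_2^{\bar A}$ the kernel $K=\bar T_{U_0}\cap\bar A$ is a line, the fibre of $\bar\alpha$ is the single point $[K]$, and $\bar\alpha$ is a local isomorphism onto the (already smooth) locus $D_1^{\bar A}\setminus D_2^{\bar A}$; hence $\tilde D_1^{\bar A}$ is smooth there. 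All the work is therefore concentrated along the fibres of $\bar\alpha$ over $D_2^{\bar A}$, i.e.\ along $E$.

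Fix $[U_0]\in D_2^{\bar A}$, so that $K=\bar T_{U_0}\cap\bar A$ is $2$-dimensional, and keep the notation of Lemma~\ref{singularities of bar D1A}: on a chart $\mathfrak U\cap C_{U_1}$ with coordinates $t$ the incidence is governed by the symmetric $9\times 9$ matrix $M(t)=\bar q_U-q_{\bar A}$ with $\ker M(0)=K$, under the identification $\bar A\cong T_{U_0}/(\wedge^3U_1)$. Splitting these nine coordinates as $K\oplus K'$ with the $K'\times K'$ block of $M(0)$ invertible, the seven equations of $M(t)\omega=0$ valued in the $K'$-direction have invertible linear part in the $K'$-components of $\omega$, so by the implicit function theorem they determine those components as smooth functions of $(t,\omega|_K)$ and may be eliminated. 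What remains is the vanishing of the Schur complement $\tilde a(t)=(\tilde a_{ij}(t))$, a symmetric $2\times 2$ matrix on $K$ with $\tilde a(0)=0$, applied to $\omega|_K$. Thus, in coordinates $(t,[\lambda_1:\lambda_2])$ on $(C_{U_1}\cap\mathfrak U)\times\PP(K)$, the variety $\tilde D_1^{\bar A}$ is cut out near the fibre by the two equations
\[
F_1=\tilde a_{11}(t)\lambda_1+\tilde a_{12}(t)\lambda_2,\qquad F_2=\tilde a_{12}(t)\lambda_1+\tilde a_{22}(t)\lambda_2 .
\]
Writing $\ell_{ij}$ for the linear part of $\tilde a_{ij}$ at $t=0$, the leading term of $\det\tilde a$ is $\ell_{11}\ell_{22}-\ell_{12}^2$, which is exactly the equation of the tangent cone to $D_1^{\bar A}$; by Lemma~\ref{singularities of bar D1A} it has rank $3$, i.e.\ $\ell_{11},\ell_{12},\ell_{22}$ are linearly independent linear forms (this is precisely where Lemma~\ref{restriction of quadrics to K} is used).

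The key computation is now immediate. At a point $([U_0],[\lambda_1^0:\lambda_2^0])$ of the fibre we have $\tilde a_{ij}(0)=0$, so the $d\lambda$-terms drop and $dF_1=\lambda_1^0\,\ell_{11}+\lambda_2^0\,\ell_{12}$, $dF_2=\lambda_1^0\,\ell_{12}+\lambda_2^0\,\ell_{22}$. A relation $\alpha\,dF_1+\beta\,dF_2=0$ reads $\alpha\lambda_1^0\,\ell_{11}+(\alpha\lambda_2^0+\beta\lambda_1^0)\,\ell_{12}+\beta\lambda_2^0\,\ell_{22}=0$, and the independence of $\ell_{11},\ell_{12},\ell_{22}$ forces $\alpha\lambda_1^0=\beta\lambda_2^0=\alpha\lambda_2^0+\beta\lambda_1^0=0$, hence $\alpha=\beta=0$ since $(\lambda_1^0,\lambda_2^0)\neq 0$. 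Therefore $dF_1,dF_2$ are independent at every point of the fibre, $\tilde D_1^{\bar A}$ is smooth of dimension $4$ along $E$, and combined with the open part this shows $\bar\alpha$ is a resolution of singularities of $D_1^{\bar A}$.

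Finally, $E$ is smooth. The same local model gives $\tilde a_{ij}(t)=0$ $(1\le i\le j\le 2)$ as the defining equations of $D_2^{\bar A}$, and since their linear parts $\ell_{ij}$ are independent, $D_2^{\bar A}$ is a smooth surface; moreover Lemma~\ref{singularities of bar D1A} shows the kernel has dimension exactly $2$ at every point of $D_2^{\bar A}$, i.e.\ $D_3^{\bar A}=\emptyset$. Hence $\bar{\mathcal T}\cap\bar A$ has constant rank $2$ along $D_2^{\bar A}$ and is there a rank-$2$ vector bundle $\mathcal K$, so $E=\PP(\mathcal K)$ is a $\PP^1$-bundle over the smooth surface $D_2^{\bar A}$ and is smooth. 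The only genuine obstacle is transversality along $E$; this is entirely absorbed by the rank-$3$ (smooth conic) statement of Lemma~\ref{singularities of bar D1A}, after which both smoothness assertions follow from the short linear-algebra arguments above.
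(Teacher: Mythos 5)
Your proof is correct and is in substance the same as the paper's: the paper's own proof is a one-line reference to the analogous Lemma 3.3 of \cite{EPWcubes}, and what you have written out (the local symmetric $9\times 9$ model, the Schur-complement reduction to a $2\times 2$ block on $K$, the linear independence of $\ell_{11},\ell_{12},\ell_{22}$ supplied by the surjectivity of restriction to $\mathrm{Sym}^2K^{\vee}$, and $E$ as a $\PP^1$-bundle over the smooth surface $D_2^{\bar A}$) is exactly that argument made explicit. The only quibble is attributional: $D_3^{\bar A}=\emptyset$ does not follow from the tangent-cone lemma but from the hypothesis $[A]\notin\Gamma$ together with $D_i^{\bar A}=C_{U_1}\cap D_{i+1}^A$, which is precisely the separate observation the paper records before invoking the analogy.
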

 \medskip
 \begin{proof} Once we have proved Lemma \ref{singularities of bar D1A}, and observed that $D_3^{\bar{A}}=\emptyset$ the proof is completely analogous to \cite[Lemma 3.3]{EPWcubes}.
 \end{proof}
 We can now perform the construction of a smooth double cover of $D_1^{\bar{A}}$ branched in $D_2^{\bar{A}}$. Note that the exceptional divisor in
 $\tilde {D}_1^{\bar{A}}$ is an even divisor.  To see this,  denote by $H$ a Pl\"ucker hyperplane section on  $\mathrm{LG}_{\bar{\eta}}(9,(\wedge^3 U_1)^{\perp}/(\wedge^3 U_1))$, denote by $h$  a Pl\"ucker hyperplane section on $\mathrm{G}(3,V)$ restricted to $C_{U_1}$, and denote by $R$ a Pl\"ucker hyperplane section on $\mathrm{G}(1,\bar{A})$.  Then, by  \cite[Lemma 2.4]{EPWcubes} and the fact that $\iota^*(H)=c_1({\mathcal T}^{\vee})\cap {C_{U_1}})=4h$, the divisor $E$ can be expressed as
 $$E=\iota^*(H-2R)=4h-2\iota^*(R).$$
 Hence, $E$ is divisible by $2$ and there exists a unique double cover $\tilde{f}: \tilde{X}_{\bar{A}}\to \tilde{D}_1^{\bar{A}}$ branched along $E$. Clearly the preimage of $\tilde{f}^{-1}(E)$ is contracted by a birational morphism $\psi$ defined by some multiple of the system $\tilde{f}^*\bar{\alpha}^* H$ on $\tilde{X}_{\bar{A}}$. The proof that the image $X_{\bar{A}}=\psi(\tilde{X}_{\bar{A}})$ %$\bar{Y}_{\bar{A}}$ 
 of this morphism is smooth is similar to the proof of \cite[Proposition 3.1]{EPWcubes}. It amounts to observing that the restriction of $\phi$ to the strict transform on $\tilde{X}_{\bar{A}}$ of a generic surface linear section of $D_1^{\bar{A}}$ %through any point in $D_2^{\bar{A}}$,
  is the contraction of $(-1)$-curves on a smooth surface.  
  Denote by
\begin{equation}\label{eq}
p_X\colon X_{\bar{A}}\to D_1^{\bar{A}} 
\end{equation}
the induced double cover, ramified over $D_2^{\bar{A}} $ .

Let $[A]\in (\Sigma-(\Sigma_+\cup \Sigma[1]\cup \Gamma)$ and let
 $[U_1]=\PP(A)\cap \mathrm{G}(3,V)$.  In \cite[Section 3]{EPWcubes} a $6$-fold double cover $Y_A\to D_2^A$ ramified along $D_3^A$ is constructed over the second degeneracy locus $D_2^A\subset \mathrm{G}(3,V)$ of the Lagrangian subspace $A$.  Note that our construction of the double cover $p_X$ is just the restriction of that construction to $D_2^A\cap C_{U_1}$. Indeed, we proceed as shown in the diagram \ref{eq:delta}. The intersection $D_2^A\cap C_{U_1}$ coincides with $D_1^{\bar {A}}$ and $D_3^A\cap C_{U_1}=D_2^{\bar {A}}$.  Also the resolution of singularities  $\tilde {D}_2^A\to D_2^A$ restricts to the resolution of singularities $\tilde {D}_1^{\bar{A}}\to D_1^{\bar{A}}$. The double cover $\tilde {Y}_A\to \tilde {D}_2^A $ restricts to a double cover of $\tilde {D}_1^{\bar{A}}$ branched along  $E$. It hence follows by uniqueness of double cover that the strict transform   of $\tilde {D}_1^{\bar{A}}$ under the double cover $\tilde {Y}_A\to \tilde {D}_2^A $ is isomorphic to  $\tilde {X}_A$.  
\begin{equation}
\label{eq:delta}
 \begin{array}{c}
  \xymatrix@R=.0cm@C=.6cm{& \tilde {X}_A\ar[rr]\ar[rd]\ar[dd]&&\tilde {Y}_A \ar[dd] \ar[rd]& 
 \\ &&X_{\bar{A}}\ar[rr]\ar[dd]&&Y_{A}\ar[dd]\\
& \tilde {D}_1^{\bar{A}}\ar[rr]\ar[rd]&&\tilde {D}_2^{A}\ar[rd]&&\\
C_{U_1}\cap{D}_2^{A}& & {D}_1^{\bar{A}}\ar@{}[ll]|-{=}\ar[rr]&& {D}_2^{A}\ar[r] &\mathrm{G}(3,V)  \\
&&&&&\\
&&{D}_2^{\bar{A}} \ar[uu]\ar[rr]&&{D}_3^{A}\ar[uu]&
}
  \end{array}
  %\begin{array}{c}
 % \xymatrix@R=.0cm@C=.6cm{& & \pr^5 \ar@{-->}[dd]^-{\pi_{2,1}} \ar@{}[r]|-{=} & 
%\pr\left( S_{2,1} \right) \\ 
%\pr^1 \times \pr^1 \ar[rru]^-{\eta_{2,1}} \ar@{-->}[rrd]_{\delta_{2,1}} \\ 
%& & \pr^3  \ar@{}[r]|-{=} & \pr \bigl( \left( f^{\bot}_{2,1} \right)^{*} 
%\bigr)}
%  \end{array}
\end{equation}

  Therefore $X_{\bar{A}}$ coincides with the strict transform of $D_1^{\bar{A}}$ under the double covering $Y_A\to D_2^A$. Finally $p_X$ is then the restriction  of the double cover $Y_A\to D_2^A$ to $X_{\bar{A}}$:
 
 \begin{prop}\label{compare X_A} Let $[A]\in (\Sigma-(\Sigma_+\cup \Sigma[1]\cup \Gamma)$, 
 let $[U_1]=\PP(A)\cap \mathrm{G}(3,V)$ and $p_Y: Y_A\to D_2^A\subset \mathrm{G}(3,V)$ be $6$-fold double cover ramified over $D_3^A$.  Then  
 $p_X:X_{\bar{A}}\to D_1^{\bar{A}}$ coincides with the restriction of the  double cover $p_Y$ to the preimage $p_Y^{-1}(D_2^A\cap C_{U_1})$.
   \end{prop}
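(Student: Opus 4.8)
The plan is to show that both double covers are produced by one and the same mechanism --- a branched double cover along an even exceptional divisor, followed by a birational contraction --- and that this mechanism is compatible with restriction from $G(3,V)$ to the cone $C_{U_1}$. The paragraph preceding the statement already isolates the two inputs we need: the resolution $\tilde D_2^A\to D_2^A$ of \cite[Section 3]{EPWcubes} restricts to $\tilde D_1^{\bar A}\to D_1^{\bar A}$, and $\tilde X_{\bar A}$ is the strict transform of $\tilde D_1^{\bar A}$ under $\tilde Y_A\to \tilde D_2^A$. I would therefore organize the argument into three comparisons: of the resolutions, of the double covers of the resolutions, and of the contractions. Throughout I use that, since $[A]\in \Sigma-(\Sigma_+\cup\Sigma[1]\cup\Gamma)$, we have $\Theta(A)=\{[U_1]\}$, $D_3^{\bar A}=\emptyset$, and the constructions of \cite{EPWcubes} apply.

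First I would identify $\tilde D_1^{\bar A}$ inside $\tilde D_2^A$. Recall from \cite[Section 3]{EPWcubes} that $\tilde D_2^A$ parametrizes pairs $([U],P)$ with $[U]\in D_2^A$ and $P\in G(2,A)$ a plane contained in $A\cap T_U$; since $A\cap T_U$ is two-dimensional over $D_2^A\setminus D_3^A$, the projection to $D_2^A$ is birational with exceptional divisor $E_Y$ lying over $D_3^A$. The key linear-algebra observation is that for every $[U]\in C_{U_1}$ one has $\wedge^3U_1\subset A\cap T_U$, because $\wedge^3U_1\subset A$ and $\wedge^3U_1\subset T_U$ for $[U]\in\PP(T_U)$. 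Hence sending a line $\langle\omega\rangle\subset\bar A\cap\bar T_U$ to its preimage $P_\omega\subset A\cap T_U$ under $A\cap T_U\to (A\cap T_U)/\wedge^3U_1$ defines a closed embedding $\tilde D_1^{\bar A}\hookrightarrow\tilde D_2^A$ whose image is $\{([U],P)\mid [U]\in C_{U_1},\ \wedge^3U_1\subset P\}$. Over $D_1^{\bar A}\setminus D_2^{\bar A}$ the plane $A\cap T_U$ is two-dimensional and already contains $\wedge^3U_1$, so this image is the full fibre of $\tilde D_2^A$; thus $\tilde D_1^{\bar A}$ is exactly the strict transform of $D_1^{\bar A}=D_2^A\cap C_{U_1}$ (using Lemma~\ref{lem-ram}), and $\bar\alpha$ is the restriction of $\tilde D_2^A\to D_2^A$. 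The same computation over $D_2^{\bar A}=D_3^A\cap C_{U_1}$, where the planes through $\wedge^3U_1$ form a $\PP^1$ in $G(2,A\cap T_U)$, shows $E=E_Y\cap\tilde D_1^{\bar A}$.

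Next I would match the double covers. Both $\tilde Y_A\to\tilde D_2^A$ and $\tilde X_{\bar A}\to\tilde D_1^{\bar A}$ are the double covers determined by square roots of $\mathcal{O}(E_Y)$ and $\mathcal{O}(E)$. Since $E=E_Y|_{\tilde D_1^{\bar A}}$ we have $\mathcal{O}(E)=\mathcal{O}(E_Y)|_{\tilde D_1^{\bar A}}$, so it suffices to check that the distinguished square root of $\mathcal{O}(E_Y)$ used in \cite[Section 3]{EPWcubes} restricts to the square root $\tfrac12 E=2h-\iota^*(R)$ used here. This is where I expect the only real work to lie, as it is precisely the comparison of the ``even divisor'' data for the two constructions. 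The point is that under $\tilde D_1^{\bar A}\hookrightarrow\tilde D_2^A$ the Plücker class of $G(2,A)$ pulls back to the Plücker class $R$ of $G(1,\bar A)$, because the planes through the fixed point $[\wedge^3U_1]$ form a linearly embedded $\PP(\bar A)=G(1,\bar A)$, while $\iota^*(H)=c_1(\mathcal{T}^\vee)\cap C_{U_1}=4h$ is the restriction of the corresponding Plücker class on $G(3,V)$. Hence the two square roots agree, and $\tilde X_{\bar A}=\tilde Y_A\times_{\tilde D_2^A}\tilde D_1^{\bar A}$ is the strict transform of $\tilde D_1^{\bar A}$ in $\tilde Y_A$; in particular $\tilde f^{-1}(E)$ equals the intersection of $\tilde X_{\bar A}$ with the ramification divisor of $\tilde Y_A\to\tilde D_2^A$.

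Finally I would pass to the contractions. The contraction $\tilde Y_A\to Y_A$ contracts the preimage of $E_Y$, and by the previous step this preimage meets $\tilde X_{\bar A}$ exactly in $\tilde f^{-1}(E)$, the locus contracted by $\psi$. Therefore the restriction of $\tilde Y_A\to Y_A$ to $\tilde X_{\bar A}$ contracts precisely the locus contracted by $\psi$, factors through $\psi$, and realizes $X_{\bar A}$ as the image of $\tilde X_{\bar A}$ inside $Y_A$. Since $\tilde X_{\bar A}$ is the strict transform of $D_1^{\bar A}$ and $p_Y$ is finite, this image equals $p_Y^{-1}(D_2^A\cap C_{U_1})$ --- the strict transform coincides with the full preimage because $p_Y$ is étale over $D_1^{\bar A}\setminus D_2^{\bar A}$ and simply ramified over $D_2^{\bar A}$ --- and the induced map $p_X\colon X_{\bar A}\to D_1^{\bar A}$ is the restriction of $p_Y$, as claimed.
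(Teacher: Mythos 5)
Your proposal is correct and follows essentially the same route as the paper, which proves this proposition only by the sketch in the preceding paragraph (the resolution restricts, the double cover of the resolution restricts, and the contractions are compatible); your three comparisons are exactly that sketch with the details supplied, the key one being that $\wedge^3U_1\subset A\cap T_U$ for all $[U]\in C_{U_1}$ so that lines in $\bar A\cap\bar T_U$ correspond to planes in $A\cap T_U$ through $\wedge^3U_1$. The only addition beyond the paper's argument is your closing observation that the strict transform of $D_1^{\bar A}$ under $p_Y$ equals the full preimage $p_Y^{-1}(D_2^A\cap C_{U_1})$, which is indeed needed to match the literal statement and is justified as you say.
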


%Note that $X_{\bar{A}}$ can be seen as the base of a divisorial contraction of an IHS sixfold so by the results of Namikawa \cite{Namikawa} we can directly deduce the existence of a symplectic form on $X_{\bar{A}}$.

%\begin{lem}\label{Ysigma}  If $[A]\in \Sigma$ is generic then $Y_A$  has $A_1$ singularities along $X^U_A$.
%Moreover, the fourfold $X_A^U$ is smooth.
%\end{lem}
%In the paper we shall also study more degenerate EPW cubes.

%Moreover, $Y_A$ can be resolved by blowing up $\overline{U_1}$ and then the proper transform of $\overline{U_2}$?????
%more precisely
%\end{rem}

%

We construct in this way a $19$-dimensional family, parametrized by  $$\Sigma-(\Sigma_{+}\cup \Sigma[1]),$$ of  hyperk\"ahler fourfolds admitting polarizations of degree $48$ that define antisymplectic involutions. 

\subsection{The construction} We need to prove that $X_{\bar{A}}$ are hyper-K\"ahler manifolds.

%This means that our family will be contained in the hyperelliptic locus on $\mathcal{M}_4$.
%Let $p\colon X_A\to \overline{D}_1^{\overline{A}}$ be the double EPW cube constructed in \cite{EPWcubes}.
 
   \begin{prop}\label{smooth X_A} Let $[A]\in (\Sigma-(\Sigma_+\cup \Sigma[1]\cup \Gamma))$, let $[U_1]=\PP(A)\cap \mathrm{G}(3,V)$ and let $p_X:X_{\bar{A}}\to D_1^{\bar{A}}$ be the  double cover of (\ref{eq}). Then $X_{\bar{A}}$ is a smooth manifold with trivial first Chern class. 
  \end{prop}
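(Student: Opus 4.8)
The plan is to prove smoothness and triviality of the canonical bundle separately, carrying out both computations on the resolution $\tilde D_1^{\bar A}$ and its double cover $\tilde X_{\bar A}$ built in the construction above, and only descending to $X_{\bar A}$ through the contraction $\psi$ at the very end. For smoothness I would invoke the preceding material: the resolution $\tilde D_1^{\bar A}$ and the exceptional divisor $E$ of $\bar\alpha$ are smooth, and since $E=4h-2\iota^*R$ is divisible by $2$, the double cover $\tilde f\colon \tilde X_{\bar A}\to \tilde D_1^{\bar A}$ branched along the smooth divisor $E$ is again smooth. It then remains to check that the morphism $\psi$ contracting $\tilde f^{-1}(E)$ yields a smooth variety; following \cite[Proposition 3.1]{EPWcubes} this is local transverse to $D_2^{\bar A}$. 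By Lemma \ref{singularities of bar D1A} the transverse singularity of $D_1^{\bar A}$ along $D_2^{\bar A}$ is the cone over a smooth conic, i.e.\ an ordinary surface node of type $A_1$, whose double cover branched at the node is the smooth quotient $\mathbb A^2\to \mathbb A^2/\{\pm 1\}$; hence $X_{\bar A}$ is smooth and $p_X\colon X_{\bar A}\to D_1^{\bar A}$ is the double cover ramified over $D_2^{\bar A}$.

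For the canonical bundle I would first compute $\omega_{D_1^{\bar A}}$ by adjunction. The ambient $C_{U_1}$ is the projective cone over the Segre embedding $\PP(\wedge^2U_1)\times\PP(V/U_1)=\PP^2\times\PP^2$ by $\mathcal O(1,1)$; the base is Fano of index $3$ with respect to $\mathcal O(1,1)$ (as $-K_{\PP^2\times\PP^2}=\mathcal O(3,3)$) and its canonical is a multiple of the polarisation, so the cone is Gorenstein with $\omega_{C_{U_1}}=\mathcal O_{C_{U_1}}(-4h)$, where $h$ is the Plücker hyperplane class. Since $D_1^{\bar A}\in|4h|$ is a Cartier divisor by Lemma \ref{lem-ram}, adjunction gives $\omega_{D_1^{\bar A}}=(\omega_{C_{U_1}}\otimes\mathcal O(4h))|_{D_1^{\bar A}}=\mathcal O_{D_1^{\bar A}}$; as $[U_1]\notin D_1^{\bar A}$ this avoids the cone vertex and the equality holds as line bundles.

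I would then propagate this triviality through the two modifications and cancel the discrepancies. Because the transverse singularity is du Val of type $A_1$, the resolution $\bar\alpha$ is crepant, so $\omega_{\tilde D_1^{\bar A}}=\bar\alpha^*\omega_{D_1^{\bar A}}=\mathcal O$. The double-cover formula then gives $\omega_{\tilde X_{\bar A}}=\tilde f^*(\omega_{\tilde D_1^{\bar A}}\otimes\mathcal O(\frac12 E))=\tilde f^*\mathcal O(\frac12 E)=\mathcal O(\hat E)$, where $\hat E=\tilde f^{-1}(E)$ is the ramification divisor with $\tilde f^*E=2\hat E$. On the other hand $\psi$ contracts the $\PP^1$-bundle $\hat E$ onto the surface $D_2^{\bar A}\subset X_{\bar A}$, restricting on general surface sections to the contraction of $(-1)$-curves; it is therefore the inverse of blowing up a smooth codimension-$2$ centre and has discrepancy $1$, i.e.\ $\omega_{\tilde X_{\bar A}}=\psi^*\omega_{X_{\bar A}}\otimes\mathcal O(\hat E)$. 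Comparing the two expressions yields $\psi^*\omega_{X_{\bar A}}=\mathcal O$, and since $\psi$ is birational with $\psi_*\mathcal O_{\tilde X_{\bar A}}=\mathcal O_{X_{\bar A}}$ this forces $\omega_{X_{\bar A}}=\mathcal O_{X_{\bar A}}$; in particular $c_1(X_{\bar A})=0$.

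The main obstacle is the bookkeeping in this last step: one must verify precisely that $\bar\alpha$ is crepant and that $\psi$ is a genuine smooth blow-down of discrepancy exactly $1$, so that $\omega_{\tilde X_{\bar A}}$, which equals the ramification divisor $\hat E$, is exactly absorbed by $\psi$. This is where the sharp form of Lemma \ref{singularities of bar D1A} (a transverse $A_1$, not a worse quadric) is indispensable. As a consistency check, Proposition \ref{compare X_A} realises $X_{\bar A}$ inside the smooth hyperk\"ahler cube $Y_A$ with $\omega_{Y_A}=\mathcal O$; but since $C_{U_1}=G(3,V)\cap\PP(T_{U_1})$ is a tangential rather than a transverse section, adjunction along that inclusion is not clean, and the resolution route above is the one I would carry out in detail.
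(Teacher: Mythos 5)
Your proposal is correct and follows essentially the same route as the paper: smoothness is quoted from the construction, the canonical bundle of the cone $C_{U_1}$ is computed to be $\mathcal{O}(-4h)$ (the paper does this via the blow-up of the vertex as a $\PP^1$-bundle over $\PP^2\times\PP^2$, you via the Fano index of the Segre base, which is the same computation), adjunction gives $\omega_{D_1^{\bar A}}=\mathcal{O}$, and triviality is then propagated through $\tilde D_1^{\bar A}$, the branched double cover, and the contraction $\psi$. Your explicit crepancy/ramification/discrepancy bookkeeping in the last step is just a more carefully spelled-out version of the paper's one-line statement that the canonical class of $\tilde X_{\bar A}$ equals the ramification divisor, which is then blown down.
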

\medskip
 \begin{proof}  The smoothness of $X_{\bar{A}}$ was noted above, so it remains to compute the canonical class.  For this we start with $D_1^{\bar{A}}$, a quartic hypersurface section of the cone $C_{U_1}$, see Lemma \ref{lem-ram}, with quadratic singularities along the surface $D_2^{\bar{A}}$.  Let $\tilde{C}_{U_1}\to C_{U_1}$ be the blowup of the cone  $C_{U_1}$ in the vertex.  Then $\tilde{C}_{U_1}$ is a  $\PP^1$-bundle over $\PP^2\times\PP^2$.  The pullback $h$ to  $\tilde{C}_{U_1}$ of a hyperplane divisor on $\PP^2\times\PP^2$, coincides with the pullback of a hyperplane divisor on $C_{U_1}$.  The pullback of a canonical divisor on $\PP^2\times\PP^2$ is $-3h$, while the relative canonical divisor over $\PP^2\times\PP^2$ is $-h$, so the canonical divisor on $\tilde{C}_{U_1}$ is $-4h$.  By adjunction the fourfold $D_1^{\bar{A}}$ has trivial canonical sheaf.  
 Since the singularities along the surface $D_2^{\bar{A}}$ are quadratic, the canonical divisor of the smooth fourfold $\tilde{D}_1^{\bar{A}}$ is half the class of the exceptional divisor.  The double cover $\tilde{X}_{\bar{A}}$, therefore has canonical divisor equal to the ramification divisor $\tilde{E}$.  On the smooth fourfold $X_{\bar{A}}$, this divisor is blown down, so $X_{\bar{A}}$ has trivial first Chern class.
 % , since $Y_{\bar{A}}$ is smooth,  it is enough to prove that  $X_{\bar{A}}$ and $Y_{\bar{A}}$ are isomorphic. The latter follows from our claim that 
% $\tilde {D}_1^{\bar{A}}$ is the proper transform of $D_1^{\bar{A}}$ via the map $\alpha: \tilde{D}_2^A \to D_2^A$ and the observation that the construction of $Y_{\bar{A}}$ is just a restriction of the construction of $Y_A$ to $D_1^{\bar{A}}$ leading to  $X_{\bar{A}}$. To see the claim consider the embedding 
% $\tilde {\bar{D}}_1^{\bar{A}} \ni ([U],[U'])\to ([U],[\pi^{-1} U'])\in \tilde{D}_2^A$, where $\pi: A\to \bar{A}$ is the natural projection,  and observe that its image is the closure of the preimage of $D_1^{\bar{A}}\setminus D_2^{\bar{A}}$ under the map $\alpha$.
 \end{proof}

\begin{thm}\label{dwa} There exists a $19$-dimensional family of polarized IHS fourfolds $(X,H)$ such that $|H|$ defines a $2:1$ morphism to 
$\PP^9$ and the image is the intersection of a cone over a Segre product $\PP^2\times \PP^2$ with a special quartic $Q_{\bar{A}}$,
with the branch locus being the surface $S_{\bar{A}}$ defined in Lemma \ref{lem-ram}.  Moreover, each  fourfold in this family admits two Lagrangian fibrations and a polarization with $q=4$.
\end{thm}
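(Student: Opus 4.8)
The plan is to assemble the local results already established into a single global statement and then to pin down the deformation type. By Lemmas \ref{lem-ram}, \ref{singularities of bar D1A} and \ref{lem int} the fourfold $D_1^{\bar A}$ is an integral quartic section of the cone $C_{U_1}\subset \PP^9$, smooth away from the degree $72$ surface $S_{\bar A}=D_2^{\bar A}$, along which it has transverse quadratic singularities; and by Proposition \ref{smooth X_A} the double cover $p_X\colon X_{\bar A}\to D_1^{\bar A}$ of (\ref{eq}), branched over $S_{\bar A}$, is a smooth fourfold with trivial canonical class. Setting $H:=p_X^{*}\mathcal O_{\PP^9}(1)$, the linear system $|H|$ is the pullback of the hyperplane system of $\PP^9$, so it factors through $p_X$ and realizes the required $2{:}1$ morphism onto $D_1^{\bar A}=C_{U_1}\cap Q_{\bar A}$ with branch locus $S_{\bar A}$. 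As $[A]$ ranges over the open subset $\Sigma-(\Sigma_+\cup\Sigma[1]\cup\Gamma)$ of the divisor $\Sigma\subset LG_\eta(10,\wedge^3 V)$ and one quotients by $\mathrm{PGL}(V)$, this produces the $19$-dimensional family, the dimension being the count already recorded after Theorem \ref{main}.

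The substantive point is that $X_{\bar A}$ is IHS of $K3^{[2]}$ type. To produce the symplectic form I would use Proposition \ref{compare X_A}: $X_{\bar A}$ is the strict transform of $D_1^{\bar A}$ under the double cover $p_Y\colon Y_A\to D_2^A$, hence a codimension $2$ submanifold of the EPW sixfold $Y_A$, which by \cite{EPWcubes} is IHS of $K3^{[3]}$ type. Restricting its symplectic form $\sigma$ gives a closed holomorphic $2$-form $\sigma_X=\sigma|_{X_{\bar A}}$, whose corank is at most $2$ at each point, the codimension of $X_{\bar A}$ in $Y_A$. Since $K_{X_{\bar A}}$ is trivial, $\sigma_X\wedge\sigma_X\in H^0(X_{\bar A},\mathcal O_{X_{\bar A}})$ is either identically zero or nowhere vanishing; I would exclude the first alternative by checking that $X_{\bar A}$ is not coisotropic in $Y_A$ for general $A$, i.e. that $\sigma_X$ is nondegenerate at one point, a local transversality computation in the chart $\mathfrak U$ of Lemma \ref{singularities of bar D1A}. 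Then $\sigma_X$ is everywhere nondegenerate and $X_{\bar A}$ is holomorphic symplectic.

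To upgrade this to \emph{irreducible} holomorphic symplectic I would verify $b_1(X_{\bar A})=0$ and $h^{2,0}(X_{\bar A})=1$, so that the Bogomolov--Beauville decomposition admits neither a torus nor a product factor. Simple connectedness follows from the model used to build $X_{\bar A}$: the resolution $\tilde D_1^{\bar A}$ is a smooth divisor in a $\PP^1$-bundle over $\PP^2\times\PP^2$, hence simply connected by Lefschetz, the double cover $\tilde X_{\bar A}$ branched along the even divisor $E=4h-2\iota^{*}R$ stays simply connected, and the contraction producing $X_{\bar A}$ leaves $\pi_1$ unchanged. The main obstacle is the identification of the deformation type as $K3^{[2]}$. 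Since the classification of anti-symplectic involutions in \cite{OW} presupposes the deformation class, it cannot serve to establish $K3^{[2]}$ type directly; instead I would degenerate $A$ within $\Sigma$ to a special Lagrangian for which $X_{\bar A}$ becomes a recognizable manifold of $K3^{[2]}$ type, paralleling O'Grady's degeneration of double EPW sextics, and then invoke deformation invariance of the deformation class along the smooth locus of the family. The anti-symplectic deck involution $\iota$ of $p_X$ then fits case $2$ of \cite{OW}, consistently with \cite{beau-invo}.

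Finally, the two Lagrangian fibrations come from the two projections of the cone $C_{U_1}=C(\PP(\wedge^2 U_1)\times\PP(U_2))$ onto its Segre factors $\PP(\wedge^2 U_1)\cong\PP^2$ and $\PP(U_2)\cong\PP^2$: composing with $X_{\bar A}\to D_1^{\bar A}\subset C_{U_1}$ yields two morphisms $X_{\bar A}\to\PP^2$ with surface fibres, whose classes $h_1,h_2$ satisfy $q(h_i)=0$, so by Matsushita's theorem they are genuine Lagrangian fibrations with abelian-surface fibres and $H=h_1+h_2$. For the Beauville degree I would use that $D_1^{\bar A}$ is a quartic section of the degree-$6$ cone $C_{U_1}$, so $\deg D_1^{\bar A}=24$ and $H^4=2\cdot 24=48$; the Fujiki relation $H^4=3\,q(H)^2$ for $K3^{[2]}$ type then gives $q(H)=4$, and consequently $q(h_1,h_2)=2$ with $q(h_i)=0$, exhibiting the invariant sublattice $U(2)$.
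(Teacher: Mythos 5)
Your assembly of the preliminary results is accurate and matches the paper: Lemmas \ref{lem-ram}, \ref{singularities of bar D1A} and \ref{lem int} give the integral quartic section $D_1^{\bar A}$ of the cone with quadratic singularities along the degree $72$ surface $S_{\bar A}=D_2^{\bar A}$; Proposition \ref{smooth X_A} gives the smooth double cover with trivial $c_1$; the two fibrations come, as in Section \ref{lagr}, from the two Segre projections of $C_{U_1}$ (well defined since the vertex is not on $D_1^{\bar A}$); and the computation $H^4=2\cdot 4\cdot 6=48=3\,q(H)^2$, hence $q(H)=4$, is the one the paper uses. Note, however, that this last step already presupposes the Fujiki relation for $K3^{[2]}$-type manifolds, so nothing in the theorem is secured until the deformation type is known.

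That is where the gap lies, and it is precisely the step you flag as the main obstacle and then leave as a placeholder. The paper's proof that $X_{\bar A}$ is IHS of type $K3^{[2]}$ \emph{is} the degeneration you say you would perform, and it is the technical heart of the section: (i) a dimension count showing that $(\Sigma\cap\Delta)-(\Sigma_+\cup\Sigma[1]\cup\Gamma)$ is nonempty of dimension $18$, so the degenerate Lagrangians actually exist inside the locus where the construction stays smooth (Lemma \ref{dim sigma delta }); (ii) the verification that for such $A$ O'Grady's surface $S_A(v)$ is a K3 with exactly one node (Lemma \ref{K3node}); and (iii) the construction of an explicit generically $2{:}1$ map $\xi$ from $\overline{S_A(v)}^{[2]}$ onto $D_1^{\bar A}\subset C_{U}$, unbranched outside codimension $2$, which is compared with $p_X$ to conclude that $X_{\bar A}$ is birational to $\overline{S_A(v)}^{[2]}$ and hence deformation equivalent to a $K3^{[2]}$ (Proposition \ref{prop3.6}). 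None of these is routine, and without them the theorem is not proved. Your auxiliary arguments do not close this gap: restricting the symplectic form of the EPW cube $Y_A$ via Proposition \ref{compare X_A} would, even after the unperformed non-coisotropy check, only produce a holomorphic symplectic structure and says nothing about the deformation class; and the $\pi_1$ argument needs care, since $D_1^{\bar A}$ is singular and a double cover branched along a divisor of a simply connected manifold is not automatically simply connected. Both become superfluous once the degeneration of Proposition \ref{prop3.6} is in place, since birational IHS manifolds are deformation equivalent. In short: correct strategy and correct scaffolding, but the decisive step is named rather than carried out.
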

Let us be more precise.
Let $A\in \mathrm{LG}_\eta(10,\wedge^3V)$ such that $\PP(A)$ intersects transversally $\mathrm{G}(3,V)$ in one point
(i.e.~$[A]\in (\Sigma-(\Sigma_+\cup \Sigma[1])$).
In this case without, loss of generality  we let $X_{\bar{A}}%:=X_A^U
\subset Y_A$ be the fourfold defined in (\ref{eq}) and Proposition \ref{smooth X_A}.

%\begin{prop}\label{fourfold} For $[A]\in (\Sigma-(\Sigma_+\cup \Sigma[1])$ the fourfold $X_A$ is an fourfolds admitting a nowhere degenerated $2$-form. Moreover, $X_A$ admits
%a polarization of degree $48$.  
 %\end{prop}
 %\begin{proof} Let $\tilde{Y_A}$ be the
 %blow-up of $Y_A$ along $X_A$. 
 %It can be shown as in \cite{Ogrady-Michigan} that $Y_A$ can be seen a a degeneration of a family of
 %double EPW cubes.???%it cannot because the double cover is not
 %By  Lemma \ref{Ysigma} the blowing-up of $Y_A$ along $X_A$ is a crepant resolution of singularities.
 %By \cite{Huybrechts} it follows that $Y_A$ is a degeneration of a family of IHS sixfold so it is also an IHS fourfolds of type $K3^{[3]}$.
 
%It follows from \cite[Prop.~1.6]{Namikawa} that $X_A$ admits an everywhere non-degenerated $2$-form.

%Let us consider the polarisation on $X_A$ defined by the pull back by $X_A\to \overline{X_A}\subset \PP^9$ of the hyperplane section from $\PP^9$. 
% From Lemma \ref{lem-ram} we infer that the polarization have degree $48$. We deduce that the last map is given by a complete linear system.
% \end{proof}
 In order to prove that $X_{\bar{A}}$ is IHS we need to find a degeneration of $X_{\bar{A}}$ that is birational to the Hilbert scheme of two points on a $K3$ surface. 
 For this, we first consider, for $v\in V$, the $10$-dimensional Lagrangian subspace  
 \[
 F_{[v]}:=\langle v\rangle\wedge(\wedge^2 V)\subset \wedge^3 V.
 \]
 Recall that  
 \[
 \Delta=\{ [A]\in \mathrm{LG}_\eta(10,\wedge^3V)| \quad \exists v\in V \colon \dim A\cap F_{[v]}\geq 3 \}.
 \]
We shall use an $[A]\in \Sigma\cap \Delta$ to find the suitable degeneration.

 By dimension count we infer the following, using the notation of \ref{notation}:
 \begin{lem} \label{dim sigma delta } The set $(\Sigma\cap \Delta)-(\Sigma_+\cup \Sigma[1] \cup \Gamma)$ is nonempty of dimension $18$. 
 \end{lem}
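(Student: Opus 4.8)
The plan is to realise each of the loci $\Sigma,\Delta,\Gamma,\Sigma_+,\Sigma[1]$ as the image of an incidence correspondence, read off its dimension, and then control the relevant intersections; the final passage to the number $18$ is the translation from the parameter space $LG_\eta(10,\wedge^3V)$ to the family $\mathcal U$.

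First I would fix the ambient dimensions $\dim LG_\eta(10,\wedge^3V)=\binom{11}{2}=55$ and $\dim G(3,V)=9$, and use throughout the symmetric degeneracy codimension: the condition $\dim(A\cap L)\ge k$ on a Lagrangian $A$ relative to a fixed Lagrangian $L$ has codimension $\binom{k+1}{2}$ in $LG_\eta$. For $\Sigma$ I would use $\tilde\Sigma\to G(3,V)$: the fibre over $[U]$ is the set of Lagrangians containing the isotropic line $\wedge^3U$, i.e. the Lagrangians of the $18$-dimensional symplectic space $(\wedge^3U)^{\perp}/\wedge^3U$, of dimension $\binom{10}{2}=45$; since a general $A\in\Sigma$ meets $G(3,V)$ in the single point $[U_1]$ the projection is generically injective and $\dim\Sigma=9+45=54$. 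For $\Delta$ I would fibre over $\PP(V)=\PP^5$, where $\dim(A\cap F_{[v]})\ge3$ has codimension $\binom{4}{2}=6$, giving $\dim\Delta=5+(55-6)=54$. The same count yields $\dim\Gamma=9+(55-\binom{5}{2})=54$ (equivalently, $D^A_4\neq\emptyset$ is one condition), while $\Sigma[1]$ and $\Sigma_+$ are proper degeneracy conditions inside $\Sigma$ and have dimension $53$ (for $\Sigma[1]$ one imposes the extra codimension-$\binom{2}{2}=1$ condition $\dim(\bar A\cap\bar T_{U_1})\ge1$ on the $45$-dimensional fibre; for $\Sigma_+$ one parametrises the pairs of $3$-planes meeting in a line together with the Lagrangians through the $2$-dimensional isotropic space they span).

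Next I would intersect. The Lagrangian Grassmannian $LG_\eta(10,\wedge^3V)$ is homogeneous of Picard rank one, so $\Sigma$ and $\Delta$ are ample; being distinct irreducible divisors (images of irreducible incidences with no common component) they meet properly, hence $\Sigma\cap\Delta$ is nonempty of pure dimension $53$. To see that the good locus fills this up to smaller sets I would check that each of $\Gamma\cap\Sigma\cap\Delta$, $\Sigma[1]\cap\Delta$, $\Sigma_+\cap\Delta$ has dimension at most $52$: the first is a proper triple intersection of divisors ($55-3=52$), and in the other two a $53$-dimensional locus meets the divisor $\Delta$ in dimension $\le 53+54-55=52$. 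As $52<53$, the set $(\Sigma\cap\Delta)-(\Sigma_+\cup\Sigma[1]\cup\Gamma)$ is nonempty of pure dimension $53$. Finally, all five loci are invariant under the action of $PGL(V)$ on $LG_\eta(10,\wedge^3V)$, a group of dimension $35$ acting with finite stabiliser at a general $A\in\Sigma$ — this is exactly what turns the $54$-dimensional $\Sigma-(\Sigma_+\cup\Sigma[1])$ into the $19$-dimensional family $\mathcal U$. By the same bookkeeping the $53$-dimensional locus descends to a family of dimension $53-35=18$, as asserted.

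The main obstacle is the properness of the intersections with the bad loci: the degeneracy loci above are only $PGL(V)$-invariant, and $PGL(V)$ is far too small to force them into general position by homogeneity, so the bounds $\dim\le52$ are not automatic. The clean way to secure them, and simultaneously the nonemptiness of the good locus, is to exhibit one explicit Lagrangian $A_0\in\Sigma\cap\Delta$ — naturally the one attached to the degeneration used afterwards to present $X_{\bar A}$ as a Hilbert scheme of two points on a $K3$ surface — and to verify directly that $\PP(A_0)\cap G(3,V)$ is a single reduced point, that $\dim(A_0\cap T_U)\le3$ for every $[U]$, and that the unique $v$ with $\dim(A_0\cap F_{[v]})\ge3$ gives $[v]$ off the relevant loci. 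A tangent-space computation at $A_0$ then shows that $\Sigma$ and $\Delta$ are transverse there and that $A_0$ avoids $\Sigma_+\cup\Sigma[1]\cup\Gamma$, producing a $53$-dimensional component through $A_0$ and hence the claimed $18$-dimensional family.
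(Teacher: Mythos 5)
Your dimension counts for the individual loci ($\dim\Sigma=\dim\Delta=\dim\Gamma=54$ and $\dim\Sigma_+=\dim\Sigma[1]=53$ inside the $55$-dimensional $LG_\eta(10,\wedge^3V)$) are correct, and the Picard-rank-one argument does give that $\Sigma\cap\Delta$ is nonempty of pure dimension $53$ (granting that $\Sigma$ and $\Delta$ are distinct irreducible divisors); the passage $53-35=18$ via the $PGL(V)$-action matches the paper's dimension conventions. The gap is exactly where you flag it yourself: the bounds $\dim(\Sigma_+\cap\Delta)\le 52$, $\dim(\Sigma[1]\cap\Delta)\le 52$ and $\dim(\Gamma\cap\Sigma\cap\Delta)\le 52$ do not follow from the "expected dimension" computation $53+54-55=52$, because the dimension theorem only bounds intersections from below and $PGL(V)$-invariance provides no transversality; a priori $\Sigma[1]$ or $\Sigma_+$ could be contained in $\Delta$, or $\Gamma\cap\Sigma$ could meet $\Delta$ improperly. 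Having diagnosed this, you defer to an unexecuted plan: produce an explicit $A_0$, verify globally that $\dim(A_0\cap T_U)\le 3$ for \emph{every} $[U]\in G(3,V)$, that $\Theta_{A_0}$ is a single reduced point and that $A_0\notin\Sigma_+\cup\Sigma[1]\cup\Gamma$, and then do a tangent-space computation. None of this is carried out, and it is precisely the hard part of the lemma, so as written the proof is incomplete.

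The paper closes this gap by direct parameter counts rather than by an explicit example. It parametrizes a component of $\Sigma\cap\Delta$ by quadruples $(v,\mathbf{P},[U],A)$ with $\mathbf{P}\subset\PP(F_{[v]})$ a plane, $[U]\in G(3,V)\cap\mathbf{P}^{\perp}$, and $\PP(A)\supset\langle\mathbf{P},[U]\rangle$, giving $5+21+6+\dim LG(6,12)=53$ parameters; then, for each of $\Sigma_+$, $\Sigma[1]$ and $\Gamma$, it enlarges this incidence by the extra datum defining the bad locus (a second point $[U']\in\Theta_A$, a line $l\subset\PP(T_U)$ through $[U]$ inside $\PP(A)$, or a $4$-space of $\PP(A)\cap\PP(T_{U'})$, with a two-case analysis for $\Gamma$ according to how $\PP(T_{U'})$ meets $\langle\mathbf{P},[U]\rangle$), and counts parameters again, obtaining $52$ in every case. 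To make your argument complete you would have to either reproduce these incidence counts or actually construct and verify the explicit $A_0$; the heuristic intersection bounds cannot stand on their own.
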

 \begin{proof} By a direct count, we first compute that $\dim (\Sigma\cap \Delta) = 53$. Let $v\in V$ and let $\mathbf{P}\subset \PP(F_{[v]})$ be a plane.  The set of triples $F_{[v]}$,  $\mathbf{P}$ and $[U]\in \mathrm{G}(3,V)\cap \mathbf{P}^{\perp}$ %of and a general projective plane $\mathbf{P}\subset F_v$, consider further a general point $[U]\in \mathrm{G}(3,V)\cap \mathbf{P}^{\perp}$. The dimension of the space of such configurations 
 depend on $5 +(3*7)+6=32$ parameters. The set of Lagrangian subspaces $A$ such that $\PP(A)\supset\langle\mathbf{P},[U]\rangle $ is isomorphic to a $\mathrm{LG}(6,12)$ so its dimension is $21$. It follows that $(\Sigma\cap \Delta)$ contains a component  corresponding to general pairs $(\mathbf{P},[U])$.
 We shall compute dimensions of the intersections of this component with $\Sigma_+$, $\Sigma[1]$ and $\Gamma$ separately: 
 \begin{enumerate}
 \item For a general $[A]\in\Sigma_+\cap\Delta$, the linear space $\PP(A)$ contains a pair $(\mathbf{P},[U])$ and a point $[U']\in \mathrm{G}(3,V)\cap \langle\mathbf{P},[U]\rangle^{\perp}$. Since $\mathbf{P}$ and $[U]$ are general, we have  $\mathrm{G}(3,V)\cap\langle\mathbf{P},[U]\rangle= [U]$ and 
 \[
 \dim(\mathrm{G}(3,V)\cap \langle\mathbf{P},[U]\rangle^{\perp})=5.
 \]
  It follows that $[U']\notin \langle\mathbf{P},[U]\rangle$ and the space of choices of $U'$ is 5-dimensional. A dimension count yields $32+5+dim(\mathrm{LG}(5,10))=52$
  \item For a general $[A]\in\Sigma[1]\cap\Delta$, the linear space $\PP(A)$ contains a pair $(\mathbf{P},[U])$ and a line $l\subset \PP(T_{U})$ through $[U]$.   Since $A$ is Lagrangian we get that $l\subset \PP(T_{U})\cap \langle\mathbf{P},[U]\rangle^{\perp}$, and the number of parameters for $A$ given 
  $(\mathbf{P},[U])$ and the line $l$ is $\dim \mathrm{LG}(5,10) =15$

  %consider a line  $[U]\in l\subset T_{[U]}\cap \langle\mathbf{P},[U]\rangle^{\perp}$  ($F_{[U]}=T_[U]$??). 
  When $\mathbf{P}$ and $[U]$ are general the number of parameters for $l$ is
  \[
  \dim (\mathrm{G}([U],1,\PP(T_{U})\cap \langle\mathbf{P},[U]\rangle^{\perp}))=5.
  \]
 So summing up we get that  $\Sigma[1]\cap\Delta$ has dimension $32+5+\dim(\mathrm{LG}(5,10))=52$.

\item For a general $[A]\in \Gamma\cap\Sigma\cap\Delta$, the linear space $\PP(A)$ contains a pair $(\mathbf{P},[U])$, and intersects $T_{[U']}$  for some $[U']\in \mathrm{G}(3,V)$ such that  $\dim (\PP(T_{U'})\cap\PP(A))=3$. 
Let  $\dim (\PP(T_{U'})\cap \langle\mathbf{P},[U]\rangle^{\perp})=5+d_1$ and therefore
$\dim (\PP(T_{U'})\cap \langle\mathbf{P},[U]\rangle)=d_1-1$. 
The set of $4$-dimensional subspaces $W_4\subset \wedge^3V$ such that  $\PP(W_4)\subset \PP(T_{[U']})\cap \langle\mathbf{P},[U]\rangle^{\perp}$ and meets $\langle\mathbf{P},[U]\rangle$ in dimension $d_1$ is a Schubert cycle of dimension $8$ for $d_1=0$ and $9$ for $d_1=1$.  On the other hand the dimension of the set of Lagrangian spaces $A$ such that $\PP(A)$ contains $\langle \PP(W_4),\mathbf{P},[U]\rangle$ is 
\[
\dim(\mathrm{LG}(2+d_1,4+2d_1))=\frac{(2+d_1)(3+d_1)}{2}.
\]
 To complete the dimension count we compute the dimension of the set of subspaces $U'$ corresponding to $d_1=0$ and $d_1=1$. %$(d_1,d_2)$. 
For $d_1=0$  the set of subspaces $U'$ is an open set in $\mathrm{G}(3,V)$, so the dimension is $9$, so the set of Lagrangian subspaces $A$ in this case has dimension $32+9+3+8=52$. Whereas, for $d_1=1$ the set of subspaces $U'$ such that $\PP(T_{[U']})\cap \langle\mathbf{P},[U]\rangle\not=\emptyset$ has dimension $5$, so 
 the set of Lagrangian subspaces $A$ in this case has dimension $32+5+6+9=52$.

\end{enumerate} 
\end{proof}

 \begin{defi}\label{OGradyassumption}
 Let $v_0\in V$.  We call $\mathrm{LG}_\eta(10,\wedge^3 V)^{v_0}$ the set of Lagrangian subspaces $A\subset \wedge^3 V$ that satisfy the following conditions:
 
% \begin{assum}\label{OGradyassumption}
%  \ 
 \begin{enumerate}
 \item There exists a codimension 1 subspace $V_0 \subset V$ such that $\wedge^3 V_0\cap A = {0}$.
 \item $v_0\in U$ for at most one $[U]\in \Theta_A$.
 \item If $v_0\in U$ and $[U]\in \Theta_A$, then $A\cap(\wedge^2 U\wedge V)=\wedge^3U$
 \end{enumerate}
 %\end{assum}
 \end{defi}
 Recall that for $A\in \mathrm{LG}(\wedge^3 V)^{v_0}$ O'Grady defined a surface $S_A(v_0)$ as follows \cite{Ogrady-michigan}: By the first two conditions $V=\langle v_0\rangle\oplus V_0$.
 Consider the isomorphism
 \[
 \lambda: \wedge^2V_0\to F_{v_0}=v_0\wedge (\wedge^2V);\qquad \alpha\mapsto v_0\wedge\alpha.
 \]
 Let $K_A^0=\lambda^{-1}(A\cap F_{v_0})\subset \wedge^2V_0$.  Given a volume form on $V_0$, there is an isomorphism $\wedge^3V_0\cong \wedge^2V_0^{\vee} $, and hence the annihilator ${\rm Ann} K_A^0\subset \wedge^3V_0$ defines a linear section $F_A^0=\PP({\rm Ann} K_A^0)\cap Gr(3,V_0)\subset\PP(\wedge^3 V_0)$.
 Now, $K_A^0$ is $3$-dimensional, so the linear section $F_A^0$ has codimension $3$ in $Gr(3,V_0)$ and is a Fano $3$-fold.
 The first assumption in \ref{OGradyassumption}  implies that $A$ is the graph of a linear map 
 \[
 q_A: \wedge^2 V_0\to \wedge^3V_0\subset\wedge^3V
 \]
  such that $q_A(\alpha)=\beta\leftrightarrow (v_0\wedge\alpha+\beta)\in A$.  Since $A$, $F_{[v_0]}$ and $\wedge^3 V_0$ are Lagrangian, the map $q_A$ is symmetric, while ${\rm ker} q_A=K_A^0$, so $q_A $ induces an isomorphism 
  \[
  \wedge^2V_0/K_A^0\to {\rm Ann} K_A^0\subset \wedge^3V_0
  \]
   whose inverse defines a quadratic form 
   \[
   Q^*_A: \beta\mapsto {\rm vol}(\alpha\wedge \beta),\quad  {\rm where}\; q_A(\alpha)=\beta 
   \] 
   on ${\rm Ann} K_A^0$.  The surface $S_A(v_0)$ is the intersection $F_A^0\cap \{Q^*_A=0\}$.
 
 O'Grady proves that if $\Theta_A$ is finite, then $S_A(v_0)$ is reduced and irreducible
 with explicitly described singular locus.
 Moreover, if it has du Val singularities, then the minimal resolution $\overline{S_A(v_0)}\to S_A(v_0)$ is a $K3$ surface \cite[Corollaries ~4.7 and 4.8]{Ogrady-michigan}.

\begin{lem}\label{$K3$node} Let $[A] \in (\Sigma\cap \Delta)-(\Sigma_+\cup \Sigma[1]\cup \Gamma)$ be generic, then there exists a unique $[v]\in \PP(V)$ such that $dim (A\cap F_{[v]})\geq 3$. Moreover, $[A]\in \mathrm{LG}(\wedge^3 V)^{v}$ and the surface $S_A(v)$ is a $K3$ surface with one node.
%$\overline{Y_A}$ is birational to the
 \end{lem}
 \begin{proof}
 Consider $F_{[v]}$ for a general  $v\in V$ and a general projective plane $\mathbf{P}\subset \PP(F_{[v]})$ and let $[U]$ be a general point in $\mathrm{G}(3,V)\cap \mathbf{P}^{\perp}$.
  By Lemma \ref{dim sigma delta }, the general Lagrangian space $A$ such that $\PP(A)$ contains $[U]\cup \mathbf{P}$ is an element of  $(\Sigma\cap \Delta)-(\Sigma_+\cup \Sigma[1] \cup \Gamma)$. Clearly $A$ and $v$ then satisfy the assumption of \ref{OGradyassumption} i.e. $A\in \mathrm{LG}_\eta(10,\wedge^3 V)^{v}$.
 
 We need to prove that $S_A(v)$ is a $K3$ surface with one node. We build on the proof of \cite[Proposition~4.6]{Ogrady-michigan}. The Fano threefold $F_A=\mathbf{P}^{\perp} \cap \mathrm{G}(3,V_0)%\subset F_{v_0}^{\vee}
 $ is smooth and the surface  $S_A(v)$ is a quadric section of $F_A$ that is smooth outside one point. It follows that the singularity is an ordinary double point. 
 \end{proof}

We denote by  $\overline{S_A(v)}\to S_A(v)$ the minimal resolution of singularities on $S_A(v)$.
  Consider the $6$-fold Lagrangian degeneracy locus $Y_A\subset \mathrm{G}(3,V)$ called an EPW cube in \cite {EPWcubes}, defined as 
  \[
  Y_A=\{[U]\in \mathrm{G}(3,V)|\dim A\cap T_U\geq 2\}.
  \]
When $[A]\in  (\Sigma\cap \Delta)-(\Sigma_+\cup \Sigma[1])$ we shall define a rational map   $$\psi\colon S_A(v)^{[3]}\dasharrow Y_A,$$
as in \cite[\S 4]{EPWcubes}.    

First we consider the natural isomorphism:
\[
V^{\vee}=V_0^{\vee}\oplus \langle v_0^*\rangle \to H^0(\mathcal{I}_{S_A(v)}(2));\qquad  v^*+cv_0^*\mapsto q_{v^*}+cq^*_A,
\]
where $Q_{v^*}$ is the restriction to ${\rm Ann} K_A^0$ of the quadratic form on $\wedge^3 V_0$ defined by
\[
Q_{v^*}(\omega)={\rm vol}(\omega(v^*)\wedge \omega).
\]
Let $[\beta_1]$ and $[\beta_2]$ be two points in $S_A(v)$, such that the line $\langle [\beta_1],[\beta_2]\rangle$ is not contained in $S_A(v)$, then 
$H^0(\mathcal{I}_{S_A(v)\cup\langle \beta_1,\beta_2\rangle }(2))$ is a hyperplane in $H^0(\mathcal{I}_{S_A(v)}(2))\cong V^{\vee}$.  Therefore 
\[
\phi\colon S_A(v)^{[2]}\dasharrow \PP(V);\quad ([\beta_1],[\beta_2])\mapsto [H^0(\mathcal{I}_{S_A(v)\cup\langle \beta_1,\beta_2\rangle }(2))]
\]
defines a rational map.  The rational map $\psi\colon S_A(v)^{[3]}\dasharrow \mathrm{G}(3,V)$ is now defined by 
\[
\psi(\beta_1,\beta_2,\beta_3) 
=[\langle \phi(\beta_1,\beta_2),\phi(\beta_1,\beta_3),\phi(\beta_2,\beta_3) \rangle]\in \mathrm{G}(3,V)
\]
For general $A$, both $\phi$ and $\psi $ are morphisms that are $2:1$ onto their image \cite[Proposition 4.1]{EPWcubes}.

We consider a restriction of the map $\psi $ to show

 % where $\tilde{Y_A}\to Y_A$ is the blow up along $X_A$.
  %\begin{lem}\label{opa} The subvariety $X_A\subset Y_A$ is smooth.
  %\end{lem}
%  \begin{rem}\label{propA} The manifolds $\tilde{Y_A}$ is an IHS fourfold being isomorphic to the moduli space of sheaves on a $K3$ surface.
 % Indeed similarly as in Lemma \ref{Ysigma} the manifold $\tilde{Y_A}$ is smooth and it is an IHS $6$-fold.
  %By the work of Bayer Macri $\tilde{Y_A}$ can be interpreted as a moduli space of sheaves on a $K3$ surface.
 %The divisorial contractions on such manifolds are well known. \end{rem}

      \begin{prop}\label{prop3.6} Let $[A]\in (\Sigma\cap \Delta)-(\Sigma_+\cup \Sigma[1]\cup \Gamma)$. Let $[U]=\mathrm{G}(3,V)\cap \PP(A)$, and let ${\bar{A}}=A/\wedge^3U$.
      Then $X_{\bar{A}}$ is birational to $\overline{S_A(v)}^{[2]}$.
 \end{prop}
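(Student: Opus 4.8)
The plan is to deduce the statement from the parametrization of the sixfold EPW cube $Y_A$ by the Hilbert cube of $\overline{S_A(v)}$. By Proposition \ref{compare X_A}, the double cover $p_X\colon X_{\bar A}\to D_1^{\bar A}$ is the restriction of $p_Y\colon Y_A\to D_2^A$ to $p_Y^{-1}(D_2^A\cap C_{U_1})$, and $D_2^A\cap C_{U_1}=D_1^{\bar A}$. On the other hand the rational map $\psi\colon S_A(v)^{[3]}\dasharrow G(3,V)$ constructed above has image $D_2^A=Y_A$ and is generically $2:1$ onto it (see \cite[Proposition 4.1]{EPWcubes}); lifting $\psi$ through $p_Y$ identifies $Y_A$ birationally with $\overline{S_A(v)}^{[3]}$, as in \cite[\S 4]{EPWcubes}. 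Under this identification $X_{\bar A}$ corresponds to the closure of $\psi^{-1}(C_{U_1}\cap D_2^A)$, the locus of length-three subschemes $Z$ whose image $\psi(Z)=[U_Z]$ satisfies $\dim(U_Z\cap U_1)\ge 2$. It therefore suffices to show that this locus is birational to $\overline{S_A(v)}^{[2]}$.

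The heart of the argument is to produce a distinguished point $o\in\overline{S_A(v)}$ through which the incidence $\psi(Z)\in C_{U_1}$ factors, and the natural candidate is (the image of) the node of $S_A(v)$ furnished by Lemma \ref{K3node}. Using the isomorphism $H^0(\mathcal{I}_{S_A(v)}(2))\cong V^\vee$, the plane $\PP(U_1)\subset\PP(V)$ corresponds to the net $U_1^{\perp}\subset V^\vee$ of quadrics through $S_A(v)$, and I would first prove that $U_1^{\perp}$ is exactly the net of quadrics that are singular at $o$. Granting this, if $o\in Z$, say $Z=\{o,\beta_2,\beta_3\}$, then every quadric of $U_1^\perp$ is singular at $o$ and vanishes at $\beta_2,\beta_3\in S_A(v)$, hence contains the lines $\langle o,\beta_2\rangle$ and $\langle o,\beta_3\rangle$; therefore $\phi(o,\beta_2),\phi(o,\beta_3)\in\PP(U_1)$, two of the three points spanning $U_Z$ lie in $U_1$, and $\dim(U_Z\cap U_1)\ge 2$, i.e. $\psi(Z)\in C_{U_1}$. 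Thus the image $\{Z:o\in Z\}$ of the embedding $\overline{S_A(v)}^{[2]}\to\overline{S_A(v)}^{[3]}$, $Z'\mapsto o+Z'$, lands in $\psi^{-1}(C_{U_1})$, and since $\phi$ is $2:1$ onto its image this embedding is compatible with the two $2:1$ structures over $D_1^{\bar A}$.

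For the reverse inclusion I would show that for a general $Z$ with $\dim(U_Z\cap U_1)\ge 2$ two of the spanning points $\phi(\beta_i,\beta_j)$ must already lie in $\PP(U_1)$, and that a net of quadrics containing $S_A(v)$ together with two general lines through a common point $\beta_i$ forces $\beta_i=o$; this is precisely the place where the local analysis of the quadrics through the projected cone $\hat{C}$, carried out in Lemma \ref{restriction of quadrics to K}, is needed. Combining the two inclusions gives that $\psi^{-1}(C_{U_1}\cap D_2^A)$ is birational to $\{Z:o\in Z\}\cong\overline{S_A(v)}^{[2]}$, hence $X_{\bar A}$ is birational to $\overline{S_A(v)}^{[2]}$. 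The main obstacle is exactly this identification of the node $o$ with the $C_{U_1}$-condition, namely both the claim that $U_1^\perp$ is the net of quadrics singular at $o$ and the converse implication ruling out $\dim(U_Z\cap U_1)\ge 2$ for $Z\not\ni o$; the remaining steps are bookkeeping with the $2:1$ covers and a dimension count matching the $4$-fold $X_{\bar A}$ with $\overline{S_A(v)}^{[2]}$.
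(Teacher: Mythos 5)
Your plan follows the paper's proof quite closely: both arguments restrict the $2{:}1$ parametrization $\psi\colon S_A(v)^{[3]}\dashrightarrow D_2^A$ to the locus of length-three subschemes containing the node, and both reduce everything to showing that such subschemes land in $C_{U_1}$, i.e.\ that $\phi(o,\beta)\in\PP(U_1)$ for all $\beta\in S_A(v)$. The one substantive caveat is that this pivotal step is exactly what you leave as an unproven claim (``$U_1^{\perp}$ is the net of quadrics singular at $o$''): the paper establishes the equivalent statement by a direct wedge-product computation, writing $\wedge^3U=v_0\wedge\alpha+v_1\wedge v_2\wedge v_3$, identifying the node as $p=[v_1\wedge v_2\wedge v_3]$ via O'Grady, computing $\phi([p,p_1])=[\mathrm{vol}(\alpha\wedge p_1)v_0+v_1]$ explicitly and checking it lies in $U$. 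Your reformulation in terms of quadrics singular at $o$ is correct and the deduction from it (singular at $o$ plus vanishing at $\beta$ forces containment of the line, hence $\phi(o,\beta)\in\PP(U_1)$) is sound, but as written the proposal has not discharged the claim, so the heart of the proof is still missing.

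On the other hand, the reverse inclusion you propose to prove (that $\psi(Z)\in C_{U_1}$ forces $o\in Z$) is not needed and is not proved in the paper either; your suggested tool, Lemma \ref{restriction of quadrics to K}, is really designed for the local smoothness analysis of $D_1^{\bar A}$ and is not obviously adapted to that purpose. The paper avoids this entirely: since $D_1^{\bar A}$ is integral of dimension four and $\xi\colon \overline{S_A(v)}^{[2]}\to D_1^{\bar A}$ is generically finite of degree two, well defined and unbranched outside codimension two, it is compared directly with the double cover $p_X\colon X_{\bar A}\to D_1^{\bar A}$ as in \cite[\S 5]{EPWcubes}. Your closing dimension-count remark essentially recovers this softer argument, so if you replace the reverse inclusion by it and supply the computation behind your central claim, your proof becomes the paper's.
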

 \medskip
 \begin{proof} 
 Let $U$ be the unique element in $\Theta_A$. Consider the decomposition $V=v_0\oplus V_0$. By  \cite[Corollary~4.7]{Ogrady-michigan}, the $K3$-surface $S:=S_A(v_0)$ 
 in $\mathbb{P}(\wedge^3 V_0)$ is singular in $p:=[\wedge^3 U']$ where $U'$ is the projection of $U$ onto $V_0$. Moreover, by Lemma \ref{$K3$node}, the point $p$ is a node in $S$. 
 Let $\kappa: \overline{S} \to S$ be the blow up giving the resolution of the node $p$. Consider the following rational map $\xi$ defined on pairs of distinct points on $\overline{S}$.
$$\xi: \overline{S}^{[2]}\to \mathrm{G}(3, H^0(\mathcal{I}_S(2));\quad \xi([ p_1,p_2 ] ) = H^0(\mathcal{I}_{S\cup \langle l_{p_1},l_{p_2}\rangle}(2))  \subset \mathrm{G}(3, H^0(\mathcal{I}_S(2)),$$
where %$A=S\cup \langle l_{p_1},l_{p_2}\rangle$.
  $l_{p_i}$ is the line spanned by $p$ and $ \kappa (p_i)$ when  $p\neq \kappa (p_i)$ and  the line in the tangent cone of $p$ corresponding to $p_i$
  %line $l_{p_i}$ 
  when $\kappa (p_i)=p$. % is the line in the tangent cone of $p$ corresponding to $p_i$.
 Moreover, $\langle l_{p_1},l_{p_2}\rangle$ is the plane spanned by $l_{p_1}$ and $l_{p_2}$.
 \begin{lem}The map $\xi$ is generically 2:1, well defined and unbranched outside a set of codimension 2.   
 \end{lem}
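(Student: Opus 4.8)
The plan is to organize all three assertions around a single duality that identifies $\xi$ with a restriction of the map $\psi$ recalled above. The key observation is the following. For a general pair $[p_1,p_2]\in \overline{S}^{[2]}$ with $p_1,p_2$ off the exceptional curve $\mathcal{E}=\kappa^{-1}(p)$, the two secant lines $l_{p_1}=\langle p,\kappa(p_1)\rangle$ and $l_{p_2}=\langle p,\kappa(p_2)\rangle$, together with the residual secant $\langle \kappa(p_1),\kappa(p_2)\rangle$, are three distinct lines in the plane $\Pi=\langle l_{p_1},l_{p_2}\rangle$. Since a conic in $\Pi$ cannot contain three distinct lines without vanishing, a quadric through $S$ contains $\Pi$ if and only if it contains all three lines; hence, under the duality $G(3,H^0(\mathcal{I}_S(2)))=G(3,V^\vee)\cong G(3,V)$, $W\mapsto W^{\perp}$,
\[
\xi([p_1,p_2])=H^0(\mathcal{I}_{S\cup \Pi}(2))=\psi(p,\kappa(p_1),\kappa(p_2))^{\perp}.
\]
Thus $\xi$ is, up to duality, the restriction of $\psi$ to triples whose first point is the node $p$. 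Over $\mathcal{E}$ the secant $l_{p_i}$ degenerates to the corresponding line of the tangent cone, and the same identification is read off from the resolution $\kappa$; this is how I would extend $\xi$ across $\mathcal{E}$.

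First I would prove well-definedness, i.e.\ that $\dim H^0(\mathcal{I}_{S\cup\Pi}(2))=3$ for general $[p_1,p_2]$, so that $\xi$ genuinely lands in $G(3,H^0(\mathcal{I}_S(2)))$. By Lemma \ref{K3node} the point $p$ is an ordinary double point of $S$, so a local computation at $p$ shows that the quadrics through $S$ restrict onto the system of conics in $\Pi$ vanishing on $\Pi\cap S$, a linear system of projective dimension $2$; the required surjectivity of this restriction is the exact analogue of Lemma \ref{restriction of quadrics to K} and is proved the same way, $S$ being cut out by quadrics. This gives $\dim H^0(\mathcal{I}_{S\cup\Pi}(2))=6-3=3$, equivalently the statement that $\psi(p,\kappa(p_1),\kappa(p_2))$ lies in $G(3,V)$. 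The same count applies when $p_i\in\mathcal{E}$, with $l_{p_i}$ the tangent-cone line, so $\xi$ is a morphism away from the closed locus where the two lines collide or $\Pi$ degenerates, which one checks has codimension $\geq 2$ in $\overline{S}^{[2]}$.

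Next I would transfer the generic $2{:}1$ property and the ramification estimate from $\psi$. By \cite[Proposition 4.1]{EPWcubes} the map $\psi$ is generically $2{:}1$ onto its image and unbranched in codimension one, with deck involution $\tau$ induced by the double cover $Y_A\to D_2^A$. The fibre of $\xi$ over $\xi([p_1,p_2])$ is reconstructed by taking the residual plane $\Pi$ in the base locus of $\xi([p_1,p_2])^{\perp}$ (a degenerate complete intersection $S\cup\Pi$, by the argument of Lemma \ref{phi_C}) and reading off $\Pi\cap S\setminus\{p\}=\{\kappa(p_1),\kappa(p_2)\}$, the two preimages being exchanged by $\tau$. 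Granting that $\tau$ fixes the node $p$ --- which I would verify from the explicit description of $\tau$ in \cite{Ogrady-michigan} and \cite{EPWcubes}, the node corresponding to the distinguished element of $\Theta_A$ --- the involution descends to $\overline{S}^{[2]}$ and realizes $\xi$ as generically $2{:}1$. Finally the branch locus of this induced involution is contained in the preimage of $D_3^A\cap C_{U_1}=D_2^{\bar{A}}$ together with the loci meeting $\mathcal{E}$, both of codimension $\geq 2$; hence $\xi$ is unbranched outside codimension two.

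The hard part will be the analysis over the exceptional curve $\mathcal{E}$: one must check that the dimension count, the identification with $\psi$, and the codimension-two bound on the ramification all survive the degeneration of the secants $l_{p_i}$ into tangent-cone lines, using the precise local model of the node from Lemma \ref{K3node}. The second delicate point is verifying that the deck involution $\tau$ of $\psi$ genuinely fixes the node, so that it restricts to $\overline{S}^{[2]}$; this is what ultimately pins down the generic $2{:}1$ behaviour rather than a $1{:}1$ or higher-degree map.
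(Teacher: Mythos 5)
Your overall strategy---realizing $\xi$ as the restriction of $\psi$ to triples whose first point is the node $p$, and importing the properties of $\psi$ from \cite[Propositions 4.1 and 4.5]{EPWcubes}---is exactly the route the paper takes. But you leave unproved the one point on which the whole lemma turns: that the deck involution of $\psi$ preserves the locus of triples containing $p$. You flag this as a ``delicate point'' to be ``verified from the explicit description of $\tau$,'' but give no argument; and your description of the fibre actually points the wrong way. If the fibre of $\xi$ over $\xi([p_1,p_2])$ were recovered by taking ``the residual plane $\Pi$'' in the base locus and reading off $\Pi\cap S\setminus\{p\}$, you would recover only the single pair $\{\kappa(p_1),\kappa(p_2)\}$, i.e.\ $\xi$ would be generically \emph{injective}. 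The $2{:}1$ behaviour comes from the existence of a \emph{second} plane through $p$, equivalently a second triple $\{p,\beta_1',\beta_2'\}$ with the same image under $\psi$, and the whole issue is to show that this residual triple again contains $p$ rather than consisting of three points away from the node.

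The paper closes this gap with a one-line geometric observation that your proposal is missing: the fibres of $\psi$ are pairs of complementary triples cut on $S=F\cap Q$ by twisted cubics $C\subset F$; since $p$ is a node of $S$ with $F$ smooth, the quadric $Q$ is tangent to $F$ at $p$ (i.e.\ $T_pF\subset T_pQ$), so every twisted cubic $C\subset F$ through $p$ satisfies $T_pC\subset T_pQ$ and hence meets $Q$ at $p$ with multiplicity at least two. Thus the length-six scheme $C\cap S$ contains $p$ doubly, both complementary triples contain $p$ (separated on $\overline S$ by the two tangent-cone directions), and the involution restricts to the locus of triples through $p$. Only with this in hand does $\xi$ inherit ``generically $2{:}1$, well defined and unbranched outside codimension $2$'' from $\psi$. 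The remaining parts of your write-up (the dimension count for well-definedness, the extension over the exceptional curve) are consistent with the paper's intent, but they are secondary to this missing tangency argument, without which the claimed degree of $\xi$ is not established.
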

 \medskip
\begin{proof} The proof is analogous to the proofs of Proposition 4.1 and Proposition 4.5 of \cite{EPWcubes}. We need only to observe that $S$ is the intersection of a smooth Fano threefold $F$ with a quadric $Q$ and hence every twisted cubic passing through $p$ is tangent to the quadric $Q$ in $p$. This implies that the involution on $S_A(v)^{[3]}$ given by $\psi$ above restricts to an involution  on the locus of triples on $\overline{S}_A(v)$ containing $p$, and  $\xi$ can be considered as the restriction of $\psi$ to this locus.
\end{proof}
 The next step is to prove that the image of $\xi$ is contained in the cone $C_U=\mathrm{G}(3,V) \cap \mathbb{P}(T_U)=\{[L]\in \mathrm{G}(3,V): \dim (L\cap U)\geq 2\}$. By \cite[Lemma 4.2]{EPWcubes}, we have 
 $$\xi([p_1,p_2])=\psi([p,p_1,p_2])=\langle\phi([p,p_1]),\phi([p,p_2]),\phi([p_1,p_2])\rangle$$ in the above notation. 
 To prove that  $\xi([p_1,p_2])\in C_U$ it is enough to prove that $\phi([p,p_i])\in U$. Let $i=1$.
 We follow the proof of \cite[Proposition 4.1]{EPWcubes}. Indeed let 
 \[
 \wedge^3 U=u_1\wedge u_2 \wedge u_3= v_0\wedge \alpha + v_1\wedge v_2 \wedge v_3 
 \]
  with $v_1,v_2,v_3\in V_0$ and $\alpha\in \wedge^2 \langle v_1,v_2,v_3\rangle,$
  then, by \cite[Corollary~4.7]{Ogrady-michigan}, the singular point of the $K3$ surface $S$ is $p=v_1\wedge v_2 \wedge v_3$. Without loss of generality we may then assume that $p_1=v_1\wedge v_4\wedge v_5$. 
 Then, by \cite[Equation 4.1]{EPWcubes}, we have 
 $\phi([p,p_1])=[{\rm vol}(\alpha \wedge v_1\wedge v_4\wedge v_5)v_0+v_1]$. To check that it is an element of $U$ %$\mathbb{P}(T_U)$ 
 we compute 
\begin{align*}
&( {\rm vol}(\alpha \wedge v_1\wedge v_4\wedge v_5)v_0+v_1)\wedge (v_0\wedge \alpha + v_1\wedge v_2 \wedge v_3)\\
&=v_1\wedge v_0 \wedge \alpha +vol(\alpha \wedge v_1\wedge v_4\wedge v_5) v_0\wedge v_1\wedge v_2 \wedge v_3.%\in \wedge^4\langle v_0,v_1,v_2,v_3\rangle
\end{align*}
The latter is an element of $\wedge^4\langle v_0,v_1,v_2,v_3\rangle$ and  the wedge product with $v_4\wedge v_5$,
\begin{align*}
&(v_1\wedge v_0 \wedge \alpha +{\rm vol}(\alpha \wedge v_1\wedge v_4\wedge v_5) v_0\wedge v_1\wedge v_2 \wedge v_3)\wedge v_4\wedge v_5\\
&=(-v_0 \wedge \alpha\wedge v_1\wedge v_4\wedge v_5+{\rm vol}(\alpha \wedge v_1\wedge v_4\wedge v_5)(v_0\wedge \ldots\wedge v_5)\\
&=(-{\rm vol}(\alpha \wedge v_1\wedge v_4\wedge v_5)+{\rm vol}(\alpha \wedge v_1\wedge v_4\wedge v_5))(v_0\wedge \ldots\wedge v_5)=0,
\end{align*}
so $({\rm vol}(\alpha \wedge v_1\wedge v_4\wedge v_5)v_0+v_1)\in U$.  With the same argument for $i=2$ we conclude that  $\xi([p_1,p_2])\in C_U$, in particular 
$\xi([p_1,p_2])\in D^1_{\bar{A}} \subset C_U$. 

%We hence have two double covers: 
Therefore $X_{\bar{A}}\rightarrow D^1_{\bar{A}}$ and $\xi: S_A^{[2]} \rightarrow D^1_{\bar{A}}$ are two double covers which are well defined and unbranched outside a set of codimension 2. It follows that $X_{{\bar{A}}}$ is birational to $S_A^{[2]}$ as in \cite[\S 5]{EPWcubes} and further still following \cite[\S 5]{EPWcubes} we get $X_{{\bar{A}}}$ is IHS %and finally $X_{\bar{A}}$ is IHS 
and deformation equivalent to a $K3^{[2]}$ for general ${\bar{A}}$. 
 \end{proof}
\begin{rem} The intersection lattice of $\bar{S}_A^{[2]}$, where $\bar{S}_A$ is the minimal resolution of the nodal $S_A$, is the diagonal matrix with entries $10,-2,-2$. After a change of base to $(h_1,h_2,\theta)$ we obtain:
$$\begin{bmatrix}0&2&0\\
2&0&0\\
0&0&-10
\end{bmatrix}$$
We find that the map $\xi$ is given by $h_1+h_2$. Since there is a divisor with self-intersection $-10$ and divisibility $2$ perpendicular to $h_1+h_2$, it follows that $\xi$ contracts a $\PP^2$ to a point (see \cite[\S 5.1]{HT} or \cite[\S 2]{Mon2}).
We can identify this $\PP^2$ as the set of pairs of points on $S_A$ such that the line spanned by these points is contained in the threefold section of $\mathrm{G}(2,5)$ containing $S_A$.
\end{rem}
\begin{rem} We can find another $18$-dimensional subfamily of $\mathcal{U}$ such that the elements are birational to the Hilbert scheme of two point on a $K3$ surface.  
Let us take a $K3$ surface $S$ that is a hyperplane section of a Verra threefold $Z\subset \PP^8$.
The intersection lattice of $S^{[2]}$ is 
$$\begin{bmatrix}2&4&0\\
4&2&0\\
0&0&-2
\end{bmatrix}.$$
After an integral linear change of coordinates the matrix takes the form:
$$\begin{bmatrix}0&2&0\\
2&0&0\\
0&0&-6
\end{bmatrix}$$
with basis $l_1,l_2,\eta'$.
Then $l_1+l_2$ gives a $2:1$ map to an EPW quartic section containing the vertex of the cone and singular at it. We can show that this map contracts two planes $\PP^2$ to this vertex point.

\end{rem}

\section{The second construction- the Hilbert scheme of conics on the Verra 4-fold}\label{F-Verra}
We describe the second construction of elements from $\mathcal{U}$ that is parallel to the construction of Kummer surfaces given in section \ref{hilbert-kummer}.
Let $U_1$ and $U_2$ be three dimensional complex vector spaces, %anYd $\C$ a one dimensional vector space
 fix moreover a volume form on each space $U_1, U_2^\vee$ such that 
$\wedge^2U_1=U_1^{\vee}$ and $\wedge^2U_2^{\vee}=U_2$, and let $\eta:\wedge^3 U_1\otimes \wedge^3 U_2^\vee\to \C$ be the product volume form.
Let $Y\subset \PP^9$ be the intersection of the cone $C(\PP(U_1)\times \PP(\wedge^2 U_2))\subset \PP(\C\oplus (U_1\otimes \wedge^2 U_2))$ with a quadric hypersurface.
Such a fourfold is a smooth Fano fourfold when $Q$ is chosen generically: we call it a Verra fourfold. We have a $19$-dimensional family of Verra fourfolds.

Note that a Verra fourfold is naturally a double cover of $\PP(U_1)\times \PP(\wedge^2 U_2)$.  Its ramification locus $Z$ is the intersection 
of $Y$ with the hyperplane polar to the vertex of the cone via the quadric $Q$. In terms of coordinates, this means that if coordinates are chosen in such a way that $Q$ is defined by a quadric $\{z^2-Q'=0\}$ then $Z=Y\cap \{z=0\}$. We call $Z$ the Verra threefold associated to $Y$. We shall sometimes also identify $Z$ with the branch locus $\PP(U_1)\times \PP(\wedge^2 U_2)\cap \{Q'=0\}$. 

Notice the following properties of Verra fourfolds.
\begin{lem}\label{smooth Verra} If $Y\subset C(\PP( U_1)\times \PP(\wedge^2 U_2))$ is a smooth Verra fourfold then:
\begin{enumerate}
\item $Y$ does not pass through the vertex of the cone $C(\PP( U_1)\times \PP(\wedge^2 U_2))$;
%\item the Picard group of $Y$ has rank $2$ and is generated by the pullback of a hyper plane bundle on each of  $\PP(U_1)$ and $\PP(\wedge^2  U_2)$;
\item $Y$ contains no quadric threefold;
\item the preimage of each quadric surface $\PP(L^{\vee})\times \PP (M^{\vee})\subset \PP( U_1)\times \PP(\wedge^2 U_2)$ by the double cover $Y\to \PP( U_1)\times \PP(\wedge^2 U_2)$ is irreducible.
\end{enumerate}
\end{lem}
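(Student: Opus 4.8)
The plan is to treat the three assertions in turn, working throughout with the description of $Y$ as the double cover $\pi\colon Y\to\PP(U_1)\times\PP(\wedge^2 U_2)=\PP^2\times\PP^2$ branched along the $(2,2)$-divisor $Z$, and with the cone $\cc:=C(\PP(U_1)\times\PP(\wedge^2 U_2))$. I will use repeatedly the elementary fact that $Y$ is smooth if and only if $Z$ is smooth: applying the Jacobian criterion to the affine cone $\{z^2=Q'(x)\}$ over $Y$, its singular points away from the origin lie over $\{z=0\}$ and project precisely to the singular points of $\hat Z=\{Q'=0\}$, hence to the singularities of $Z$. For part (1), I would argue that the vertex $p$ of $\cc$ cannot lie on a smooth $Y$. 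The point $p$ is the unique singular point of $\cc$, with tangent cone the affine cone over $\PP^2\times\PP^2$, which is not a linear space. If $p\in Y=Q\cap\cc$, then the tangent cone to $Y$ at $p$ is the intersection of this affine cone with the tangent hyperplane to $Q$ at $p$, i.e. the affine cone over a hyperplane section of $\PP^2\times\PP^2$; this is again not linear, so $Y$ would be singular at $p$, a contradiction. (Equivalently: smoothness forces the coefficient of $z^2$ in $Q$ to be nonzero, so after completing the square $Q=z^2-Q'(x)$ and $Q(p)=1\neq 0$.)

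For part (2), suppose $W\subset Y$ is a quadric threefold and let $\Pi=\langle W\rangle\cong\PP^4$ be its span. By part (1) we have $p\notin Y$, hence $p\notin W$, and I distinguish two cases. If $p\notin\Pi$, the projection $\pi_p$ from the vertex restricts to a linear isomorphism on $\Pi$, so $\pi_p(W)\subset\PP^2\times\PP^2$ is again a threefold of degree $2$; but every threefold in $\PP^2\times\PP^2$ is a divisor of some bidegree $(a,b)$ and has degree $3(a+b)\ge 3$ in the Segre embedding, a contradiction. If $p\in\Pi$, then $\pi_p(\Pi)=\PP^3$ and, since $p\notin W$, the image $\pi_p(W)$ is a threefold contained in $\PP^3\cap(\PP^2\times\PP^2)$, which forces $\PP^3\subset\PP^2\times\PP^2$; this is impossible since the maximal linear subspaces of the Segre variety $\PP^2\times\PP^2$ are planes. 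The same two alternatives also exclude a $\PP^3\subset Y$, so reducible quadric threefolds are ruled out as well.

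For part (3), I would first reduce the irreducibility of $\pi^{-1}(S)$ to a statement about the branch curve. Write $S=\ell_1\times\ell_2\cong\PP^1\times\PP^1$ with $\ell_1=\PP(L^\vee)$, $\ell_2=\PP(M^\vee)$. The restricted cover $\pi^{-1}(S)\to S$ is the double cover determined by the pair $(\oo_S(1,1),\,Z\cap S)$: indeed $Z\cap S\in|\oo_S(2,2)|$, and since $\Pic(\PP^1\times\PP^1)$ is torsion free, $\oo_S(2,2)$ has the unique square root $\oo_S(1,1)$. Such a double cover is reducible exactly when its branch divisor is a perfect square, i.e. when $Z\cap S=2D$ for some $D\in|\oo_S(1,1)|$. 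Thus it suffices to show that $Z$ is never tangent to such a quadric surface $S$ along an entire $(1,1)$-curve. For a general $S$ the intersection $Z\cap S$ is transverse, hence a smooth $(2,2)$-curve, which is reduced and not a double conic; this already handles the generic $S$ by Kleiman transversality, as $\PP GL(U_1)\times\PP GL(\wedge^2 U_2)$ acts transitively on the family of surfaces $S$. The remaining tangent surfaces are excluded by a dimension count: the perfect squares form a locus of codimension $5$ in $|\oo_S(2,2)|$, while the quadric surfaces $S$ vary in a $4$-dimensional family, so for the general Verra fourfold no $S$ satisfies $Z\cap S=2D$.

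The main obstacle is part (3). The reduction to ``$Z\cap S$ is a double conic'' is clean, but excluding this uniformly is delicate, because a smooth $(2,2)$-divisor can a priori be tangent to a quadric surface along a conic. The dimension count rules it out for the general member of the family, which is what is needed; to obtain the conclusion for every smooth $Y$ one would instead analyse the conic bundle $Z\to\PP^2$ given by the first projection, noting that $Z\cap S=2D$ forces each fibre $C_x$ over $x\in\ell_1$ to be tangent to the fixed line $\ell_2$, and then trace through the resulting constraints on the degenerate fibres to locate a forced singularity of $Z$.
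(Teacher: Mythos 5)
Your treatments of (1) and (2) are correct and essentially coincide with the paper's: for (1) the paper simply observes that a smooth complete intersection with the cone cannot pass through the cone's singular point, and for (2) it argues that a quadric threefold inside $C(\PP(U_1)\times\PP(\wedge^2 U_2))$ would have to be a cone with vertex at $p$, contradicting (1) --- your two-case degree computation ($p\notin\Pi$ versus $p\in\Pi$) is a more detailed version of the same argument. Likewise, your reduction of (3) to the assertion that $Z\cap S$ is never a double $(1,1)$-curve (reducibility of the double cover of $S=\PP^1\times\PP^1$ branched in $Z\cap S$ forces $Z\cap S=2D$, using that $\oo_S(2,2)$ has a unique square root) is exactly the paper's reduction.

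The gap is in how you exclude $Z\cap S=2D$. Kleiman transversality for the general $S$ together with the dimension count ($\operatorname{codim}=5$ versus a $4$-dimensional family of surfaces $S$) only rules this out for a \emph{general} Verra fourfold, as you concede yourself, whereas the lemma asserts the conclusion for \emph{every} smooth $Y$ --- and it is used later in that strength, e.g.\ to guarantee that for every pair $(L,M)$ each singular quadric in the pencil $I_{D_{(L,M)},2}$ has rank at least $3$. The missing idea is that the tangency forced by $Z\cap S=2D$ already contradicts smoothness of $Z$ by itself: the equality $Z\cap S=2D$ means that the branch threefold $Z\subset\PP^8$ meets the $3$-plane $\langle S\rangle$ in an everywhere non-reduced curve, i.e.\ $\langle S\rangle$ is tangent to the smooth nondegenerate threefold $Z$ along all of $D$, and Zak's theorem on tangencies (which the paper invokes) bounds the tangency locus of a $\PP^3$ with a smooth nondegenerate threefold by dimension $0$. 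This is what upgrades ``general'' to ``every smooth''. Your proposed fallback via the conic bundle $Z\to\PP^2$ (every fibre over $\ell_1$ tangent to $\ell_2$) is a plausible alternative but is only sketched, so as written part (3) is not proved in the generality claimed.
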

\begin{proof} Clearly $Y$ being a smooth complete intersection of $C(\PP( U_1)\times \PP(\wedge^2 U_2))$ with a quadric cannot pass through the singular point of the cone. 
%Item (2) follows from the generalized Grothendieck-Lefschetz theorem \cite[Theorem 6]{RavindraSrinivas} on a small resolution of the cone (note that since the resolution is small and $Y$ is smooth we don't need further genericity assumptions). 
For (2), if $Y$ contained a quadric threefold, then this threefold would be contained in $C(\PP( U_1)\times \PP( \wedge^2 U_2))$ and hence would be a cone over a quadric surface in $\PP(U_1)\times \PP(\wedge^2 U_2)$. This  leads to a contradiction with (1). 
Finally assume that the preimage of some quadric surface $\PP(L^{\vee})\times \PP (M^{\vee})\subset \PP(U_1)\times \PP(\wedge^2 U_2)$ is reducible. Then it must decompose as the union of two quadric surfaces and the branch locus of the projection onto  $\PP(L^{\vee})\times \PP (M^{\vee})$ is then a double conic. It follows that the branch locus $Z$ of the projection $Y\to \PP(U_1)\times \PP(\wedge^2  U_2)$ meets the $\PP^3$ spanned by $\PP(L^{\vee})\times \PP (M^{\vee})$  in a double conic. By Zak's Tangency theorem \cite{Zak}, this implies that $Z$ is singular and in consequence $Y$ is also singular.
\end{proof}

The linear system of quadrics containing $C(\PP(U_1)\times \PP(\wedge^2 U_2))\subset \PP(\C\oplus (U_1\otimes \wedge^2 U_2))$ is naturally isomorphic to $\PP(U_1 \otimes \wedge^2 U_2)$.  In fact let $w\in U_1\otimes \wedge^2 U_2=U_1\otimes (U_2)^{\vee}$ and $(w_0,w')\in \C\oplus(U_1\otimes \wedge^2 U_2)= \C\oplus(U_1\otimes (U_2)^{\vee})$, then 
\[
Q_w(w_0,w')=\eta(w\wedge w'\wedge w')
\]
is a quadratic form on $\C\oplus(U_1\otimes \wedge^2 U_2)$, and
 the map
\[
U_1 \otimes \wedge^2 U_2\to H^0(\mathcal{I}_{C(\PP(U_1)\times \PP(\wedge^2 U_2))}(2));\quad w\mapsto Q_w
\]
is an isomorphism.  Thus 
\[
I_{Y,2}:= H^0(\mathcal{I}_{Y}(2))\cong \C\oplus (U_1 \otimes \wedge^2 U_2),
\]
and the linear system of quadrics containing $Y\subset \PP^9$ is naturally isomorphic to $\PP(\C\oplus (U_1 \otimes \wedge^2 U_2))$ and is dual to $\PP(\C\oplus (\wedge^2 U_1 \otimes U_2))$. By abuse of notation, we denote also by $Q_{w}$ the quadric hypersurface corresponding to $[w]\in \PP(U_1)\otimes \PP(\wedge^2 U_2)$.

Consider the two natural projections $\pi_i$ of  $Y$ onto  $\PP(U_1)$ and $\PP(\wedge^2 U_2)$ for $i=1,2$ respectively.
We denote by $F(Y)$ the Hilbert scheme of conics on $Y$ of type $(1,1)$ i.e. conics that project to lines by both $\pi_1$ and $\pi_2$. 

Let us now relate the Hilbert scheme $F(Y)$ corresponding to the quadric $Q$ with an EPW quartic section.
Let $C$ be a conic on $Y$, then $C$ spans a plane $\PL_C\subset \PP(\C\oplus (U_1\otimes \wedge^2 U_2))$. Consider the locus $H_C$ of quadrics containing $Y\cup \PL_C$. Clearly
$H_C$ is a hyperplane in the space of quadrics containing $Y$ i.e. naturally a point $H_C\in \PP(\wedge^3 U_1 \oplus (\wedge^2 U_1 \otimes U_2)).$
In this way we defined a morphism
\[
\psi_Q: F(Y)\to \PP(\wedge^3 U_1 \oplus (\wedge^2 U_1 \otimes U_2)).
\]

\begin{prop}The image $\psi_Q (F(Y))$  is isomorphic to an EPW quartic section. 
\end{prop}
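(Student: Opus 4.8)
The plan is to identify the image $\psi_Q(F(Y))$ with the concrete EPW quartic section $D_1^{\bar{A}_Q}$ attached to the Lagrangian space $\bar{A}_Q\subset(\wedge^2 U_1\otimes U_2)\oplus(U_1\otimes\wedge^2 U_2)$, in exact parallel with the baby case of Proposition \ref{hilb lagr}. Recall that the branch quadric $Q'$ is encoded by a symmetric map $q'\colon U_1\otimes\wedge^2 U_2\to \wedge^2 U_1\otimes U_2$ whose graph is the Lagrangian $\bar{A}_Q$; since $D_1^{\bar{A}_Q}=\{[U]\in C_{U_1}\mid \dim(\bar{T}_U\cap\bar{A}_Q)\geq 1\}$ is an EPW quartic section by the construction of Section \ref{section2} (see Lemma \ref{lem-ram}), it suffices to prove the equality $\psi_Q(F(Y))=D_1^{\bar{A}_Q}$. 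I would first establish the inclusion $\psi_Q(F(Y))\subseteq D_1^{\bar{A}_Q}$ and then promote it to an equality by a dominance argument.

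For the inclusion I would run the coordinate computation modelled on the lemma preceding Proposition \ref{hilb lagr}. A $(1,1)$-conic $C$ projects isomorphically onto a line $\ell_1=\PP(L)\subset\PP(U_1)$ and a line $\ell_2=\PP(M)\subset\PP(\wedge^2 U_2)$, so its image under projection from the vertex is the Segre $(1,1)$-curve $t\mapsto a(t)\otimes m(t)$ with $a(t)\in L$, $m(t)\in M$ linear; lifting to $Y=\{z^2=Q'\}$ forces $z(t)^2=Q'(a(t)\otimes m(t))$, i.e. $Q'|_{\pi(C)}$ must be a perfect square. Choosing three points $(z_i,\beta_i)$ on $C$ with $\beta_i=a_i\otimes m_i$ decomposable, I would set $\alpha_i=q'(\beta_i)$ so that $\alpha_i+\beta_i\in\bar{A}_Q$, and use that $\bar{A}_Q$ is Lagrangian to extract the symmetry relations $\alpha_i\wedge\beta_j=\alpha_j\wedge\beta_i$. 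Expanding $Q$ on a chord $\lambda_1(z_i,\beta_i)+\lambda_2(z_j,\beta_j)$ as in the baby case, the value $\psi_Q([C])$ then reads off as $[c_C:\wedge^2 L\otimes v]\in\PP(\wedge^3 U_1\oplus(\wedge^2 U_1\otimes U_2))$, where $v\in U_2$ is the point cut out by the pencil $M\subset\wedge^2 U_2\cong U_2^\vee$ and $c_C$ is the common value of the expressions $z_iz_j-\alpha_i\wedge\beta_j$. The decomposability of the $\wedge^2 U_1\otimes U_2$-component shows at once that $[H_C]$ lies on the cone $C_{U_1}=C(\PP(\wedge^2 U_1)\times\PP(U_2))$, cut out by a three-space $U_C=L\oplus\langle\ell\rangle$ with $\ell$ determined by $c_C$ and $v$. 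Finally, writing the membership $\sum_i\lambda_i(\alpha_i+\beta_i)\in\bar{T}_{U_C}$ as a system of wedge equations $E_1,E_2,E_3$ in the $\lambda_i$, I would check that one equation vanishes identically and the remaining two reduce to a linear system of rank at most two; any nonzero solution $(\lambda_1,\lambda_2,\lambda_3)$ then yields a nonzero element of $\bar{T}_{U_C}\cap\bar{A}_Q$, giving $[H_C]\in D_1^{\bar{A}_Q}$.

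For the reverse inclusion I would argue by dimension: $F(Y)$ is projective and $\psi_Q$ is a morphism, so $\psi_Q(F(Y))$ is closed, while $D_1^{\bar{A}_Q}$ is an integral fourfold by Lemma \ref{lem int}. Counting fibres (over a general point the fibre is a curve, reflecting the two pencils of $(1,1)$-conics cut out by a rank-$4$ quadric, exactly as in the baby-case Lemma \ref{phi_C}) shows $\psi_Q$ is dominant, hence surjective onto $D_1^{\bar{A}_Q}$, which is an EPW quartic section by construction. I expect the main obstacle to be the coordinate computation of the second paragraph: correctly parametrising the $(1,1)$-conics through the perfect-square condition on $Q'|_{\pi(C)}$, verifying that the $\wedge^2 U_1\otimes U_2$-component of $[H_C]$ is genuinely decomposable so that the image meets the cone, and confirming that the Lagrangian relations force the $3\times 3$ system in the $\lambda_i$ to drop rank. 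These are precisely the points where the more intricate tensor geometry of $\wedge^2 U_2\cong U_2^\vee$ makes the analogue of the baby-case lemma nontrivial.
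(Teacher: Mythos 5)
Your proposal is correct and follows essentially the same route as the paper: the inclusion $\psi_Q(F(Y))\subseteq D_1^{\bar{A}_{Q'}}$ is proved there by exactly the coordinate computation you describe (three decomposable points on the conic, the Lagrangian symmetry relations $\alpha_i\wedge\beta_j=\alpha_j\wedge\beta_i$, and the rank-two system in the $\lambda_i$), transplanted from the Kummer baby case. The only difference is that the paper's proof of this proposition stops at the inclusion (invoking density of smooth conics), deferring surjectivity to the subsequent Stein-factorization analysis of the fibres, whereas you supply the dominance/dimension argument explicitly -- a harmless and arguably cleaner completion, provided you also record that $F(Y)$ is a fivefold.
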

%We shall prove that the image of $\psi_Q$ is an EPW quartic section that we first need to introduce.
\begin{proof}
We first introduce the EPW quartic section that we claim is $\psi_Q (F(Y))$.

For that, choose a coordinate chart $(z,\beta)$ on $\C\oplus (U_1\otimes \wedge^2 U_2)$ in which $Q(z,\beta)=z^2-Q'(\beta)$. Note, that in this case $Q'\cap (\PP(U_1)\times \PP(\wedge^2U_2))$ is the branch locus of  the projection map of $Y$ from the vertex of the cone.

Now, the vector space $((\wedge^2 U_1) \otimes U_2)\oplus (U_1 \otimes (\wedge^2 U_2)) $ is equipped with the symplectic form $\bar{\eta}(\alpha,\beta)=vol(\alpha\wedge \beta)$. Observe that $((\wedge^2 U_1) \otimes U_2)\oplus (U_1 \otimes (\wedge^2 U_2)) $ is then a decomposition into a sum of Lagrangian spaces with respect to $\bar{\eta}$. In particular $\bar{\eta}$ defines the canonical  isomorphism  $((\wedge^2 U_1) \otimes U_2)^{\vee}\simeq (U_1 \otimes (\wedge^2 U_2))$. Now $Q'$ defines a symmetric map $q':(U_1 \otimes (\wedge^2 U_2))\to  (U_1 \otimes (\wedge^2 U_2))^{\vee}=((\wedge^2 U_1) \otimes U_2)$, the graph of this map in $((\wedge^2 U_1) \otimes U_2)\oplus (U_1 \otimes (\wedge^2 U_2)) $ is a Lagrangian space that we call $\bar{A}_{Q'}$.

Since we know that the subset of the Hilbert scheme of conics in $Y$ parameterizing smooth conics is dense in the whole Hilbert scheme of conics the following lemma completes the proof of the proposition.
\begin{lem}\label{hilbert scheme of conics to lagrangian} Let $\mathbf{P}$ be a plane in $\PP(\C\oplus (U_1\otimes \wedge^2 U_2))$ meeting $Y$ in a smooth conic curve $C$ of type $(1,1)$, then the hyperplane $H_\mathbf{P}$ of quadrics containing $Y\cup \mathbf{P}$ is an element of  the EPW quartic section $\bar{D}^{\bar{A}_{Q'}}_1$. Furthermore, if $C$ is contained in the branch locus  $Z$ then $[H_\mathbf{P}]\in
 \bar{D}^{\bar{A}_{Q'}}_2$.
\end{lem}\medskip
\begin{proof}
Let us consider the cone 
\[
C(\PP(U_1)\times \PP(\wedge^2 U_2))\subset \PP(\C\oplus (U_1\otimes (\wedge^2 U_2))) =\PP((\wedge^3 U_2) \oplus (U_1\otimes (\wedge^2 U_2)))
\]
 as 
 \[
 \PP((U_1\oplus U_2)\wedge (\wedge^2 U_2))\cap \mathrm{G}(3,U_1\oplus U_2)\subset \PP(\wedge^3 (U_1\oplus U_2)).
 \] 

Let 
\[
\mathbf{P}=\langle(z_1,\beta_1),(z_2,\beta_2),(z_3,\beta_3)\rangle,
\]
 such that 
 \[
 (z_i,\beta_i)\in C(\PP(U_1)\times \PP(\wedge^2 U_2))\subset \PP((U_1\oplus U_2)\wedge (\wedge^2 U_2))\cap \mathrm{G}(3,U_1\oplus U_2)\subset \PP(\wedge^3 (U_1\oplus U_2)).
 \]
  Since $\mathbf{P}$ meets $\mathrm{G}(3,U_1\oplus U_2)$ in a conic curve, there exists then a basis $u_1,u_2,u_3, v_1,v_2,v_3$ of $U_1\oplus U_2$ 
  such that we have 
  \[
  \beta_1=u_1\wedge v_1\wedge v_2, \beta_2=u_2\wedge v_1\wedge v_3, \beta_3= (u_1+u_2)\wedge v_1\wedge (v_2+v_3).
  \]
   In such basis the coordinate of $H_\mathbf{P}\in \PP(\C\oplus (\wedge^2 U_1)\otimes U_2) $ corresponding to $\wedge^2 U_1\wedge U_2$ is $u_1\wedge u_2\wedge v_1$.
Moreover, by the definition of $\bar{A}_{Q'}$, for each $\beta\in U_1\wedge \wedge^2 U_2$ there exists an $\alpha\in (\wedge^2 U_1)\wedge U_2$ such that $Q'(\beta)=\alpha\wedge \beta$ or equivalently 
$\alpha+\beta\in \bar{A}_{Q'}$. Let us denote by $\alpha_i$ the  elements corresponding to $\beta_i$ under the above. Since $(\alpha_i+\beta_i)\wedge(\alpha_j+\beta_j)=0$ for all $i,j$ we get 
 $\alpha_i\wedge\beta_j=\alpha_j\wedge\beta_i=:c_{i,j}$ for $i\neq j$. 
Now 
$$Q(\lambda_1 (z_i,\beta_i)+\lambda_2 (z_j,\beta_j))=(z_i+\lambda z_j)^2- Q'(\lambda_1\beta_i+ \lambda_2 \beta_j)=$$
$$=(z_i+\lambda z_j)^2- (\lambda_1\alpha_i+ \lambda_2 \alpha_j)\wedge(\lambda_1\beta_i+ \lambda_2 \beta_j)=\lambda_1^2 q((z_i,\beta_i))+\lambda_2^2 q((z_j,\beta_j))+2 \lambda_1\lambda_2 (z_iz_2-c_{i,j}).$$
But $Q((z_i,\beta_i))=0$ by assumption, so
$$Q(\lambda_1 (z_i,\beta_i)+\lambda_2 (z_j,\beta_j))= 2\lambda_1\lambda_2 (z_iz_2-c_{i,j}).$$
Now 
$$(t_0Q+t_1 Q_{(u_1\wedge u_2\wedge v_1)^*}) (\lambda_1 (z_i,\beta_i)+\lambda_2 (z_j,\beta_j))=2t_0 \lambda_1\lambda_2 (z_iz_j-c_{i,j})+ 2t_1\lambda_1\lambda_2.$$
It follows that the $H_{\mathbf{P}}=[(z_iz_j-c_{i,j},  u_1\wedge u_2\wedge v_1) ]\in \PP(\C\oplus U_1\wedge (\wedge^2 U_2))$ which means, in particular, that:
$$z_1z_2-c_{1,2}=z_1z_3-c_{1,3}=z_2z_3-c_{2,3}=c_{\mathbf{P}}.$$
$H_{\mathbf{P}}$ is also an element of the cone $C(\PP(\wedge^2 U_1)\times \PP(U_2))\subset \PP(\C\oplus \wedge^2 U_1\wedge U_2)$%\subset \wedge^3(U_1\oplus U_2)$. 

The corresponding $\bar{T}_{H_{\mathbf{P}}}$ is described by 
$$\{\omega\in ((\wedge^2 U_1)\otimes U_2) \oplus ((\wedge^2 U_2)\otimes U_1) | \omega\wedge u_1\wedge u_2= \omega\wedge u_1\wedge  (v_1+c_{\mathbf{P}} u_3)=\omega\wedge u_2\wedge  (v_1+c_{\mathbf{P}} u_3)=0\}.$$
We shall prove that $\bar{A}_{Q'}\cap \bar{T}_{H_{\mathbf{P}}}\neq 0$. We know that $\sum_{i=1}^3 \lambda_i (\alpha_i+\beta_i)\in \bar{A}_{Q'}$ for $\lambda_i\in \C$,  it is therefore enough to prove that the following system of equations has a nonzero solution $(\lambda_1,\lambda_2,\lambda_3)$:
\begin{equation*}
\begin{split}
E_1(\lambda_1,\lambda_2,\lambda_3):=(\sum_{i=1}^3 \lambda_i (\alpha_i+\beta_i))\wedge u_1\wedge u_2=0\\
E_2(\lambda_1,\lambda_2,\lambda_3):=(\sum_{i=1}^3 \lambda_i (\alpha_i+\beta_i))\wedge u_1\wedge (v_1+c_{\mathbf{P}} u_3)=0\\
E_3(\lambda_1,\lambda_2,\lambda_3):=(\sum_{i=1}^3 \lambda_i (\alpha_i+\beta_i))\wedge u_2\wedge (v_1+c_{\mathbf{P}} u_3)=0
\end{split}
\end{equation*}
Observe now that $E_1(\lambda_1,\lambda_2,\lambda_3)\equiv 0$  since both $\alpha_i\wedge u_1\wedge u_2=0$ and $\beta_i\wedge u_1\wedge u_2=0$.
Furthermore, we have:
$$E_2(\lambda_1,\lambda_2,\lambda_3)\wedge u_1=E_2(\lambda_1,\lambda_2,\lambda_3)\wedge u_2=E_2(\lambda_1,\lambda_2,\lambda_3)\wedge u_3=E_2(\lambda_1,\lambda_2,\lambda_3)\wedge v_1=0,$$
as well as 
$$E_3(\lambda_1,\lambda_2,\lambda_3)\wedge u_1=E_3(\lambda_1,\lambda_2,\lambda_3)\wedge u_2=E_3(\lambda_1,\lambda_2,\lambda_3)\wedge u_3=E_3(\lambda_1,\lambda_2,\lambda_3)\wedge v_1=0.$$
Finally 
\begin{align}
E_2(\lambda_1,\lambda_2,\lambda_3)\wedge v_2= z_1^2 \lambda_1 +z_1z_2 \lambda_2+z_1z_3 \lambda_3,\\
E_3(\lambda_1,\lambda_2,\lambda_3)\wedge v_3= z_1z_2 \lambda_1 +z_2^2 \lambda_2+z_2z_3 \lambda_3,\\
(E_2(\lambda_1,\lambda_2,\lambda_3)+E_3(\lambda_1,\lambda_2,\lambda_3))\wedge (v_2+v_3)= z_1z_3 \lambda_1 +z_2 z_3 \lambda_2+z_3^2 \lambda_3,
\end{align}
are proportional, so the above equations reduce to two linear equations in the $\lambda_i$:
$$E_2(\lambda_1,\lambda_2,\lambda_3)\wedge v_3=0$$
and one of the above 3 proportional equations.
It follows that the linear system has rank 2 and therefore admits a nontrivial solution implying 
$\dim (\bar{A}_{Q'}\cap \bar{T}_{H_{\mathbf{P}}})\geq 1$, which proves the first part of the lemma.

It remains to prove that the image of the Hilbert scheme of $(1,1)$-conics contained in the ramification locus $Y\cap \{z=0\}$ of the projection maps to $\bar{D}^{\bar{A}_{Q'}}_2$. Clearly points on such conics satisfy $z_i=0$ and the three proportional equations above are then trivial, hence the system has two-dimensional solution i.e. $\dim (\bar{A}_{Q'}\cap \bar{T}_{H_{\mathbf{P}}})\geq 2$.
\end{proof}
\end{proof}
%\begin{cor}The image $\psi_Q (F(Y))$  is isomorphic to an EPW quartic section. 
%\end{cor}

We shall now describe the Stein factorization of the morphism
\[
\psi_Q: F(Y)\to \bar{D}^{\bar{A}_{Q'}}_1.
\]
Consider the diagram:
$$\begin{CD} \PP(\F) @>\pi>>\PP(\wedge^2 U_1)\times \PP(U_2)\\
@V f VV @.\\
\PP(\C\oplus (\wedge^2 U_1\otimes U_2)) @. \end{CD}$$
where 
\begin{equation}\label{F} \F=\pi_1^{\vee}( (\oo_{\PP(\wedge^2 U_1)}(1)) \otimes \pi_2^{\vee} (\oo_{\PP(U_2)}(1)))\oplus \C ) \end{equation} is a vector bundle on  $\PP(\wedge^2 U_1)\times \PP(U_2)$ such that $\pi$ is the projection. Moreover, $f$ is given by $\oo_{\PP(F)}(1)$ such that the image of $f$ is the cone over $\PP(\wedge^2 U_1)\times \PP(U_2)$ and $f$ is the blow-up of the vertex with exceptional divisor $E$.

Consider the rank $5$ bundle $\G$ over $\PP(\wedge^2 U_1)\times \PP(U_2)$, such that for $(L,M)\in \PP(\wedge^2 U_1)\times \PP(U_2)$ 
the fiber $\G_{(L,M)}$ is $$\C\oplus(L^{\vee}\otimes M^{\vee})\subset \C\oplus ( U_1\otimes \wedge^2 U_2).$$ 
There is a natural restriction map $I_{Y,2}\to Sym^2\G^{\vee}_{(L,M)}$.  When $Y$ contains no quadric threefold, this map has rank $2$, and the image is a pencil of quadric threefolds that defines a complete intersection that we denote by $D_{(L,M)}$.   Thus for each $(L,M)\in \PP(\wedge^2 U_1)\times \PP(U_2)$  there is a
natural surjective restriction map 
\[
\pi_{(L,M)}:I_{Y,2}\to I_{D_{(L,M)},2}:=H^0(\PP(\G_{(L,M)}),\mathcal{I}_{D_{(L,M)}}(2))\subset Sym^2\G^{\vee}_{(L,M)}.
\]
%Let $\PP(I_{D,2})\subset \PP(Sym^2\G^{\vee})$ be the $\PP^1$-subbundle over $\PP(\wedge^2 U_1)\times \PP(U_2)$ whose fiber is  $I_{D_{(L,M)},2}.$
For each element $\mathfrak{Q}\in I_{D_{(L,M)},2}$, let $H_{\mathfrak{Q}}\subset I_{Y,2}$ be the hyperplane of quadrics whose image in $I_{D_{(L,M)},2}$ is proportional to ${\mathfrak{Q}}$.
We define degeneracy loci
\[
D^Q_r=\{ ([H_{\mathfrak{Q}}],(L,M))| {\mathfrak{Q}}\in I_{D_{(L,M)},2},%H\subset I_{Y,2} 
\;{\rm rank}({\mathfrak{Q}})\leq 5-r  \}\subset \PP(\C\oplus (\wedge^2 U_1\otimes U_2))  \times \PP(\wedge^2 U_1)\times \PP(U_2).
\]
Notice that  $D^Q_r\subset \PP(\F)$. 
%(since $\PP(I_{Y,2})=\PP(\C\oplus (\wedge^2 U_1\otimes U_2))$).
Consider the projections
\[
f|_{D^Q_r}:D^Q_r\to  \PP(\C\oplus (\wedge^2 U_1\otimes U_2)); \quad ([H_{\mathfrak{Q}}],(L,M))\mapsto [H_{\mathfrak{Q}}] \quad r=1,2.
\]
We claim 

\begin{lem}
$f(D^Q_1)=\psi_Q(F(Y))$.% and $f(D^Q_r)=\bar{D}^{{\bar A}_{Q'}}_r, r=1,2$
\end{lem}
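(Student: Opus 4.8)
The plan is to set up an explicit correspondence between $(1,1)$-conics on $Y$ and the rank $\le 4$ members of the pencils $I_{D_{(L,M)},2}$, and then to check that the two recipes for a hyperplane of quadrics — the one producing $[H_C]$ and the one producing $[H_{\mathfrak{Q}}]$ — agree. First I would observe that a $(1,1)$-conic $C\in F(Y)$ determines a point of the base: since $C$ projects to a line $\pi_1(C)=\PP(L^\vee)\subset\PP(U_1)$ and a line $\pi_2(C)=\PP(M^\vee)\subset\PP(\wedge^2U_2)$, it gives $(L,M)\in\PP(\wedge^2U_1)\times\PP(U_2)$, and $C$ is contained in the preimage $\pi^{-1}(\PP(L^\vee)\times\PP(M^\vee))=Y\cap\PP(\G_{(L,M)})=D_{(L,M)}$, which is irreducible by Lemma \ref{smooth Verra}(3) and is the complete intersection in $\PP(\G_{(L,M)})\cong\PP^4$ cut out by the pencil $I_{D_{(L,M)},2}$. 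The plane $\PL_C$ spanned by $C$ then lies in this $\PP^4$.

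Next I would identify the pencil member attached to $C$. Every $\mathfrak{Q}\in I_{D_{(L,M)},2}$ vanishes on $D_{(L,M)}\supset C$, so restriction gives a linear map $I_{D_{(L,M)},2}\to H^0(\mathcal{I}_C(2))$, $\mathfrak{Q}\mapsto\mathfrak{Q}|_{\PL_C}$; since $C$ is a conic in the plane $\PL_C\cong\PP^2$ we have $h^0(\mathcal{I}_C(2))=1$, so a two–dimensional space maps to a one–dimensional one and the kernel is nonzero. Its kernel is exactly the set of pencil members containing $\PL_C$, so there is a member $\mathfrak{Q}_C\supset\PL_C$, necessarily of rank $\le 4$ (only quadric threefolds of rank $\le 4$ contain a plane), and generically $\C\,\mathfrak{Q}_C$ is the whole kernel. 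Because $\PL_C\subset\PP(\G_{(L,M)})$, the restriction $q|_{\PL_C}$ of a quadric $q\in I_{Y,2}$ depends only on $\pi_{(L,M)}(q)$; hence
\[
H_C=\{q\in I_{Y,2}\,:\,q|_{\PL_C}=0\}=\pi_{(L,M)}^{-1}\!\big(\C\,\mathfrak{Q}_C\big)=H_{\mathfrak{Q}_C}.
\]
Therefore $\psi_Q([C])=[H_C]=[H_{\mathfrak{Q}_C}]=f\big(([H_{\mathfrak{Q}_C}],(L,M))\big)$ with $([H_{\mathfrak{Q}_C}],(L,M))\in D^Q_1$, which proves $\psi_Q(F(Y))\subseteq f(D^Q_1)$.

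For the reverse inclusion I would start from a general point $([H_{\mathfrak{Q}}],(L,M))\in D^Q_1$, so that $\mathfrak{Q}$ is a rank $4$ member of $I_{D_{(L,M)},2}$. Such a quadric threefold carries two pencils of planes; taking a general plane $\PL\subset\mathfrak{Q}$ and intersecting with a residual member $\mathfrak{Q}'$ yields a conic $C=\PL\cap\mathfrak{Q}'\subset D_{(L,M)}\subset Y$, which for general data is a smooth $(1,1)$-conic, so $[C]\in F(Y)$; running the previous paragraph backwards gives $\mathfrak{Q}=\mathfrak{Q}_C$ and $[H_{\mathfrak{Q}}]=[H_C]=\psi_Q([C])$. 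Thus a dense set of $f(D^Q_1)$ lies in $\psi_Q(F(Y))$. Since $F(Y)$ is projective, $\psi_Q(F(Y))$ is closed, and it equals the integral EPW quartic section $\bar{D}^{\bar{A}_{Q'}}_1$ (preceding proposition and Lemma \ref{lem int}); both $\psi_Q(F(Y))$ and $f(D^Q_1)$ are fourfolds, one contains a dense subset of the other, so they coincide.

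The main obstacle I expect is the bookkeeping in the middle step — verifying cleanly that $H_C=H_{\mathfrak{Q}_C}$. This rests on the restriction map $\pi_{(L,M)}$ having rank exactly $2$ onto the pencil (guaranteed by Lemma \ref{smooth Verra}(2), as $Y$ contains no quadric threefold) together with the elementary fact $h^0(\mathcal{I}_C(2))=1$ for a plane conic. A secondary point needing care is the reverse inclusion: one must check that the rank $4$ members sweep out a dense subset of $f(D^Q_1)$ and that the conics produced are genuine, reduced members of $F(Y)$ rather than degenerate limits, which is where the density of smooth conics in $F(Y)$ and the matching of dimensions are used.
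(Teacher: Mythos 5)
Your proof is correct and follows essentially the same route as the paper's: the forward inclusion via the (necessarily rank $\le 4$) pencil member containing the plane $\PL_C$, and the converse via the planes inside singular pencil members cutting $(1,1)$-conics on $D_{(L,M)}$ — with your verification that $H_C=H_{\mathfrak{Q}_C}$ (using $h^0(\mathcal{I}_{C/\PL_C}(2))=1$ and the factoring of restriction through $\pi_{(L,M)}$) spelled out more explicitly than the paper's one-line version. The only caveat, which is shared with (indeed present more strongly in) the paper's own proof, concerns the locus in $D^Q_1$ given by the cone quadrics $C(\PP(L^{\vee})\times\PP(M^{\vee}))$, whose planes cut $(2,0)$- and $(0,2)$-conics rather than $(1,1)$-conics; $f$ contracts this locus to the vertex of $C(\PP(\wedge^2 U_1)\times \PP(U_2))$, and both your closure argument and the paper's blanket claim that singular members yield $(1,1)$-conics silently pass over this single point.
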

\begin{proof}
A $(1,1)$-conic $C$ is mapped to a unique pair of lines $\PP(L^{\vee})\subset \PP( U_1)$ and $\PP(M^{\vee})\subset\PP(\wedge^2 U_2)$, 
and is therefore contained in a unique complete intersection $D_{(L,M)}$, so if ${\mathfrak{Q}}\in I_{D_{(L,M)},2}$ is the quadric threefold that contains the plane $P_C$ spanned by $C$, then $H_C\subset I_{Y,2}$ is the hyperplane of quadrics that contain $P_C$, i.e. $\psi{[C]}=f([H_{\mathfrak{Q}}],(L,M))$.  On the other hand, if  ${\mathfrak{Q}}\in I_{D_{(L,M)},2}$ is singular, then, by Lemma \ref{smooth Verra}(4), it has rank $4$ or $3$, and the planes in ${\mathfrak{Q}}$ intersect $D_{(L,M)}$ in conics that are $(1,1)$-conics on $Y$.
\end{proof}

Next, we claim that $f$ restricted to $D^Q_1$ has an inverse $f^{-1}:f(D^Q_1)\to \PP(\F)$. Indeed,
the quadrics in the ideal of $Y$ define a rational map: 
\[
\PP(\C\oplus ( U_1\otimes \wedge^2 U_2))\dashrightarrow \PP(\C\oplus (\wedge^2 U_1\otimes U_2)).
\]
The preimage of a point $p \in C\PP(\wedge^2 U_1)\times \PP(U_2)$, outside the vertex,  is the union 
\[
Y\cup {\mathfrak{Q}}_p,
\]
 where ${\mathfrak{Q}}_p\in I_{D_{(L,M)},2}$, and, by abuse of notation, at the same time ${\mathfrak{Q}}_p$ is a quadric threefold  in $\PP(C\oplus (L^{\vee}\otimes M^{\vee}))$.  Therefore, the quadrics in the hyperplane $H_{\mathfrak{Q}}$  with ${\mathfrak{Q}}\in I_{D_{(L,M)},2}$ define the pair $(L,M)$ and hence also ${\mathfrak{Q}}$, so $f$ has an inverse.
 
 We choose coordinates such that $Y$ is the intersection of $C(\PP(\wedge^2 U_1)\times \PP(U_2))$ with a quadric $\{ z^2-Q'=0\}$, where $\{Q'=0\}$ is a cone with vertex at the vertex of $C(\PP(\wedge^2 U_1)\times \PP(U_2))$, and $z$ is nonzero at the vertex.  

 The pencil  $I_{D_{(L,M)},2}$ contains in general $5$ rank $4$ quadrics.  One is the rank $4$ quadric $C(\PP(L^{\vee})\times \PP(M^{\vee}))$.  The planes in this quadric intersect $D_{(L,M)}$ in conics that are contracted, by the projection to either $\PP( U_1)$ or $\PP(\wedge^2 U_2)$, so these are not $(1,1)$-conics.  
 A plane in any of the other singular quadrics in $I_{D_{(L,M)},2}$, intersects $D_{(L,M)}$ in a $(1,1)$-conic.  When ${\mathfrak{Q}}\in I_{D_{(L,M)},2}$ has rank $4$,
the fiber  $\psi_Q^{-1} ([H_{\mathfrak{Q}}])$ is therefore two $\PP^1$'s of conics defined on $D_{(L,M)}$ by the two pencils of planes in ${\mathfrak{Q}}$.
The two pencils coincide precisely when ${\mathfrak{Q}}$ has rank $3$. 

The double cover $Y\to \PP(U_1)\times \PP(\wedge^2 U_2)$ is branched along the Verra threefold $Z=Y\cap \{z=0\}$.  It 
defines an involution on $Y$, that for each $(L,M)$ restricts to an involution on $D_{(L,M)}$ and on each threefold quadric ${\mathfrak{Q}}$, where
 ${\mathfrak{Q}}\in I_{D_{(L,M)},2}$.
In particular, when ${\mathfrak{Q}}$ has rank $4$, the two pencils of planes in the quadric  are interchanged by this  involution. 

Finally, when ${\mathfrak{Q}}\in I_{D_{(L,M)},2}$ has rank $3$, then  $D_{(L,M)}$ is singular in two points on the vertex of ${\mathfrak{Q}}$.  So the double cover 
$$D_{(L,M)}\to \PP(L^{\vee})\times \PP(M^{\vee})\subset \PP( U_1)\times \PP(\wedge^2 U_2)$$
is branched along a curve with two singular points, i.e. a pair of conics $C\cup C'$, corresponding to the fixed points of the involution on the pencil of planes in ${\mathfrak{Q}}$.  The pair of conics $C\cup C'$ lies in the hyperplane $\{z=0\}$, i.e. in the Verra threefold $Z=Y\cap \{z=0\}$.   Conversely,  a $(1,1)$-conic $C$ in $Z$ is mapped to a pair of lines $\PP(L^{\vee})$ and $ \PP(M^{\vee})$ and is a component of the ramification locus of the double cover $D_{(L,M)}\to \PP(L^{\vee})\times \PP(M^{\vee})$.  The other component $C'$ is also a 
$(1,1)$-conic contained in $Z$ and $C$ and $C'$ intersect in a scheme of length $2$.  The complete intersection $D_{(L,M)}$ is singular along this scheme, which is the intersection of  the vertex of a rank $3$ quadric ${\mathfrak{Q}}\in I_{D_{(L,M)},2}$ and $D_{(L,M)}$.  

Thus, we have identified the  set of pairs $([H_{\mathfrak{Q}}],(L,M))\in D^Q_2$ where ${\mathfrak{Q}}$ has rank $3$ with the set of pairs of $(1,1)$-conics $C\cup  C'$ in $Z$ that intersect in a scheme of length $2$.  
  %Let us now relate the above construction to EPW quartic sections.
By Lemma \ref{hilbert scheme of conics to lagrangian}, we 
 infer $f(D^Q_2)={D}^{\bar{A}_{Q'}}_2$. 
Summing up, we have precisely described the Stein factorization of the map  $\psi_Q$.
\begin{prop} The Stein factorization of $\psi_Q$ is 
$$ \psi_Q=\phi \circ \rho$$
with $\phi:F(Y)\to X_Q$ a $\mathbb{P}^1$ fibration  and $\rho: X_Q\to {D}^{\bar{A}_{Q'}}_1 $ a 2:1 cover branched  precisely in
${D}^{\bar{A}_{Q'}}_2$.
\end{prop}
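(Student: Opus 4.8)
The plan is to obtain the asserted factorization directly as the Stein factorization of $\psi_Q$, reading off the fiber structure from the analysis carried out above. Since $Y$ is projective, its Hilbert scheme of $(1,1)$-conics $F(Y)$ is projective as well, so $\psi_Q$ is a proper morphism; by the preceding lemma $f(D^Q_1)=\psi_Q(F(Y))$, and the previous proposition identifies this image with the EPW quartic section $D^{\bar{A}_{Q'}}_1$. I would therefore take the Stein factorization
\[
F(Y)\xrightarrow{\ \phi\ } X_Q:=\Spec_{D^{\bar{A}_{Q'}}_1}\big((\psi_Q)_*\oo_{F(Y)}\big)\xrightarrow{\ \rho\ } D^{\bar{A}_{Q'}}_1,
\]
in which $\phi$ has connected fibers and $\rho$ is finite by construction. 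It then suffices to determine the fibers of $\psi_Q$ over the two strata $D^{\bar{A}_{Q'}}_1\setminus D^{\bar{A}_{Q'}}_2$ and $D^{\bar{A}_{Q'}}_2$, since everything else is forced by the universal property of Stein factorization.

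Over a general point $[H_{\mathfrak{Q}}]\in D^{\bar{A}_{Q'}}_1\setminus D^{\bar{A}_{Q'}}_2$ the inverse $f^{-1}$ recovers a unique pair $(L,M)\in\PP(\wedge^2 U_1)\times\PP(U_2)$ together with a quadric threefold $\mathfrak{Q}\in I_{D_{(L,M)},2}$ of rank $4$. As observed above, the two pencils of planes in $\mathfrak{Q}$ cut $D_{(L,M)}$ in two pencils of $(1,1)$-conics on $Y$, so $\psi_Q^{-1}([H_{\mathfrak{Q}}])$ is a disjoint union of two $\PP^1$'s. Consequently $\rho$ is generically $2:1$, and being finite it is $2:1$ onto $D^{\bar{A}_{Q'}}_1$; each connected component of a general fiber of $\psi_Q$ is a single $\PP^1$, so $\phi$ is generically a $\PP^1$-fibration, the two sheets of $X_Q$ corresponding to the two pencils of planes. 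The involution of $Y$ induced by the double cover $Y\to\PP(U_1)\times\PP(\wedge^2 U_2)$ interchanges these two pencils; hence it descends to the deck involution of $\rho$, which makes the two-sheeted structure of $X_Q$ transparent and identifies its fixed conics with those lying in the Verra threefold $Z$.

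To pin down the branch locus, I would pass to $D^{\bar{A}_{Q'}}_2=f(D^Q_2)$. Over such a point the corresponding quadric $\mathfrak{Q}\in I_{D_{(L,M)},2}$ degenerates to rank $3$, the two pencils of planes coincide, and $\psi_Q^{-1}([H_{\mathfrak{Q}}])$ is a single $\PP^1$. Thus the two sheets of $X_Q$ come together exactly over $D^{\bar{A}_{Q'}}_2$, i.e. $\rho$ is branched precisely along $D^{\bar{A}_{Q'}}_2$, while $\phi$ remains a $\PP^1$-fibration there. Assembling the two strata yields that $\phi\colon F(Y)\to X_Q$ is a $\PP^1$-fibration and $\rho\colon X_Q\to D^{\bar{A}_{Q'}}_1$ is the desired $2:1$ cover branched exactly in $D^{\bar{A}_{Q'}}_2$.

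The step where I expect to spend the most care is upgrading these fiberwise statements to a uniform global description: that $\phi$ is a genuine (flat) $\PP^1$-fibration over \emph{all} of $X_Q$, with no additional degenerate fibers hidden over the vertex of the cone $C(\PP(\wedge^2 U_1)\times\PP(U_2))$ or over loci where $D_{(L,M)}$ acquires extra singularities, and that $\rho$ has no spurious branch components outside $D^{\bar{A}_{Q'}}_2$. This is most cleanly handled inside the resolved model $\PP(\F)$ via the degeneracy loci $D^Q_r\subset\PP(\F)$ together with the invertibility of $f|_{D^Q_1}$, which transports the rank stratification of $\mathfrak{Q}$ to a clean global description of the fibers and of the locus where the two sheets of $X_Q$ meet, so that the stratumwise computations above glue into the claimed factorization.
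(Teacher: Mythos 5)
Your proposal is correct and takes essentially the same route as the paper: the paper's argument for this Proposition is precisely the preceding fiber analysis — rank‑$4$ quadrics $\mathfrak{Q}$ give fibers consisting of two disjoint $\PP^1$'s interchanged by the covering involution of $Y$, rank‑$3$ quadrics (over $f(D^Q_2)=D_2^{\bar{A}_{Q'}}$) give a single $\PP^1$ — followed by reading off the Stein factorization. Your closing remark about upgrading the stratumwise computation to a uniform global statement via $\PP(\F)$ and the invertibility of $f|_{D^Q_1}$ is exactly the (somewhat implicit) glue the paper uses when it writes ``summing up, we have precisely described the Stein factorization.''
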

%hil it fixes  defines an involution of the pencils of planes in the 
% the double cover restricts to a double cover $D_{(L,M)}\to \PP(L^{\vee})\times \PP(M^{\vee})\subset \PP( U_1)\times \PP(\wedge^2 U_2)$
%branched along $Z\cap D_{(L,M)}$.  In particular, the double cover induces an involution on each surface $D_{(L,M)}$.

% In this case $D_{(L,M)}$  has two singular points, and the double cover
%$D_{(L,M)}\to \PP(L^{\vee})\times \PP(M^{\vee})$ is branched over a pair of conics that intersect in two points.  
Moreover, we have proven the following relation between the singular locus of an EPW quartic and its associated Verra threefold.
\begin{prop}\label{Z_Q} Let $Y=Q\cap C(\PP(\wedge^2 U_1)\times \PP(U_2))$  be a general Verra fourfold and let $Z=(\PP(\wedge^2 U_1)\times \PP(U_2))\cap Q' = Y\cap \{z=0\}$ be its  associated Verra threefold. Then  the map $$\psi_Q|_{F(Z)}\colon F(Z)\to C(\PP(\wedge^2 U_1)\times \PP(U_2))$$ is an \'etale $2:1$ map to the set $D_2^{\bar{A}_{Q'}}\subset C(\PP(\wedge^2 U_1)\times \PP(U_2))$. Thus the singular set of a general EPW quartic section admits an \'etale double cover
being the Hilbert scheme of conics on the corresponding Verra threefold $Z$.
\end{prop}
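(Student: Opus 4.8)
The plan is to upgrade the fiber analysis of the Stein factorization of $\psi_Q$ obtained above into the assertion that its restriction to $F(Z)$ is an \'etale double cover onto $D_2^{\bar{A}_{Q'}}$.

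First I would record the set-theoretic picture. By the second part of Lemma \ref{hilbert scheme of conics to lagrangian}, every $(1,1)$-conic contained in the Verra threefold $Z=Y\cap\{z=0\}$ is sent by $\psi_Q$ into $D_2^{\bar{A}_{Q'}}$, so $\psi_Q(F(Z))\subseteq D_2^{\bar{A}_{Q'}}$. For the reverse inclusion and the fiber structure I would invoke the identification established in the analysis preceding the statement: the points of $D_2^{\bar{A}_{Q'}}$ are exactly the classes $[H_{\mathfrak{Q}}]$ with $\mathfrak{Q}\in I_{D_{(L,M)},2}$ of rank $3$, and each such $\mathfrak{Q}$ cuts out on $D_{(L,M)}$ a pair $C\cup C'$ of $(1,1)$-conics lying in $Z$ and meeting in a length-$2$ scheme, namely the two branch conics of the double cover $D_{(L,M)}\to\PP(L^{\vee})\times\PP(M^{\vee})$. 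Both $[C]$ and $[C']$ lie in $F(Z)$ and are sent to $[H_{\mathfrak{Q}}]$, which gives surjectivity and shows the fiber is $\{[C],[C']\}$. Since $D_3^{\bar{A}_{Q'}}$ is empty for general $Q$, no quadric $\mathfrak{Q}$ of rank $\le 2$ occurs, so every fiber is of this form and $\psi_Q|_{F(Z)}$ is finite of degree $2$.

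Next I would make the double structure explicit through the involution $\tau\colon F(Z)\to F(Z)$ that exchanges a conic $[C]$ with its partner $[C']$ in the pair determined by $(L,M)$. By the previous paragraph $\psi_Q|_{F(Z)}$ is constant on $\tau$-orbits and separates distinct orbits, so it factors as the quotient $F(Z)\to F(Z)/\tau$ followed by a finite bijective morphism $F(Z)/\tau\to D_2^{\bar{A}_{Q'}}$; as $D_2^{\bar{A}_{Q'}}$ is normal (being isomorphic to the smooth fixed locus of the antisymplectic involution on the associated IHS fourfold of $\mathcal{U}$, cf.\ the introduction), this morphism is an isomorphism. Hence it suffices to show that $\tau$ is fixed-point free, for then $F(Z)\to F(Z)/\tau=D_2^{\bar{A}_{Q'}}$ is an \'etale double cover.

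The freeness of $\tau$ is the main obstacle, and it is exactly here that the genericity of $Y$ enters. A fixed point of $\tau$ would be a conic $C$ equal to its partner $C'$, i.e.\ a pair degenerating to a double conic $2C$ in the branch locus of $D_{(L,M)}\to\PP(L^{\vee})\times\PP(M^{\vee})$; then $Z$ would meet the $\PP^3$ spanned by $\PP(L^{\vee})\times\PP(M^{\vee})$ in a double conic, which by Zak's tangency theorem forces $Z$, and hence the general Verra fourfold $Y$, to be singular --- contradicting smoothness exactly as in the proof of Lemma \ref{smooth Verra}. Thus $\tau$ acts freely and $\psi_Q|_{F(Z)}$ is the asserted \'etale $2:1$ map; the remaining bookkeeping (that $F(Z)$ has no spurious components and that the two branch conics are always distinct for general $Y$) follows from the same genericity together with $D_3^{\bar{A}_{Q'}}=\emptyset$.
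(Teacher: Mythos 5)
Your proposal is correct and follows essentially the same route as the paper, which derives Proposition \ref{Z_Q} directly from the preceding Stein-factorization analysis: Lemma \ref{hilbert scheme of conics to lagrangian} gives $\psi_Q(F(Z))\subseteq D_2^{\bar{A}_{Q'}}$, and the identification of rank-$3$ quadrics $\mathfrak{Q}\in I_{D_{(L,M)},2}$ with pairs of $(1,1)$-conics $C\cup C'\subset Z$ meeting in a length-$2$ scheme gives surjectivity and the $2:1$ fiber structure. Your only addition is to make the fixed-point-freeness of the swap involution (hence \'etaleness) explicit via the Zak-tangency argument already used in Lemma \ref{smooth Verra}, a point the paper leaves implicit; this is a welcome clarification rather than a different method.
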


Finally, Theorem \ref{main2} appears also as a direct consequence of the above arguments.
 
 \begin{proof}[Proof of Theorem 0.2] Let $X_{\bar{A}}\in \mathcal{U}$, then $X_{\bar{A}}$ is a double cover of  $D_1^{\bar{A}}$ for some Lagrangian $\bar{A}\subset ((\wedge^2 U_1) \otimes U_2)\oplus (U_1\otimes (\wedge^2 U_2)) $. Let $Q_{\bar{A}}\subset \mathbb{P}((U_1\otimes (\wedge^2 U_2))$ be the corresponding quadric and $Z_{\bar{A}}$ be the corresponding Verra threefold and $Y_{\bar{A}}$ the corresponding Verra fourfold. Then both $X_{Q_{\bar{A}}}$ and $X_{\bar{A}}$ appear as double covers of $D_1^{\bar{A}}$ branched in $D_2^{\bar{A}}$ hence are isomorphic. It follows that $X_{\bar{A}}$ is the base of a $\mathbb{P}^1$
fibration on $F(Y_{\bar{A}})$.  For the converse we just need to recall that there is a 1:1 correspondence between general Lagrangian subspaces $\bar{A}$ and general quadrics $Q_{\bar{A}}$. \end{proof}
 
\begin{rem}\label{U2} Observe that if $V_6$ a 6-dimensional vector space and  $[A]\in \mathrm{LG}(10,\wedge^3 V_6)$ such that $\mathbb{P}(A)\cap \mathrm{G}(3,V_6)=\{[U_1]\}$ then to $A$ we associate a unique EPW quartic section $D^1_{\bar{A}}$ and  also a unique Verra fourfold $V_A$. The Verra fourfold appears as follows.
  First, for a fixed choice of $[U_2]\in \mathrm{G}(3,V_6)$ such that $U_2\cap U_1=\{0\} $ consider 
$$q_{A,U_2}: T_{[U_2]}/<[U_2]> \to (T_{[U_1]}/<[U_1]>)=(T_{[U_2]}/<[U_2]>)^{\vee}$$ the symmetric map whose graph is $A/<[U_1]>$ and let $Q_{A,U_2}$ be the corresponding quadric. 
Let $C_{U_2}=T_{[U_2]}\cap \mathrm{G}(3,U)$ and $P_{U_2}=\PP(\wedge^2 U_2)\times \PP(U_1)$ be the corresponding Segre embedding $$\mathbb{P}^2\times \mathbb{P}^2\subset T_{[U_2]}/<[U_2]> \simeq (T_{[U_1]}/<[U_1]>)^{\vee}.$$
Define $Z_{A,U_2}=P_{U_2}\cap Q_{A,U_2}$ the Verra threefold associated to $A$ and $U_2$ and $V_{A,U_2}$ the corresponding Verra fourfold. We claim that in fact $Z_{A,U_2}$ (and in consequence $V_{A,U_2}$) is independent from the choice of $U_2$. Indeed, if we choose a different  $[U'_2]\in \mathrm{G}(3,V_6)$ then we have a canonical isomorphism $T_{[U'_2]}/<[U'_2]> \simeq(T_{[U_1]}/<[U_1]>)^{\vee}\simeq  T_{[U_2]}/<[U_2]>$ induced by the symplectic form  and under this identification we have $Q_{A,U_2}-Q_{A,U'_2}\in H^0(I_{P_{U_2}}(2))=H^0(I_{P_{U'_2}}(2))$.

\end{rem}

 \subsection{Two Lagrangian fibrations}\label{lagr}

Observe that a general double EPW quartic section $X$  admits two fibrations.
Indeed, consider the composition of maps $X_{\bar{A}}\to D_1^{\bar{A}} \subset C(\PP(\wedge^2 U_1)\times \PP(U_2))=C(\PP^2\times\PP^2)$, with $D_1^{\bar{A}}$ the EPW quartic section defined by the Lagrangian subspace ${\bar{A}}\subset (\wedge^3 U_1)^{\bot}/(\wedge^3 U_1)$.
The projections to the factors of $\PP(\wedge^2 U_1)\times \PP(U_2)$ induces two fibrations $\pi_1$ and $\pi_2$. Since $X_{\bar{A}}$ is IHS the fibers are abelian surfaces.
Let us study these fibrations in more details. We shall consider the fibration of the EPW quartic section $D_1^{\bar{A}} \subset C( \PP(\wedge^2 U_1)\times \PP(U_2))$.
\begin{prop} The general fibers of the two natural fibrations $\pi_1: D_1^{\bar{A}} \to \PP(\wedge^2 U_1)$ and $\pi_2: D_1^{\bar{A}} \to \PP(U_2)$ of the EPW quartic section $D_1^{\bar{A}}$ are Kummer quartic surfaces.
\end{prop}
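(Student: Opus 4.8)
The plan is to restrict the Lagrangian–degeneracy description of $D_1^{\bar A}$ to a single fibre and recognise it as an instance of the ``baby case'' of Lemma \ref{lagrangian Kummer}. Fix a general $[L]\in\PP(\wedge^2 U_1)$ and write $L=\wedge^2 W$ for the corresponding $2$-plane $W\subset U_1$. A point $[U]\in C_{U_1}$ with $\pi_1([U])=[L]$ is a $3$-space $U=W\oplus\langle u\rangle$ with $[u]\in\PP((U_1\oplus U_2)/W)$, so that $[\wedge^3U]=[L\wedge u]$ runs over the linear $\PP^3=\PP(\wedge^3 U_1\oplus(L\otimes U_2))$; thus $\pi_1^{-1}([L])$ is this $\PP^3$, and by Lemma \ref{lem-ram} the restriction $D_1^{\bar A}\cap\pi_1^{-1}([L])=Q_{\bar A}\cap\PP^3$ is a quartic surface. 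It remains to identify it with a Kummer quartic.

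To do so I set up the baby case of subsection \ref{lagrangiankummer} with $V_2:=W$ and $V_4:=(U_1/W)\oplus U_2$, so that $\PP(V_4)$ is exactly the fibre $\PP^3$ and $V_{2,4}=V_2\otimes\wedge^2 V_4$ carries the induced symplectic form $\eta_{2,4}$. The heart of the matter is a symplectic reduction. A direct computation of $\bar T_U=(\wedge^2U\wedge(U_1\oplus U_2))/\wedge^3U_1$ shows that every such $\bar T_U$ contains the fixed $3$-dimensional isotropic subspace $L\otimes U_2\subset\wedge^2 U_1\otimes U_2$, independently of $u$, and that modulo $L\otimes U_2$ the remaining $6$-dimensional part of $\bar T_U$ is exactly $F_v=V_2\otimes(V_4\wedge v)$, under the identifications $\wedge^2 U_1=L\oplus(W\wedge e)$ and $U_1/W\cong\langle e\rangle$ (for a chosen $e\in U_1\setminus W$), where $v\in V_4$ corresponds to $u$. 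Passing to the $12$-dimensional symplectic reduction $(L\otimes U_2)^{\perp}/(L\otimes U_2)\cong(V_{2,4},\eta_{2,4})$, the fixed Lagrangian $\bar A$ induces a Lagrangian $A':=\mathrm{im}\big(\bar A\cap(L\otimes U_2)^{\perp}\big)\subset V_{2,4}$.

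The next step matches the degeneracy conditions. For general $\bar A$ and general $[L]$ one has $\bar A\cap(L\otimes U_2)=0$: the locus of $[L]\in\PP(\wedge^2 U_1)=\PP^2$ with $\bar A\cap(L\otimes U_2)\neq 0$ has codimension $\geq 7$ (the map $L\otimes U_2\to(\wedge^2 U_1\otimes U_2\oplus U_1\otimes\wedge^2 U_2)/\bar A$ must drop to rank $\leq 2$), hence is empty. Since $L\otimes U_2\subset\bar T_U$, every nonzero element of $\bar T_U\cap\bar A$ then projects isomorphically into $(\bar T_U/(L\otimes U_2))\cap A'=F_v\cap A'$, whence
\[
D_1^{\bar A}\cap\pi_1^{-1}([L])=\{[v]\in\PP(V_4)\mid \dim(F_v\cap A')\geq 1\}=D_1^{A'},
\]
which is a Kummer quartic surface by Lemma \ref{lagrangian Kummer}, once $A'$ is checked to be general in $LG(6,V_{2,4})$; this genericity is inherited from that of $\bar A$. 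The fibration $\pi_2$ is handled identically: fixing $[M]\in\PP(U_2)$ one writes the fibre as $\PP(\wedge^3 U_1\oplus(\wedge^2 U_1\otimes M))$, realises it as $\PP(V_4^{\vee})$ with $V_4=U_1\oplus M$ and a suitable $2$-plane $V_2$, and identifies the restricted locus with the dual degeneracy locus $\hat D_1^{A'}$ of the Remark following Lemma \ref{lagrangian Kummer}, again a Kummer surface.

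The main obstacle is the symplectic reduction in the second paragraph: one must verify in coordinates that $L\otimes U_2$ is the universal isotropic part of $\bar T_U$ over the fibre, that the reduced Lagrangian $\bar T_U/(L\otimes U_2)$ is genuinely $F_v$ with the correct matching of symplectic forms (not merely abstractly a space of the shape $V_2\otimes(V_4\wedge v)$), and that the induced $A'$ is generic enough to invoke Lemma \ref{lagrangian Kummer}. The transversality $\bar A\cap(L\otimes U_2)=0$ and this transfer of genericity are the steps requiring care.
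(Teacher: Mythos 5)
Your argument is correct, but it is not the route the paper's own proof takes. The paper proves the statement for $\pi_2$ (and declares $\pi_1$ analogous) by passing through the conic-fibration picture: the fiber over a generic $[v]\in\PP(U_2)$ is identified with the image under $\psi_{Q_{\bar{A}}}$ of the $(1,1)$-conics lying on the threefold $C(\PP(U_1)\times\PP(V_2))\cap Q_{\bar{A}}$, where $\PP(V_2)$ is the line dual to $v$, and then Proposition \ref{1.10} --- which realizes that image as the dual Kummer quartic of a discriminant surface --- gives the conclusion. You instead work entirely on the Lagrangian side, performing the symplectic reduction of $\bar{T}_U$ and $\bar{A}$ by the fixed isotropic subspace $L\otimes U_2$ (resp.\ $\wedge^2 U_1\otimes u_2$) contained in every $\bar{T}_U$ over the fiber, and matching the reduced data with the ``baby case'' of Lemma \ref{lagrangian Kummer}; your fiber-by-fiber identifications (the fiber as a $\PP^3$, the quotient $\bar{T}_U/(L\otimes U_2)=F_v$, the induced Lagrangian $A'$, and the equivalence $\dim(\bar{T}_U\cap\bar{A})\geq 1\iff\dim(F_v\cap A')\geq 1$ under the transversality $\bar{A}\cap(L\otimes U_2)=0$) all check out. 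This is precisely the content of the long Remark the authors place immediately after the Proposition, so your computation is endorsed by the paper, just not used as the proof. What each approach buys: the paper's route is short once Proposition \ref{1.10} is available and ties the fibers to the conic geometry of the Verra fourfold; your route is self-contained on the Lagrangian side and exhibits the $16$ nodes directly as $D_2^{A'}$. The one step you flag but do not close --- that the reduced Lagrangian $A'$ is general enough in $LG(6,V_{2,4})$ for Lemma \ref{lagrangian Kummer} to apply --- is a genuine obligation of your approach: it requires showing that the reduction map $\bar{A}\mapsto A'$ is dominant onto $LG(6,V_{2,4})$ for fixed $[L]$, followed by a standard incidence argument over $[L]\in\PP^2$ to handle a fixed general $\bar{A}$. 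The paper's Remark leaves the same point implicit, while the paper's actual proof sidesteps it entirely.
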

\begin{proof} We consider the fibers of the second projection $\pi_2$, the fibers of $\pi_1$ are treated similarly. 
Let $v\in \PP(U_2)$ be generic. 
Denote by $\PP(V_2)\subset \PP(\wedge^2 U_2)$ the line dual to $v$.
This induces a subset $C(\PP(U_1)\times \PP(V_2))\cap Q_{\bar{A}}$ of the corresponding Verra fourfold  $C( \PP(U_1)\times \PP(\wedge^2 U_2))\cap Q_{\bar{A}}$.

We can identify the fiber $\pi_2^{-1}(v)$ as the image by $\psi_{Q_{\bar{A}}}$ %(see (\ref{er})) 
of the conics contained in
$C(\PP(U_1)\times \PP(V_2))\cap Q_{\bar{A}}$.
It follows from Proposition \ref{1.10} that this fiber is a Kummer surface.
\end{proof}
\begin{rem} Note that from the adjunction formula $\pi_1$ and $\pi_2$ induces two Lagrangian fibrations  
on $X_{\bar{A}}$. The Kummer surfaces above can be seen as quotient of the Abelian surfaces in the fibers. 
\end{rem}
\begin{rem} Note that also the original description of the EPW quartic section as a Lagrangian degeneracy locus induces naturally a description of the Kummer quartic fibers as Lagrangian degeneracy loci in $\mathbb{P}^3$. That description is consistent with Lemma \ref{lagrangian Kummer} in the following sense.  We analyze both fibrations separately:

\textbf{(1)}The fibers of $\pi_2: D_1^{\bar{A}} \to \PP(U_2)$.
We know that 
\[
D_1^{\bar{A}} \subset C(\mathbb{P}(\wedge^2 U_1)\times \mathbb{P}(U_2))
\]
 hence a fiber $D_{u_2}$  of the projection $\pi_2: D_1^{\bar{A}} \to \PP(U_2)$ of a point $[u_2]\in \PP(U_2)$ is the intersection of 
 \[
 P_{[u_2]}=\PP((\wedge^3 U_1) \oplus ((\wedge^2 U_1)\otimes u_2)))\cap \mathrm{G}(3,U_1\oplus U_2)=C(\mathbb{P}(\wedge^2 U_1)),
 \]
with the Lagrangian degeneracy locus $D_1^{\bar{A}}$: 

\[
D_{u_2}=P_{[u_2]}\cap\bar{D}_1^{\bar{A}}=\{[U]\in P_{[u_2]}\cap \mathrm{G}(3,\wedge^2 U_1\oplus U_2)| \dim(\bar{T}_U\cap \bar{A})\geq 1\}. 
\]

%corresponding to a chosen Lagrangian space $\bar{A}\subset ((\wedge^2 U_1)\otimes U_2) \oplus (U_1\otimes (\wedge^2 U_2))$, where 
%$C_{U_1}=C(\PP^2 \times \PP^2)$ stands for the cone $\PP((\wedge^3 U_1) \oplus ((\wedge^2 U_1)\otimes U_2)))\cap \mathrm{G}(3,U_1\oplus U_2)$.
Let 
\[
K_4=U_1\oplus\langle u_2\rangle.
\]
 then 

\[
\wedge ^3K_4=\wedge^3 U_1 \oplus ((\wedge^2 U_1)\otimes u_2)))\subset \wedge^3 V.
\]

%Notice that there are two types of $\mathbb{P}^3\subset C_{U_1}$, these are defined by $\PP^3_{K_1}= \PP(\wedge^3 K_1)$ with $K_1\subset V$ subspace of dimension 4 containing $U_1$ or $\PP^3_{K_2}= Fl( K_2,3,W)$ with $K_2\subset U_1$ subspace of dimension 2. Consider $\PP^3_{K_1}$ and choose a decomposition $W=K_1\oplus R$ with $R\subset U_2$ we have 
Thus, for all $[U]\in P_{u_2}$ we have $\wedge^3U\subset \wedge ^3K_4$ and 
$$T_U=((\wedge^2 U )\wedge V )\supset (\wedge^3 K_4)$$
%\oplus ((\wedge^2 U)\wedge K_2).$$
Since $T_U$ is Lagrangian with respect to the wedge product form on $\wedge^3V$, we have $T_U\subset (\wedge^3 K_4)^{\perp}$. 
%and  consequently 
%$\bar{T}_U\subset ((\wedge^3 K_4)/(\wedge^3 U_1))^{\perp}\subset \wedge^3 V/(\wedge^3 U_1)$. 

Consider the $12$-dimensional quotient space $(\wedge^3 K_4)^\perp /(\wedge^3 K_4)$, with the nondegenerate $2$ form induced by the wedge product form. Then 
$$T_{U}/(\wedge^3 K_4) \subset (\wedge^3 K_4)^\perp /(\wedge^3 K_4)$$ is a Lagrangian subspace.
%There is a natural identification $((\wedge^3 K_4)/(\wedge^3 U_1))^{\perp}\cong (\wedge^3 K_1)^{\perp}/(\wedge^3 K_1)$
The Lagrangian subspace $A\subset \wedge^3 V$ contains $\wedge^3 U_1$ so has a Lagrangian quotient $\bar{A}\subset (\wedge^3 U_1)^{\perp}/(\wedge^3 U_1)$. It follows that the image $\bar{A}_{K_4}:= Im \varphi $ of the natural projection 
$$\varphi: \bar{A}\cap (\wedge^3 K_4)^{\perp}/(\wedge^3 U_1) \to (\wedge^3 K_4)^{\perp}/(\wedge^3 K_4)$$
is an isotropic subspace.  

By the genericity of $\bar{A}$, we have $\bar{A}\cap  ((\wedge^3 K_4)/(\wedge^3 U_1))=0$ (for every $u_2\in U_2$), so $\dim (\bar{A}_{K_4})=6$ and $\bar{A}_{K_4}$ is Lagrangian (for every $u_2\in U_2$). 

Finally for $[U]\in P_{u_2}$ i.e. $U\subset K_4$,
\[
[U]\in D_{u_2}\iff \dim(\bar{T}_U\cap \bar{A})\geq 1 \iff \dim((T_U/(\wedge^3 K_4))\cap \bar{A}_{K_4})\geq 1,
\]
i.e. the fiber $D_{u_2}$ is a Lagrangian degeneracy locus associated to the family of Lagrangian subspaces 
\[
\{T_U/(\wedge^3 K_4)|U\subset K_4\}
\]
and the fixed space $\bar{A}_{K_4}$ as Lagrangian subspaces of 
$(\wedge^3 K_4)^{\perp}/(\wedge^3 K_4)$.

With a choice of decomposition $V=K_4\oplus K_2$ we may identify 
$$(\wedge^3 K_4)^{\perp}/(\wedge^3 K_4)= (\wedge^2 K_4) \otimes K_2\subset \wedge^3 V,$$
and identify the $6$-dimensional subspace  $\bar{A}_{K_4}$ with a Lagrangian subspace in $(\wedge^2 K_4) \otimes K_2$, finally we identify:
\[
T_U=(\wedge^2 U) \otimes K_2\subset (\wedge^2 K_4) \otimes K_2.
\]  
In this context $D_{u_2}\subset  \PP(K_4^{\vee}) $ is  the first degeneracy locus 
$$\{[U]\in \PP(K_4^{\vee}) | \dim ((\wedge^2 U) \otimes K_2)\cap \bar{A}_{K_4})\geq 1\} .$$ 
This degeneracy locus was described in Section \ref{kummer} as a Kummer quartic singular in $16$ points given by:
$$\{[U]\in \PP(K_4^{\vee}) | \dim ((\wedge^2 U) \otimes K_2)\cap \bar{A}_{K_4})\geq 2\}.$$

\textbf{(2)} Consider next a fiber of the first projection $\pi_1: D_1^{\bar{A}} \to \PP(\wedge^2 U_1)$  from the Lagrangian degeneracy locus
\[
D_1^{\bar{A}} \subset C(\mathbb{P}(\wedge^2 U_1)\times \mathbb{P}(U_2)).
\]
Let $M_2\subset U_1$ be a $2$- dimensional subspace and denote by  $D_{M_2}$ the fiber $\pi_1^{-1}([\wedge^2M_2])$. 

Let $U\supset M_2$ be $3$-dimensional subspace of $V$, then 
\[
\wedge^3 U\in\wedge^2M_2\wedge V\subset \wedge^3V.
\]
  Thus  we may identify the sets $\{[U]\in \mathrm{G}(3,V)| U\supset M_2\}=P_{[M_2]}$ where 
 \[
P_{[M_2]}:=\PP(\wedge^2M_2\wedge V)\subset \PP(\wedge^3V)%\cap \mathrm{G}(3,U_1\oplus U_2)\subset C(\mathbb{P}(U_2)),
 \]
The fiber $\pi_1^{-1}([\wedge^2M_2])$ is then 
\[
D_{M_2}=D_1^{\bar{A}}\cap P_{[M_2]}.
\]
%where

%%i.e.
%\[
%D_{M_2}=\{[U]\in P_{[M_2]}\cap \mathrm{G}(3,\wedge^2 U_1\oplus U_2)| \dim(\bar{T}_U\cap \bar{A})\geq 1\}. 
%\]

%corresponding to a chosen Lagrangian space $\bar{A}\subset ((\wedge^2 U_1)\otimes U_2) \oplus (U_1\otimes (\wedge^2 U_2))$, where 
%$C_{U_1}=C(\PP^2 \times \PP^2)$ stands for the cone $\PP((\wedge^3 U_1) \oplus ((\wedge^2 U_1)\otimes U_2)))\cap \mathrm{G}(3,U_1\oplus U_2)$.

%Let $M_4=V/M_2$.
%Choose $u_1\in U_1$ such that $U_1=M_2\oplus \langle u_1\rangle$ and thus
%\[
%V=M_2\oplus \langle u_1\rangle\oplus U_2.
%\]
% We denote by $M_4=\langle u_1\rangle\oplus U_2$. 
%Then there is a natural isomorphism 
%\[
%\wedge^2M_2\wedge V\to  M_4.
%\]
%and an induced isomorphism 
%\[
%\phi:\wedge^3V/(M_2\wedge (\wedge^2V))\to \wedge^3 M_4
%\]
%With this isomorphism we identify $ P_{[M_2]}=\PP(M_4)$ and $P_{[M_2]}^\vee=\PP(\wedge^3M_4)$.
 
Notice that for each $[U]\in P_{M_2}$,
\[
\wedge^2 M_2\wedge V\subset T_U=((\wedge^2 U )\wedge V ).%= (\wedge^2 M_2\wedge V)\oplus (M_2\wedge \langle u_1\rangle \wedge U_2).
\]
In particular 
\[
\wedge^2 M_2\wedge V\subset T_{U_1}=\wedge^3 U_1 \oplus ((\wedge^2 M_2)\otimes U_2)\subset \wedge^3 V.
\]
%Notice that there are two types of $\mathbb{P}^3\subset C_{U_1}$, these are defined by $\PP^3_{K_1}= \PP(\wedge^3 K_1)$ with $K_1\subset V$ subspace of dimension 4 containing $U_1$ or $\PP^3_{K_2}= Fl( K_2,3,W)$ with $K_2\subset U_1$ subspace of dimension 2. Consider $\PP^3_{K_1}$ and choose a decomposition $W=K_1\oplus R$ with $R\subset U_2$ we have 
%Let $[U]\in P_M_2$,%$ be a $3$-dimensional space such that $M_2\subset U\subset V$,
% then
%$[U]\in P_{M_2}$ we have 
%$\wedge^3U\in \wedge^2 M_2\wedge V$ and 
%\[
%(\wedge^2 M_2\wedge V)\subset T_U=((\wedge^2 U )\wedge V )%= (\wedge^2 M_2\wedge V)\oplus (M_2\wedge \langle u_1\rangle \wedge U_2).
%\]
Since $T_U$ is Lagrangian with respect to the wedge product form on $\wedge^3V$, we have $T_U\subset (\wedge^2 M_2\wedge V)^{\perp}$.
Consider the $12$-dimensional quotient space 
\[
(\wedge^2 M_2\wedge V)^{\perp}/(\wedge^2 M_2\wedge V),
\]
with the nondegenerate $2$-form  induced by the wedge product.
Then 
\[
T_U/(\wedge^2 M_2\wedge V)\subset (\wedge^2 M_2\wedge V)^{\perp}/(\wedge^2 M_2\wedge V)
\] 
is a Lagrangian subspace.
The Lagrangian subspace $A\subset \wedge^3V$ contains $\wedge^3 U_1$, so has a Lagrangian quotient $\bar{A}\subset \wedge^3 U_1^{\bot}/\wedge^3 U_1$.
So the subspace $\bar{A}\cap ((\wedge^2 M_2\wedge V)^{\bot}/(\wedge^3 U_1))$  of $\bar{A}$ is isotropic. 
The projection 
\[
\phi:\bar{A}\cap ((\wedge^2 M_2\wedge V)^{\bot}/(\wedge^3 U_1))\to (\wedge^2 M_2\wedge V)^{\perp}/(\wedge^2 M_2\wedge V)
\] 
therefore has an image 
\[
Im\; \phi:=\bar{A}_{M_2}\subset(\wedge^2 M_2\wedge V)^{\perp}/(\wedge^2 M_2\wedge V).
\]
which is isotropic.
 By the genericity of $\bar{A}$ we have $\bar{A}\cap  ((\wedge^2 M_2\wedge V)/(\wedge^3 U_1))=0$ (for every $M_2\subset U_1$), so $\dim \bar{A}_{M_2}=6$, and
$\bar{A}_{M_2}$ is in fact a Lagrangian subspace for every $M_2\subset U_1$.

%\[
%\bar{T}_U\subset ((\wedge^2 M_2\wedge V)/(\wedge^3 U_1))^{\perp}\subset \wedge^3 V/(\wedge^3 U_1).
%\] 

%Consider the $15$-dimensional quotient $((\wedge^2 M_2\wedge V)/(\wedge^3 U_1))^{\perp}\subset (\wedge^3 V)/(\wedge^3 U_1)$, with the nondegenerate $2$ form induced by the wedge product form.
%There is a natural identification $((\wedge^3 K_4)/(\wedge^3 U_1))^{\perp}\cong (\wedge^3 K_1)^{\perp}/(\wedge^3 K_1)$
%The quotient  $\bar{A}'_{M_4}=\bar{A}\cap ((\wedge^2 M_2\wedge V)/(\wedge^3 U_1))^{\perp}$  of $\bar{A}$ is an isotropic subspace. 

 %By the genericity of $\bar{A}$ we have $\bar{A}\cap  ((\wedge^2 M_2\wedge V)/(\wedge^3 U_1))=0$ (for every $M_2\subset U_1$), so $\dim (\bar{A}'_{M_4})=6$. 
%Moreover, if we denote by $\bar{A}_{M_4}\subset (\wedge^2 M_2\wedge V)^{\perp}/(\wedge^3 M_4)$ the image of $\bar{A}'_{M_4}$ by the natural  projection 
%\[
%((\wedge^2 M_2\wedge V)/(\wedge^3 U_1))^{\perp}\to ((\wedge^2 M_2\wedge V)^{\perp}/((\wedge^2 M_2\wedge V),
%\] 
%then 
%$\dim(\bar{A}_{M_4})=6$ and $\bar{A}_{M_4}$ is Lagrangian with respect to the induced form on 
%\[
%(\wedge^2 M_2\wedge V)^{\perp}/(\wedge^2 M_2\wedge V).
%\] 
Finally for $[U]\in P_{M_2}$ i.e. $U\supset M_2$,
\[
[U]\in D_{M_2}\iff \dim(\bar{T}_U\cap \bar{A})\geq 1 \iff \dim((T_U/((\wedge^2 M_2\wedge V))\cap \bar{A}_{M_2})\geq 1,
\]
i.e. the fiber $D_{M_2}\subset  P_{[M_2]}$ is the first Lagrangian degeneracy locus associated to the family of Lagrangian subspaces 
\[
\{T_U/(\wedge^2 M_2\wedge V)|U\supset M_2\}
\]
and the fixed space $\bar{A}_{M_2}$ as Lagrangian subspaces of 
$(\wedge^2 M_2\wedge V)^{\perp}/(\wedge^2 M_2\wedge V)$, i.e.
%The fiber $D_{M_2}\subset  \PP(M_4) $ is therefore the first degeneracy locus 
\[
D_{M_2}=\{[U]\in P_{[M_2]} | \dim (\wedge^2U\wedge V/((\wedge^2 M_2) \wedge V))\cap \bar{A}_{M_2})\geq 1\} .
\]
%With the identification $V/M_2=M_4$ we get
%$$(\wedge^3 M_4)^{\perp}/(\wedge^3 M_4)= (\wedge^2 M_4) \otimes M_2\subset \wedge^3 V,$$
%and identify the $6$-dimensional subspace  $\bar{A}_{M_4}$ with a Lagrangian subspace in $(\wedge^2 M_4) \otimes M_2$.
%Consider the family  of Lagrangian spaces 
%\[
%(\wedge^2 U) \otimes M_2\subset (\wedge^2 M_4) \otimes M_2,
%\]
% parametrized by  $[U] \in \PP(M_4^{\vee})$.  
%The fiber $D_{u_2}\subset  \PP(M_4^{\vee}) $ is therefore the first degeneracy locus 
%$$\{[U]\in \PP(M_4^{\vee}) | \dim ((\wedge^2 U) \otimes M_2)\cap \bar{A}_{M_4})\geq 1\} .$$ 

If we set $V_2=M_2$ and decompose  $V=V_2\oplus V_4$, then 
\[
(\wedge^2 M_2\wedge V)^{\perp}/(\wedge^2 M_2\wedge V)\cong (\wedge^2 V_2\wedge V_4)^{\bot}/\wedge^2 V_2\wedge V_4\cong V_2\otimes \wedge^2V_4.
\]
On the one hand we can identify the space $\{U\supset M_2\}$ with $\{\langle v\rangle=U\cap V_4\}$.
If $A'$ is the Lagrangian subspace corresponding to $\bar{A}_{M_2}$ via these isomorphisms, then the fiber $\pi_1^{-1}([\wedge^2M_2])$ is isomorphic to 
\[
D_{M_2}\cong\{[v]\in \PP(V_4) | {\rm rank} A'\cap (V_2\otimes V_4\wedge v)\cap A')\geq 1\} .
\]
This degeneracy locus is the Lagrangian degeneracy locus $D_1^{A'}$ of Lemma \ref{lagrangian Kummer} and is a Kummer quartic surface singular in $16$ points.
% given by:
%$$\{[U]\in \PP(M_4) | \dim ((\wedge^2 U) \otimes M_2)\cap \bar{A}_{M_4})\geq 2\}.$$

\end{rem}
\begin{cor}The general fibers of the fibrations $\mathbb{P}(\wedge^2 U_1)\leftarrow X_{\bar{A}}\to \mathbb{P}(U_2)$ are abelian surfaces.  The projections factor 
through the double cover $X_{\bar{A}}\to D^{\bar{A}}_1$, which for each fiber is the double cover of a Kummer quartic surface branched in its 16 singular points. 
\end{cor}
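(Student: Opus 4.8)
The plan is to obtain the statement by restricting the double cover $p_X\colon X_{\bar{A}}\to D_1^{\bar{A}}$ of (\ref{eq}) to the fibers of the two projections and identifying the resulting surfaces. First I would record that the factorization asserted in the second sentence is automatic: by construction each projection $\pi_i\colon X_{\bar{A}}\to \PP^2$ is the composite of $p_X$ with the projection of $D_1^{\bar{A}}\subset C(\PP(\wedge^2 U_1)\times \PP(U_2))$ onto the corresponding factor. Fixing a general point of the base, say $[u_2]\in \PP(U_2)$ with fiber $D_{u_2}$ on $D_1^{\bar{A}}$ (the case of $\pi_1$ is identical), I would then invoke the preceding Proposition together with the explicit Lagrangian degeneracy description of the Remark to identify $D_{u_2}$ with the locus $D_1^{A'}$ of Lemma \ref{lagrangian Kummer}: a Kummer quartic surface whose singular locus is the set $D_2^{A'}$ of $16$ points.

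Next I would determine the branch locus of $p_X$ over this fiber. Since $p_X$ is ramified exactly over $D_2^{\bar{A}}$, everything reduces to the compatibility $D_2^{\bar{A}}\cap D_{u_2}=D_2^{A'}$, i.e.\ to the assertion that, after the isotropic projection $\bar{A}\mapsto \bar{A}_{K_4}$ of the Remark (with $K_4=U_1\oplus\langle u_2\rangle$), the fiberwise degeneracy loci $D_k^{A'}$ coincide with the restrictions $D_k^{\bar{A}}\cap D_{u_2}$. Granting this, the restriction $p_X^{-1}(D_{u_2})\to D_{u_2}$ is a double cover of a Kummer quartic branched precisely in its $16$ nodes; and since $X_{\bar{A}}$ is smooth, the fiber $F:=\pi_2^{-1}([u_2])$ is a smooth surface by generic smoothness.

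To conclude that $F$ is an abelian surface I would appeal to the classical description of a Kummer quartic as the quotient $A/\{\pm 1\}$ of an abelian surface by the $(-1)$-involution: the quotient map is exactly a double cover branched over the $16$ two-torsion points, which are the $16$ nodes of the quartic. Hence the double cover of the Kummer quartic branched in its nodes recovers $A$, so $F\cong A$. This proves both assertions at once. As an independent check of the abelian-surface claim I would also note the Remark's observation that $\pi_1$ and $\pi_2$ are Lagrangian fibrations on the IHS fourfold $X_{\bar{A}}$, whose general fibers are abelian surfaces by the structure theory of such fibrations.

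The step I expect to be the main obstacle is the compatibility $D_2^{\bar{A}}\cap D_{u_2}=D_2^{A'}$ (and more generally $D_k^{\bar{A}}\cap D_{u_2}=D_k^{A'}$). Here I must check that the projection $\bar{A}\to \bar{A}_{K_4}$ is, for general parameters, an isomorphism onto a Lagrangian subspace of $(\wedge^3K_4)^{\perp}/(\wedge^3K_4)$ --- this is where the genericity of $\bar{A}$ enters, as in the Remark --- and that under the identification $T_U/(\wedge^3K_4)\subset (\wedge^3K_4)^{\perp}/(\wedge^3K_4)$ the intersection dimensions $\dim(\bar{T}_U\cap\bar{A})$ and $\dim((T_U/\wedge^3K_4)\cap \bar{A}_{K_4})$ agree. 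This rank comparison is what guarantees that the nodes of the Kummer fiber are exactly the points of $D_2^{\bar{A}}$ lying over that fiber, hence exactly the branch points of $p_X$ there.
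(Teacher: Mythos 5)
Your proposal is correct and follows essentially the same route as the paper: the corollary is assembled there from the preceding Proposition (fibers of $\pi_i$ on $D_1^{\bar{A}}$ are Kummer quartics), the Remark identifying each fiber with the degeneracy locus $D_1^{A'}$ of Lemma \ref{lagrangian Kummer} singular in the $16$ points $D_2^{A'}$ (whose chain of equivalences, together with the genericity statement $\bar{A}\cap((\wedge^3K_4)/(\wedge^3U_1))=0$, is exactly the compatibility $D_k^{\bar{A}}\cap D_{u_2}=D_k^{A'}$ you flag as the main obstacle), and the baby-case Proposition \ref{hilb lagr} / the Lagrangian-fibration remark for the abelian-surface conclusion. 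The only point at the same level of informality in both your argument and the paper's is the uniqueness of the smooth double cover of a Kummer quartic branched exactly in its $16$ nodes, which is covered by Nikulin's theorem on even sets of nodes.
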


\section{The third construction-moduli space of twisted sheaves}\label{known}
It was observed by G.~Mongardi, that the generic element from the family $\mathcal{U}$ can be constructed as the moduli space of twisted sheaves on a $K3$ surface of degree 2.
We know that the generic element from the family $X\in \mathcal{U}$ admits two Lagrangian fibrations $\pi_i\colon X\to \PP^2$. 
In particular, we obtain two sextic curves as discriminant curves of the fibrations on the bases.
The double cover of $\PP^2$ branched along a curve of degree $6$ is a $K3$ surface of degree $2$. For a given $X$ we can associate naturally two such surfaces. 
Those will be naturally the base of our moduli space of stable twisted sheaves.

Recall that the moduli space of stable twisted sheaves were described by Yoshioka in \cite{Yo}.
In order to construct such a moduli space $M_v(S,\alpha)=M(v)$ we need to fix a $K3$ surface $S$ with an element $\alpha$ in the Brauer group $Br(S)$ and a Mukai vector $v$.
Recall that for a $K3$ surface we have 
$$Br(S)=\Hom(T_S,\QQ/ \Z),$$
where 
$$T_S =NS(S)^{\perp} = \{ v\in H^2(S,\Z)\colon \ \forall \  m\in NS(S) \ \ \ v\cdot m=0 \}$$ is the transcendental lattice of $S$.
For a cyclic element $\alpha \in Br(S)$ of order $n$ denote by
$$T_{\langle\alpha\rangle}= ker(\alpha \colon T_S \to \QQ / \Z)\subset T_S$$ the sublattice of index $n$.

Now let $S$ be a general $K3$ surface of degree $2$ such that $NS(S)=\Z h$. 
 In \cite{vG} van Geemen classified the order $2$ elements in $Br(S)$ by classifying 
the possible index two sublattices of $T_S=\langle-2 \rangle \oplus 2 U\oplus 2 E_8(-1)$ and found three possibilities.
 Recall that an element of order $n$ in the group $Br(S)$ can be represented as a Brauer-Severi variety being a rank $n$ bundle on $S$. As suggested to us by A.~Kresch it is convenient to look at the three geometric realizations of the order two elements in $Br(S)$ in the following way:
\begin{itemize}
\item a $(2,2,2)$-complete intersection is the base locus of a net of quadric 4-folds,
then the space of planes in the quadric 4-folds is generically $\PP^3 \cup \PP^3$
and over the sextic discriminant curve is just $\PP^3$; Therefore it is a $\PP^3$-bundle
over the double cover of $\PP^2$ branched along the discriminant sextic
(an element of $Br(S)_2$ is also an element from $Br(S)_4$ so gives a rank $4$ bundle).

\item for the cubic fourfold containing a plane, the projection from the plane yields a
quadric surface fibration over $\PP^2$, the  discriminant locus is a sextic curve, the
space of lines in the quadrics give a $\PP^1$-bundle over the double cover branched along the sextic.

\item the double cover of $\PP^2 \times \PP^2$ branched along a $(2,2)$ hypersurface is a quadric surface bundle (by the projection to the first factor), the discriminant locus is
a sextic and the spaces of lines give a $\PP^1$-bundle over the corresponding double cover.
\end{itemize}

We are interested in the last case, discussed in detail in \cite[\S 9.8]{vG}. Then $$T_{\langle\alpha_3\rangle}=\langle -2 \rangle\oplus U\oplus U(2)\oplus 2 E_8(-1)$$ is Hodge isometric to a primitive sublattice 
of the middle cohomology of the Verra fourfold. Note that $T_{\langle\alpha_3\rangle}$ admits two embeddings as an index $2$ sublattice of $T_S$.  Note also that Hassett and Varilly-Alvarado \cite{HV} showed that the Brauer elements $\alpha_3$ that we consider can obstruct the Hasse principle.

\begin{prop}\label{twist} Let $X\in \mathcal{U}$ be general then $X$ is isomorphic to the moduli space of stable twisted sheaves on a $K3$ surface of degree 
$2$ with twist $\alpha_3\in Br(S)$.
\end{prop}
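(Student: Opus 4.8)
The plan is to identify $X$ with a twisted moduli space through a Hodge-theoretic comparison and then to upgrade the resulting Hodge isometry to an isomorphism via the Torelli theorem. First I would fix the combinatorics. For generic $X\in\mathcal{U}$ the Picard lattice is the invariant lattice $U(2)$ of the antisymplectic involution, so the transcendental lattice $T_X=U(2)^{\perp}$ inside the $K3^{[2]}$ lattice $U^{3}\oplus E_8(-1)^{2}\oplus\langle -2\rangle$ has rank $21$ and signature $(2,19)$. A discriminant-form computation in the style of Nikulin then identifies $U(2)^{\perp}$, as an abstract lattice, with
\[
\langle -2\rangle\oplus U\oplus U(2)\oplus 2E_8(-1)=T_{\langle\alpha_3\rangle},
\]
the index-two sublattice of the transcendental lattice of a degree-$2$ K3 surface $S$ determined by the van Geemen class $\alpha_3$; the ranks, signatures $(2,19)$, and discriminant groups $(\Z/2)^{3}$ match.

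Next I would produce a genuine Hodge isometry rather than a mere lattice isometry, using the geometry already in place. By Theorem \ref{main2} the generic $X$ arises from a Verra fourfold $Y$, and by van Geemen's analysis (the third case of the introduction, treated in \cite[\S 9.8]{vG}) the lattice $T_{\langle\alpha_3\rangle}$ is Hodge isometric to a primitive sublattice of the middle cohomology $H^4(Y,\Z)$. The $\PP^1$-fibration $\phi\colon F(Y)\to X$ of Theorem \ref{main2}, together with the incidence correspondence between $(1,1)$-conics and their spanning planes, induces a morphism of Hodge structures from the primitive part of $H^4(Y,\Z)$ to $H^2(X,\Z)$; comparing the distinguished $2$-forms (the symplectic form of $X$ matching the generator of the relevant Hodge piece of $H^4(Y)$) shows this restricts to a Hodge isometry $T_X\cong T_{\langle\alpha_3\rangle}$ sending the period of $X$ to the twisted period $e^{B}\sigma_S$ of $(S,\alpha_3)$, where $B$ is a $B$-field lift of $\alpha_3$. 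Alternatively, one could read $S$ and $\alpha_3$ directly off the two Lagrangian fibrations $\pi_i\colon X\to\PP^{2}$ and their discriminant sextics, as indicated at the start of this section.

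Having matched the twisted transcendental Hodge structures, I would invoke Yoshioka's theory of moduli of stable twisted sheaves \cite{Yo}: for a suitable $v$-generic polarization and a Mukai vector $v$ with $\langle v,v\rangle=2$ and appropriate divisibility, the moduli space $M_v(S,\alpha_3)$ is a smooth projective IHS fourfold of $K3^{[2]}$ type whose transcendental Hodge structure is exactly the twisted transcendental Hodge structure of $(S,\alpha_3)$, hence Hodge isometric to $T_X$. The Torelli theorem for manifolds of $K3^{[2]}$ type (Verbitsky, Markman) then promotes this Hodge isometry, which I would arrange to carry the degree-$48$ polarization $h=h_1+h_2$ of $X$ to the natural polarization of $M_v$, to an actual isomorphism $X\cong M_v(S,\alpha_3)$.

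The main obstacle is the middle step: producing an honest Hodge isometry $T_X\cong T_{\langle\alpha_3\rangle}$ that respects the twist, that is, realizing the $B$-field and checking that the correspondence coming from $\phi\colon F(Y)\to X$ transports periods correctly rather than only matching abstract lattices. A secondary difficulty is ensuring that Torelli yields an isomorphism and not merely a birational map, which requires the Hodge isometry to be a polarized parallel-transport operator sending one Kähler cone into the other; I would control this using the explicit polarization of Beauville degree $q=4$ and the two Lagrangian fibrations, together with the non-emptiness and fineness of $M_v(S,\alpha_3)$ for the chosen $v$. This twisted realization is moreover consistent with \cite[Proposition~4]{Addington}, which rules out an \emph{untwisted} moduli description for the generic $X\in\mathcal{U}$.
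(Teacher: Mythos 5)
Your own ``middle step'' --- the one you flag as the main obstacle --- is precisely the content of the proposition, and it is left unproven. You assert that the $\PP^1$-fibration $\phi\colon F(Y)\to X$ together with the incidence correspondence between $(1,1)$-conics and their spanning planes induces a Hodge isometry $T_X\cong T_{\langle\alpha_3\rangle}$ carrying the period of $X$ to the twisted period $e^{B}\sigma_S$. Establishing this would require an Abel--Jacobi type computation on the correspondence, plus a verification that the induced map is integral, primitive and an isometry onto the correct sublattice and that it matches the $B$-field lift; none of this is carried out, and nothing elsewhere in the paper supplies it. The abstract lattice identification of $U(2)^{\perp}$ with $\langle -2\rangle\oplus U\oplus U(2)\oplus 2E_8(-1)$ via discriminant forms in your first paragraph does not transport the Hodge structure, so it cannot substitute for this step.

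The paper's proof avoids the geometric correspondence entirely and is purely lattice-theoretic. It works inside $\tilde\Lambda=4U\oplus 2E_8(-1)$, writes $H^2(X,\Z)=\eta^{\perp}$ with $\eta^2=2$ and $\mathrm{div}_{\tilde\Lambda}(\eta)=2$, and uses Eichler's criterion together with Nikulin's results to show that the embedding of $U(2)\oplus\langle\eta\rangle$ into $\tilde\Lambda$ is unique up to $O(\tilde\Lambda)$. Putting it in normal form, it extracts a new hyperbolic summand $U=\langle e,f\rangle$ with $e=u_1+v_1$, $f=u_2$, decomposes $\tilde\Lambda=U\oplus\Lambda$ and the period as $x=\lambda e+\sigma$, and then invokes the \emph{surjectivity of the period map for K3 surfaces} to manufacture $S$ realizing $\sigma$; the degree-$2$ polarization comes from $\eta\in\Lambda$ being orthogonal to $\sigma$, and the twist is read off as the image of $B=\tfrac12(v_2-u_2)$, identified with van Geemen's $\alpha_3$ by a computation with \cite{vG}. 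This produces the required embedding of $H^2(X,\Z)$ into $\tilde{H}(S,\alpha_3)$ as Hodge structures by construction, with no correspondence needed. For the passage from birational to isomorphic, the paper also has a cleaner mechanism than your ``polarized parallel-transport'' plan: since $\mathrm{Pic}(X)$ has rank $2$ and $X$ carries two Lagrangian fibrations, the moving cone coincides with the K\"ahler cone, so a birational map from an IHS manifold to $X$ is automatically an isomorphism --- which is why it suffices to prove birationality at all. As written, your proposal assumes the hardest point rather than proving it; you should either adopt the lattice-theoretic construction or actually establish the Hodge-theoretic claim about the conic correspondence.
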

\begin{proof} 
Since the Picard group of $X$ has rank two and $X$ admits two Lagrangian fibrations it follows that the movable cone of $X$ is isomorphic to the 
nef cone. Thus it is enough to prove that $X$ is birational to the moduli space of twisted sheaves.

We argue similarly as \cite{Addington} using the global Torelli theorem \cite{Verbitsky}.
Let us use the notation from \cite[Proposition~4.1]{Huy}.
We have to show that
there is an embedding $H^2(X,\Z) \hookrightarrow \tilde{H}(S,\alpha_3)$  (into the Hodge structure of the twisted $K3$ surface see \cite[Definition~2.5]{Huy}) that is compatible with the Hodge structure.
Given the embedding, we find a vector $v\in \tilde{H}(S,\alpha_3)$ in the orthogonal complement of the image of $H^2(X,\Z)$ having $(v,v)=2$.
 For such $v$ let $M(v)=M_v(S,\alpha_3)$  be the moduli space of stable twisted sheaves on $S$. We know from \cite[Theorem~3.19]{Yo} that  there is a distinguished embedding
$$H^2(M(v),\Z)\simeq v^{\perp}\hookrightarrow \tilde{H}(S,\alpha_3).$$
 We deduce an isomorphism $H^2(X,\Z)\simeq H^2(M(v),\Z)$ and conclude by the global Torelli theorem
for deformations of $K3^{[2]}$ \cite[Corollary 9.8]{Markmansurvey} that  $M(v)$ and $X$ are birational. 

We denote the hyperbolic plane by $U$, i.e. $\Z^2$ with the intersection form 
$\left( \begin{array}{cc}
0 & 1 \\
1 & 0  \\
\end{array} \right) $.
Denote by $$\tilde{\Lambda}=I\oplus J\oplus M\oplus N \oplus 2E_8(-1),$$ where $I,J,M,N$ are copies of the hyperbolic lattice $U$.
We can assume (by choosing an appropriated marking) that $H^2(X,\Z)=\Lambda \oplus \langle-2\rangle=\eta^{\perp} \subset \tilde{\Lambda}$, where $\Lambda\simeq 3U\oplus E_8(-1)$ and $\eta$ is some element with $\eta^2=2$ contained in $I$. 
We know that the Hodge structure on $H^2(X,\Z)\otimes \C$ is determined by the choice of  $$x:=\langle H^{2,0}(X)\rangle$$ such that the algebraic part $H^{1,1}(X)$ of $x^{\bot}$ contains the 
lattice $U(2) \subset \tilde{\Lambda}$. We are thus in the context of \cite[Lemma~2.6]{Huy}.
From the improved Eichler's criterion  \cite[thm I.2.9]{BPV} it follows that there is a unique (up to $O(\tilde{\Lambda})$) embedding of a lattice of type $U(2)\oplus \langle\eta\rangle$  into $\tilde{\Lambda}$. 
%Indeed, the embedding of $\eta$ is unique  by Eichler's criterion 
%\cite[Proposition 1.7]{OW} and then $U(2)$ is unique in $\eta^{\perp}=\Lambda %\oplus \langle-2\rangle$ as in point No.2 of the proof of \cite[Proposition~2.5]
%{OW} (note the difference in notation). %check 
In particular, we can assume that $U(2)\subset M\oplus N$ such that if $u_1,u_2$ and $v_1,v_2$ are standard generators of $M$ and $N$ respectively, then the image of the embedding is defined by $e=u_1+v_1, f_2=u_2+v_2$.
We find that the lattice generated by $\langle e,f\rangle$, where $f=u_2$ (or $f=v_2$) spans a hyperbolic plane $U$.
We obtain a new special decomposition \begin{equation}\label{decomp} \tilde{\Lambda}=U\oplus \Lambda\end{equation} 
 (this decomposition is different from the one in the definition of $\tilde{\Lambda}$).
Since $x$ is orthogonal to $e$,  it admits a decomposition $x=\lambda e+\sigma$ with respect to (\ref{decomp})   with $\sigma\in \Lambda \otimes \C$ and $\lambda \in \C$.

By the surjectivity of the period map we can find a $K3$ surface $S$ that realizes $\sigma\in \Lambda \otimes \C$ (we have two such $K3$ surfaces).
We claim that $S$ admits a polarization of degree $2$.
Indeed, observe that $\eta\in I$, so we have $\eta \in \Lambda$. Moreover, $$0=(\eta.x)=(\eta.(\lambda e+\sigma))=(\eta.\sigma)=(\eta.\bar{\sigma})$$ and $\eta^2=2$.
It follows that $\eta$ induces a polarisation of degree $2$ on $S$; the claim follows.

Let us identify the twist. As in \cite[Lemma~2.6]{Huy} we decompose with respect to (\ref{decomp})  the element $f_2=\gamma+2f+ke$ with $\gamma \in \Lambda$.
We compute that $\gamma=v_2-u_2 \in \Lambda$ and denote $B:=\frac{1}{2} \gamma=\frac{1}{2}(v_2-u_2)$. We define the Brauer class $\alpha_3' \in Br(S)$ as the image of
$B$ under the exponential map $$\Lambda\otimes \QQ\simeq H^2(S,\QQ) \to H^2(S,\mathcal{O}^*_S)_{tors}\simeq Br(S).$$ Finally, if we identify $\Z e\subset U\subset \Lambda\oplus U$ 
(with respect to (\ref{decomp})) 
with $H^4(S,\Z)$ we obtain an isometry with $\tilde{H}(S,\alpha_3')$ and the Hodge structure determined by $x$ on $\tilde{\Lambda}$ (and a second isometry for $ f=v_2$).

In order to identify the element $\alpha_3'$ with the element $\alpha_3$ described above, we use \cite[\S 2.1]{vG}.
Indeed, we associate to $B$ a map $b \colon T_S\to \QQ/\Z$, where $T_S\subset \Lambda$ is the perpendicular lattice to $\eta$ (in particular $T_S\simeq \langle-2\rangle \oplus 2 U \oplus 2 E_8(-1)=\langle-2\rangle\oplus \Lambda'$ and $B\in \Lambda'$), such that $b(t)=[t.B]$ (in particular $a_{\alpha_3'}=0$ and $d=1$ in the notation of \cite[Proposition~9.2]{vG}) i. e. we are in the case of \cite[Proposition~9.8]{vG}.
%We conclude by \cite[prop.~9.2]{vG}. ??%check
\end{proof}
\begin{rem} By analogy with the generic cubic fourfold containing a plane we expect that the generic Verra fourfold is not rational. We hope that the IHS fourfold from $\mathcal{U}$ related to a Verra fourfold can be used to attack this problem \cite{MS}. 
\end{rem}
\begin{rem} We saw in the proof above that $X$ admits two structures of moduli spaces of stable twisted sheaves on $K3$ surfaces of degree $2$.
The elements of these moduli spaces are torsion sheaves that are supported on curves on the linear system of degree $2$ on $S$, so define two Lagrangian fibrations.
%Note that the moduli spaces of sheaves can be constructed in positive characteristic by \cite{Langer-Duke} it is a natural problem to generalize our result to positive characteristic.  
\end{rem}
%\begin{rem} In this way it is a natural to expect that the moduli space of stable twisted sheaves on the $K3$ surface of degree
%$2$ corresponding to the two torsion element in the Brauer group described in \cite[\S 9.7]{vG} should be isomorphic to
%the Hilbert scheme of lines contained a cubic hypersurface containing a plane.
%We hope that our construction can be used to understand more directly the relation between the Hilbert schemes and moduli spaces of twisted sheaves.
 
%\end{rem}
We are now ready to give a proof of our main Theorem.
\begin{proof}[Proof of Theorem \ref{main}]
It follows from Theorem \ref{dwa} and Proposition \ref{prop3.6} that the Lagrangian degeneracy locus $D_1^{\bar{A}}$ admits
a double cover branched along $D_2^{\bar{A}}$ being an IHS fourfold of type $K3^{[2]}$ such
that the double cover $X_{\bar{A}}\to D_1^{\bar{A}}$ is given by an anti-symplectic involution. Moreover, $X_{\bar{A}}$ 
moves in a $19$-dimensional family. It follows from \cite{OW} that the invariant lattice of the involution is one of the lattices $U$, $U(2)$, $\langle-2\rangle \oplus \langle 2\rangle$.
On the other hand $X_{\bar{A}}$ admits a polarisation of Beauville degree $4$ and, from Section \ref{lagr}, two Lagrangian fibrations: thus the invariant lattice is $U(2)$.

The fact that the Hilbert scheme of conics on $Y$ admits a $\PP^1$ fibration with base a fourfold from $\mathcal{U}$, follows from  Proposition \ref{Z_Q}.
The isomorphism with the moduli space of twisted sheaves follows from Proposition \ref{twist}.
\end{proof}

\section{ %The fixed locus of the involution on elements of $\mathcal{U}$ and t
The Fano surface of the Verra threefold $Z$} \label{Verra-Involutions}

%\subsection{The involution on the Fano surface $F(Z)$ of the Verra threefold $Z$}

Let $Z= ( \PP_1^2\times \PP_2^2) \cap Q$ be a general Verra threefold, and let $F = F(Z)$ be the Fano surface 
of conics of bidegree $(1,1)$ on $Z$, 
i.e. 
$$
F = \{ [C]:C \subset Z\mbox{ is a conic},  C\cdot h_1 = C\cdot h_2 = 1 \}
$$
where the $h_i$ is the pullback to $Z$ of a line in $\PP_i^2$.
On $F$ there is a natural regular involution 
$$
i:F \rightarrow F, \; [C]  \mapsto [C'] = i([C])
$$ 
described as follows:   Since any $[C] \in F$ is of bidegree $(1,1)$ then 
$C$ can degenerate only to a pair $C_o = L+M$ of 
intersecting lines, one of bidegree $(1,0)$ 
and the other of bidegree $(0,1)$. 
Indeed if $C_o = 2L$ is a double line, then the bidegree 
${\rm deg}(C_o) = (2,0)$ or $(0,2)$, a contradiction.  
Let $p_i:Z\to \PP_i^2, i=1,2$, be the two projections.  Then, for any $[C] \in F$ the projections 
$$L_i = p_i(C) \subset \PP^2_i, i = 1,2$$
are lines, and  the conic $C$ lies on the  
smooth quadric surface 
$$
S_2 = L_1 \times L_2 \subset \PP_1^2\times \PP_2^2.
$$
Since $Z$ is a quadratic section $\PP_1^2\times \PP_2^2\cap Q$ and $C\subset Z$, 
then 
$$
S_2 \cap Z = S_2 \cap Q = C + C',
$$
where also $C'$ is a $(1,1)$-conic on $Z$.  It is bisecant to $C$.
The involution on $F$ is defined by $$i: [C]  \mapsto [C'].$$
 Clearly $[C] = i([C'])$, and $C'$ is the unique conic on $Z$ bisecant 
to $C$.   
The Fano surface $F = F(Z)$ of the general $Z$ is smooth, 
the involution $i:F \rightarrow F$ is regular and has no fixed 
points; in particular the {\it quotient Fano surface} 
$$
F_0 = F_0(Z) = F/i
$$ 
of $Z$ is smooth  \cite{Iliev}, \cite{DIM}.   
 Note that, by Proposition \ref{Z_Q}, the quotient Fano surface $F_0$ is isomorphic to the singular locus of the EPW quartic section 
 associated to the Verra fourfold $Y$ being the double cover of $\PP_1^2\times \PP_2^2$ branched in $Z$. 
 In particular $F_0$ is isomorphic to the fixed locus of an antisymplectic involution on an IHS fourfold of $K3$ type from the family $\mathcal{U}$.
\medskip

\subsection{ The two conic bundle structures on $Z$ and invariants of the Fano surface
}\cite{Vera}, \cite{Iliev}

Let $Z = ( \PP_1^2\times \PP_2^2) \cap Q$ be general. 
%The two projections 
%$$p_i: Z\rightarrow \PP^2_i,$$
%$i = 1,2$  
%define two structures of standard conic bundles on $Z$. 
For a point $x \in \PP^2_i$ denote by 
$C_x = p_i^{-1}(x)$ the fiber of $p_i$ over $x$.
If $x \in \PP^2_1$ (resp. $x \in \PP^2_2$) 
then $C_x \subset Z$ is a conic of bidegree $(0,2)$
(resp. of bidegree $(2,0)$). For the general $Z$ any 
degenerate fiber $C_x$ of any of the two projections 
$p_i$ has rank two, i.e. $C_x = p_i^{-1}(x) = L'_x + L''_x$ 
is a pair of lines, intersecting at a point 
$$f_i(x) = L'_x \cap L''_x = \mbox{ Sing } C_x,$$  
and the discriminant curves 
$$
\Delta_i = \{ x \in \PP^2_i: C_x = L'_x + L''_x \} \subset \PP^2_i
$$
are smooth plane sextics, see \cite{Vera}.  
The maps 
$$
f_i: \Delta_i \rightarrow  \PP^8, \ 
x \mapsto f_i(x) = \mbox{ Sing } C_x, i = 1,2
$$
%with image the Steiner curve $f_i(\Delta_i) \subset \PP^8$, 
are called the 
the Steiner maps of the conic fibrations $p_i$.

Let 
$$\tilde{\Delta}_i = \{ ([L],x):L\subset C_x, x \in \Delta_i \}$$
be the curve of components of degenerate fibers $p_i^{-1}(x) =C_x= L'_x + L''_x$ 
of $p_i$, $i = 1,2$. Let 
$$
\pi_i: \tilde{\Delta}_i \rightarrow \Delta_i,\;  ([L],x)\mapsto x
$$
$i = 1,2$ be the induced \'etale double covering,
and let $\varepsilon_i \in Pic_2(\Delta_i)$ be the 
2-torsion sheaf defining $\pi_i$. Then for $i = 1,2$ 
the two coverings $\pi_i$ (resp. the two pairs $(\Delta_i,\varepsilon_i)$)
define two Prym varieties 
$$
Pr_i = {\rm Prym}(\Delta_i,\varepsilon_i)
$$ 
which are both principally polarized abelian varieties (p.p.a.v.) 
of dimension $9 = g(\Delta_i) - 1$. 
Let also 
$$
J(Z) = H^1(\Omega^2_Z)^*/H_3(Z,\Z)
$$
be the principally polarized (p.p.) intermediate jacobian of $Z$. 
By the results of \cite{Vera}, $Pr_1$ and $Pr_2$ are both 
isomorphic to each other and to $J(Z)$ as p.p.a.v..

%====================================

%\medskip

\begin{prop}\label{3-2}
Let $Z = W \cap Q \subset \PP^8$ be a general Verra threefold. 
Then:  

(A) The Fano surface $F = F(Z)$ has invariants 
$K^2 = 576$, $c_2 = 312$, 
$p_g = 82$, $q = 9$.

(B) The quotient Fano surface $F_0 = F(Z)/i$ has invariants 
$K^2 = 288$, $c_2 = 156$, 
$p_g = 36$, $q = 0$.   
\end{prop}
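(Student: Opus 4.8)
The plan is to compute the invariants of the quotient surface $F_0$ from Beauville's classification, transport everything except the irregularities to $F$ through the \'etale double cover $\pi\colon F\to F_0$, and determine both irregularities from the Albanese variety together with its identification with the intermediate Jacobian $J(Z)$. Since the involution $i$ is fixed-point free, $\pi$ is \'etale of degree $2$, so $K_F=\pi^*K_{F_0}$ and $\pi_*\oo_F=\oo_{F_0}\oplus L^{-1}$ for the $2$-torsion class $L\in\Pic(F_0)$ defining the cover. As $L$ is numerically trivial, Riemann--Roch gives $\chi(L^{-1})=\chi(\oo_{F_0})$, and one obtains the multiplicativity $K_F^2=2K_{F_0}^2$, $\chi(\oo_F)=2\chi(\oo_{F_0})$, and $c_2(F)=\chi_{\mathrm{top}}(F)=2\,c_2(F_0)$.

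For $F_0$ I would invoke Proposition \ref{Z_Q}: the quotient Fano surface is isomorphic to the singular locus $D_2^{\bar{A}_{Q'}}$ of the EPW quartic section, hence to the fixed locus of an antisymplectic involution on an IHS fourfold $X\in\mathcal{U}$ with $b_2(X)=23$. As recalled in the introduction, \cite[Theorem~2]{beau-invo} then yields $K_{F_0}^2=288$ and $\chi(\oo_{F_0})=37$. Granting $q(F_0)=0$ (treated below), we get $p_g(F_0)=\chi(\oo_{F_0})-1=36$ and, by Noether's formula $12\chi(\oo_{F_0})=K_{F_0}^2+c_2(F_0)$, the value $c_2(F_0)=444-288=156$. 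The multiplicativity of the previous paragraph then produces at once $K_F^2=576$, $\chi(\oo_F)=74$ and $c_2(F)=312$, so that $p_g(F)=\chi(\oo_F)-1+q(F)=73+q(F)$; it remains only to establish $q(F)=9$ and $q(F_0)=0$.

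Both irregularities I would extract from the Albanese. Following Verra and Iliev \cite{Vera}, \cite{Iliev}, the Abel--Jacobi map $F\to J(Z)$, $q\mapsto[q-q_0]$, identifies $\mathrm{Alb}(F)$ with the $9$-dimensional intermediate Jacobian $J(Z)\cong Pr_i$, whence $q(F)=\dim J(Z)=9$ and therefore $p_g(F)=82$. For $q(F_0)$ I would analyse the induced involution on the Albanese. Writing $q'=i(q)$, the $1$-cycle $q+q'=S_2\cap Z$ is the intersection of $Z$ with a product of lines $S_2=l_1\times l_2$, and as $(l_1,l_2)$ ranges over the rational family $(\PP^2_1)^{\vee}\times(\PP^2_2)^{\vee}$ all these complete intersections are rationally equivalent. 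Hence $[q-q_0]+[q'-q_0]$ is constant in $J(Z)$, so the involution $\hat{\imath}$ induced on $\mathrm{Alb}(F)=J(Z)$ has the form $x\mapsto -x+c$ and acts as $-1$ on $H^1(F,\oo_F)=H^{0,1}(F)$. Consequently $H^1(F_0,\oo_{F_0})=H^1(F,\oo_F)^{i}=0$, that is $q(F_0)=0$; note that this is consistent with $q(F)=9$ being entirely $i$-anti-invariant, closing the remaining gap in part (B).

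The delicate points are the two inputs concerning $\mathrm{Alb}(F)$: that the Abel--Jacobi map realises the Albanese as $J(Z)$ (giving $q(F)=9$) and that $q+i(q)$ is rationally constant. The first I would take from the Prym--Torelli analysis of the Verra threefold in \cite{Vera}, \cite{Iliev}, where the Fano surface of $(1,1)$-conics is studied; the second rests only on the fact that the family of surfaces $S_2=l_1\times l_2$ is parametrised by a rational variety, so that any two of the cycles $S_2\cap Z$ are linked by a chain of rational curves in the Chow group and hence share one Abel--Jacobi image. Everything else is formal, resting on Beauville's theorem, the numerics of the \'etale double cover, and Noether's formula.
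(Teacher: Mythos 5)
Your proposal is correct and follows the paper's skeleton for most of the computation: Beauville's theorem gives $K_{F_0}^2=288$ and $\chi(\oo_{F_0})=37$ via the identification of $F_0$ with the fixed locus of the antisymplectic involution (Proposition \ref{Z_Q}), Noether's formula gives $c_2(F_0)=156$, multiplicativity under the free double cover gives the invariants of $F$, and the Abel--Jacobi isomorphism $\mathrm{Alb}(F)\cong J(Z)$ gives $q(F)=9$. Where you genuinely diverge is in closing the system for $p_g(F_0)$ and $q(F_0)$. The paper computes $p_g(F_0)=36$ \emph{directly}, by citing Voisin's results: the class $[F]=2\Theta^7/7!$ in $J(Z)$, surjectivity of the Abel--Jacobi map, and the resulting computation that the $i$-invariant part of $H^0(K_F)$ has dimension $36$; it then deduces $q(F_0)=p_g(F_0)-\chi(\oo_{F_0})+1=0$. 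You instead compute $q(F_0)=0$ directly, by observing that $q+i(q)=(l_1\times l_2)\cap Z$ moves in the rational family $(\PP^2)^\vee\times(\PP^2)^\vee$, so its Abel--Jacobi image is constant, the induced involution on $\mathrm{Alb}(F)\cong J(Z)$ is $x\mapsto c-x$, and hence acts as $-1$ on $H^{1,0}(F)$; then $p_g(F_0)=36$ follows from $\chi$. Your argument for this step is more elementary and self-contained (it avoids Voisin's computation of the invariant part of $H^0(K_F)$, needing only the Albanese identification, which both proofs already require for $q(F)=9$), while the paper's route additionally records the finer fact that $h^0(K_F)^+=36$, which is of independent interest. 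Both arguments are valid; no gap.
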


\medskip 

%====================================

%\medskip
\begin{proof}

The irregularity $q(F) = 9$ follows from the Abel-Jacobi isomorphism
$Alb(F) \cong J(Z)$, from where $q(F) = h^{1,0}(F) = h^{2,1}(Z) = 9$. 

We have seen that $F_0 $ is isomorphic to the fixed locus of the 
an IHS fourfold from $\mathcal{U}$. Starting from this, we compute 
the invariants of $F(Z)$ and $F_0$. 
The facts that  $K^2_{F_0} = 288$ and $\chi(\oo_{F_0})= 37$ follows from \cite{beau-invo} since $Y$ moves in a $19$ dimensional family. 
By Noether's formula, we infer $c_2(F_0)=156$.
%Note that in order to compute the topological Euler characteristic of the fixed locus it is enough
%to compute the topological Euler characteristic .
%of the quotient $Y_A$ where $\phi\colon \HH_F\to D_A$ is the quotient map.
% Note that the generic element of the family of IHS fourfolds $F(Y)$ where $Y$ is a Verra
%fourfold has Picard number 2 thus $dim H^2_+(\phi)=2$.
%We infer also that $H_+^4(\phi)=234$ and the odd cohomologies vanish.
%Since $\chi(F(Y))=324$ we infer $c_2=156$.

%We find $p_g(F_0)=36$ from the tangent bundle theorem \ref{TB}.
%Indeed the map $\gamma \colon F_0 \to \mathrm{G}(2,9)\subset \PP^{35}$ is given by a subsystem of $|-K_{F_0}|$.
%On the other hand we find that the image of $\gamma$ is not contained in a hyperplane (there are no $7$-forms that anihilate all the lines $\C^2_{q \cap q'}$???).
Now, by \cite{Beauville3}, the class of $F$ in $J(Y)$
is $[F] = 2\Theta^7/7!$, where $\Theta$ is the theta divisor
of the principal polarization on $J(Y) $. Moreover from \cite[Corollary~3.17]{Vo1} the Abel-Jacobi map is surjective. Thus, by \cite[Corollary~3.18]{Vo1}, we deduce that the invariant part $H^0(K_{F(Z)})^+$ of the involution $i$ on $H^0(K_{F(Z)})$ has dimension $36$.
It follows that $p_g(F_0)=36$.

Since $F(Z) \rightarrow F_0$ is a 2-sheeted unbranched covering, then
$$
K^2_{F(Z)} =2 \cdot K^2_{F_0} = 576, $$
$$
c_2(F(Z)) = 2 \cdot c_2(F_0) = 2\cdot 156= 156,
$$
and $\chi(O_{F}) = 2\cdot \chi(O_F) = 37\cdot2 = 74$.  
Therefore, since $p_g(F_0) - q(F_0) + 1 =  \chi(O_{F_0})$ then 
$$
q(F_0) = p_g(F_0) - \chi(O_{F_0}) + 1 = 36- 37+ 1 = 0.
$$ 
Thus we find $q(F_0)=0$.
\end{proof}
\begin{rem} Note that the Chow group $CH^0(F(Z))$ was studied in \cite[Proposition~1.1]{Vo1}.
In particular, we find a description of the class $2([C]+[C'])\in CH^0(F(Z))$ where $C$ and $C'$ are two involutive conics.
\end{rem}
\begin{rem} Note that the Hilbert scheme of $(1,1)$ conics on $Z$ was already studied in \cite[\S 6]{Vera} and \cite{DIM}.
Verra considers a natural birational map $u\colon F(Z)\to D_Z^6$, where $D_Z^6$ is the intersection of the closure of the locus of rank $6$ quadrics containing $Z$ but not $\PP^2\times \PP^2$ with the locus of quadrics containing $\PP^2\times \PP^2$. 

Note that in \cite{DIM} it is proved that each nodal  prime Fano threefold $X_{10}$ of degree $10$ is birational to a Verra threefold $Z_X$.
From \cite[Proposition~6.6., \S 5.4]{DIM} the Fano surface of conics on $X_{10}$ and $(1,1)$ conics $Z_X$ are two birational  Beauville special subvarieties $S^{odd}$ and $S^{even}$ respectively.

 By \cite{Beauville3}, the class of $Z$ in $J(Z)$
is $[Z] = 2\Theta^7/7!$, where $\Theta$ is the theta divisor
of the principal polarization on $J(Z) $. 
Also $K_Z$ is numerically equivalent 
(on $Z$)  to $2\Theta |_Z$. Therefore we recompute
$$
K_Z^2 = (2\Theta|_Z)^2 = (2\Theta)^2.2\Theta^7/7! 
= 8\Theta^9/7! = 8.9!/7! = 8.8.9 = 576
$$
since the Abelian 9-fold $J(Z)$ is principally polarized by $\Theta$,
this yields $\Theta^9/9! = 1$. 
 
\end{rem}
\medskip

\begin{rem}
%We eZpect also the following properties of the invarants of the fiZed locus
%$$p_g (F_0)= 36, \ \ \ q(F_0) = 0$$
%$$p_g(F(Z)) = 82, \ \ \ q (F(Z))= 9.$$
We also have the following relation $K_{F_0}=2 H+e$ where $H$ is the hyperplane section on $C(\PP^2\times \PP^2)\subset \PP^9$ and $e$ is the torsion divisor
defining the cover $i$.
\end{rem}

\bibliography{biblio} \bibliographystyle{alpha}

\begin{thebibliography}{BHPVdV04}

\bibitem[Add16]{Addington}
Nicolas Addington.
\newblock On two rationality conjectures for cubic fourfolds.
\newblock {\em Math. Res. Lett.}, 23(1):1--13, 2016.

\bibitem[BCMS16]{BCMS}
S.~Boissi\`ere, Ch. Camere, G.~Mongardi, and A.~Sarti.
\newblock Isometries of ideal lattices and hyperkähler manifolds.
\newblock {\em International Math. Research Notices}, 2016.

\bibitem[BCS16]{BCS}
S.~Boissi\`ere, Ch. Camere, and A.~Sarti.
\newblock Classification of automorphisms on a deformation family of hyperk\"a
  hler fourfolds by p-elementary lattices.
\newblock {\em Kyoto Journal of Mathematics}, 2016.

\bibitem[BD85]{BD}
Arnaud Beauville and Ron Donagi.
\newblock La vari\'et\'e des droites d'une hypersurface cubique de dimension
  {$4$}.
\newblock {\em C. R. Acad. Sci. Paris S\'er. I Math.}, 301(14):703--706, 1985.

\bibitem[Bea82]{Beauville3}
A.~Beauville.
\newblock Sous-varietes speciales des varietes de prym.
\newblock {\em Compositio Math. .}, 265(45):357--383, 1982.

\bibitem[Bea83]{Beauville}
Arnaud Beauville.
\newblock Vari\'et\'es {K}\"ahleriennes dont la premi\`ere classe de {C}hern
  est nulle.
\newblock {\em J. Differential Geom.}, 18(4):755--782 (1984), 1983.

\bibitem[Bea11]{beau-invo}
Arnaud Beauville.
\newblock Antisymplectic involutions of holomorphic symplectic manifolds.
\newblock {\em J. Topol.}, 4(2):300--304, 2011.

\bibitem[Bho86]{Bh}
Usha~N. Bhosle.
\newblock Nets of quadrics and vector bundles on a double plane.
\newblock {\em Math. Z.}, 192(1):29--43, 1986.

\bibitem[BHPVdV04]{BPV}
Wolf Barth, Klaus Hulek, Chris A.~M. Peters, and Antonius Van~de Ven.
\newblock {\em Compact complex surfaces}, volume~4 of {\em Ergebnisse der
  Mathematik und ihrer Grenzgebiete. 3. Folge. A Series of Modern Surveys in
  Mathematics}.
\newblock Springer-Verlag, Berlin, 2004.

\bibitem[DIM11]{DIM}
O.~Debarre, A.~Iliev, and L.~Manivel.
\newblock On nodal prime fano threefolds of degree 10.
\newblock {\em Sci. China Math.}, 54:1591--1609., 2011.

\bibitem[DK15]{DK}
O.~Debarre and A.~Kuznetsov.
\newblock Gushel--mukai varieties: classification and birationalities.
\newblock {\em arXiv:1510.05448}, 2015.

\bibitem[Don77]{Donagi}
Ron~Y. Donagi.
\newblock On the geometry of {G}rassmannians.
\newblock {\em Duke Math. J.}, 44(4):795--837, 1977.

\bibitem[EPW01]{EPW}
David Eisenbud, Sorin Popescu, and Charles Walter.
\newblock Lagrangian subbundles and codimension 3 subcanonical subschemes.
\newblock {\em Duke Math. J.}, 107(3):427--467, 2001.

\bibitem[Huy15]{Huy}
Daniel Huybrechts.
\newblock The k3 category of a cubic fourfold, 2015.

\bibitem[HVA13]{HV}
Brendan Hassett and Anthony V{\'a}rilly-Alvarado.
\newblock Failure of the {H}asse principle on general {$K3$} surfaces.
\newblock {\em J. Inst. Math. Jussieu}, 12(4):853--877, 2013.

\bibitem[HY15]{HT}
Brendan Hasset and Tschinkel Yuri.
\newblock Extremal rays and automorphisms of holomorphic symplectic varieties.
\newblock {\em arXiv:1506.08153}, 2015.

\bibitem[IKKR16]{EPWcubes}
Atanas Iliev, Grzegorz Kapustka, Michal Kapustka, and Kristian Ranestad.
\newblock Epw cubes.
\newblock {\em J. Reine Angew. Math.}, 2016.
\newblock published online DOI 10.1515/crelle-2016-0044.

\bibitem[Ili97]{Iliev}
Atanas Iliev.
\newblock The theta divisor of bidegree {$(2,2)$} threefold in {$\bold
  P^2\times \bold P^2$}.
\newblock {\em Pacific J. Math.}, 180(1):57--88, 1997.

\bibitem[IM11]{IM}
Atanas Iliev and Laurent Manivel.
\newblock Fano manifolds of degree ten and {EPW} sextics.
\newblock {\em Ann. Sci. \'Ec. Norm. Sup\'er. (4)}, 44(3):393--426, 2011.

\bibitem[Mar11]{Markmansurvey}
Eyal Markman.
\newblock A survey of {T}orelli and monodromy results for
  holomorphic-symplectic varieties.
\newblock In {\em Complex and differential geometry}, volume~8 of {\em Springer
  Proc. Math.}, pages 257--322. Springer, Heidelberg, 2011.

\bibitem[Mon15]{Mon2}
Giovanni Mongardi.
\newblock A note on the {K}\"ahler and {M}ori cones of hyperk\"ahler manifolds.
\newblock {\em Asian J. Math.}, 19(4):583--591, 2015.

\bibitem[MS12]{MS}
Emanuele Macr{\`{\i}} and Paolo Stellari.
\newblock Fano varieties of cubic fourfolds containing a plane.
\newblock {\em Math. Ann.}, 354(3):1147--1176, 2012.

\bibitem[MSTVA14]{MSTV}
Kelly McKinnie, Justin Sawon, Sho Tanimoto, and Anthony V{\'a}rilly-Alvarado.
\newblock Brauer groups on k3 surfaces and arithmetic applications, 2014.

\bibitem[Muk87]{Mu2}
Shigeru Mukai.
\newblock Moduli of vector bundles on {$K3$} surfaces and symplectic manifolds.
\newblock {\em S\=ugaku}, 39(3):216--235, 1987.
\newblock Sugaku Expositions {{\bf{1}}} (1988), no. 2, 139--174.

\bibitem[MW15]{MongardiWandel}
G.~Mongardi and M.~Wandel.
\newblock Induced automorphisms on irreducible symplectic manifolds.
\newblock {\em J. Lond. Math. Soc. (2)}, 92(1):123--143, 2015.

\bibitem[Nie03]{Nieper}
Marc~A. Nieper.
\newblock Hirzebruch-{R}iemann-{R}och formulae on irreducible symplectic
  {K}\"ahler manifolds.
\newblock {\em J. Algebraic Geom.}, 12(4):715--739, 2003.

\bibitem[Nik80]{Nikulin}
V.V. Nikulin.
\newblock Finite groups of automorphisms of k\"ahlerian surfaces of type k3.
\newblock {\em Moscow Math. Soc.}, 38:71--137., 1980.

\bibitem[O'G06]{OgradyEPW}
Kieran~G. O'Grady.
\newblock {Irreducible symplectic 4-folds and {Eisenbud}-{Popescu}-{Walter}
  sextics}.
\newblock {\em Duke Math. J.}, 134(1):99--137, 2006.

\bibitem[O'G13]{Ogrady-michigan}
Kieran~G. O'Grady.
\newblock Double covers of {E}{P}{W}-sextics.
\newblock {\em Michigan Math. J.}, 62:143--184, 2013.

\bibitem[OW13]{OW}
Hisanori Ohashi and Malte Wandel.
\newblock Non-natural non-symplectic involutions on symplectic manifolds of
  $k3^{[2]}$-type.
\newblock {\em arXiv:1305.6353 [math.AG]}, 2013.

\bibitem[PR97]{PragaczRatajski}
P.~Pragacz and J.~Ratajski.
\newblock Formulas for {L}agrangian and orthogonal degeneracy loci;
  {$Q$}-polynomial approach.
\newblock {\em Compositio Math.}, 107(1):11--87, 1997.

\bibitem[Ver04]{Vera}
Alessandro Verra.
\newblock The {P}rym map has degree two on plane sextics.
\newblock In {\em The {F}ano {C}onference}, pages 735--759. Univ. Torino,
  Turin, 2004.

\bibitem[Ver13]{Verbitsky}
Misha Verbitsky.
\newblock Mapping class group and a global {T}orelli theorem for hyperk\"ahler
  manifolds.
\newblock {\em Duke Math. J.}, 162(15):2929--2986, 2013.
\newblock Appendix A by Eyal Markman.

\bibitem[vG05]{vG}
Bert van Geemen.
\newblock Some remarks on {B}rauer groups of {$K3$} surfaces.
\newblock {\em Adv. Math.}, 197(1):222--247, 2005.

\bibitem[Voi90]{Vo1}
Claire Voisin.
\newblock Relations dans l'anneau de {C}how de la surface des coniques des
  vari\'et\'es de {F}ano, et applications.
\newblock {\em Amer. J. Math.}, 112(6):877--898, 1990.

\bibitem[Yos06]{Yo}
K{\=o}ta Yoshioka.
\newblock Moduli spaces of twisted sheaves on a projective variety.
\newblock In {\em Moduli spaces and arithmetic geometry}, volume~45 of {\em
  Adv. Stud. Pure Math.}, pages 1--30. Math. Soc. Japan, Tokyo, 2006.

\bibitem[Zak93]{Zak}
F.~L. Zak.
\newblock {\em Tangents and secants of algebraic varieties}, volume 127 of {\em
  Translations of Mathematical Monographs}.
\newblock American Mathematical Society, Providence, RI, 1993.
\newblock Translated from the Russian manuscript by the author.

\end{thebibliography}
\end{document}